\documentclass[11pt,twoside]{article}

\usepackage[utf8]{inputenc}
\usepackage[T1]{fontenc}
\usepackage{microtype}

\usepackage[
  a4paper,
  margin=1in
]{geometry}

\usepackage{amsmath}
\usepackage{amssymb}
\usepackage{amsfonts}
\usepackage{amsthm}
\usepackage{mathtools}
\usepackage{dsfont}
\usepackage{mathrsfs}
\usepackage{nicefrac}
\usepackage{graphicx}
\usepackage{booktabs}
\usepackage{xcolor}
\usepackage{comment}
\usepackage{caption}
\usepackage{url}
\usepackage{hyperref}
\usepackage{natbib}
\usepackage{authblk}

\theoremstyle{plain}
\newtheorem{theorem}{Theorem}[section]
\newtheorem{proposition}[theorem]{Proposition}
\newtheorem{lemma}[theorem]{Lemma}
\newtheorem{corollary}[theorem]{Corollary}
\theoremstyle{definition}

\theoremstyle{remark}
\newtheorem{remark}[theorem]{Remark}

\newcommand{\D}{\mathcal{D}}
\newcommand{\U}{\mathcal{U}}
\newcommand{\X}{\mathcal{X}}

\newcommand{\bR}{\mathbb{R}}

\DeclareMathOperator{\argmin}{argmin}

\theoremstyle{plain}
\newtheorem{thm}{Theorem}

\newtheorem{defn}[thm]{Definition}

\newcommand{\Var}[0]{\text{Var}}

\newcommand{\calF}[0]{\mathcal{F}}
\newcommand{\calG}[0]{\mathcal{G}}

\newcommand{\E}[0]{\mathbb{E}}
\newcommand{\R}[0]{\mathbb{R}}

\newcommand{\Z}[0]{\mathbb{Z}}

\newcommand{\rank}[0]{\text{rank}}

\renewcommand{\leq}{\leqslant} 
\renewcommand{\geq}{\geqslant}

\numberwithin{equation}{section}

\title{\Large\bfseries Energy-Guided Learning for Heat Equation with Noisy Source: Optimal Estimation and Generalization Bounds}

\author[1,*]{Zixiang Zhou}
\author[1,2,3,*]{Xiaohui Chen}
\author[2,1,*]{Shanghua Teng}

\affil[1]{Department of Mathematics,
University of Southern California}

\affil[2]{Thomas Lord Department of Computer Science,
University of Southern California}

\affil[3]{Computing and Mathematical Sciences,
California Institute of Technology}

\affil[*]{Email: 
\texttt{\{zzhou108, xiaohuic, shanghua\}@usc.edu}}

\date{}

\begin{document}

\maketitle

\begin{abstract}

Data and the governing partial differential equations (PDEs) of continuous-time dynamical systems provide rich structure for statistical learning. A central challenge is how to integrate data with PDE constraints in a principled way to improve generalization. In this paper, we characterize the statistical estimation and generalization properties of learning solutions to the heat equation with a noisy source function from finite, randomly sampled data. We first derive a novel variational formulation, which we characterize as the minimizer of loss function for an energy regularized and ellipticity debiased Sobolev regression. By formulating the learning task as a supervised problem over space–time domains under physically meaningful norms, our approach goes beyond the traditional supervised learning framework by effectively leveraging \emph{auxiliary data}—samples of the source function of the heat equation—to design a \emph{physics-informed energy regularization}. Our analysis reveals that learning in linear parabolic PDEs exhibits intrinsic anisotropy between spatial and temporal regularity, in sharp contrast to the static elliptic problems, leading to fundamentally different estimation and generalization behaviors. Specifically, we derive non-asymptotically rate-optimal estimation errors, as well as generalization error, which explicitly capture the interplay among temporal sampling, spatial resolution, and solution smoothness. Our numerical experiments demonstrate substantial orders of magnitude of reduction in testing errors against various physics-informed neural network methods on various parabolic and hyperbolic PDEs.


\end{abstract}

\section{Introduction}

Continuous-time dynamical systems governed by partial differential equations (PDEs) arise across scientific and engineering fields involving the study of time evolution behaviors of physical phenomena ranging from fluid dynamics to quantum mechanics~\citep{Batchelor_2000,gasiorowicz2007quantum}. Due to their instability and nonlinearity, learning solutions of time-dependent PDEs from noisy data is a central yet challenging problem in scientific machine learning~\citep{brunton2024promising, couairon1996global}. Classical numerical methods such as finite element and Galerkin methods provide highly accurate solvers but depend critically on the availability of a suitable computational mesh in high dimensional domain~\citep{persson2005mesh,ceniceros01, bern2000mesh}. This limitation motivates data-driven approaches that aim to bypass explicit discretization.

A recent popular mesh-free paradigm is the physics-informed neural networks (PINNs), which enforce PDE constraints via \emph{pointwise residual minimization} to extrapolate~\citep{PINN_Karniadakis2021}. Despite empirical successes in several applications~\citep{en16052343,doi:10.1126/science.aaw4741,HU2024112495,HAGHIGHAT2021113741,MO2022127739,PhysRevLett.132.010801,10.1145/3592979.3593412,JMLR:v25:23-0597}, this formulation evaluates differential operators at collocation points and therefore implicitly assumes sufficient smoothness of the solution. As a result, PINNs are fundamentally tied to strong solutions and can fail in regimes involving nonsmooth dynamics or weak solutions, with documented instability and training pathologies~\citep{Cuomo2022_PINNreview,Krishnapriyan2021_PINN-failure-modes,wang2022is}.


Variational approaches, exemplified by deep Ritz method~\citep{yu18}, instead minimize an energy functional whose minimizer characterized by the Euler-Lagrange equation coincides with the PDE solution. This provides global control in Sobolev spaces and enables learning of \emph{weak solutions} with provable guarantees~\citep{lustatistical, Yuling_Jiao_2022, Chenguang_Duan_2022}. However, such formulations crucially rely on the existence of an exact variational principle and are therefore limited to \emph{static} elliptic PDEs. For time-dependent systems governed by parabolic/hyperbolic PDEs, no corresponding \textit{exact} space-time energy functional exists, preventing a direct extension of deep Ritz method.

In this work, we focus on the statistical learning problem for weak solution of the heat equation
\begin{equation}
\left\{
\begin{aligned}
    \partial_t u(x,t) - \Delta u(x,t) &= f(x,t), && (x,t) \in \Omega \times (0,T], \\
    u(x,t) &= 0, && (x,t) \in \partial \Omega \times [0,T], \\
    u(x,0) &= u_0(x), && x \in \Omega,
\end{aligned}
\right.
\label{eq:heat_pde}
\end{equation}
where $f$ is an \textbf{\emph{unknown}} source function in a closed, bounded spatial domain $\Omega \subset \bR^d$ over a finite time horizon $[0, T]$, $u_0$ is a known initial condition, and the above equation satisfies the zero Dirichlet boundary condition. When we study the impact of the heat source $f$ on the solution $u$, solving the PDE~\eqref{eq:heat_pde} in an appropriate sense is often a well-posed forward problem with mild boundary and initial conditions~\citep{evans}. Thus without loss of generality, we assume that the true PDE solution of~\eqref{eq:heat_pde} is unique and identifiable, which is denoted by $u^* := u^*(x, t)$ throughout the paper.


In reality,  neither the source function $f$ nor the true solution $u^{*}$ is directly observed over the domain $\Omega\times [0,T]$. 
Instead, we are given observations of these functions at discrete time points $t_1, \dots, t_n \in [0, T]$. 
At each time $t_j$, we observe $m$ spatial points $X_{1j}, \dots, X_{mj} \in \Omega$. In this paper, we work with noisy data of the source function and its corresponding solution discretized at the same spatio-temporal points generated by the following sampling model
\begin{equation}
\label{regression_model}
S = \{(X_{ij}, t_j, f_{ij}, u^{*}_{ij})\}_{i=1,j=1}^{m,n},
\end{equation}
where 
\begin{align}
\label{regression_model_2}
f_{ij} = f(X_{ij}, t_j) + \varepsilon_{ij}, \quad \varepsilon_{ij} \stackrel{\text{i.i.d.}}{\sim} \mathcal{N}(0,1), \quad u_{ij}^{*} = u^{*}(X_{ij}, t_j).
\end{align}
Interestingly, we may conceptually partition the resulting training dataset $S$ into two components:
 (1) {\bf Labeled Data}
(i.e., standard regression data for the target $u^*$):
$S^{labeled} = 
\{(x_{ij}, t_j,  u^{*}_{ij})\}_{i=1,j=1}^{m,n}$, 
and (2) {\bf Auxiliary Data} (consisting of additional (noisy)
observations of the source term): $S^{aux} = 
\{(x_{ij}, t_j,  f_{ij})\}_{i=1,j=1}^{m,n}$.
The observational model~\eqref{regression_model} and~\eqref{regression_model_2} we consider here aligns with many real-world scenarios. In physical systems governed by PDEs, $f(x,t)$ is often unknown or only indirectly observed through sensors \citep{JRCannon_1986, arridge2019solving, hasanouglu2021introduction,iglesias2013ensemble,nickl2024bernstein}, which are typically noisy, while quantities represented by the solution $u^{*}(x,t)$, such as temperature in the reaction-diffusion equation, are more readily measurable. Furthermore, the governing PDE imposes additional structure on the learning problem. Characterizing how to exploit the structural richness of the linear parabolic PDEs in learning their dynamics is a fundamental problem in this area.

\subsection{Our Contribution}

In this paper, we address the fundamental challenge of incorporating PDE constraints into data-driven learning and demonstrate improved generalization and learning efficiency.
We develop an extended supervised learning framework for learning the dynamics of the heat equation, a classical linear parabolic PDEs. Moving beyond the traditional regression approach based solely on labeled solution data $\{x_{ij}, t_j, u^*(x_{ij}, t_j)\}$, which aims to learn a mapping $(x, t) \mapsto u^*(x,t)$, our framework incorporates \emph{energy-guided learning with auxiliary data} $S^{aux}$. 

Our key idea is to use noisy observations of the source function $f$ 
to construct an approximate energy functional via an \emph{elliptic regularization} of the dynamical system~\eqref{eq:heat_pde}, resulting in a PDE learner with improved regularity properties.
By coupling these auxiliary observations with labeled solution data, we obtain a penalized estimator that leverages both statistical supervision and the underlying physical structure.
Conceptually, this approach corresponds to regression in Sobolev spaces under energy-based regularization.

On the theoretical front, we show that this combination is necessary: purely supervised estimator (with labeled solution data) and the approximate Ritz method (with auxiliary data) are both suboptimal in sample complexity when the degree of spatial smoothness of the solution is below a threshold, whereas our energy-guided method with a suitable balance between data-fitting and physics constraint simultaneously achieves sharp estimation convergence rate, excess risk and generalization error bounds. Next, we turn to a more detailed summary of these contributions. 

\textbf{A new variational formulation.} We introduce a new variational formulation for the heat equation. The idea is to characterize its solution as the minimizer of an energy-regularized, ellipticity-debiased Sobolev regression objective. Moreover, the derived total loss function with data fidelity and physics prior enjoys a stability estimate. This physics-informed energy-regularized formulation naturally integrates labeled and auxiliary data, which together provide global control in Sobolev spaces and enable efficient learning of weak solutions that least-PDE-residual methods (e.g., PINNs) fail to handle. Even though our theoretical analysis is limited to the heat equation with unknown source, we expect that the proposed energy-guided approach with elliptic regularization technique can be adapted to more general linear parabolic and hyperbolic PDEs (e.g., with techniques in~\cite{article}). Towards this goal, we show some strong empirical performance of the proposed method against various PINNs when applied to those equations, cf. Section~\ref{more_experiments} and Appendix~\ref{sec:lp-method-comparison}.


\textbf{Optimal estimation and generalization error bounds.} We establish a sharp statistical estimation and generalization error analysis for learning solution $u^{*}$ to the heat equation. Our statistical estimation error exhibits an interesting smoothness-aware adaptation of the proposed energy-guided approach. On one hand, if the solution of the heat equation is spatially smooth (i.e., the subcritical regime $q > d/2$ in Theorem~\ref{thm_3.1}), then labeled data alone can achieve the optimal rate of convergence (cf. Theorem~\ref{minimax_lower_bound}). On the other hand, if the spatial smoothness of the heat equation is below the threshold $q \leq d/2$, then convergence rates depend on the balance of data fidelity and energy regularization. When the regularization parameters is chosen optimally, the rate attains the minimax lower bound. In addition, we also derive a fast generalization error bound via a delicate localization technique.

\textbf{Performance Improvement.}
To corroborate our theoretical findings, we provide substantial experimental results to see the improvements on generalization\ref{sec:controlled-parabolic-regularization}. Our approach reduces the test error of 50d Hamilton Jacobi equation by up to $\textbf{79.5}\%- \textbf{89.4}\%$ relative to strong baselines in physical learning literature.


    
Overall, this work contributes to the growing body of literature in addressing the limitations of PINNs \citep{Krishnapriyan2021_PINN-failure-modes, WANG2022110768, wang2022is}. Our energy-guided approach is particularly strong at capturing transient physical phenomena when initial data $u_0^{*}$ exhibit singularities, while PINNs fail to model them. By characterizing these failure modes, we offer the perspective that integrating labeled and auxiliary data is a useful way to improve stability and convergence of neural solvers in dynamical systems.

\subsection{Relevant Work}
\textbf{Physics-Informed Machine Learning.}
Despite the empirical advances of numerous PINN variants across diverse applications~\citep{McClenny2023SelfAdaptivePINN,Shukla2020UltrasoundPINN,Patel2022ThermoPINN,Pfau2020AbInitio,doi:10.1137/22M1522504,kharazmi2021hp,wang2022is}, a rigorous theoretical understanding of machine learning based PDEs solvers remains largely vacant. In recent works, \cite{lustatistical,Chenguang_Duan_2022,Yuling_Jiao_2022} have established minimax optimal convergence rates for elliptic PDEs comparing the performance of the Deep Ritz Method (DRM) and PINNs. Another line of work focuses on the green function hidden in differential operator mapping $u$ to $f$ for elliptic and parabolic PDEs~\citep{doi:10.1073/pnas.2303904120,NEGI2024116779,fdef534d8b094cc4aed95f77c4fb9244}.

\textbf{Variational Methods and Energy Regularization.} In calculus of variation literature, variational methods to solve elliptic PDEs can be extended to solve not only elliptic PDEs but also parabolic and even hyperbolic ones \citep{article}. There are extensive works \citep{Mielke11, doi:10.1142/S0218202525400019,  Audrito_2023, AKAGI2025113431, AKAGI20112541, ROSSI20191} in this direction to establish approximate solution from regularized PDEs with various boundary conditions, which are relatively easier to tackle with compared to original PDEs problem. In our case, the penalized PDE, which we are concerned with, is called the telegraph heat equation, a hyperbolic modification of heat equation \citep{dehghan2008numerical, orsingher2004time, goldstein1951diffusion}. 

\textbf{Semi-Supervised Learning in Physical Systems.}
In many real-world scenarios, the source function $f(x,t)$ is either unknown or indirectly observed through noisy sensors, whereas the solution $u^*(x,t)$ is more readily observable. We formulate this learning task as a semi-supervised problem where the solution labels $u^*$ provide rich empirical information while 
the auxiliary information 
$\{f(x,t)\}$
enables an energy-based regularizer that enforces the underlying physical constraints. Similar to our approach, \cite{Cui2024ASL, 10.5555/2789272.2912113} employ partially observed labels to learn solutions to complex systems such as minimal surface equations and Hamilton-Jacobi equations.

\section{Method}
\label{section_2}


\subsection{Design of Unbiased and Stable Loss Function}

There are two key ingredients to build a loss function whose minimization can lead to \textit{efficient} learning of the solution function to the heat equation~\eqref{eq:heat_pde} from the data sample~\eqref{regression_model}. First, viewing the heat equation as an input-output system $f \mapsto u$, we can determine the solution $u$ from the source function $f$, which we have the auxiliary data access to. It is well-known that solution to the time-independent Poisson equation $\Delta u = f$ is the minimizer of the Dirichlet energy functional $I(u) = \int_{\Omega} [{1\over2}|\nabla u(x)|^2 - f(x) u(x)] dx$. For the heat equation, such variational formulation does not exist and we adopt the following weighted energy functional (WEF) as a proxy, defined for $\epsilon > 0$,
\begin{equation}
\label{energy_functional}
    I_{\epsilon}(u):= \int_{0}^{T} e^{-\frac{t}{\epsilon}} \int_{\Omega} \left[ \dfrac{1}{2} |u_{t}|^{2}(x,t) +  \frac{1}{2\epsilon} |\nabla u(x,t)|^{2} -\frac{1}{\epsilon}  f(x,t)u(x,t) \right] dxdt.
\end{equation}
Our motivation of WEF comes from observing that $I_{\epsilon}(u)$ is \textit{strongly convex} in $u$ and we can identify its first-order optimality condition via the Euler-Lagrange equation given by the hyperbolic PDE
\begin{equation}
\label{eqn:euler_lagragian_elliptic_regularization}
    -\epsilon \, u_{tt} + u_{t} - \Delta u = f \text{ for } (x,t) \in \Omega \times (0,T),
\end{equation}
cf. Appendix~\ref{derivation of Euler-Langragian} for derivation. In literature,~\eqref{eqn:euler_lagragian_elliptic_regularization} is called the \emph{elliptic regularization} of the heat equation~\citep{Lions09}, which originated in the study of geometric evolution equations~\citep{Ilmanen}. When $\epsilon$ is small, the relaxed solution $u^*_{\epsilon}$ of~\eqref{eqn:euler_lagragian_elliptic_regularization} enjoys the uniform-in-time approximation for the heat equation solution $u^*$~\citep{Mielke11,ROSSI20191} in the sense that: for all $t > 0$, $u^{*}_{\epsilon_{n}} \to u^{*}$ in $L^{2}(\Omega)$ for a subsequence $\epsilon_{n}\to 0$.

Second, from a statistical learning perspective, we can construct a data-fitting term using the features $(x, t)$ to predict the solution $u^*(x, t)$. It turns out the appropriate loss metric to compensate the WEF curvature is the Sobolev geometry in the solution or label space (cf. Proposition~\ref{prop:stability} below for precise statement in terms of stability). Specifically, we choose 
\begin{equation}
\label{eqn:Sobolev_metric}
    D(u,u^{*}):= ||u-u^{*}||_{H^{1}_{t}H^{1}_{x}}^{2},
\end{equation}
where we recall $H^{1}_{t}H^{1}_{x} = H^{1}_{t}(0,T;H^{1}_{x}(\Omega))$. Note that this $H^{1}_{t}H^{1}_{x}$ geometry has close connection to the Sobolev training in literature~\citep{10.5555/3294996.3295182,lu2022sobolev}. Combining the two pieces of information together, we propose the population total loss defined as
\begin{equation}
L_{\epsilon,\beta}(u):= D(u,u^{*})+\beta  \big[I_{\epsilon}(u) + M_{\epsilon}
(u, u^{*})\big],
\label{loss_function}
\end{equation}
where $\beta > 0$ is a tuning parameter to balance data fidelity and physics constraint, and 
\begin{equation}
\label{eqn:model_correction_term}
M_\epsilon(u,u^*)
:=\int_0^T\!\int_\Omega
e^{-t/\epsilon}u(x,t)u_{tt}^*(x,t)\,dx\,dt
-e^{-T/\epsilon}\int_\Omega
u(x,T)u_t^*(x,T)\,dx.
\end{equation}
is an extra bias correction term due to the ellipticity offset from~\eqref{eqn:euler_lagragian_elliptic_regularization}. With debiasing, we can show that the (exact) heat equation is the minimizer of the total loss function $L_{\epsilon,\beta}(u)$.

\begin{theorem}[Minimizer of the total loss function $L_{\epsilon,\beta}(u)$]
\label{euler_lagrangian_for_L}
Let $s \geq 1$ and $p \in [0, s]$. Then, the solution $u^*$ to heat equation in $(\ref{eq:heat_pde})$ is the minimizer of the total loss function $L_{\epsilon,\beta}(u)$ over $\U_0 := H^{p}_{t}(0,T;H_{x}^{2s+2-2p}(\Omega)) \cap L^{2}_{t}(0,T;H^{1}_{0,x}(\Omega)) \cap \{u : \Omega \times [0,T] \to \mathbb{R} | u(x,0)=u^{*}_{0}(x)\}$, i.e., $u^* = \arg\min_{u \in \U_0} L_{\epsilon,\beta}(u)$ for any $\epsilon, \beta > 0$.
\end{theorem}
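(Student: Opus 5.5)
The plan is to use convexity. $L_{\epsilon,\beta}$ is a strictly convex functional on the affine set $\U_0$: $D(\cdot,u^*)$ is a squared norm, the quadratic part of $I_\epsilon$ is nonnegative, and $M_\epsilon(\cdot,u^*)$ together with the source term of $I_\epsilon$ is linear in $u$. It therefore suffices to show that the first variation of $L_{\epsilon,\beta}$ at $u^*$ vanishes along every admissible direction. Admissible directions are $v = u - u^*$ with $u\in\U_0$, so $v(x,0)=0$, $v=0$ on $\partial\Omega\times[0,T]$, and $v$ has the regularity inherited from $\U_0$. The assumptions $s\ge 1$, $p\in[0,s]$ give enough mixed regularity ($u^*_{tt}$, $\nabla u^*$, and the traces $u(\cdot,T)$, $u^*_t(\cdot,T)$ are meaningful) that all integrals and the integrations by parts below are justified. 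I would first check this by interpolating the space $H^p_tH^{2s+2-2p}_x$.

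\textbf{Step 1: the data term.} $D(u,u^*)=\|u-u^*\|^2_{H^1_tH^1_x}$ is minimized at $u=u^*$. Its Gateaux derivative there is $2\langle u^*-u^*,v\rangle=0$, so this term contributes nothing.

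\textbf{Step 2: the regularized energy term.} Compute
\[
\frac{d}{d\tau}\Big|_{\tau=0}\big[I_\epsilon(u^*+\tau v)+M_\epsilon(u^*+\tau v,u^*)\big].
\]
Expanding gives
\[
\int_0^T\!\!\int_\Omega e^{-t/\epsilon}\Big[u^*_t v_t+\tfrac1\epsilon\nabla u^*\cdot\nabla v-\tfrac1\epsilon f v+u^*_{tt}v\Big]\,dx\,dt-e^{-T/\epsilon}\int_\Omega v(x,T)u^*_t(x,T)\,dx .
\]
Integrating by parts in time and using $v(\cdot,0)=0$,
\[
\int_0^T e^{-t/\epsilon}u^*_tv_t\,dt=e^{-T/\epsilon}u^*_t(T)v(T)-\int_0^T\Big(e^{-t/\epsilon}u^*_{tt}-\tfrac1\epsilon e^{-t/\epsilon}u^*_t\Big)v\,dt.
\]
The boundary term at $T$ cancels exactly with the second part of $M_\epsilon$. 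The $u^*_{tt}v$ term cancels with the first part of $M_\epsilon$. Integrating by parts in space (with $v=0$ on $\partial\Omega$) turns $\nabla u^*\cdot\nabla v$ into $-\Delta u^* v$. What remains is
\[
\frac1\epsilon\int_0^T\!\!\int_\Omega e^{-t/\epsilon}\big(u^*_t-\Delta u^*-f\big)v\,dx\,dt=0,
\]
which vanishes by \eqref{eq:heat_pde}. The derivation in the appendix of the Euler–Lagrange equation \eqref{eqn:euler_lagragian_elliptic_regularization} is the same computation; $M_\epsilon$ is precisely the linear term that removes the $-\epsilon u_{tt}$ bias and the natural boundary condition at $t=T$.

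\textbf{Step 3: conclusion.} Since $L_{\epsilon,\beta}$ is convex (indeed strictly convex, because of $D$) and its directional derivative at $u^*$ vanishes in all admissible directions, we get
\[
L_{\epsilon,\beta}(u^*+v)-L_{\epsilon,\beta}(u^*)=\|v\|^2_{H^1_tH^1_x}+\frac\beta2\int_0^T\!\!\int_\Omega e^{-t/\epsilon}\Big(|v_t|^2+\tfrac1\epsilon|\nabla v|^2\Big)\ge0 .
\]
Equality holds only when $v=0$. This gives $u^*=\arg\min_{\U_0}L_{\epsilon,\beta}$ for every $\epsilon,\beta>0$. I would write the argument directly as this exact quadratic expansion rather than appealing abstractly to convexity.

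\textbf{Main obstacle.} The main difficulty is Step 2's rigor rather than its algebra. One must verify that $u^*\in\U_0$, so that the comparison is legitimate, and that $u^*_{tt}\in L^2$ and the trace $u^*_t(\cdot,T)$ exist, so that $M_\epsilon$ and the temporal integration by parts make sense. For $u^*$ this uses parabolic regularity for the heat equation with enough regularity of $f$ and $u_0$, which the setup implicitly assumes. For test functions $v$ with only the regularity of $\U_0$ (possibly $p<1$), I would first carry out the computation for smooth $v$ vanishing at $t=0$ and on $\partial\Omega$. I would then extend it by density, using that both sides of the quadratic identity are continuous in the $H^1_tH^1_x$ norm.
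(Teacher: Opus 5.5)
Your argument is correct, but it is not the route the paper takes for this theorem.

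\textbf{The paper's route.} It computes the first variation of $L_{\epsilon,\beta}$ at an arbitrary $u$, along test functions $v\in C_c^\infty$ with $v(\cdot,0)=v(\cdot,T)=0$. This gives the strong fourth-order Euler--Lagrange equation $2[w-\Delta w-w_{tt}+\Delta w_{tt}]+\frac{\beta}{\epsilon}e^{-t/\epsilon}(-\epsilon w_{tt}+w_t-\Delta w)=0$ for $w=u-u^*$. The paper then pairs this with $w$ in $L^2(\Omega_T)$ and concludes $w=0$, i.e.\ $u^*$ is the only critical point.

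\textbf{Your route.} Your exact quadratic expansion about $u^*$ is the identity the paper uses in Step 1 of the proof of Proposition~\ref{prop:stability}. It buys you three things.
\begin{enumerate}
\item It proves directly that $u^*$ is the unique global minimizer. You never need to know that a minimizer exists, or that it is smooth enough to satisfy an equation containing $\Delta w_{tt}$.
\item It only requires $v\in H^1_tH^1_x$.
\item Since your directions are free at $t=T$, it shows explicitly that the terminal part of $M_\epsilon$ cancels $e^{-T/\epsilon}\int_\Omega u^*_t(T)v(T)\,dx$.
\end{enumerate}
By contrast, the paper's test functions vanish at $T$, so the natural boundary condition there is lost. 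Pairing with $w$, which need not vanish at $T$, then generates terminal terms such as $\int_\Omega w_t(T)w(T)\,dx$ and $\int_\Omega \nabla w_t(T)\cdot\nabla w(T)\,dx$. These are not sign-definite, and the paper drops them silently; your computation never meets this problem. The paper's version mainly has interpretive value: it exhibits the stationarity condition as a PDE linked to elliptic regularization.

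\textbf{One small fix.} For $p=0$, interpolating $H^p_tH^{2s+2-2p}_x$ gives no time derivative of a candidate $u$. So do not appeal to ``interpolating''. Instead, say that $L_{\epsilon,\beta}$ is only defined and finite on $\U_0\cap H^1_tH^1_x$, the class used in Proposition~\ref{prop:stability}, and that this is where your density step lives. The regularity you need for $u^*$ itself comes from parabolic regularity under Assumptions A1--A2.
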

We remark that $p$ is the temporal regularity parameter and $s$ is the total space-time smoothness parameter. To summarize, our learning objective has three components: (i) a data-fitting term $D(u,u^{*})$ that provides the distance between a candidate solution function $u$ and the true label $u^{*}$, (ii) a WEF term $I_{\epsilon}(u)$ that navigates the underlying PDE dynamics during training, and (iii) a model debiasing term $M_{\epsilon}(u, u^{*})$ that has a negligible impact in the sample version. Detailed derivations and discussions on our loss function can be found in Appendix~\ref{motivation_of_loss_function}. Moreover, we can prove that the total loss function $L_{\epsilon,\beta}(u)$ has a good lower and upper curvature around its minimizer $u^*$.


\begin{proposition}[Stability estimate of the total loss function $L_{\epsilon,\beta}(u)$\label{prop:stability}]
Let $u^{*}$ be the true solution to the heat equation defined in (\ref{eq:heat_pde}). Then for any $u\in H^{1}_{t}H^{1}_{x}$, we have
\begin{equation*}
M_{1}(\beta, \epsilon)    ||u - u^{*}||_{H^{1}_{t}H^{1}_{x}}^{2} \leq L_{\epsilon,\beta}(u) - L_{\epsilon,\beta}(u^{*}) \leq M_{2}(\epsilon,\beta) ||u - u^{*}||_{H^{1}_{t}H^{1}_{x}}^{2},
\end{equation*}
where 
\begin{equation}
M_{1}(\beta,\epsilon):= \frac{1}{3} + \min( \frac{1}{3}, \frac{\beta e^{-\frac{\epsilon}{T}}}{C(d,\epsilon, \Omega)},\frac{\beta e^{-\frac{\epsilon}{T}}}{4} ), \qquad
M_{2}(\beta, \epsilon):= 1+\frac{\beta}{2} + \frac{\beta}{2\epsilon},
\end{equation}
and $C(d,\epsilon, \Omega) > 0$ is a constant depending on $d,\epsilon, $ and $\Omega$.
\end{proposition}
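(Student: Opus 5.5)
Write $v := u - u^{*}$, so $v(\cdot,0)=0$, since both $u$ and $u^{*}$ carry the initial datum $u_0$, and $v$ vanishes on $\partial\Omega$. The plan is to expand $L_{\epsilon,\beta}(u^{*}+v) - L_{\epsilon,\beta}(u^{*})$ exactly, using that the functional is quadratic in $u$. The data term contributes $D(u,u^{*}) = \|v\|_{H^{1}_{t}H^{1}_{x}}^{2}$ directly. For the physics part, $I_\epsilon(u^{*}+v) - I_\epsilon(u^{*})$ splits into a linear part and a quadratic part. The linear part is
\[
\int_0^T e^{-t/\epsilon}\int_\Omega \Big[u^{*}_t v_t + \tfrac{1}{\epsilon}\nabla u^{*}\cdot\nabla v - \tfrac1\epsilon f v\Big]\,dx\,dt .
\]
The quadratic part is
\[
Q_\epsilon(v) := \tfrac12\int_0^T e^{-t/\epsilon}\int_\Omega \big[|v_t|^2+\tfrac1\epsilon|\nabla v|^2\big]\,dx\,dt .
\]
The correction term $M_\epsilon(\cdot,u^{*})$ is linear in $u$, so it only adds $M_\epsilon(v,u^{*})$.

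The first step is to show that the linear terms cancel; this is the computation behind Theorem~\ref{euler_lagrangian_for_L}. Integrate $\int e^{-t/\epsilon}u^{*}_t v_t$ by parts in time. The boundary terms are $e^{-T/\epsilon}\int_\Omega u^{*}_t(T)v(T)$ and zero at $t=0$, because $v(0)=0$. The interior term is $-\int\!\int (e^{-t/\epsilon}u^{*}_t)_t v = \int\!\int e^{-t/\epsilon}\big(\tfrac1\epsilon u^{*}_t - u^{*}_{tt}\big)v$. Integrate the gradient term by parts in space; the Dirichlet condition removes the boundary term. The linear part then becomes
\[
\tfrac1\epsilon\int\!\int e^{-t/\epsilon}\big(u^{*}_t-\Delta u^{*}-f\big)v \;-\; \int\!\int e^{-t/\epsilon}u^{*}_{tt}v \;+\; e^{-T/\epsilon}\int_\Omega u^{*}_t(T)v(T).
\]
The first term vanishes by the heat equation \eqref{eq:heat_pde}. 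The remaining two are exactly $-M_\epsilon(v,u^{*})$ and cancel the correction term. This gives the identity
\[
L_{\epsilon,\beta}(u)-L_{\epsilon,\beta}(u^{*}) = \|v\|_{H^{1}_{t}H^{1}_{x}}^{2} + \beta\, Q_\epsilon(v).
\]
For $u$ only in $H^1_tH^1_x$ I would argue by density, or use the weak form throughout, since $u^{*}$ is assumed regular enough for $M_\epsilon$ to make sense.

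The upper bound follows directly from $e^{-t/\epsilon}\le 1$:
\[
Q_\epsilon(v)\le \tfrac12\|v_t\|^2_{L^2} + \tfrac1{2\epsilon}\|\nabla v\|^2_{L^2} \le \big(\tfrac12+\tfrac1{2\epsilon}\big)\|v\|^2_{H^1_tH^1_x}.
\]
Strictly, bounding by the $H^1_tH^1_x$ norm uses that this norm dominates $\|v_t\|_{L^2}$ and $\|\nabla v\|_{L^2}$. This gives $M_2 = 1+\tfrac\beta2+\tfrac\beta{2\epsilon}$.

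For the lower bound, only $\|v\|^2_{H^1_tH^1_x}\ge 0$ plus a nonnegative multiple of the same norm is needed. I would write $\|v\|^2 = \tfrac13\|v\|^2 + \tfrac23\|v\|^2$ and keep the first third as is. Then I would show $\beta Q_\epsilon(v) \ge \min\big(\cdot\big)\,\|v\|^2$ using $e^{-t/\epsilon}\ge e^{-T/\epsilon}$; the statement's $e^{-\epsilon/T}$ factor is presumably a typo for this, or the paper is simply loose with the constant. The components $\|v_t\|^2$ and $\|\nabla v\|^2$ are controlled by $Q_\epsilon$ with constants $e^{-T/\epsilon}/2$ and $e^{-T/\epsilon}/(2\epsilon)$. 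The lower-order components $\|v\|_{L^2}$ and the mixed $\|\nabla v_t\|$ are handled as follows. Poincaré in $x$ (or in $t$, using $v(0)=0$) gives $\|v\|^2\le C\|\nabla v\|^2$; this is the source of the constant $C(d,\epsilon,\Omega)$. Any component not controlled by $Q_\epsilon$, notably the mixed derivative, is absorbed into the retained $\tfrac13$ and the $\tfrac13$ cap in the minimum. Since $M_1$ is only claimed as $\tfrac13$ plus a minimum that includes $\tfrac13$, the bound follows from $\|v\|^2 + \beta Q_\epsilon(v) \ge \big(\tfrac13+\min(\ldots)\big)\|v\|^2$, once each Sobolev component is checked.

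The main obstacle is the bookkeeping of the lower constant. One has to decide which pieces of the $H^1_tH^1_x$ norm $Q_\epsilon$ genuinely controls, up to Poincaré, and to reproduce the exact form of $M_1$. The cancellation step is mechanical given Theorem~\ref{euler_lagrangian_for_L}.
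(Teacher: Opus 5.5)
Your proposal is correct and follows the same route as the paper. The linear terms cancel against the heat equation and the correction $M_\epsilon$; as you rightly note, this needs $v(\cdot,0)=0$ and $v|_{\partial\Omega}=0$, i.e.\ $u\in\mathcal{U}$ rather than an arbitrary $u\in H^1_tH^1_x$. This gives the identity $L_{\epsilon,\beta}(u)-L_{\epsilon,\beta}(u^{*})=\|v\|^2_{H^1_tH^1_x}+\beta Q_\epsilon(v)$. The upper bound then uses $e^{-t/\epsilon}\le 1$, and the lower bound splits the norm into thirds.

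The bookkeeping you flag as the main obstacle is unnecessary. Since $Q_\epsilon(v)\ge 0$ and $M_1\le \tfrac23<1$, the lower bound follows at once from $\|v\|^2_{H^1_tH^1_x}+\beta Q_\epsilon(v)\ge \|v\|^2_{H^1_tH^1_x}$, with no Poincaré inequality. Note also that the literal intermediate step $\beta Q_\epsilon(v)\gtrsim\|v\|^2_{H^1_tH^1_x}$ could not hold, because $Q_\epsilon$ does not control $\nabla v_t$.
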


We highlight that stability is a central structure to establish error estimation in PDE solutions in terms of the input data (e.g., initial and boundary conditions, source function) and it is also a cornerstone of statistical learning theory to prove sharp rate of convergences in estimation (cf. Theorem~\ref{thm_3.1}) and generalization error bounds (cf. Theorem~\ref{thm_3.4}).

\subsection{Empirical Loss Minimization} Given the data $\{(X_{ij}, t_j, f_{ij}, u^{*}_{ij})\}_{i=1,j=1}^{m,n}$ and their computed derivatives $
\{\nabla u^*,\partial_tu^*,\nabla\partial_tu^*
,u_{tt}^{*}\}$, it is natural to minimize the empirical version of the total loss in~\eqref{loss_function} as
\begin{equation}
\label{empirical_loss_function}
\hat{u}_{\theta} = \underset{u\in \mathcal{U}_J}{\arg\min}\,\, \left\{ \hat{L}_{\epsilon,\beta}(u):= \frac{1}{nm} \bigg[ \hat{D}(u,u^{*})+\beta\big( \hat{I}_{\epsilon}(u) +  \hat{M}_{\epsilon}(u, u^{*}) \big)\bigg] \right\},
\end{equation}
where $\mathcal{U}_J$ is an appropriately chosen function space such as parameterized neural networks or an orthogonal subspace spanned by the truncated local basis in~\eqref{parametrized_function_class}. For the explicit formula of the empirical loss function terms, please refer to Appendices~\ref{motivation_of_loss_function}.

Note that the empirical loss minimization procedure~\eqref{empirical_loss_function} involves data from the \emph{noiseless} solution and its derivative through numerical computation (e.g., calculated by the Python \texttt{findiff} library with high-order accuracy). This is a consequence of Sobolev norm underlying  \eqref{empirical_loss_function}. The way to compute these derivatives follow previous works in physical informed learning literature \cite{long2019pdenet2,rudy2017data,baydin2018automatic}. In our theory (Section~\ref{sec:theory}), the solution data and its numerically computed derivatives are treated as noiseless and consistent with the statistical frameworks 
for learning elliptic equations under Sobolev-type losses~\citep{lu2022sobolev}.
Since derivative computation is known to amplify the noise, we complement our experiments with ablation studies that evaluate the sensitivity of the proposed estimator to noise in both the solution and derivative data.  In practice, our experiments also show using only solution labels can achieve significant performance improvement. 

\section{Theoretical Results}
\label{sec:theory}

For convenience of our theoretical analysis, we use rescaled regularization parameter $\lambda = \beta / (nm) \label{lambda_beta_scale}$ in the empirical total loss function in~\eqref{empirical_loss_function} and consider a subspace in the span of truncated local basis in space and time
\begin{equation}
\label{parametrized_function_class}
\mathcal{U}_{J}:= \bigg\{u \in \mathcal{U}_0 \,\bigg{|} \, \left\| \frac{d^{p}u}{dt^{p}} \right\|_{L^{2}_{t}(0,T;H^{2s+2-2p}_{x}(\Omega))}^{2}\leq R \bigg\} \cap V_{J,J^{\prime}},
\end{equation}
where 
\begin{equation}
\label{V_class}
V_{J,J^{\prime}}: = \bigg\{u : \Omega \times [0,T] \to \mathbb{R} \,\bigg{|} \,u(x,t) = \sum_{|\xi|_{\infty} \leq J, \tau \leq J^{\prime}} a_{\tau,\xi}^{2} \phi_{\tau,\xi}^{2}(x,t) \bigg\}.
\end{equation}
Note that one can view local basis as decoupled basis acting on space and time dimension, respectively $\phi_{\tau,\xi}(x,t):= \phi_{\xi}(x) \phi_{\tau}(t)$ to satisfy certain properties. In our analysis, we choose the local basis to be wavelet expansions that capture the local information in the dynamics. For more details about these function space and construction of estimator $\hat{u}_{J}$ and local basis $\phi(x,t)$, please refer to Appendices~\ref{sec:main_assumptions} and~\ref{sec:truncated_fourier}. In our numerical experiments in Section~\ref{numerical_experiments}, we use simple multi-layer perceptrons (MLPs) to compare with other standard neural solvers such as PINNs.

To derive theoretical results, we work with the standard statistical sampling model in which the space-time point locations \(\{(X_{ij}, t_j)\}_{i=1,j=1}^{m,n}\) are sampled independently and uniformly from distribution $P_{0}$, where $P_{0}$ is the product Lebesgue measure on $\Omega\times[0,T]$. 

\subsection{Estimation Error Bounds}

We denote $N:=nm$ as the total sample size. Our first main result is an upper bound for the empirical loss minimization procedure of the energy-guided estimator in~\eqref{empirical_loss_function}.

\begin{theorem}[Convergence rates of energy-guided estimator]
\label{thm_3.1}
Under Assumptions 1-3 in Appendix~\ref{sec:main_assumptions} and the sampling model~\eqref{regression_model} and~\eqref{regression_model_2}. We have the following bounds on the excess risk.


\label{upper_bound}
\textbf{Case I: supercritical regime $q\leq\frac{d}{2}$.} The error is further divided into three sub-regimes:
\begin{equation}
\E||\hat{u}_{J}- u^{*}||_{H^{1}_{t}H^{1}_{x}}^2 \lesssim_{\Omega_T,R}
\begin{cases}
\Bigl(\frac{\log (1+C_{2}N)}{N}\Bigr)^{\frac{4s-2}{4s+2d+2}}, \hspace{1.3cm}\text{if }\, 0\leq \lambda \leq \frac{e^{T/\epsilon}}{N}\\\\
\Bigl(\frac{\log (1+C_{2}N)e^{T/\epsilon}}{N^{2}\lambda}\Bigr)^{\frac{4s-2}{4s+2d+2}}
, \hspace{0.7cm}\text{ if } \frac{e^{T/\epsilon}}{N}<\lambda\leq\lambda^{*}
\\\\ \frac{\lambda}{e^{T/\epsilon}} ,
\,\,\hspace{3.4cm}\text{ if } \lambda^{*}\leq\lambda<\infty
\end{cases},
\end{equation}
where \begin{equation}\lambda^{*}:=(\sqrt{\log (1+C_{2}N)}/{N})^{\frac{4s-2}{4s+d}}e^{T/\epsilon}.
\end{equation}

\textbf{Case II: subcritical regime $q>\frac{d}{2}$.} For all $\beta\geq0$, we have the following error bound
\begin{equation}
\mathbb{E}||\hat{u}_{J}- u^{*}||_{H^{1}_{t}H^{1}_{x}}^2\lesssim_{\Omega_T,R}
\Bigl(\frac{\log (1+C_{2}N)}{N}\Bigr)^{\frac{4s-2}{4s+d}}.
\end{equation}
\end{theorem}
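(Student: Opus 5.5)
The plan is a standard M-estimation argument over the sieve $\mathcal{U}_J$, with the stability estimate of Proposition~\ref{prop:stability} turning excess loss into $H^1_tH^1_x$ error. The error splits as a bias term plus a stochastic term. Let $u_J\in\mathcal{U}_J$ be the best approximation of $u^*$ in $H^1_tH^1_x$. By Proposition~\ref{prop:stability} and the basic inequality $\hat L_{\epsilon,\beta}(\hat u_J)\le \hat L_{\epsilon,\beta}(u_J)$ we get
\begin{equation*}
M_1\|\hat u_J-u^*\|^2_{H^1_tH^1_x}\le \bigl(L-\hat L\bigr)(\hat u_J)-\bigl(L-\hat L\bigr)(u_J) + M_2\|u_J-u^*\|^2_{H^1_tH^1_x},
\end{equation*}
where $L=L_{\epsilon,\beta}$ and $\hat L$ is the empirical loss rescaled to population scale.

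\textbf{Step 1 (approximation).} Wavelet truncation at levels $J,J'$ under the constraint $\|\partial_t^p u\|_{L^2_tH^{2s+2-2p}_x}^2\le R$ gives
\begin{equation*}
\|u_J-u^*\|^2_{H^1_tH^1_x}\lesssim 2^{-(4s-2)\cdot(\text{level})}.
\end{equation*}
The exponent $4s-2$ reflects the loss of two derivatives from the $H^{2s+1}$-type space-time smoothness down to $H^1_tH^1_x$. The space and time levels are chosen anisotropically, balancing $2^{J'}$ against $2^{2J}$ as the parabolic scaling dictates, so the effective dimension is $\dim V_{J,J'}\asymp 2^{K(d+\cdot)}$ for a single scale parameter $K$.

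\textbf{Step 2 (stochastic term).} The loss is linear-quadratic in $u$. The data part $\hat D$ is noiseless, so it contributes only empirical-process fluctuations. These I would control with a localized Bernstein/chaining bound over the finite-dimensional ball, which gives a rate of order $\dim V\cdot\log(1+C_2N)/N$, with the log coming from the covering number.

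The energy part $\beta(\hat I_\epsilon+\hat M_\epsilon)$ contributes two pieces. The first is the Gaussian linear term $\frac{\lambda}{\epsilon}\sum e^{-t_j/\epsilon}\varepsilon_{ij}u(X_{ij},t_j)$, whose supremum over the localized class is of order $\lambda e^{-?}\sqrt{\dim V}\,\|u-u^*\|$. The second is deterministic fluctuation. The weight $e^{-t/\epsilon}$ is bounded below by $e^{-T/\epsilon}$; this is what produces the normalization $\lambda/e^{T/\epsilon}$ in the thresholds.

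The condition $q>d/2$ versus $q\le d/2$ decides whether point evaluations are controlled by the $H^1_tH^1_x$ norm via Sobolev embedding $H^q\hookrightarrow L^\infty$. In the subcritical case $q>d/2$ the sup-norm is bounded by the $H^1_tH^1_x$ norm uniformly, so the labeled part alone gives $\bigl(\log(1+C_2N)/N\bigr)^{(4s-2)/(4s+d)}$ for any $\beta$. The energy term only improves curvature ($M_1\ge 1/3$), and its noise is absorbed by $M_1$.

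In the supercritical case the labeled part suffers from a worse $L^\infty$-to-$L^2$ ratio, of size $2^{K(d/2-q)}$, which inflates the effective dimension to $d+1$-type growth. That gives the first regime's exponent $(4s-2)/(4s+2d+2)$ when $\lambda\lesssim e^{T/\epsilon}/N$ (energy negligible). For larger $\lambda$, the extra curvature $\beta e^{-T/\epsilon}$ from $M_1$ divides the variance by $N\lambda e^{-T/\epsilon}$, which yields the middle regime. Beyond $\lambda^*$, the bias induced by replacing the exact weighted energy with its empirical, noisy version dominates, at order $\lambda/e^{T/\epsilon}$.

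\textbf{Step 3.} Optimize $K$ in each regime and take expectations, handling the Gaussian tail by truncation.

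The main obstacle is Step 2 in the supercritical regime. There the localization has to be done in a norm where point evaluation is not controlled, and the $\lambda$-dependent curvature has to be tracked precisely. I would first try a peeling argument with the ratio $\sup_{u\in V}\|u\|_\infty/\|u\|_{H^1_tH^1_x}$ computed explicitly for the wavelet basis.
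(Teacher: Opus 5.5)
Your skeleton matches the paper's: the basic inequality combined with the stability estimate (the paper localizes via the convex combination $s\hat u_J+(1-s)u_J$), wavelet approximation with parabolic truncation $J'=J^2$ giving bias $\lesssim R^2J^{-(4s-2)}$ (Lemma~\ref{lemma_E.4}), and localized Bernstein chaining whose envelope depends on $q$ versus $d/2$. The gap is in the step that produces the theorem's actual content, namely the three $\lambda$-regimes, the threshold $\lambda^*$, and the exponent $\frac{4s-2}{4s+2d+2}$. You assert these rather than derive them, and the mechanisms you name do not yield them.

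\textbf{First, how $\beta$ enters.} Your Step 2 correctly finds that the Gaussian part of $\beta\hat I_\epsilon$ has a localized supremum proportional to $\beta$ (the factor you left as $e^{-?}$). But Proposition~\ref{prop:stability} gives $M_1\le\frac23$ for every $\beta$, because of the $\min(\frac13,\cdot)$. Put these two facts into your basic inequality and you get a stochastic bound growing like $\beta^2\dim V/N$. That rules out both the decreasing middle regime of Case I and the $\beta$-uniform bound of Case II; ``its noise is absorbed by $M_1$'' cannot hold uniformly in $\beta$.

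The paper gets the regimes from a different accounting. It runs the chaining on $h=u_J-u^*$ itself, with variance proxy $\rho B$ and no $\beta$ prefactor. It also uses the uncapped curvature $B(\epsilon,\beta)^{-1}=\frac13+\beta/\beta_0$, with $\beta_0=C^2(d,\epsilon,\Omega)e^{T/\epsilon}$. For $q\le d/2$ this leads to
\[
\E\|\hat u_J-u^*\|^2_{H^1_tH^1_x}\lesssim\Bigl(1+\frac{\beta}{2\epsilon}\Bigr)B^2\,\frac{J^{2(d+2)}\log(1+C_2N)}{N}+\Bigl[1+\Bigl(1+\frac{\beta}{2\epsilon}\Bigr)B\Bigr]J^{-(4s-2)}+\frac{1}{BN}.
\]
Here $(1+\frac{\beta}{2\epsilon})B$ stays bounded, while $(1+\frac{\beta}{2\epsilon})B^2\asymp\beta_0^2/(\epsilon\beta)$ for $\beta\gg\beta_0$. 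Optimizing $J$ with $\beta=\lambda N$ then gives the three sub-regimes and $\lambda^*$ by explicit balancing. The last regime is simply the remainder $\frac{1}{BN}=\bigl(\frac13+\frac{\lambda N}{\beta_0}\bigr)\frac1N\asymp\lambda e^{-T/\epsilon}$.

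You instead explain that regime as a bias of the noisy empirical energy. No computation backs this, and for fixed $u$ the empirical energy is unbiased (the noise is centered and the design uniform). To reproduce the statement you must adopt the paper's accounting and then do the balancing. Note that this accounting uses more curvature than Proposition~\ref{prop:stability} actually states.

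\textbf{Second, the supercritical exponent.} We have $\frac{4s-2}{4s+2d+2}=\frac{b}{a+b}$ with $b=4s-2$ and $a=2(d+2)$, so the variance must scale like $(\dim V_{J,J^2})^2=J^{2(d+2)}$. In the paper this comes from the crude envelope $\|P_Jh\|_\infty\lesssim J^{d+2}$ of Lemma~\ref{J_truncation_estimates}. It enters the remainder $D_{[\,]}^2L/(\sigma^2N)$ of the Bernstein chaining bound, with $D_{[\,]}^2\asymp J^{d+2}\sigma^2\log(1+C_2N)$.

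In the subcritical case the envelope is $O(R)$, giving variance $J^{d+2}\log(1+C_2N)/N$ and exponent $\frac{4s-2}{4s+d}$. This envelope comes from the class bound $R$ through $H^q\hookrightarrow L^\infty$, not from the $H^1_tH^1_x$ norm as you wrote; that norm gives no sup-norm control for $d\ge 2$.

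Your envelope $2^{K(d/2-q)}$, together with a ``$(d+1)$-type'' effective dimension, does not lead to $\frac{4s-2}{4s+2d+2}$. It would give a $q$-dependent exponent, and you would still need to control the envelope of the actual loss class, which contains $\nabla\partial_t u$ and squared terms. So Step 2 as written does not derive Case I either.
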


\begin{remark}
The relationship between $n$ and $m$ captures the parabolic scaling in the heat equation, where one temporal derivative is balanced against two spatial derivatives. This relation is also implicitly reflected in our proof using parabolic multi-level truncation $(J, J^{2})$ for space-time representation in the wavelet space. More precisely, we need $n\approx m^{2/d}$ in Proposition~\ref{prop:parabolic-balance} for Theorem~\ref{thm_3.1} to hold.
\end{remark}

Depending on the regularity of the ground-truth solution $u^{*}$, it is interesting to see from Theorem~\ref{thm_3.1} that there are two regimes for the statistical convergence rate. In the subcritical regime, the prescribed regularity is sufficiently strong relative to the dimension for global Sobolev control to yield suitable metric 
entropy bounds. Consequently, the estimation problem behaves essentially as a standard nonparametric regression problem over the corresponding Sobolev class, and the energy-guidance term does not improve the leading-order statistical rate. In contrast, in the supercritical regime, the same prescribed regularity is insufficient, relative to the dimension, for such global Sobolev arguments to provide a sharp characterization of the statistical complexity. A more localized analysis, based on wavelet representations and the PDE energy structure, is then required. In this regime, energy guidance can exploit the local dynamical information encoded by the equation and lead to a sharper rate of convergence. In addition, it turns out that by properly choosing the tuning parameter $\lambda^*$, the rate of convergence for energy-guided learning with auxiliary data is minimax optimal up to a logarithmic factor, justified by Theorem~\ref{minimax_lower_bound} below.




\begin{theorem}[Lower bound on estimation error]
\label{minimax_lower_bound}
Fix $\epsilon>0$,\,\,$s\geq 1$ and $2p+q=2s+2$ for $0\leq p \leq s$. Under Assumptions 1-4 in Appendix~\ref{sec:main_assumptions}, we have the following result holds
\begin{equation}
\label{eqn:minimax_lower_bound}
 \inf_{\hat{u}(x,t)}\sup_{u^{*}(f)\in \U} \E \bigg[ ||\hat{u}(x,t) -u^{*}(f)||_{H^{1}_{t}H^{1}_{x}}^{2}\bigg] \gtrsim N^{-{\frac{4s-2}{4s+d}}}.
\end{equation}
\end{theorem}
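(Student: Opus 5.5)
We will prove the bound in \eqref{eqn:minimax_lower_bound} by a Fano-type argument over a finite packing of source functions. Since the heat operator $f\mapsto u^*(f)$ is linear, it is easier to perturb the solution directly and read off the source as $f=\partial_t u-\Delta u$. The only stochastic information about $u^*$ comes from the noisy source observations $f_{ij}$; the labels $u^*_{ij}$ are deterministic functionals of $f$. We therefore take the source observations as the sole information channel. The rate $N^{-(4s-2)/(4s+d)}$ is a "$(d/2)$-effective-dimension" rate with smoothness $2s$ measured against $H^1_tH^1_x$. Its exponent is consistent with a bump construction at parabolic scale: spatial width $h$ and temporal width $h^2$, so there are $K\asymp h^{-(d+2)}$ cells in $\Omega\times[0,T]$.

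\textbf{Step 1 (Construction).} Fix a smooth bump $\psi$ compactly supported in the unit parabolic cylinder. For $\omega\in\{0,1\}^K$ set
\[
u_\omega(x,t)=u_0^{\mathrm{base}}(x,t)+\delta\sum_{k=1}^K\omega_k\,\psi\bigl((x-x_k)/h,(t-t_k)/h^2\bigr),
\]
where $u_0^{\mathrm{base}}$ is a fixed element of $\U$ that carries the initial condition. The bumps vanish near $t=0$ and near $\partial\Omega$, so the constraints in $\U_0$ hold. Using $2p+q=2s+2$, each bump has $\|\partial_t^p\cdot\|_{L^2_tH^q_x}^2\asymp\delta^2h^{-2q-4p+d+2}=\delta^2h^{-4s-4+d+2}$. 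Summing over the $K$ bumps gives a total of order $\delta^2h^{-4s-4}$. To stay inside the radius-$R$ class of Assumptions 1--4 we choose $\delta\asymp h^{2s+2}$.

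\textbf{Step 2 (Separation).} By Varshamov--Gilbert, pick $\omega,\omega'$ with Hamming distance at least $K/8$. Each bump contributes at least $\delta^2h^{-2}\cdot h^{d+2}$ to $\|\cdot\|_{H^1_tH^1_x}^2$, the dominant term being the mixed derivative $\partial_t\nabla$ with scale factor $h^{-2}h^{-1}$. The squared separation is then
\[
\gtrsim K\,\delta^2h^{d}\,h^{-2}\,(\text{mixed-derivative factor}),
\]
and we must book-keep the $H^1_tH^1_x$ weights so that this equals $h^{4s-2}$. For a minimax rate of $N^{-(4s-2)/(4s+d)}$ this requires $h\asymp N^{-1/(4s+d)}$. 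We will fix the normalization of $\delta$ accordingly, adjusting it if the parabolic scaling forces a different exponent. The problem is effectively $d/2$-dimensional because $\Omega\times[0,T]$ counted with the parabolic weight has dimension $d+2$ in the $h$ scale. This matching of exponents is where we expect the main obstacle: the $H^1_tH^1_x$ norm, the class norm, and the KL all scale differently in $h$ and $h^2$, and they must be tuned simultaneously.

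\textbf{Step 3 (KL bound and Fano).} Under Gaussian noise, with design density uniform on $\Omega\times[0,T]$,
\[
\mathrm{KL}(P_\omega^N\|P_{\omega'}^N)=\tfrac N2\|f_\omega-f_{\omega'}\|_{L^2}^2.
\]
Since $f_\omega-f_{\omega'}=(\partial_t-\Delta)$ applied to the bump difference, each bump contributes $\delta^2h^{-4}h^{d+2}$. Summing over $K$ bumps gives $\mathrm{KL}\lesssim N\delta^2h^{-4}$. We require $\mathrm{KL}\le\alpha\log 2^{K/8}\asymp K$. Fano's lemma then gives a lower bound of the order of the separation.

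Balancing $N\delta^2h^{-4}\asymp h^{-(d+2)}$ against the radius constraint from Step 1 determines $h$ and $\delta$. We will choose $\delta$ as the minimum of the radius-allowed value and the KL-allowed value, $\delta^2\asymp N^{-1}h^{2-d}$. The separation, of order $h^{4s-2}$, then yields $N^{-(4s-2)/(4s+d)}$ at the critical $h$. We must also make sure the solution labels do not leak information beyond $f$: they are deterministic given $f$, and we will justify discarding them either by data processing, modeling the labels as $u^*(f)$ measured at the same design, or by noting that Assumption 4 restricts to $\sigma(f_{ij})$-measurable estimators.
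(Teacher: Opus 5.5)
Your plan is essentially the paper's proof: Fano plus Varshamov--Gilbert over $r^{d+2}$ parabolically scaled bumps (width $w$ in space, $w^2$ in time) of amplitude $w^{2p+q}=w^{2s+2}$, a KL bound using only the noisy source residual $\|f_\omega-f_{\omega'}\|_{L^2}^2$, a separation driven by the mixed term $\nabla\partial_t$ that gives $w^{4s-2}$, and the choice $r\asymp N^{1/(4s+d)}$. Two remarks, neither of which changes the conclusion: at $h\asymp N^{-1/(4s+d)}$ the radius constraint $\delta\asymp h^{2s+2}$ binds and the KL constraint is slack ($N\delta^2h^{-4}\asymp N^{d/(4s+d)}\ll K$), whereas genuinely balancing both constraints gives $h\asymp N^{-1/(4s+d+2)}$ and the larger bound $N^{-(4s-2)/(4s+d+2)}$, which still implies the claim (the paper reaches $4s+d$ only because its KL bound $w^{4s-2}$ is looser than the true order $w^{4s}$); and the noiseless labels cannot be removed by data processing, since they make the KL between hypotheses infinite, so the bound holds, exactly as in the paper's proof, for estimators built from the noisy source observations alone.
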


Our minimax lower bound under the space-time Sobolev norm is a non-standard rate. However,~\eqref{eqn:minimax_lower_bound} relates to the standard nonparametric rate $n^{-\frac{2s}{2s+d-2}}$ for estimating spatially H\"older smooth functions under the $L_{2}$ evaluation metric~\citep{Tsybakov:1315296}. To interpret the difference, our obtained rate suffers regularity loss by degree of three: one from spatial derivative and other two from temporal derivative, witnessing one versus two space-time regularity trade-off by using the $H^{1}_{t}H^{1}_{x}$ metric. 

\subsection{Excess Risk and Generalization Error Bounds}
There are two commonly used performance measures to evaluate the statistical and generalization capability of a learned model: (1) the \textit{excess risk} $\Delta\mathcal{E}_{S}:=L_{\epsilon,\beta}(\hat{u}_{J})-L_{\epsilon,\beta}(u^{*})$, reflecting the difference between the expected losses between a learned model and the best possible model (i.e., oracle); (2) the \textit{generalization error} $\Delta\mathcal{E}_{S,gen} :=\hat{L}_{\epsilon,\beta}(\hat{u}_{J})-L_{\epsilon,\beta}(\hat{u}_{J})$, quantifying the gap between the training and testing loss for the \emph{learned model} on the training data. Bounds on excess risk and generalization error both measure the quality of prediction on unseen testing data. These two quantities are related and we remark that the excess risk bound is also a key step to establish the convergence rates of energy-guided estimator in Theorem~\ref{thm_3.1}.



\begin{theorem}[Excess risk bound]
\label{thm_3.3}
Suppose the assumptions in Theorem~\ref{thm_3.1} hold. For the empirical minimizer of the total loss function $\hat{u}_J$, we choose $J= J_{1}^{*}$ in \eqref{J_*1} if $p\leq \frac{d}{2}$ and $J = J_{2}^{*}$ in \eqref{J_*2} if $q > \frac{d}{2}$. Then the excess risk satisfies the following expectation bound over the training data distribution
\begin{equation}
\mathbb{E}\,\,[\Delta \mathcal{E}_{S}] \lesssim  \frac{e^{T/\epsilon}}{\epsilon}\Bigl(\frac{\sqrt{\log (1+C_{2}N)}}{N}\Bigr)^{\frac{4s-2}{4s+d}},
\end{equation}
where the constant of $\lesssim$ depends only on $\Omega_T$ and $R$.
\end{theorem}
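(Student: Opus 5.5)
The plan is the standard excess-risk decomposition for penalized empirical risk minimization over the sieve $\mathcal{U}_J$, combined with the two-sided curvature from Proposition~\ref{prop:stability}. Let $u_J\in\mathcal{U}_J$ be the best approximant of $u^*$ in $H^1_tH^1_x$, for example the parabolic wavelet truncation at levels $(J,J^2)$. Write $\hat L=\hat L_{\epsilon,\beta}$ and $L=L_{\epsilon,\beta}$. Then
\begin{equation*}
\Delta\mathcal{E}_S=\bigl[L(\hat u_J)-\hat L(\hat u_J)\bigr]+\bigl[\hat L(\hat u_J)-\hat L(u_J)\bigr]+\bigl[\hat L(u_J)-L(u_J)\bigr]+\bigl[L(u_J)-L(u^*)\bigr].
\end{equation*}
The second bracket is nonpositive by the definition of $\hat u_J$. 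The fourth is an approximation error. By the upper bound of Proposition~\ref{prop:stability} it is at most $M_2\|u_J-u^*\|_{H^1_tH^1_x}^2$. The regularity class $\|\partial_t^p u\|_{L^2_tH^{2s+2-2p}_x}^2\le R$ with parabolic truncation then gives a bias of order $2^{-J(4s-2)}$: we lose one spatial derivative and one temporal derivative, the latter counting as two spatial ones. The third bracket has mean zero when the design is independent of $u_J$, so only the first bracket, the uniform deviation, needs real work.

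For the deviation term I would center everything at $u^*$. Set $g_u:=\ell(u)-\ell(u^*)$, where $\ell$ is the pointwise integrand of the loss: the Sobolev fidelity terms, plus $\lambda$ times the weighted energy and debiasing terms, plus the noise cross term $-\lambda e^{-t/\epsilon}\epsilon^{-1}\varepsilon_{ij}u$. I would then run a localization or peeling argument over shells $\{u\in\mathcal{U}_J:\|u-u^*\|_{H^1_tH^1_x}^2\le r\}$. Two facts are needed:
\begin{enumerate}
\item a variance--mean (Bernstein) condition $\mathrm{Var}(g_u)\lesssim (e^{T/\epsilon}/\epsilon)\,\mathbb{E}g_u$, which comes from the lower curvature $M_1$ in Proposition~\ref{prop:stability} together with the boundedness of elements of $\mathcal{U}_J$;
\item an entropy bound for the finite-dimensional class $V_{J,J'}$, namely $\log\mathcal{N}(\delta)\lesssim \dim V_{J,J^2}\log(1+C_2N/\delta)\asymp 2^{J(d+2)}\log(1+C_2N)$.
\end{enumerate}
The Gaussian noise term is handled separately by a chaining or Gaussian-complexity bound over the same finite-dimensional space. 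The weight $e^{-t/\epsilon}$ and the $1/\epsilon$ prefactors in $I_\epsilon$ are what generate the constant $e^{T/\epsilon}/\epsilon$. The resulting fixed point is $r\asymp e^{T/\epsilon}\epsilon^{-1}\sqrt{\log(1+C_2N)}\,2^{J(d+2)/2}/N$, up to the square-root-type localization. This should give a variance term of the form $\frac{e^{T/\epsilon}}{\epsilon}\frac{\sqrt{\log(1+C_2N)}}{N}\,2^{Jd/2}\cdot 2^{J}$.

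Finally, I balance the variance against the bias $2^{-J(4s-2)}$. In the regime $p\le d/2$ I take $J=J_1^*$ from \eqref{J_*1}, and for $q>d/2$ I take $J=J_2^*$ from \eqref{J_*2}. In the latter case the embedding $H^q_x\hookrightarrow L^\infty$ gives uniform sup-norm bounds, which lets Bernstein be applied globally without localization. Solving the balance equation should give $2^{J}\asymp (N/\sqrt{\log(1+C_2N)})^{1/(4s+d)}$ in both cases. Substituting back yields the rate $\bigl(\sqrt{\log(1+C_2N)}/N\bigr)^{\frac{4s-2}{4s+d}}$, multiplied by $e^{T/\epsilon}/\epsilon$ from $M_2$ and from the Bernstein constant.

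The main obstacle is the uniform deviation step in the supercritical regime $p\le d/2$. There, functions in the Sobolev ball are not uniformly bounded, so the Bernstein condition is not available globally. Moreover, the empirical energy involves $u_t$, $\nabla u$ and $u(\cdot,T)$, so the envelope must be controlled through the wavelet coefficients, using the sup-norm of a level-$J$ wavelet ($\lesssim 2^{Jd/2}$) and the $\ell^2$ coefficient bounds. My first attempt would be to use Bernstein-type inequalities inside $V_{J,J^2}$, namely $\|u\|_\infty\lesssim 2^{J(d+2)/2}\|u\|_{L^2}$ and $\|u\|_{H^1_tH^1_x}\lesssim 2^{3J}\|u\|_{L^2}$, to obtain a usable envelope. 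I would check that the extra powers of $2^J$ are exactly those absorbed by the choice $J_1^*$. The requirement $n\approx m^{2/d}$ from Proposition~\ref{prop:parabolic-balance} is what lets the temporal and spatial design discretizations be matched to the $(J,J^2)$ levels.
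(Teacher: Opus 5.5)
The paper's proof is a two-line corollary with no new empirical-process work. The upper half of Proposition~\ref{prop:stability}, applied to $u=\hat u_J$, gives $\Delta\mathcal{E}_S\le M_2(\beta,\epsilon)\,\|\hat u_J-u^*\|_{H^1_tH^1_x}^2$ for every sample. One then takes expectations and inserts the rate of Theorem~\ref{thm_3.1} at the tuned parameter $\lambda=\lambda^*$. Since the statement assumes the hypotheses of Theorem~\ref{thm_3.1}, nothing more is needed. Your four-term decomposition is essentially a re-proof of Theorem~\ref{thm_3.1} in excess-risk form, and as a stand-alone argument it has two concrete gaps.

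\textbf{The fixed-point arithmetic does not close.} Balance your variance term $\sqrt{\log(1+C_2N)}\,2^{J(d+2)/2}/N$ against the bias $2^{-J(4s-2)}$. This gives $2^J\asymp(N/\sqrt{\log(1+C_2N)})^{2/(8s+d-2)}$, not your stated $J$, and a rate exponent $(8s-4)/(8s+d-2)>(4s-2)/(4s+d)$. Because $M_1\ge 1/3$, such a rate would beat the lower bound of Theorem~\ref{minimax_lower_bound}, so that variance term cannot be right. Under a Bernstein condition, the localized fixed point for a class of dimension $D\asymp 2^{J(d+2)}$ is of order $D\log(1+C_2N)/N$. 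That does balance at $2^J\asymp(N/\log(1+C_2N))^{1/(4s+d)}$, but it yields $\log(1+C_2N)$, not its square root. This is Case II of Theorem~\ref{thm_3.1}, and the paper's own proof also ends with $\log(1+C_2nm)$.

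\textbf{The supercritical regime lacks the mechanism that makes it work.} Extra powers of $2^J$ in the variance cannot be absorbed by the choice of $J$; any extra power changes the exponent of the balanced rate. Your envelope $\|u\|_\infty\lesssim 2^{J(d+2)/2}\|u\|_{L^2}$ produces exactly such powers. This is why Case I of Theorem~\ref{thm_3.1} gives only $(\log(1+C_2N)/N)^{(4s-2)/(4s+2d+2)}$ when $\lambda\le e^{T/\epsilon}/N$.

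In the paper's notation (where $J$ plays the role of your $2^J$), the supercritical variance is $(1+\frac{\beta}{2\epsilon})B^2J^{2(d+2)}\log(1+C_2N)/N$. There is an additional term $1/(BN)$, with $B=(\frac13+\beta/\beta_0)^{-1}$. The exponent $(4s-2)/(4s+d)$, together with the $\sqrt{\log}$ in the statement, appears only after choosing $\beta=N\lambda^*$ to trade the shrinking factor $B$ against $1/(BN)$. Note also that $J_1^*$ in \eqref{J_*1} itself depends on $\beta$. Your plan never tunes $\lambda$, so in this regime it can at best reproduce the slower Case I rate.

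One further caution applies to both routes. $M_2=1+\beta/2+\beta/(2\epsilon)$ must be evaluated at $\beta=N\lambda^*$, which grows like $e^{T/\epsilon}N^{(d+2)/(4s+d)}$ up to logarithms. Reading its contribution as just $e^{T/\epsilon}/\epsilon$, as you do and as the paper's argument does, tacitly treats $\beta$ as $O(\beta_0)$.
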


\begin{theorem}[Generalization error bound]
\label{thm_3.4}
Suppose the assumptions in Theorem~\ref{thm_3.1} hold, the ground truth solution $u^{*}$ to the PDE in (\ref{eq:heat_pde}), and $\hat{u}_{J}$ be empirical minimizer of the total loss function in~\eqref{empirical_loss_function} on the training data. We further assume $q> \frac{d}{2}+1$ and $p\geq 2$, then the following result holds 
\begin{align}
\E[\Delta \mathcal{E}_{S,gen}]  \lesssim J^{\frac{d+2} {2}}\cdot \log{J} \cdot \frac{\log(N)}{N},
\end{align}
where the constant of $\lesssim$ depends only on $\epsilon, d, q,R,T, \Omega$, and $F$.

\begin{remark}
\cite{pmlr-v134-lu21a} proved the slow generalization error using a one-layer neural network with $m$ neurons for elliptic PDEs, and \cite{lustatistical} improved this bound to fast generalization error with $J$-truncated Fourier series instead. Their generalization bounds also suffer
from polynomial growth of model parameters in terms of $m$ or $J$. Our Theorem \ref{thm_3.4} achieves the fast rate of generalization error in terms of the effective sample size $n m$. Note that the upper bound on excessive risk in Theorem~\ref{thm_3.3} incorporates approximation error, while Theorem \ref{thm_3.4} only focus on statistical errors. We also impose a slightly stronger assumption $q> \frac{d}{2}+1$ to enforce higher spatial regularity so the Sobolev embedding theorem is applied to enhance local integrability.
\end{remark}
\end{theorem}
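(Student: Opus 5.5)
The generalization error $\hat{L}_{\epsilon,\beta}(\hat{u}_J)-L_{\epsilon,\beta}(\hat{u}_J)$ is an empirical-process deviation evaluated at a data-dependent function. I would therefore control it uniformly over the finite-dimensional class $\U_J$, combined with a localization argument.

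\textbf{Step 1: reduce to a centered empirical process.} Write the loss as an average over sample points of a pointwise loss $\ell_u(X,t,\varepsilon)$. This loss collects four kinds of terms: the Sobolev data-fit integrand $|u-u^*|^2+|\nabla(u-u^*)|^2+|\partial_t(u-u^*)|^2+|\nabla\partial_t(u-u^*)|^2$; the weighted energy integrand $e^{-t/\epsilon}\bigl(\tfrac12|u_t|^2+\tfrac1{2\epsilon}|\nabla u|^2-\tfrac1\epsilon (f+\varepsilon)u\bigr)$; the correction integrand $e^{-t/\epsilon}u\,u^*_{tt}$; and the boundary term at $T$. Each is rescaled by $|\Omega|T$, and the boundary term is treated deterministically or by its own sampling.

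It is convenient to subtract the same quantity at $u^*$, so we study $g_u:=\ell_u-\ell_{u^*}$. The noise enters only linearly, through $-\tfrac{\beta}{\epsilon}e^{-t/\epsilon}\varepsilon\,(u-u^*)$. This part is a Gaussian process indexed by $u\in\U_J$, and its supremum is controlled by the dimension of $V_{J,J^2}$, which is of order $J^{d}\cdot J^{2}$ basis functions. Its square root is the origin of the $J^{(d+2)/2}$ factor.

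\textbf{Step 2: uniform boundedness and a Bernstein condition.} This is where $q>d/2+1$ and $p\ge2$ enter. By Sobolev embedding in $x$ (with $q-1>d/2$) and in $t$ (with $p\ge 2$), every $u\in\U_J$ has $u$, $\nabla u$, $u_t$ and $\nabla u_t$ bounded in $L^\infty$ by a constant depending on $R$, $d$, $q$, $T$ and $\Omega$. Hence $g_u$ is uniformly bounded, apart from the Gaussian factor, which is sub-Gaussian. Moreover, $\mathrm{Var}(g_u)\lesssim \E g_u = L_{\epsilon,\beta}(u)-L_{\epsilon,\beta}(u^*)$. I would obtain this by noting that $g_u$ is quadratic-plus-linear in $u-u^*$, so its second moment is bounded by the sup norm times $\|u-u^*\|^2_{H^1_tH^1_x}$. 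The latter is comparable to the excess loss by the lower bound in Proposition~\ref{prop:stability}.

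\textbf{Step 3: localized complexity and fixed point.} I would bound the local Rademacher complexity of $\{g_u:\E g_u\le r\}$ via Dudley's entropy integral over the linear space $V_{J,J^2}$. Its covering numbers at scale $\delta$ in sup norm are $(C/\delta)^{\dim}$, where the sup norm is compared to $H^1_tH^1_x$ through wavelet Bernstein inequalities. Rather than the crude $\dim\cdot\log N$, I would aim for a bound of the form $\sqrt{r\,J^{d+2}\log J\,\log N/N}$. The $\log J$ factor arises from the multilevel wavelet structure: one sums over $\log J$ resolution levels and handles each level by a chaining or maximal-inequality argument.

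\textbf{Step 4: conclude via Talagrand/Bousquet.} The critical radius $r^*$ solves $r\asymp\sqrt{r}\cdot(\ldots)$. Bousquet's inequality then gives, uniformly over $u\in\U_J$,
\[
|(\hat L-L)(u)-(\hat L-L)(u^*)| \le \tfrac12\bigl(L(u)-L(u^*)\bigr)+C r^*,
\]
and $(\hat L-L)(u^*)$ has mean zero. Plugging in $u=\hat u_J$ and using the excess-risk bound of Theorem~\ref{thm_3.3} to absorb the first term yields the claimed $J^{(d+2)/2}\log J\,\log N/N$ rate in expectation, after integrating the tail bound.

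\textbf{Main obstacle.} I expect Step 3 to be the hardest: obtaining exactly the stated $J^{(d+2)/2}\log J$ dependence rather than a cruder power of $J$. This requires careful norm comparisons between $L^\infty$ and $H^1_tH^1_x$ on $V_{J,J^2}$ under the parabolic scaling, together with handling the unbounded Gaussian noise, for which I would truncate at level $\sqrt{\log N}$.
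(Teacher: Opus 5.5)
Your route (a Bernstein condition from the lower bound of Proposition~\ref{prop:stability}, local entropy on the span $V_{J,J^2}$, Bousquet's inequality) differs from the paper's. However, the conclusion of Step~4 does not follow from it. Because you center at $u^*\notin\U_J$, what you actually get is
\[
\E[\Delta\mathcal{E}_{S,gen}]\le \tfrac12\,\E\bigl[L_{\epsilon,\beta}(\hat u_J)-L_{\epsilon,\beta}(u^*)\bigr]+Cr^*,
\]
and the first term cannot be absorbed, for two reasons:
\begin{itemize}
\item It is at least $L_{\epsilon,\beta}(u_J)-L_{\epsilon,\beta}(u^*)\ge M_1(\beta,\epsilon)\|u_J-u^*\|^2_{H^1_tH^1_x}$. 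This is an approximation error that does not decay in $N$ at fixed $J$.
\item Theorem~\ref{thm_3.3} is stated only at the tuned truncation level, and there it bounds this term by $\frac{e^{T/\epsilon}}{\epsilon}\bigl(\sqrt{\log(1+C_2N)}/N\bigr)^{\frac{4s-2}{4s+d}}$. At that $J$, this is of strictly larger order than $J^{\frac{d+2}{2}}\log J\,\log N/N$.
\end{itemize}
The reference point has to be $u_J:=\arg\min_{\U_J}L_{\epsilon,\beta}$. The quadratic part of $L_{\epsilon,\beta}$ dominates $\|\cdot\|^2_{H^1_tH^1_x}$ and $\U_J$ is convex, so first-order optimality gives $L_{\epsilon,\beta}(u)-L_{\epsilon,\beta}(u_J)\ge\|u-u_J\|^2_{H^1_tH^1_x}$. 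Together with your Step~2 sup-norm bounds, this is the Bernstein condition you need. For the signed quantity in the statement, this recentering even makes the complexity analysis unnecessary: $\hat L_{\epsilon,\beta}(\hat u_J)\le\hat L_{\epsilon,\beta}(u_J)$ and $L_{\epsilon,\beta}(u_J)\le L_{\epsilon,\beta}(\hat u_J)$ give $\Delta\mathcal{E}_{S,gen}\le(\hat L_{\epsilon,\beta}-L_{\epsilon,\beta})(u_J)$, whose expectation vanishes under the unbiasedness you use in Step~1.

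Second, consider a genuinely two-sided bound, which is what your Steps~3--4 and the paper's $\E\sup|\hat L_{\epsilon,\beta}-L_{\epsilon,\beta}|$ argument aim at. Here your mechanism cannot give the stated power of $J$. Solving $r\asymp\sqrt{r\,J^{d+2}\log J\,\log N/N}$ yields $r^*\asymp J^{d+2}\log J\,\log N/N$. In a fixed-point argument the square root of the entropy is squared away, so you get the full dimension of $V_{J,J^2}$, not $J^{\frac{d+2}{2}}$. This is structural, not a matter of sharper $L^\infty$ versus $H^1_tH^1_x$ comparisons.

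The paper gets $J^{\frac{d+2}{2}}$ and $1/N$ together by a different device:
\begin{itemize}
\item It symmetrizes the global deviation into Rademacher complexities of the data, energy and correction classes (Lemma~\ref{lemma_E.5}).
\item It bounds their $L^2$ covering numbers by $J^{d+2}[\log(1/\epsilon_1)+C\log J]$ through the coefficient parametrization, whose Lipschitz constants are polynomial in $J$ (Lemma~\ref{lemma_E.6}). This, not a sum over resolution levels, is where $\log J$ comes from.
\item It controls second moments by $\rho$, and by $L^4$ bounds that use $q\ge\frac d2+1$ and $p\ge2$.
\item Dudley's integral then gives terms of the form $\sqrt{\rho/(nm)}\,J^{\frac{d+1}{2}}(\cdots)$ and $\sqrt{\beta/(nm)}\,J^{\frac{d+2}{2}}(\cdots)$.
\end{itemize}
The factor $1/N$ comes from the explicit choices $\rho\approx 1/(nm)$ and $\beta\approx\log(nm)\log J/(nm)$. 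Your argument never fixes $\beta$ or a localization radius at scale $1/N$, so it has no way to reach that form.
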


\section{Numerical Experiments}
\label{numerical_experiments}

In this section, we present some interesting examples using our energy-regularized approach for readers to understand how our approach works. The comprehensive experiment report can be found in \ref{more_experiments}. We report using our energy-regularized approach to solve four parabolic linear PDEs and one nonlinear parabolic PDEs in various dimensions up to $d=50$. In appendix \ref{sec:lp-method-comparison}, we show our methods achieve dominant accuracy advantage in $L_2$ relative in comparison with various PINNs baseline. Furtheremore, we show our energy regularization effects beats purely sobolev regression using similar sample-adaptive regularization tuning in appendix \ref{sec:controlled-parabolic-regularization}. Since we ideally assume solutions label to be noiseless and compute derivatives from solution labels, we also provide noise ablation on how additional noise injection will degrade our prediction; our ablation results show our method is robust against mild noisy solution labels and computed derivatives as well. To see the extension of our method, we provide straightforward extension to nonlinear hyperbolic PDEs in appendix \ref{extension_to_hyperbolic}. Lastly, we also present parameter sensitivity analysis (see appendix \ref{sec:parameter-sensitivity-analysis}) for readers to understand how PDEs regularization parameter $\epsilon$, $\beta$, and sample complexity $N$ dynamically entangles.

\subsection{Heat Equation}

We first consider the heat equation in the domain $\Omega \times [-1,1]^{d}$ with a zero forcing term for $d=1,3$. Here we choose the initial condition $u_0(x) = \log\log (1+|x|^{-1})\in H^{1}(\Omega)$ which has a severe singularity around $|x|=0$. In this setting, strong solution does not exist and one only has weak solution in an appropriate Sobolev space. In practice, since $u_0$ cannot be numerically computed due to singularity, we perturb around $|x|=0$ by adding a small perturbation $10^{-6}$, leading to a modified initial condition $u_0(x) = \log\log (1+(|x|^2 + 10^{-6})^{-1/2})$. For a fair comparison, we make the total sample size roughly the same between a vanilla PINN and our method, because they have distinct sampling schemes. For the 1D heat equation, Figure \ref{figure_1} shows that our solution is very accurate in the entire phase of space and time, while the PINN solution fails. Moreover, Figure~\ref{figure_2} shows that PINN prioritizes learning snapshots around the singularity $|x|=0$ at initial time and does not keep track of the overall dynamics; that is, PINN does not capture physical dynamics when $t$ is away from zero. Our explanation is that the penalizing mechanism in the PINN loss design leads to the imbalance of initial condition and PDE residuals, causing the failure in capturing dynamics at larger $t>0$, since one needs to allocate different penalty weights for learning interior solutions, boundary solutions, or initial solutions. As a singularity is present at $t=0$, PINNs heavily enforce residuals to initial time points instead of overall dynamics, as shown in Figure~\ref{figure_2} (top row).

\begin{figure}[ht]
\centering\includegraphics[width=\textwidth]{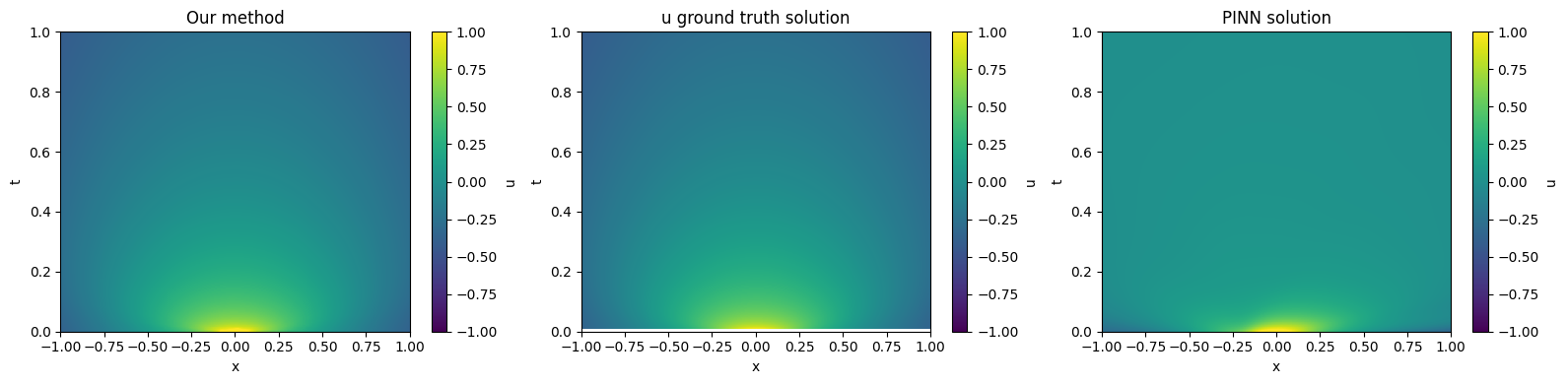}
    \caption{Comparison of solutions from our method and PINN for a 1-D heat equation.}
    \label{figure_1}
\end{figure}

\begin{figure}[ht]
\centering
\includegraphics[width=\textwidth]{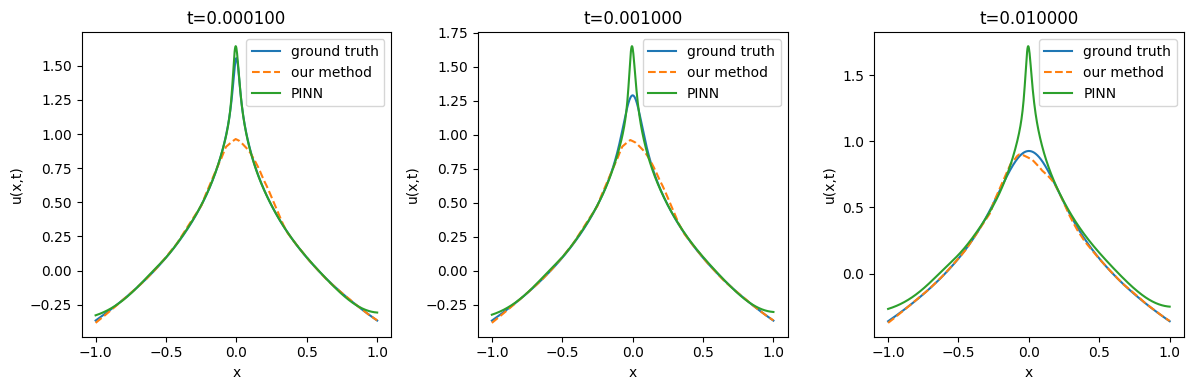}
\includegraphics[width=\textwidth]{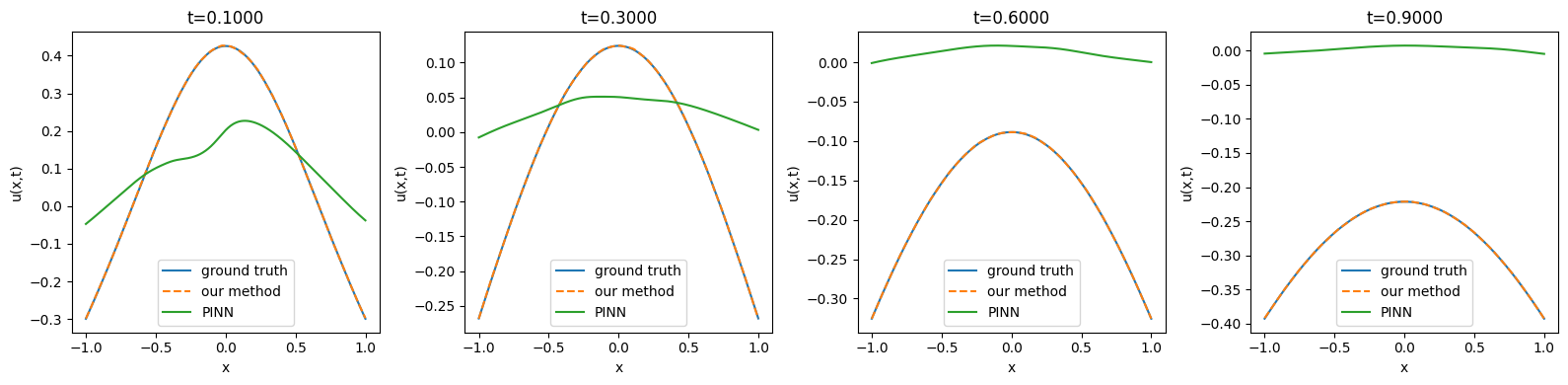}
    \caption{Top row: Near $t\approx0$ snapshot comparison of solutions from our method and a PINN for a 1-D solution. Bottom row: zoom-in snapshot comparison of solutions from our method and a PINN different times.}
    \label{figure_2}
    \vspace{-0.2in}
\end{figure}

We observe similar phenomena in higher-dimensional experimental settings. Figure \ref{fig_4} shows a 3D scenario in which PINN fails to solve the heat equation. In the literature, several approaches have been proposed to improve PINNs by rebalancing loss penalty weights to handle singularities formed in shock waves and to enhance estimation accuracy for weak solutions. Although these alternative methods improve upon the baseline PINN benchmark~\citep{doi:10.1137/22M1522504,kharazmi2021hp,McClenny2023SelfAdaptivePINN,wang2022is}, they remain underperformed relative to our approach by several orders of magnitudes in testing errors in Appendix~\ref{A.7}. Given that strong solutions are generally easier to approximate, we anticipate that those competing methods will correspondingly fall short of the performance of our model in more complex scenarios where only weak solutions exist, e.g., with singularity initial conditions.

\begin{figure}[ht]
\centering\includegraphics[width=0.32\textwidth]{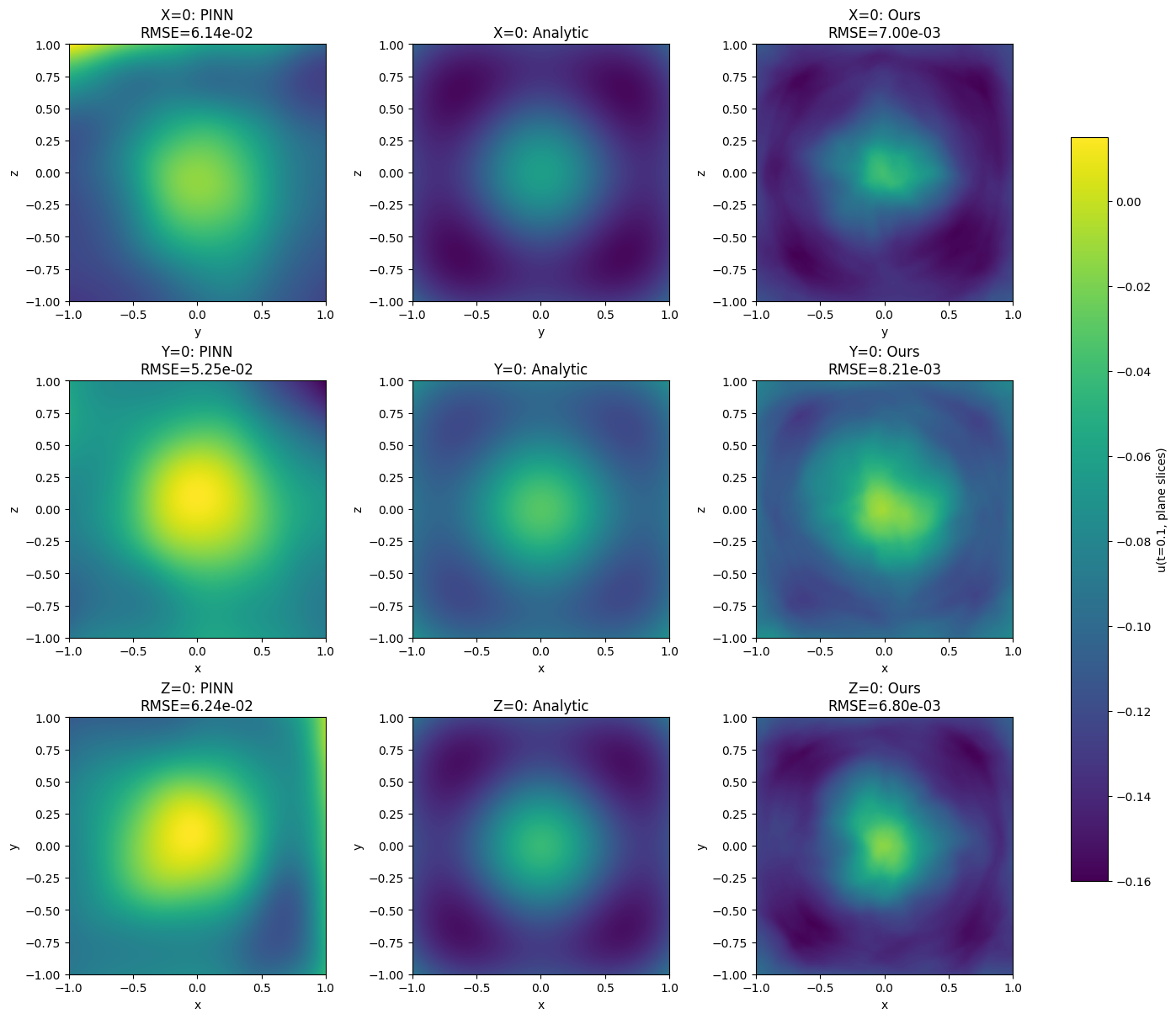}
\centering\includegraphics[width=0.32\textwidth]{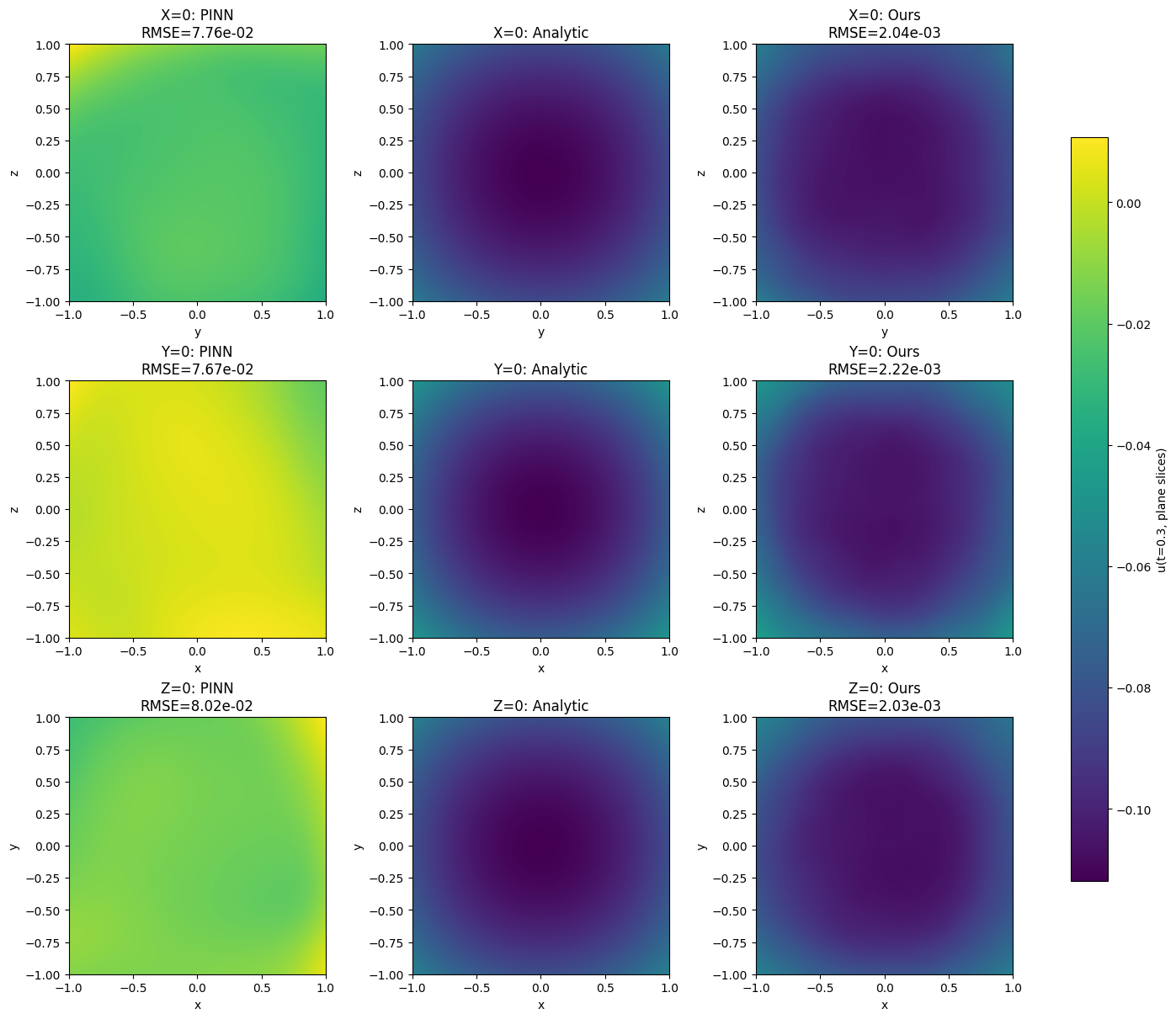}
\centering\includegraphics[width=0.32\textwidth]{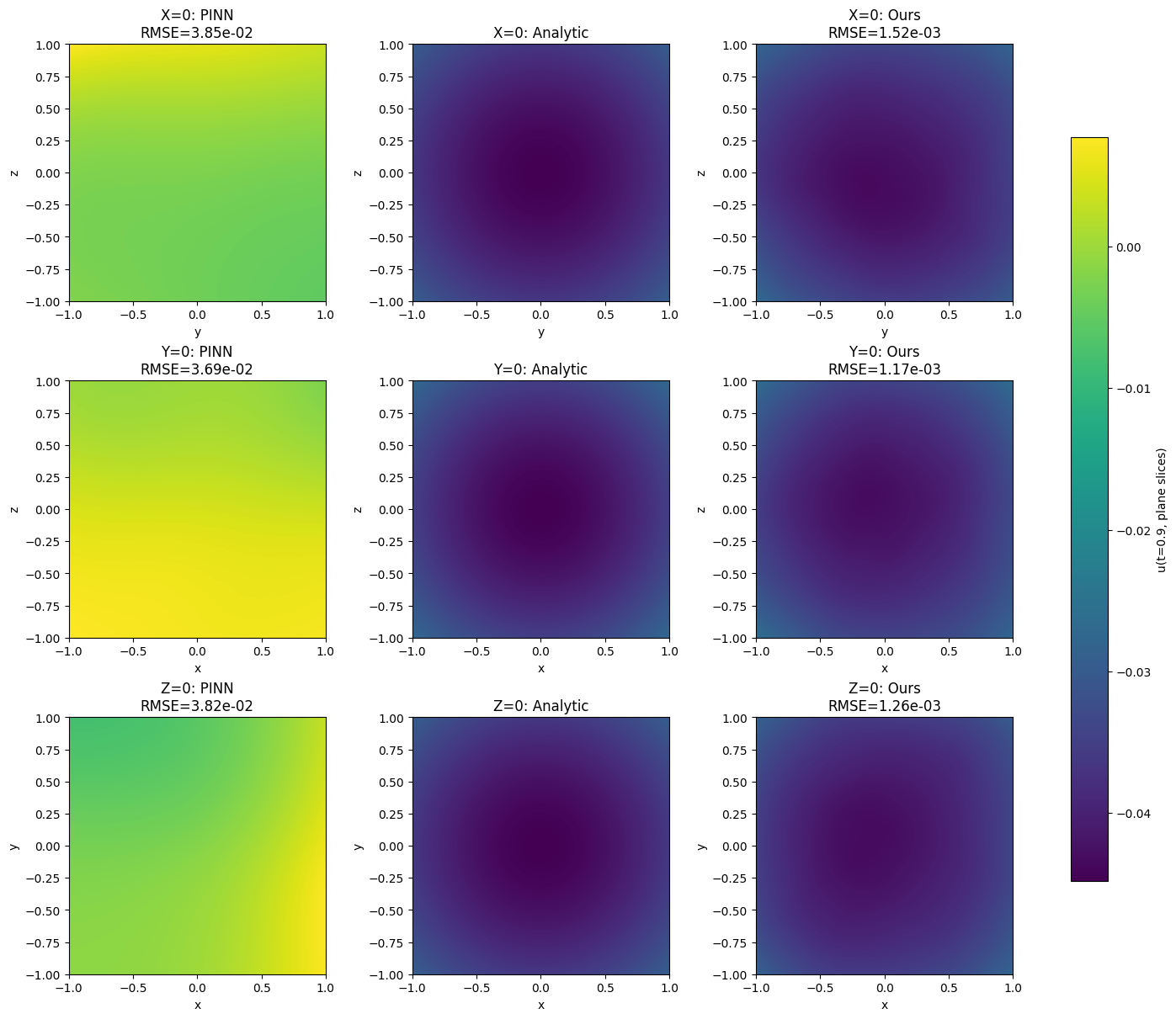}
 \caption{3D heat equation example. Left plot is 3D solution projection onto two coordinates at $t=0$. Middle plot is 3D solution projection onto two coordinates at $t=0.3$. Right plot is 3D solution projection onto two coordinates at $t=0.9$.}
 \label{fig_4}
 \end{figure}

\subsection{3D Fokker-Planck Equation}
\label{subsec:fokker_planck}

In this experiment, we show the applicability of our energy-guided method for more general linear parabolic PDEs. Specifically, we consider the Fokker-Planck equation $u_t + \nabla \cdot \big(b\,u\big) - \nabla\cdot \big(D\,\nabla u\big)=0$ with standard Gaussian initial condition $u(x,0) = (2\pi)^{-1/2} e^{-|x|^2}$. In this setting, we use a constant drift coefficient $b=-1$ and diffusion coefficient $D=0.25$. Throughout the time evolution, our solution roughly preserves mass conversation inherited by physics law in the Fokker-Planck equation. The learned solutions of our method compared with the ground truth solutions are shown in Figure~\ref{fig:fokker_planck}.

\begin{figure}[ht]
\centering\includegraphics[width=0.5\textwidth]{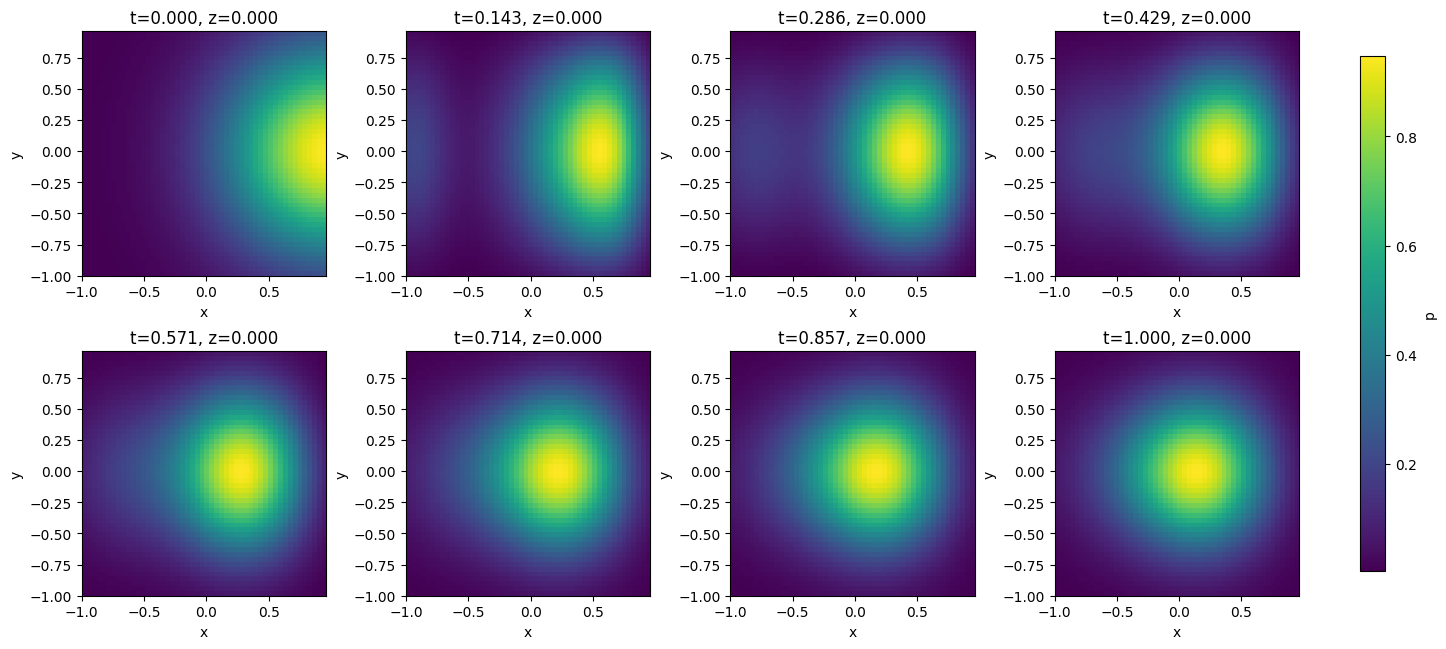}\includegraphics[width=0.5\textwidth]{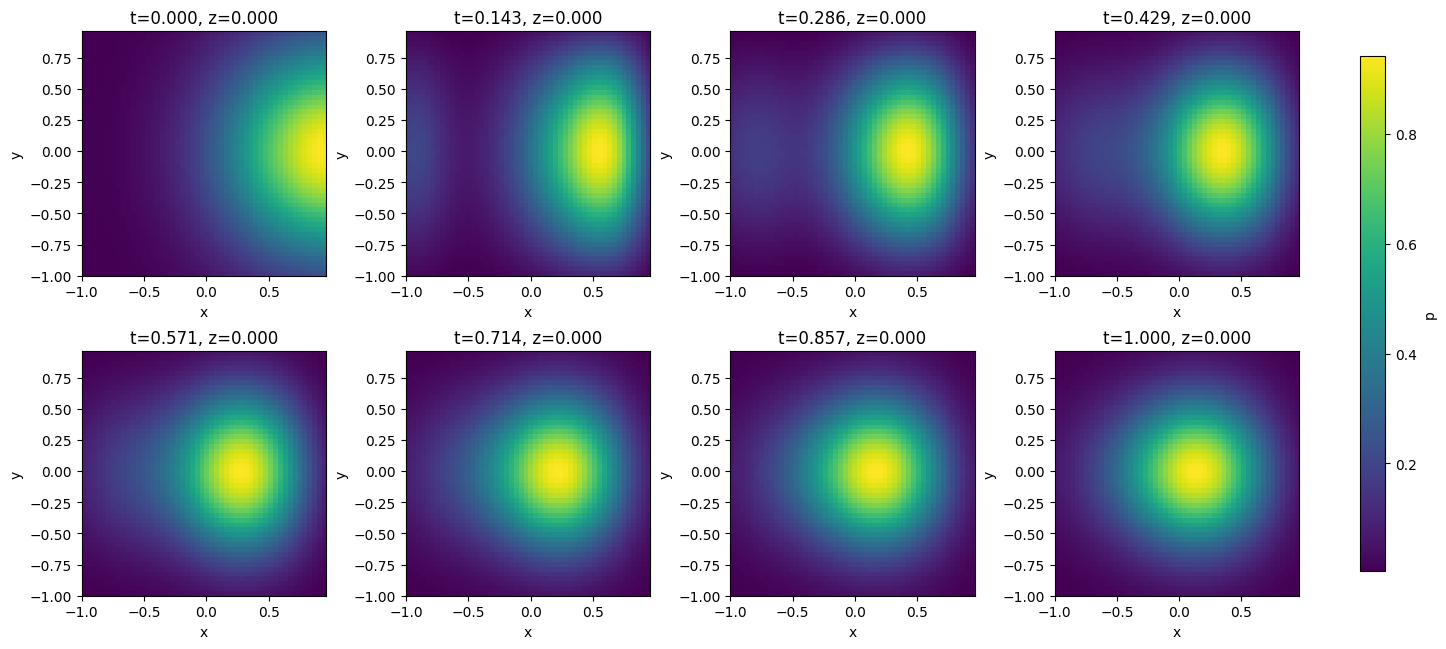}
    \caption{3D Fokker-Planck equation example. Left plot is ground-truth solution projection onto x,y coordinates over time evolution. Right plot is our solution projection onto x,y coordinates over time evolution.}
    \label{fig:fokker_planck}
\end{figure}

\section{Discussion}
 Our work studies variational energy regularization for data-driven
approximation of parabolic PDEs. Though our theoretical sheds some light in the guidance of energy regularization in physical learning, we admit our framework could be better implemented in suitable architecture including optimizer and neural network.Current popular architecture in time-dependent physical dynamic prediction only uses MLPs where temporal information is not well-represented in architecture realization. We believe networks with separate spatial and temporal representations can better capture parabolic regularity than standard MLPs. Current research on foundation models on  dynamics system \cite{mccabe2025walrus} is worth investigation to understand how these architectural choices interact with energy
regularization to affect approximation error, optimization, and generalization remains an important direction.

\bibliographystyle{plainnat}
\bibliography{ML4PDE}

\newpage


\appendix

\begin{center}
    \textbf{\Huge Appendix}
\end{center}

\section{Notations and Assumptions}

It this section, we collect the notations and the main assumptions that are used in this paper.

\subsection{Notations}
\label{sec:notations}
Throughout the paper, we let the domain be closed bounded set $\Omega \in \mathbb{R}^{d}$ with Lipschitz boundary $\partial \Omega$ over finite time horizon $[0,T]$, and spatial-temporal domain be $\Omega_{T}:=\Omega \times [0,T]$. we denote implicit regularization parameter $0<\epsilon<1$, total smoothness parameter $s$, temporal regularity parameter $p$ and spatial regularity parameter $q$ such that  $s\geq 1$ and $2p+q=2s+2$ and $0\leq p \leq s$, and the size of Sobolev norm of $u$ $R>1$, and denote the functional spaces $\mathcal{U}:=\mathcal{U}(R)$, $\mathcal{F}:=\mathcal{F}(R)$, and $H^{1}_{t}H^{1}_{x}:=H^{1}_{t}(0,T;H^{1}_{x}(\Omega))$. Unless we make additional specifications, we follow the following notation:
\begin{align*}
d &: \text{spatial dimension},\\
\Omega &: \text{closed bounded domain with Lipschitz boundary $\partial \Omega$} \\
\epsilon &: \text{implicit regularization parameter for the regularized PDEs},\\
\epsilon_1 &: \text{radius of net for construction of covering number}
\\
J &: \text{the truncation level of the local basis representation},\\
n &: \text{the number of temporal samples}, \qquad
m : \text{the number of spatial samples}\\
N &: \text{ the number of total sample size}\\
s &: \text{the total smoothness}, \qquad q: \text{the spatial smoothness}, \qquad p: \text{the temporal smoothness }\\
\beta &: \text{explicit regularization parameter for the energy regularizer } I_{\epsilon}(u),\\
\rho &: \text{the radius of localized balls},\\
L_{\epsilon,\beta}(u) &: \text{our loss function } \epsilon \text{ and } \beta,\\
\hat{L}_{\epsilon,\beta}(u) &: \text{the empirical counterpart of } L_{\epsilon,\beta}(u),\\
I_{\epsilon}(u) &: \text{weighted energy functional},\\
\hat{I}_{\epsilon}(u) &: \text{the empirical counterpart of } I_{\epsilon}(u),\\
H_t^p H_x^q&: H^p(0,T;H^q(\Omega)), \qquad H_t^1 H_x^1: H^1(0,T;H^1(\Omega)),\\
u^*&: \text{the ground truth solution satisfying the PDE problem},\\
u &: \text{estimator},\\
\hat{u} &: \text{empirical estimator of } u^*,\\
\mathcal U &: \text{the search space of models/estimators},\\
\mathcal U_J &: \text{the search space of models/estimators with local basis representation at truncation level } J,\\
\hat{u}_J &:=
\arg\min_{u \in \mathcal U_J} \widehat{L}_{\epsilon,\beta}(u),\\
u_J &:=
\arg\min_{u \in \mathcal U_J} L_{\epsilon,\beta}(u).
\end{align*}

\subsection{Main Assumptions}
\label{sec:main_assumptions}

To follow up discussions on assumptions of this paper, we make a few comments on parameters used in our paper.  $\epsilon$ is an implicit regularization parameter that serves to regularize PDEs model. The relation between $p$, $q$, and $s$ is well-known as linear parabolic equations always gain more one more derivative as time evolves. The bounded Sobolev norm $R$ is used to control the growth of our estimator $\hat{u}_{J}$ in (\ref{parametrized_function_class}).\\

\textbf{A1}\,(Regularity Condition) 
 Let $\mathcal{F}(R)$ be the functional space of forcing function $f: \Omega_{T} \to \mathbb{R}$ and $u^{*}_{0}:=u^{*}(x,0)$ be initial condition such that 
\begin{align}
&(1)\label{assumption_1}\,\, u^{*}_{0}\in H^{2s+1}(\Omega),\\
&(2)\label{assumption_2}\,\,||\frac{d^{p}f}{dt^{p}}||_{L^{2}_{t}(0,T;H^{2s-2p}_{x}(
\Omega))}^{2}\leq R,\,\, 0\leq p \leq s.
\end{align}
\\
\textbf{A2}\,(Compatibility Condition)\label{assumption_A2} Let $u^{*}_{0}$ be initial value of linear parabolic PDEs and $f$ be right hand side of the PDEs, then $u_{0}^{*}$ and $f$ need to satisfy the following conditions
\begin{align}&(3)\label{assumption_3}=
\begin{cases}
\,\,g_{0}:=u^{*}_{0}\in H^{1}_{0}(\Omega),\,\, g_{1}:=f(0,x) - Lg_{0} \in H^{1}_{0}(\Omega),\\
\,\,,....,\,g_{p-1}:= \displaystyle\frac{d^{p-1}}{dt^{p-1}}f(0,x) - Lg_{p-1} \in H^{1}_{0}(\Omega).
\end{cases}
\end{align}
\textbf{A3}\,(Search Space)\label{assumption_A3} Let $\mathcal{U}(R)$ be the set of all functions $u:\Omega_{T}\to \R$ such that 
\begin{align}
&(4)\,\, u(x,t) \in H^{p}_{t}(0,T;H_{x}^{2s+2-2p}(\Omega)) \cap L^{2}_{t}(0,T;H^{1}_{0,x}(\Omega)), \,\,u(x,0) =u^{*}_{0}(x)\,\, ,0\leq p \leq s \label{functional_space},\\
&(5)\,\, ||\frac{d^{p}u}{dt^{p}}||_{L^{2}_{t}(0,T;H^{2s+2-2p}_{x}(\Omega))}^{2}\leq R,\,\, 0\leq p \leq s, \\
&(6)\,\, u^{*} \in \U(R).
\end{align}
\\
\textbf{A4}\,(Invertibility Condition) \label{assumption_A4} Moreover, we require $u^{*}$ to satisfy
\begin{align}
&(7)\,\, u^{*} \in \U(R) \cap \{u \mid u_{t} +Lu\in \mathcal{F}(R)  \}.
\end{align}
\begin{remark}
(1)-(3) are essential assumptions needed in the the parabolic regularity theory \ref{thm_3.1} in \cite[Chapter 7, Section 7.1, Theorem 6]{evans} to establish higher order regularity result.; (5) is equivalent to $||f||_{H^{p}_{t}(0,T;H^{2s+2-2p}_{x}(\Omega))}^{2}\lesssim  R$, which guarantees that the approximation errors used in theorem \ref{thm_3.1} are controllable, and (6) guarantees learning framework is realizable; (7) is sufficient condition to guarantee for $u^{*}\in \U(R)$, there always exists data $f$ such that we can construct an estimator $\hat{u}$ in minimax lower bound.
\end{remark}

\subsection{Functional Space and Norms}
\label{functional_spaces_and_norms}
In this section, we summarize the key functional spaces used in our analysis, with particular attention to their relationship with Fourier series representations. A more detailed treatment of these topics can be found in \cite[Chapter 5]{evans} and \cite{MR0450957}. \\

Let $\Omega \subseteq \mathbb{R}^{n}$ be an open set with nonempty interior and integers $\alpha,\beta\geq 0$, we denote $W^{\alpha, \beta}$ as the set of the Sobolev function $f$ with $\alpha$ degree of smoothness and $\beta$ degree of integrability. We define the norms as 
\begin{equation}
||f||_{W^{\alpha,\beta}(\Omega)}:= \sum_{|k|\leq \alpha} ||\partial^{k}f||_{L^{\beta}(\Omega)},
\label{def_of_sobolev}
\end{equation}
where the derivative $\partial^{k}$ is defined as weak derivative in the distributional sense and detailed discussion of weak derivative is in \cite[Chapter 3, Section 1]{stein2011functional}.\\

Sometimes, an alternative but equivalent definition is used:
\begin{equation}
\|f\|_{W^{\alpha,\beta}(\Omega)} := \|f\|_{L^\beta(\Omega)} + \sum_{|k| = \alpha} \|\partial^k f\|_{L^\beta(\Omega)}.
\end{equation}
In this work, we adopt the full norm in \eqref{def_of_sobolev}. In particular, we denote
\begin{equation}
H^\alpha(\Omega) := W^{\alpha,2}(\Omega).
\end{equation}
We also define the negative-order Sobolev space $H^{-1}(\Omega)$ as the dual of $H^1_0(\Omega)$, which satisfies the following Gelfand triple:
\begin{equation}
H^1_0(\Omega) \subseteq L^2(\Omega) \subseteq H^{-1}(\Omega).
\end{equation}

Next, we define functional spaces involving time. Here, we consider function $f$ as a mapping from $t\in[0,T]$ to Banach Space $\X$.

\subsubsection{\textbf{Spaces involving time}}\label{time_dependent_space}
Let $\X$ denote a real Banach space, with the norm $||\cdot||$.
\begin{defn} The space 
\begin{equation}
L^{\beta}(0,T;\X)
\end{equation}
consists of all measurable functions $f: [0,T] \rightarrow \X$ with \\
\begin{equation}
||f||_{L^{\beta}(0,T;\X)} := \begin{cases} \bigg(\displaystyle\int_{0}^{T} ||f(t)||^{\beta}\bigg)^{\frac{1}{\beta}}<\infty, \quad 1\leq \beta<\infty,\,\quad\quad \\\\
\underset{0\leq t\leq T}{ess\,\sup} ||f(t)||<\infty, \quad\quad \beta=\infty
\end{cases} 
\end{equation}
\end{defn}

Once we have defined $L^{\beta}(0,T;\X)$, we can define the time-dependent Sobolev space
\begin{defn} The Sobolev space
\begin{equation}
W^{\alpha,\beta}(0,T;\X)
\end{equation}
consists of all functions $f\in L^{\beta}(0,T;\X)$ such that weak derivative $\tfrac{\partial^{k} u}{\partial t^{k}}$ exists in the distributional sense and belongs to $L^{\beta}(0,T;\X)$. Furthermore, 
\begin{equation}
||u||_{W^{\alpha,\beta}(0,T;\X)}:=\begin{cases}
\bigg(\displaystyle\int_{0}^{T} \sum_{k=0}^{\alpha}||\tfrac{\partial^{k}u(t)}{\partial t^{k}}(t,\cdot)||^{\beta}dt\bigg)^{1/\beta},\quad 1\leq \beta<\infty\\\\\\
\underset{0\leq t\leq T}{ess\,\sup} \sum_{k=0}^{\alpha}||\tfrac{\partial^{k}u(t)}{\partial t^{k}}(t,\cdot)||^{\beta}dt, \quad \beta =\infty 
\end{cases}.
\label{time_dependent_sobolev_norm}
\end{equation}
We write $H^{\alpha}(0,T;\X) = W^{\alpha,2}(0,T;\X)$. In particular, let $\X = H^{\alpha^{\prime}}(\Omega)$,\,$\alpha^{\prime}>0$, we abbreviate 
\begin{equation}
H^{\alpha}_{t}H^{\alpha^{\prime}}_{x}=H^{\alpha}(0,T;H^{\alpha^{\prime}}(\Omega)).
\end{equation}
\end{defn}

\subsubsection{\textbf{Sobolev Extension and Fourier Characterization}}
A very useful tool to handle function space defined on compact domains is using extension operators that preserve the norms. The following theorem ensures the existence of such an extension operator that is the same among all Sobolev spaces. This allows us to characterize Sobolev functions
on compact domains via the Fourier coeﬃcients of their extensions, a crucial step to derive approximation errors in our proof.
\begin{theorem}[Lipschitz Domain Extension Theorem] 
\label{Sobolev_extension_theorem}
{\cite[Theorem 5.4, when $\alpha$=1]{evans}\cite[Theorem 1.105]{triebel2006theory}} Let $\Omega\subseteq \R^{d}$ be a bounded Lipschitz domain and the smoothness parameter $\alpha>0$. Then, there exists a linear extension operator 
\begin{equation}
Ext: H^{\alpha}(\Omega) \rightarrow H^{\alpha}(\R^{d})
\end{equation}
such that for any $f\in H^{\alpha}(\Omega)$,
\begin{align}
(i)& Ext\,f \,|_{\Omega} =f ,\,\, \text{a.s} \,\, \text{in} \,\, \Omega, \\
(ii)& ||Ext \,f||_{W^{\alpha,2}(\R^{d})}\leq C(\Omega)||f||_{W^{\alpha,2}(\Omega)},
\end{align}
where the constant $C$ depending only on $\Omega$.
\end{theorem}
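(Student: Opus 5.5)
The plan is to use Stein's universal extension operator, a reflection-type construction whose single formula works at all smoothness orders, as a black box. We then verify properties (i) and (ii) in three steps.

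\emph{Step 1: reduce to a special Lipschitz domain.} Since $\partial\Omega$ is compact and Lipschitz, cover it by finitely many open sets $U_1,\dots,U_K$. After a rigid motion, $\Omega\cap U_k$ coincides with $\{(x',x_d): x_d>\varphi_k(x')\}\cap U_k$ for a Lipschitz $\varphi_k$. Add an interior open set $U_0\Subset\Omega$ and take a smooth partition of unity $\{\psi_k\}$ subordinate to this cover. Write $f=\sum_k \psi_k f$. Multiplication by a smooth compactly supported function is bounded on $H^\alpha(\Omega)$ with constant depending only on $\Omega$ and the cover. It therefore suffices to build, for each special Lipschitz domain $D_k=\{x_d>\varphi_k(x')\}$, a linear extension $E_k$ bounded $H^\alpha(D_k)\to H^\alpha(\mathbb{R}^d)$. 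Then set
\begin{equation*}
Ext\,f=\psi_0 f+\sum_{k\ge1}\tilde\psi_k\,E_k(\psi_k f),
\end{equation*}
where $\tilde\psi_k\equiv1$ on $\operatorname{supp}\psi_k$ and $\psi_0 f$ is extended by zero. Property (i) follows because each $E_k$ restricts to the identity on $D_k$ and $\sum_k\psi_k=1$ on $\Omega$. Linearity is clear from the formula.

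\emph{Step 2: the extension on $D=\{x_d>\varphi(x')\}$.} Let $\delta(x)$ be a regularized distance to $\partial D$, smooth off $\bar D$, comparable to $\mathrm{dist}(x,\partial D)$, with $|\partial^k\delta|\lesssim\delta^{1-|k|}$. Take a continuous $\eta$ on $[1,\infty)$ with
\begin{equation*}
\int_1^\infty\eta(\lambda)\,d\lambda=1,\qquad \int_1^\infty\lambda^j\eta(\lambda)\,d\lambda=0\quad (j\ge1),
\end{equation*}
and rapid decay. For $x\notin\bar D$ define
\begin{equation*}
E f(x)=\int_1^\infty f\bigl(x',\,x_d+\lambda c\,\delta(x)\bigr)\eta(\lambda)\,d\lambda,
\end{equation*}
where $c$ depends on the Lipschitz constant of $\varphi$, so that the evaluation points lie in $D$. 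For smooth $f$, the vanishing moments make every derivative of $Ef$ match across $\partial D$ to all orders. Differentiating under the integral, Minkowski's inequality and Hardy's inequality then give $\|\partial^k Ef\|_{L^2(\mathbb{R}^d\setminus\bar D)}\lesssim\sum_{|j|\le|k|}\|\partial^j f\|_{L^2(D)}$ for each integer $|k|\le\alpha$. Density of $C^\infty(\bar D)$ in $H^\alpha(D)$ extends this bound to all $f$, which gives (ii) for integer $\alpha$.

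\emph{Step 3: non-integer $\alpha$, and the main obstacle.} For fractional $\alpha>0$, note that $E$ is one linear operator bounded on $H^{\lfloor\alpha\rfloor}$ and on $H^{\lceil\alpha\rceil}$. Complex interpolation, which preserves restriction spaces on Lipschitz domains, then gives boundedness on $H^\alpha$. This is exactly what Triebel's Theorem 1.105 provides, so we cite it. The main obstacle is Step 2: the derivative bounds near $\partial D$ needed for Hardy's inequality. We would handle these through the estimates on $\partial^k\delta$ together with the moment conditions on $\eta$, following Stein. The resulting constant depends only on $\Omega$ (and on $\alpha$, which is fixed in our applications).
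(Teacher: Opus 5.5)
Your proposal is correct as a sketch, but it takes a different route from the paper. The paper gives no argument: it cites Evans's extension theorem (higher-order reflection across a flattened boundary, stated for $C^1$ boundaries and only for $\alpha=1$) and Triebel's Theorem~1.105 (a universal extension operator on Lipschitz domains for all smoothness orders).

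You instead rebuild Stein's operator: a partition of unity, reduction to special Lipschitz graph domains, the regularized distance with $|\partial^k\delta|\lesssim\delta^{1-|k|}$, a kernel with $\int_1^\infty\eta=1$ and all higher moments vanishing, and Hardy's inequality in the vertical variable.

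For integer $\alpha$ this gains something real. Evans-type reflection does not extend past order one on Lipschitz boundaries, because a bi-Lipschitz flattening preserves $H^1$ but not $H^k$ for $k\ge2$; its reflection coefficients also change with the order. Stein's operator is a single linear map bounded on every $H^k$ at once. The paper tacitly relies on that universality when it extends a function once and then measures the extension at several Sobolev levels, as in Lemma~\ref{space_time_extension} and the tail and approximation estimates around Lemma~\ref{lemma_E.4}. Since the paper defines $W^{\alpha,2}$ only for integer $\alpha$, your Step~2 already covers every use of the theorem.

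For fractional $\alpha$ you gain nothing over the paper. Interpolating your operator gives boundedness only on $[H^k(D),H^{k+1}(D)]_\theta$, which the retraction argument identifies with the restriction space $H^{k+\theta}(\mathbb{R}^d)|_D$. Matching that with an intrinsically normed $H^\alpha(D)$ is exactly what you import from Triebel, the same citation the paper uses.

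Your remark that the constant also depends on $\alpha$ and on the Lipschitz character of $\Omega$ correctly sharpens the statement's ``depending only on $\Omega$''.
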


If $f(x)$ admits representation in finitely many orthornormal basis $\phi_{\xi}(x)$,
\begin{equation}
f:= \sum_{\xi=0}^{\infty} a_{\xi} \phi_{\xi}, \quad \{\phi_{\xi}\}\,\, \text{orthonormal basis in } L^{2}(\R^{d}).
\end{equation}
By orthogonality of $L_2$ basis $\phi_{\xi}$,  $f$ in Sobolev norm can be written 
\begin{equation}
||f||_{H^{\alpha}(\R^{d})}^{2}\lesssim \sum_{\xi=0}^{\infty} (1+|\xi|^{\alpha})^{2} a_{\xi}^{2}.
\label{sobolev_norm_in_fourier_transform}
\end{equation}

    Moreover, we need Theorem \ref{Sobolev_extension_theorem} to establish space-time extension lemma so that we can extend $u\in H^{1}_{t}([0,T];H^{1}_{x}(\Omega))$ to $ ext\,u\in H^{1}_{t}(\R;H^{1}_{x}(\R^{d}))$. Similar ideas that leverage approximation analysis on larger or full space-time domain and then restrict analytical results to space-time cylinder $[0,T]\times \Omega$ are common in PDEs and numerical analysis literature. see \cite{dong2025lpestimatesconormalderivativeproblem,Dong2021SobolevEF} \cite[Section 1.3]{Amann2019}

\begin{lemma}[Space--time extension]
\label{space_time_extension}
Let $\Omega \subset \mathbb{R}^d$ be a bounded Lipschitz domain, $T>0$, and smoothness parameters $\alpha_{1},\alpha_{2}\geq0$.
There exists a bounded linear operator
\begin{equation}
  Ext : H_t^{\alpha_{2}}\bigl(0,T; H_x^{\alpha_{1}}(\Omega)\bigr)
  \longrightarrow
H_t^{\alpha_{2}}\bigl(\mathbb{R}; H_x^{\alpha_{1}}(\mathbb{R}^d)\bigr)
\end{equation}
such that for every $u \in H_t^1(0,T;H_x^1(\Omega))$:\\\\
$(i)\,\, Ext\, u(t,x) = u(t,x)$ for a.e.\ $(t,x) \in (0,T)\times\Omega$,
\\\\
(ii) There exists a constant $C = C(\Omega,T) > 0$ with
\begin{equation}
    ||Ext \,\,u\|_{H_t^{\alpha_{2}}(\mathbb{R};H_x^{\alpha_{1}}(\mathbb{R}^d))}
    \le C\,\|u\|_{H_t^{\alpha_{2}}(0,T;H_x^{\alpha_{1}}(\Omega))}.
\end{equation}
\end{lemma}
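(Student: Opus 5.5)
The extension is built as a composition of two one-variable extensions, one in space and one in time. Each is applied fibrewise, and a linear operator bounded on a Hilbert space $\X$ lifts to the vector-valued Sobolev spaces $H^{\alpha_2}_t(\,\cdot\,;\X)$. Concretely, set $Ext := E_t\circ E_x$, where $E_x$ extends in $x$ for each fixed $t$ and $E_t$ extends in $t$ with values in $H^{\alpha_1}(\mathbb{R}^d)$.

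\emph{Spatial step.} Theorem~\ref{Sobolev_extension_theorem} gives a linear operator $E := Ext: H^{\alpha_1}(\Omega)\to H^{\alpha_1}(\mathbb{R}^d)$ with norm at most $C(\Omega)$. For $u\in H^{\alpha_2}_t(0,T;H^{\alpha_1}_x(\Omega))$ define $(E_x u)(t):=E(u(t))$. Since $E$ is bounded and linear, it commutes with Bochner integrals. Hence for every test function $\varphi\in C_c^\infty(0,T)$ and every $k\le\alpha_2$,
\begin{equation*}
\int_0^T E(u(t))\,\varphi^{(k)}(t)\,dt=E\Big(\int_0^T u(t)\,\varphi^{(k)}(t)\,dt\Big)=(-1)^k\int_0^T E\big(\partial_t^k u(t)\big)\,\varphi(t)\,dt .
\end{equation*}
So $\partial_t^k E_x u = E_x\partial_t^k u$ weakly. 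By the norm \eqref{time_dependent_sobolev_norm} this gives $\|E_xu\|_{H^{\alpha_2}_t(0,T;H^{\alpha_1}(\mathbb{R}^d))}\le C(\Omega)\|u\|_{H^{\alpha_2}_t(0,T;H^{\alpha_1}(\Omega))}$. Strong measurability is preserved because $E$ is continuous. One point needs care: the stated theorem only guarantees a universal extension for one fixed $\alpha$. I would use Stein's universal extension operator for Lipschitz domains (or Rychkov's), which works for all orders simultaneously. That also makes $Ext$ independent of $\alpha_1$, as the statement implicitly requires when it is applied to $u\in H^1_tH^1_x$.

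\emph{Temporal step.} Next extend $v=E_xu\in H^{\alpha_2}(0,T;Y)$, with $Y=H^{\alpha_1}(\mathbb{R}^d)$, to all of $\mathbb{R}$. I would use the classical higher-order reflection (Hestenes/Lions–Magenes). Near $t=0$ set $v(-t):=\sum_{j=1}^{K}c_j\, v(t/j)$ for small $t>0$, where the coefficients solve $\sum_j c_j(-1/j)^{k}=1$ for $k=0,\dots,K-1$ with $K>\alpha_2$. This matches all traces $\partial_t^k v(0^\pm)$ for $k<\alpha_2$, so no jumps appear. Do the same at $t=T$, and multiply by a smooth cutoff $\chi$ supported in $(-T,2T)$ that equals $1$ on $[0,T]$. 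Each piece is a finite linear combination of dilations and translates of $v$. Dilations by factors $1/j\in(0,1]$ change $H^{\alpha_2}(\,\cdot\,;Y)$ norms only by constants depending on $j$ and $\alpha_2$, and multiplication by $\chi$ is bounded by Leibniz's rule. The result is $\|E_tv\|_{H^{\alpha_2}(\mathbb{R};Y)}\le C(T)\|v\|_{H^{\alpha_2}(0,T;Y)}$. This is a scalar argument carried verbatim into the Hilbert-valued setting, since only linear combinations and the chain rule are used.

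\emph{Conclusion and main obstacle.} Property (i) holds because $E_x$ restricts to the identity on $\Omega$ for a.e.\ $t$, and $E_t$ is the identity on $(0,T)$. Property (ii) follows by multiplying the two constants, giving $C=C(\Omega,T)$. For non-integer $\alpha_2$ I would interpolate between neighbouring integers; this works because $E_t$ is a single operator for all orders up to $K$. The main obstacle is the temporal gluing: showing that the reflected function has no jump in its derivatives up to order $\lceil\alpha_2\rceil-1$ at $t=0,T$. This is exactly what the Vandermonde choice of the $c_j$ secures. Traces exist because $H^{k}(0,T;Y)\hookrightarrow C^{k-1}([0,T];Y)$, so the matching conditions are meaningful. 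The remaining issue is only the $\alpha$-independence of the spatial extension, which the Stein operator resolves.
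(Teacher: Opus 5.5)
Your argument is correct. Its skeleton matches the paper's: both set $Ext=\hat E_t\circ\hat E_x$, applying a spatial extension fibrewise in $t$ and then extending in time. Your spatial step is the paper's, except that you actually justify $\partial_t^k(E_xu)=E_x\partial_t^k u$ by commuting $E$ with Bochner integrals, where the paper only appeals to ``Fubini''.

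The real difference is the temporal step. The paper takes a scalar extension $E_t:H^{\alpha_2}(0,T)\to H^{\alpha_2}(\mathbb{R})$ as a black box. It lifts $E_t$ through an orthonormal basis $\{e_k\}$ of the separable Hilbert space $X=H^{\alpha_1}(\mathbb{R}^d)$, setting $(\hat E_tu)(t)=\sum_k (E_tu_k)(t)\,e_k$ with $u_k=\langle u(t),e_k\rangle_X$. Parseval then gives $\|\hat E_tu\|^2=\sum_k\|E_tu_k\|^2\le C_T^2\sum_k\|u_k\|^2$.

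That route has real advantages:
\begin{itemize}
\item it is short and needs no trace bookkeeping;
\item it covers fractional $\alpha_2$ directly, since Slobodeckij seminorms also split coordinatewise;
\item but it relies on the Hilbert structure of $X$, and it inherits whatever order-dependence the chosen scalar extension has.
\end{itemize}

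Your explicit Hestenes reflection has a different profile:
\begin{itemize}
\item it works verbatim for any Banach-valued $Y$;
\item it gives a single operator valid for all orders up to $K$;
\item the costs are the Vandermonde trace matching at $t=0,T$ and an interpolation step for non-integer $\alpha_2$.
\end{itemize}
The order-independence matters: combined with the Stein/Rychkov universal spatial operator, it is what the paper actually needs when it later applies the same $Ext$ to one function and estimates it in both $H^1_tH^1_x$ and higher-order norms.
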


\begin{proof}[Proof of Lemma~\ref{space_time_extension}]
We first extend spatial domain $\Omega$ to $\R^{d}$.
Since $\Omega$ is a bounded Lipschitz domain, the
extension Theorem \ref{Sobolev_extension_theorem} provides a bounded linear operator
\begin{equation}
  E_x : H_x^{\alpha_{1}}(\Omega) \longrightarrow H_x^{\alpha_{1}}(\mathbb{R}^d)
\end{equation}
such that for any $t\in[0,T]$, (i) and (ii) of Theorem \ref{Sobolev_extension_theorem} hold for all $u(\cdot,t ) \in H^{\alpha_{1}}(\Omega)$.

For $u \in H_t^{\alpha_{2}}(0,T;H_x^{\alpha_{1}}(\Omega))$,  we can define
\begin{equation}
  (\hat{E}_x u)(\cdot,t) := E_x\bigl(u(\cdot,t)\bigr),
  \quad \text{for a.e.\ } t \in (0,T).
\end{equation}
Since $E_x$ is bounded operator along with Fubini's theorem, we obtain for every temporal derivative index $\alpha_{2}\geq 0$
\begin{equation}
  \int_0^T \|\partial_t^{\alpha_{2}} (\hat{E}_x u)(t)\|_{H^{\alpha_{1}}(\mathbb{R}^d)}^2 \, dt
  \le C_\Omega^2 \int_0^T \|\partial_t^{\alpha_{2}} u(t)\|_{H^{\alpha_{1}}(\Omega)}^2 \, dt.
\end{equation}
This implies $\hat{E}_x u \in H^{\alpha_{2}}(0,T;H^{\alpha_{1}}(\mathbb{R}^d))$. Thus, the operator $ \hat{E}_x : H_t^{\alpha_{2}}(0,T;H_x^{\alpha_{1}}(\Omega))\to H_t^{\alpha_{2}}(0,T;H_x^{\alpha_{1}}(\mathbb{R}^d))$ is bounded with
  
\begin{equation}
  \|\hat{E}_x u\|_{H_t^{\alpha_{2}}(0,T;H_x^{\alpha_{1}}(\mathbb{R}^d))}
  \le C_\Omega\,\|u\|_{H_t^{\alpha_{2}}(0,T;H_x^{\alpha_{1}}(\Omega))}.
\end{equation}
Moreover, $(\widetilde E_x u)(t,x) = u(t,x)$, for a.e.\ $(t,x)\in [0,T]\times\Omega$.

Next we apply Sobolev extension theorem again to the interval $(0,T)\subset\mathbb{R}$, with a bounded linear $E_t:\, H^{\alpha_{2}}(0,T) \to H^{\alpha_{2}}(\mathbb{R})$ satisfying (i) and (ii) of Theorem ~\ref{Sobolev_extension_theorem} as well.

Let $X$ be a separable Hilbert space and let
$\{e_k\}_{k\ge 1}$ be an orthonormal basis of $X$.
For $u \in H^{\alpha_{2}}(0,T;X)$, define scalar functions
$u_k(t) := \langle u(t),e_k\rangle_X \in H^{\alpha_{2}}([0,T])$.

Define $\hat{E}_t u : \mathbb{R}\to X$ by
\begin{equation}
  (\hat{E}_t u)(t) := \sum_{k=1}^\infty (E_t u_k)(t) \, e_k,
\end{equation}
and similarly, for every temporal derivative index $\alpha_{2}\geq 0$, we obtain
\begin{equation}
  \|\hat{E}_t \partial_{t}^{\alpha_{2}}u\|_{L^2(\mathbb{R};X)}^2
  = \sum_{k=1}^\infty \|E_t \partial_{t}^{\alpha_{2}}u_k\|_{L^2(\mathbb{R})}^2
  \le C_T^2 \sum_{k=1}^\infty \|\partial_{t}^{\alpha_{2}}u_k\|_{L^2(0,T)}^2
  = C_T^2 \|\partial_{t}^{\alpha_{2}}u\|_{L^2(0,T;X)}^2.
\end{equation}
Moreover, for a.e.\ $t\in[0,T]$, we have
\begin{equation}
  (\hat{E}_t u)(t)
  = \sum_{k=1}^\infty (E_t u_k)(t)\,e_k
  = \sum_{k=1}^\infty u_k(t)\,e_k
  = u(t),
\end{equation}

When $X = H_x^1(\mathbb{R}^d)$, $\hat{E}_t : H_t^{\alpha_{2}}\bigl(0,T;H_x^{\alpha_{1}}(\mathbb{R}^d)\bigr)
  \to H_t^{\alpha_{2}}\bigl(\mathbb{R};H_x^{\alpha_{1}}(\mathbb{R}^d)\bigr)$
is bounded and extends functions in time. Finally, we define
\begin{equation}
  Ext := \hat{E}_t \circ \hat{E}_x.
\end{equation}
For $u \in H_t^1(0,T;H_x^1(\Omega))$, we have
\begin{equation}
  Ext\,\, u = \hat{E}_t\bigl(\hat{E}_x u\bigr)
  \in H_t^{\alpha_{2}}\bigl(\mathbb{R};H_x^{\alpha_{1}}(\mathbb{R}^d)\bigr),
\end{equation}
and for a.e.\ $(t,x)\in[0,T]\times\Omega$,
\begin{equation}
  Ext\, u(t,x)
  = \hat{E}_t(\hat{E}_x u)(t,x)
  = (\hat{E}_x u)(t,x)
  = u(t,x),
\end{equation}
so (i) holds. Moreover,
\begin{equation}
  \| Ext \,u\|_{H_t^{\alpha_{2}}(\mathbb{R};H_x^{\alpha_{1}}(\mathbb{R}^d))}
  \le C_T \|\hat{E}_x u\|_{H_t^{\alpha_{2}}(0,T;H_x^{\alpha_{1}}(\mathbb{R}^d))}
  \le C_T C_\Omega \|u\|_{H_t^{\alpha_{2}}(0,T;H_x^{\alpha_{1}}(\Omega))},
\end{equation}
which yields (ii) with $C(\Omega,T) := C_T\cdot C_\Omega$.
\end{proof}

\subsubsection{\textbf{Inner Product in $H^1_t H^1_x$}}
The Hilbert space structure of $H^1_t H^1_x := H^1(0,T;H^1(\Omega))$ plays a crucial role in our analysis. We use the norm
\begin{equation}
\|u\|_{H^1_t H^1_x} := \left( \int_0^T \|u(\cdot,t)\|_{H^1(\Omega)}^2 + \left\| \frac{\partial u}{\partial t}(\cdot,t) \right\|_{H^1(\Omega)}^2 \,dt \right)^{1/2}.
\end{equation}
The associated inner product is
\begin{equation}
\langle u, v \rangle_{H^1_t H^1_x} := \int_0^T \int_\Omega uv + \nabla u \cdot \nabla v + \partial_t u \, \partial_t v + \nabla(\partial_t u) \cdot \nabla(\partial_t v) \,dx\,dt.
\label{inner_product}
\end{equation}
We also define the $L^2(\Omega_T)$ inner product as
\begin{equation}
\langle u, v \rangle_{L^2(\Omega_T)} := \int_0^T \int_\Omega uv\,dx\,dt,
\label{L_2_inner_product}
\end{equation}
so that the $H^1_t H^1_x$ inner product can be rewritten as
\begin{equation}
\langle u, v \rangle_{H^1_t H^1_x} = \langle u, v \rangle_{L^2(\Omega_T)} + \langle \nabla u, \nabla v \rangle_{L^2(\Omega_T)} + \langle \partial_t u, \partial_t v \rangle_{L^2(\Omega_T)} + \langle \nabla(\partial_t u), \nabla(\partial_t v) \rangle_{L^2(\Omega_T)}.
\end{equation}

\section{Background for Linear Parabolic PDEs}
\label{sec:background_parabolicPDE}

In this section, we provide overview of linear parabolic PDEs theory.

\subsection{Dirichlet Boundary Problem}
Given $f\in L^{2}(\Omega_{T})$ and $g\in L^{2}(\Omega)$, the Dirichlet boundary value problem for linear parabolic PDEs is defined as
\begin{equation}
\begin{cases}
\,\, u_{t} +Lu =f, \quad in\,\, \Omega\times (0,T],\\
\quad\quad\quad u=0,\quad on\,\, \partial\Omega \times [0,T],\\
\quad\quad\quad u=g, \quad on \,\, \Omega\times \{t=0\},
\end{cases}
\label{linear_parabolic_pdes}
\end{equation}\\
where $L$ is an elliptic operator 
\begin{equation}
L:= -\sum_{i,j=1}^{n} a^{ij}(x,t) u_{x_{i}x_{j}} + \sum_{i=1}^{n} b^{i}(x,t)u_{x_{i}} + c(x,t)u,\,\,a^{ij}, b^{i}, c\in L^{\infty}(\Omega_{T}).
\label{elliptic_operator}
\end{equation}
Furthermore, the stronger \textit{uniform ellipticity} condition requires that there exists some constant $\theta>0$ such that
\begin{equation}
\sum_{i,j=1}^{n}a^{ij}(x,t) \xi_{i}\xi_{j} \geq \theta |\xi|^{2} \text{ for all } (x,t) \in \Omega_{T}, \xi\in \R^{d}.
\end{equation}
Because solutions to \eqref{linear_parabolic_pdes} may lack of classical smoothness, particularly when the forcing term \( f \) or initial condition \( g \) have singularities, it is natural to study solutions in the weak sense.

\subsection{Weak Solution}
We say $u$ is a weak solution to (\ref{linear_parabolic_pdes}) if 
\begin{align}
&(i)\quad u \in L^{2}(0,T;H^{1}_{0}(\Omega)), \text{ with} \,\,u^{\prime} \in L^{2}(0,T;H^{-1}(\Omega)),\\
&(ii)\quad (u^{\prime},v) + B[u,v;t] = (f,v), \text{ for all test function } v\in H^{1}_{0}(\Omega) \text{ and } a.e \text{ time } 0\leq t\leq T, \\
&(iii)\quad u(0)=g,
\label{weak_solution}
\end{align}
where $B[\cdot, \cdot ]$ is the time-dependent bilinear form
\begin{equation}
B[u,v;t]:= \int_{\Omega} \sum_{i,j=1}^{n} a^{ij}(\cdot, t) u_{x_{i}}u_{x_{j}} + \sum_{i=1}^{n} b^{i}(\cdot,t) u_{x_{i}} v + c(\cdot,t)uv dx
\end{equation}
for any $u,v\in H^{1}_{0}(\Omega)$  and a.e $0\leq t\leq T$.

Next we present a classical result on the regularity of solutions to linear parabolic PDEs. This result highlights the smoothing property of parabolic equations: the solution \( u \) gains one additional regularity in space compared to the initial data.

\subsection{Regularity Theory}
\begin{theorem}[Higher Regularity Theory for Linear Parabolic PDEs]. Assume
\begin{equation}
g\in H^{2m+1}(\Omega), \frac{d^{p}}{dt^{p}}f\in L^{2}(0,T;H^{2m-2p}(\Omega))\quad (p=0,...,m).
\end{equation}
Suppose also that the following $p$-th order compatibility conditions hold: 
\begin{equation}
\begin{cases}
\,\,g_{0}:=g\in H^{1}_{0}(\Omega),\,\, g_{1}:=f(0,x) - Lg_{0} \in H^{1}_{0}(\Omega),\\\\\\
\,\,,....,\,g_{m}:= \displaystyle\frac{d^{p-1}}{dt^{p-1}}f(0,x) - Lg_{p-1} \in H^{1}_{0}(\Omega).
\end{cases}
\label{compatibility_condition}
\end{equation}
Then \begin{equation}
    \frac{d^{p}u}{dt^{p}}\in L^{2}(0,T;H^{2m+2-2p}) \text{ holds for solution } u \text{ to PDEs } (\ref{linear_parabolic_pdes}).
\end{equation}
\label{higher_regulairty_thm}
\end{theorem}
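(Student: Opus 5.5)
We prove the statement by induction on $m$. Our working assumptions are those under which the classical result \cite[Chapter 7, Section 7.1, Theorem 6]{evans} holds: the coefficients $a^{ij},b^i,c$ are smooth (time-independent, or smooth in $t$ with the extra commutator terms absorbed), and $\partial\Omega$ is smooth enough for elliptic boundary regularity. A Lipschitz boundary alone does not give $H^{2m+2}$ regularity, so this restriction is genuinely needed.

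\textbf{Base case $m=0$.} Here $g\in H^1_0(\Omega)$ and $f\in L^2(0,T;L^2(\Omega))$. We construct Galerkin approximations $u_N=\sum_{k\le N} d_N^k(t)w_k$ in the eigenbasis of $L$. Testing the Galerkin system with $u_N'$ and with $Lu_N$, and using Gronwall, gives bounds on $\|u_N\|_{L^\infty H^1_0}$ and $\|u_N'\|_{L^2L^2}$ that are uniform in $N$. Passing to weak limits, the weak solution satisfies $u'\in L^2(0,T;L^2)$. For a.e.\ $t$, $u(t)$ solves the elliptic problem $Lu(t)=f(t)-u'(t)\in L^2(\Omega)$ with $u(t)\in H^1_0$. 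Elliptic $H^2$-regularity, integrated in time, then yields $u\in L^2(0,T;H^2)$. This is exactly the claim for $m=0$: $p=0$ gives $H^2$ and $p=1$ gives $L^2$.

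\textbf{Inductive step.} Assume the theorem holds for $m-1$ and set $\tilde u:=u'$. Formally, $\tilde u$ solves
\[
\tilde u'+L\tilde u=f',\qquad \tilde u|_{\partial\Omega}=0,\qquad \tilde u(0)=f(0)-Lg=g_1 .
\]
We check that $(g_1,f')$ satisfies the hypotheses at level $m-1$.
\begin{itemize}
\item The derivatives $\frac{d^{p}}{dt^{p}}f'=\frac{d^{p+1}}{dt^{p+1}}f$ lie in $L^2H^{2m-2-2p}$ for $p\le m-1$, directly from the hypotheses.
\item We need $g_1\in H^{2m-1}$. First, $Lg\in H^{2m-1}$ because $g\in H^{2m+1}$. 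Second, $f(0)\in H^{2m-1}$ by the time-trace/interpolation embedding $L^2H^{2m}\cap H^1H^{2m-2}\hookrightarrow C([0,T];H^{2m-1})$.
\item The compatibility conditions for $(g_1,f')$ are exactly $g_1,\dots,g_m\in H^1_0$, shifted by one index.
\end{itemize}
The induction hypothesis therefore gives $\frac{d^{p}\tilde u}{dt^{p}}\in L^2H^{2(m-1)+2-2p}$, i.e.\ $\frac{d^{p+1}u}{dt^{p+1}}\in L^2H^{2m+2-2(p+1)}$. This covers all $1\le p+1\le m+1$. The remaining case $p=0$ again uses elliptic regularity at a.e.\ time: $Lu(t)=f(t)-u'(t)\in H^{2m}$ with zero Dirichlet data, so $u(t)\in H^{2m+2}$ with a bound that is square-integrable in $t$.

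\textbf{Main obstacle.} The hard step is justifying rigorously that $u'$ is the weak solution of the differentiated problem with initial value $g_1$. We would not differentiate the limit equation. Instead we differentiate the Galerkin ODE system, derive energy estimates for $u_N'$ and $u_N''$ that are uniform in $N$, and identify the limit by uniqueness of weak solutions. The initial datum $u_N'(0)$ must be controlled by $\|g_1\|_{H^1_0}$, and this is exactly where the compatibility conditions enter. To make this work, we would choose the projected initial data of $u_N$ to be compatible, i.e.\ project $g$ in the $H^{2m+1}$-adapted eigenbasis of $L$.
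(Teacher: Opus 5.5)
Your proposal is correct and is essentially the paper's proof. The paper only cites Evans, Ch.~7.1, Thm.~6, and that proof is exactly your induction on $m$: the base case comes from the Galerkin and elliptic-regularity estimates of Evans's Thm.~5, the inductive step passes to $\tilde u=u'$ with data $(g_1,f')$ using the embedding that gives $f(0)\in H^{2m-1}$, and elliptic regularity handles $p=0$. You are also right that it needs Evans's smooth boundary and smooth, time-independent coefficients rather than the paper's Lipschitz/$L^\infty$ setting.

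Two minor cautions. First, with $t$-dependent coefficients the compatibility conditions themselves pick up $\partial_t L$ terms, so your aside about absorbing commutators does not hold for the conditions as stated. Second, an eigenbasis of $L$ is orthonormal only when $L$ is symmetric; Evans avoids this by using the $-\Delta$ eigenbasis and applying the base case to $u'$ at the level of weak solutions.
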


\begin{proof}[Proof of Theorem~\ref{higher_regulairty_thm}]
    The proof is standard and follows from Theorem 6 in Chapter 7.1 of~\citep{evans}.
\end{proof}

\begin{theorem}[Infinite differentiability]. Assume $g\in C^{\infty}(\bar{\Omega}), f\in C^{\infty}(\bar{\Omega}_{T})$ and compatibility condition $(\ref{compatibility_condition})$ hold for $m=0,1,2,3,...$\,. Then the parabolic initial/boundary-value problem $(\ref{linear_parabolic_pdes})$ has a unique solution 
\begin{equation}
u\in C^{\infty}(\bar{\Omega}_{T}).
\end{equation}
\label{infinite_regularity_theory}
\end{theorem}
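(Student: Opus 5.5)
The plan is to derive the statement from the finite-order regularity result, Theorem~\ref{higher_regulairty_thm}, combined with Sobolev embedding, following the standard argument in \cite[Chapter 7.1, Theorem 7]{evans}. Since $g\in C^\infty(\bar\Omega)$ and $f\in C^\infty(\bar\Omega_T)$, for every integer $m\geq 0$ we have $g\in H^{2m+1}(\Omega)$ and $\frac{d^{k}f}{dt^{k}}\in L^2(0,T;H^{2m-2k}(\Omega))$ for $k=0,\dots,m$, because all derivatives are bounded on the compact closure $\bar\Omega_T$. By hypothesis the compatibility conditions \eqref{compatibility_condition} hold at every order $m$. Hence Theorem~\ref{higher_regulairty_thm} applies for every $m$ and gives
\begin{equation*}
\frac{d^{k}u}{dt^{k}}\in L^2(0,T;H^{2m+2-2k}(\Omega)),\qquad k=0,\dots,m,
\end{equation*}
for the unique weak solution $u$. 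Uniqueness of the weak solution is part of the standard existence theory and does not depend on $m$, so the same $u$ is obtained at each level.

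Next I will show that $u$ lies in $H^{r}(\Omega_T)$ for every $r$, counting mixed derivatives. Fix a multi-index $\alpha$ in $x$ and an order $k$ in $t$. I will choose $m$ large enough that $2m+2-2k\geq|\alpha|$. The estimate above then gives $\partial_t^k\partial_x^\alpha u\in L^2(\Omega_T)$. Because $m$ is arbitrary, every mixed derivative is square integrable on the cylinder, so $u\in H^r(\Omega_T)$ for all $r$.

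Finally I will apply the Sobolev embedding $H^r(\Omega_T)\hookrightarrow C^{k}(\bar\Omega_T)$ for $r>k+(d+1)/2$. This is valid because $\Omega_T$ is a bounded Lipschitz domain in $\mathbb{R}^{d+1}$; alternatively one can use the space--time extension of Lemma~\ref{space_time_extension} and embed on the whole space. Since $k$ is arbitrary, $u\in C^\infty(\bar\Omega_T)$.

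The step I expect to be the main obstacle is the bookkeeping that turns the graded, anisotropic statement of Theorem~\ref{higher_regulairty_thm}, which involves vector-valued spaces $L^2(0,T;H^{\cdot})$, into isotropic Sobolev regularity on $\Omega_T$ up to the boundary, including $t=0$ and $\partial\Omega$. I will handle it through the parabolic scaling: one time derivative costs two spatial derivatives, and letting $m\to\infty$ absorbs that cost.
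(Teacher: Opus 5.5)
Your proposal is correct and is essentially the paper's argument: the paper proves this only by citing Theorem 7 in Chapter 7 of Evans, whose proof is to apply the finite-order result (Theorem~\ref{higher_regulairty_thm}) for every $m$ and then use Sobolev embedding. Your handling of the mixed derivatives $\partial_t^k\partial_x^\alpha u$ and of the embedding on the cylinder $\Omega_T$ supplies the details that the citation leaves implicit.
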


\begin{proof}[Proof of Theorem~\ref{infinite_regularity_theory}]
    See Theorem 7 in Chapter 7 of~\citep{evans}.
\end{proof}

We conclude this section by an important characterization of solution to linear parabolic PDEs, which is essential to construction of our lower bound and space-time trade-off in approximation error analysis.

\subsection{Duhamel Principle}

\begin{theorem}[Duhamel’s Principle for General Linear Parabolic PDE]
Let \( \Omega \subseteq \mathbb{R}^n \) be a domain with boundary $\partial \Omega$, and let  $L$ is an elliptic operator defined in (\ref{elliptic_operator})
\label{Duhamel_principle}
\begin{equation}
\begin{cases}
\quad u_t(x,t) - L u(x,t) = f(x,t), & (x,t) \in \Omega \times (0,T],\\
\quad u(x,t) = 0, &(x,t) \in \partial \Omega\times[0,T], \\
\quad u(x,0) = 0, & x \in \Omega.
\end{cases}
\end{equation}
Then the solution to the inhomogeneous problem is given by the Duhamel formula
\begin{equation}
u(x,t) = \int_{0}^{t} (P^s f)(x,\,s)\, \mathrm{d}s = \int_{0}^{t} v(x,\,s)\, \mathrm{d}s.
\end{equation}
where \(P^s\) is the solution operator of the homogeneous problem with $s>0$ 
 \begin{equation}
\begin{cases}
\quad v_t(x,t) - Lv(x,t) = 0, & (x,t)\in \Omega \times (s,T],\\
\quad v(x,t) = 0 , &  (x,t) \in \partial \Omega \times [s,T],\\
\quad v(x,s) = f(x,s), & x\in \Omega.
\end{cases}
\end{equation}
\end{theorem}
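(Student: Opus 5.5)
The approach is the standard semigroup argument. We show that the proposed function solves the inhomogeneous problem; uniqueness of weak solutions with zero data then identifies it with $u$. Write $S(t)$ for the solution operator of the homogeneous problem, so that $(P^s f)(\cdot,t) = S(t-s)f(\cdot,s)$ for $t\geq s$. The formula should be read as
\begin{equation*}
u(x,t)=\int_0^t \bigl(S(t-s)f(\cdot,s)\bigr)(x)\,ds ,
\end{equation*}
that is, $v=v(x,t;s)$ is evaluated at the current time $t$ and integrated over the launch time $s$. Here $L$ generates the evolution. For a time-dependent $L$ one uses the two-parameter evolution family $U(t,s)$ in place of $S(t-s)$, and the argument is the same.

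\textbf{Step 1: well-definedness.} First check that the formula makes sense. By the standard energy estimate for weak solutions of the homogeneous problem (the weak formulation above, with the bilinear form $B$ coercive up to a Gårding shift by uniform ellipticity), we have
\begin{equation*}
\|v(\cdot,t;s)\|_{L^2(\Omega)}\le e^{\gamma(t-s)}\|f(\cdot,s)\|_{L^2(\Omega)},
\end{equation*}
and the map $s\mapsto v(\cdot,t;s)$ is measurable. Hence the Bochner integral defines $u\in C([0,T];L^2(\Omega))$, and in particular $u(\cdot,0)=0$. The boundary condition holds because each $v(\cdot,t;s)\in H^1_0(\Omega)$. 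Using the $L^2(H^1_0)$ part of the energy estimate together with Fubini, we also get $u\in L^2(0,T;H^1_0(\Omega))$.

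\textbf{Step 2: the equation.} Next verify the weak form. Fix a test function $w\in H^1_0(\Omega)$ and differentiate $t\mapsto (u(t),w)$ by the Leibniz rule:
\begin{equation*}
\frac{d}{dt}(u(t),w)=(v(t;t),w)+\int_0^t (\partial_t v(t;s),w)\,ds=(f(t),w)+\int_0^t \partial_t\bigl(v(t;s),w\bigr)\,ds .
\end{equation*}
Substituting the weak equation for each $v(\cdot;s)$ and integrating in $s$ turns the integral term into the contribution of $L$ applied to $u$, with the sign convention fixed by the statement. Since $L$ is linear, it commutes with the $s$-integral in the weak sense, because the bilinear form $B[\cdot,w;t]$ is continuous on $H^1_0$.

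\textbf{Main obstacle.} The delicate point is justifying the Leibniz rule at the diagonal $s=t$. This requires $v(t;s)\to f(\cdot,t)$ as $s\uparrow t$, i.e.\ continuity of the semigroup at the initial time, which in turn needs $f\in C([0,T];L^2)$ or an approximation argument. I would first prove the identity for smooth $f$, where Theorem~\ref{infinite_regularity_theory} gives classical regularity and the computation is pointwise. I would then pass to general $f\in L^2(\Omega_T)$ by density, using linearity and the energy estimate, which gives continuous dependence of both sides on $f$ in $L^2(0,T;H^1_0)$. Finally, uniqueness of weak solutions (energy estimate applied to the difference of two solutions) shows that this $u$ is the solution.
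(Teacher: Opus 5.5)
The paper gives no argument of its own here; it only cites Umarov and John. Your route is the standard proof behind those references: build the candidate from the evolution family, check the weak formulation, and conclude by uniqueness. Your reading of the formula as $\int_0^t v(x,t;s)\,ds$ is also the right one, since read literally $\int_0^t v(x,s)\,ds=\int_0^t f(x,s)\,ds$ because $v(\cdot,s)=f(\cdot,s)$. Two points need repair.

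\textbf{First, the sign.} Take $L$ as in \eqref{elliptic_operator}, so $L=-\Delta$ when $a^{ij}=\delta_{ij}$ and $b^i=c=0$. Then the homogeneous problem $v_t-Lv=0$ is $v_t+\Delta v=0$ run forward in time, which is the backward heat equation. It has no solution for general $f(\cdot,s)\in L^2(\Omega)$, and the bound $\|v(\cdot,t;s)\|_{L^2}\le e^{\gamma(t-s)}\|f(\cdot,s)\|_{L^2}$ you invoke in Step~1 is false for it. G\r{a}rding's inequality gives that estimate only for $v_t+Lv=0$. Likewise, the weak form you would substitute in Step~2, the paper's $\langle v',w\rangle+B[v,w;t]=0$, is the weak form of $v_t+Lv=0$. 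So your argument actually proves $u_t+Lu=f$, i.e.\ problem \eqref{linear_parabolic_pdes} with $g=0$. Equivalently, it proves the stated problem with $L$ replaced by $-L$, consistent with $L=\Delta$ in Corollary~\ref{scaling_invariant}. Your phrases ``$L$ generates the evolution'' and ``with the sign convention fixed by the statement'' hide exactly this. You should say explicitly that the statement as written is ill-posed and that you prove the forward version.

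\textbf{Second, the base case of your density argument.} Theorem~\ref{infinite_regularity_theory} (Evans) requires smooth coefficients, a smooth boundary, and compatibility conditions of every order at the initial time. Here the coefficients are only $L^\infty$ and $\Omega$ is arbitrary. Even with smooth coefficients, a smooth $f$ generally gives initial data $f(\cdot,s)\notin H^1_0(\Omega)$, so classical regularity of $v(\cdot;s)$ down to $t=s$ fails.

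You do not need the Leibniz rule at the diagonal at all. Integrate the weak equation for $v(\cdot;s)$ over $[s,t]$, then integrate over $s\in(0,t)$, and exchange the order of integration on the triangle $0\le s\le\tau\le t$:
\begin{equation*}
(u(t),w)=\int_0^t (f(s),w)\,ds-\int_0^t\!\int_s^t B[v(\tau;s),w;\tau]\,d\tau\,ds=\int_0^t\bigl[(f(\tau),w)-B[u(\tau),w;\tau]\bigr]\,d\tau .
\end{equation*}
Fubini applies because $|B[v,w;\tau]|\lesssim\|v\|_{H^1}\|w\|_{H^1}$, and the $L^2(s,T;H^1_0)$ energy bound gives
\begin{equation*}
\int_0^T\!\int_s^T\|v(\tau;s)\|_{H^1}\,d\tau\,ds\lesssim\sqrt{T}\int_0^T\|f(s)\|_{L^2}\,ds<\infty .
\end{equation*}
The last equality uses continuity of $B[\cdot,w;\tau]$ on $H^1_0$ to pass it through the Bochner integral defining $u(\tau)$. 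This yields the weak formulation directly for every $f\in L^2(\Omega_T)$, with no continuity assumption on $f$, no approximation, and no classical regularity. Uniqueness then finishes the proof as you planned.
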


\begin{proof}[Proof of Theorem~\ref{Duhamel_principle}]
    See \cite[Section 2.4]{UMAROV20125217} and \cite[Chapter 5, Section 1]{john1982partial}.
\end{proof}

An immediate corollary followed by the Duhamel principle is the solution to non-zero initial value problem with Dirichlet boundary. Duhamel Principle tells us that the construction of solution $u$ to linear parabolic equation in our setting is enough to look at the homogeneous solution $w$ generated by initial data $g(x)$ when $t=0$ and $v$ generated by forcing function $f(x,t)$ when $t>0$. 

\begin{corollary}
Assume the conditions in Theorem~\ref{Duhamel_principle} holds. Then the solution $u$ to the PDE problem (\ref{linear_parabolic_pdes}) is given by 
\begin{equation}
u(x,t) = g(x) + \int_{0}^{t} w(x,\,s)\, \mathrm{d}s
\end{equation}
where $w(x,t)$ is the solution to homogeneous initial value problem with Dirichlet boundary condition
\begin{equation}
\begin{cases}
\quad w_t(x,t) - Lw(x,t) = 0, & (x,t)\in \Omega \times (0,T],\\
\quad w(x,t) = 0 , &  (x,t) \in \partial \Omega \times [0,T],\\
\quad w(x,0) = g(x).
\end{cases}
\end{equation}
\end{corollary}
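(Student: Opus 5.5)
The natural route is the fundamental theorem of calculus in time. I take $w$ to be $\partial_t u$, show that it solves a homogeneous problem of the stated type, and then integrate. The representation
\begin{equation*}
u(x,t) = g(x) + \int_0^t w(x,s)\,\mathrm{d}s
\end{equation*}
is then just $u(t) = u(0) + \int_0^t \partial_t u(s)\,\mathrm{d}s$ together with $u(0)=g$. So the whole content of the corollary is the identification of $w$ with $\partial_t u$.

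\emph{Step 1: regularity.} I first make $\partial_t u$ meaningful. By the higher regularity result (Theorem~\ref{higher_regulairty_thm}) with $m\geq 1$, under the compatibility conditions (3), $\partial_t u \in L^2(0,T;H^{2m}(\Omega))$. With one more level, $\partial_t^2 u \in L^2(0,T;H^{2m-2}(\Omega))$. Hence $t\mapsto u(t)$ is absolutely continuous with values in $H^1_0(\Omega)$, and the FTC identity above holds for a.e.\ $x$.

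\emph{Step 2: the equation for $\partial_t u$.} Next I differentiate the equation $u_t - Lu = f$ in $t$, in the weak sense, by testing against $\partial_t$ of test functions. This gives $(\partial_t u)_t - L(\partial_t u) = \partial_t f$ when the coefficients of $L$ are time-independent. Since $u=0$ on $\partial\Omega\times[0,T]$ for all $t$, we also have $\partial_t u = 0$ on the lateral boundary. So in the setting of the corollary, where the only data driving the evolution is the initial datum and the forcing is stationary ($\partial_t f = 0$), $\partial_t u$ solves the homogeneous Dirichlet problem displayed in the corollary. Uniqueness of weak solutions (the energy estimate for \eqref{weak_solution}) then forces $w = \partial_t u$, provided the initial values agree.

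\emph{Step 3: initial datum (main obstacle).} Evaluating the equation at $t=0$ gives $\partial_t u(\cdot,0) = f(\cdot,0) + Lg$. This is exactly the first compatibility datum $g_1$ of (3), which lies in $H^1_0(\Omega)$ by assumption A2, so $w(\cdot,0)$ is a legitimate initial value. The corollary as worded prescribes $w(x,0)=g(x)$ instead. To reconcile the two, I would read the initial condition for $w$ as the datum propagated from $g$ through the equation, namely $g_1$ built from $g_0=g$. With that reading, Steps 1--3 together with Step 0 give the formula.

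I expect justifying this identification of $w(\cdot,0)$ to be the delicate point. A secondary technical point is the trace $\partial_t u(\cdot,0)$ itself, which is available in $L^2(\Omega)$ from the continuity embedding $H^1(0,T;X)\hookrightarrow C([0,T];X)$. Once it is settled, the rest is bookkeeping with Theorem~\ref{Duhamel_principle}, applied with zero forcing to the homogeneous problem for $w$.
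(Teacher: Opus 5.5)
\textbf{The gap: Step 3 changes the statement instead of proving it.} Step 3 is not a delicate point to be settled later; it is exactly where the claim breaks. You resolve it by rewriting the corollary rather than proving it. Step 2 already adds hypotheses the corollary does not make: $\partial_t f = 0$ and time-independent coefficients. The paper's $L$ has coefficients $a^{ij}(x,t), b^i(x,t), c(x,t)$, and in general $\partial_t u$ satisfies $(\partial_t u)_t - \mathcal{A}(\partial_t u) = \partial_t f + (\partial_t \mathcal{A})u$. Step 3 then replaces $w(\cdot,0)=g$ by $w(\cdot,0)=g_1$.

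What you actually prove is a different statement. If $f$ and the coefficients are independent of $t$, then $u = g + \int_0^t w\,\mathrm{d}s$, where $w$ is the homogeneous Dirichlet flow started from $\partial_t u(\cdot,0) = f + \mathcal{A}g$, i.e.\ the compatibility datum $g_1$. Here $\mathcal{A}$ denotes the operator with $w_t = \mathcal{A}w$. That corrected statement is true, and your argument for it works. It can be shortened: define $\tilde u := g + \int_0^t w\,\mathrm{d}s$, check directly that it solves the problem, and invoke uniqueness, with no need to differentiate the equation in time.

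\textbf{The corollary as written is false, so no argument can prove it.} For $\tilde u = g + \int_0^t w\,\mathrm{d}s$ with $w(\cdot,0)=g$ one computes
\[
\tilde u_t - \mathcal{A}\tilde u = w(t) - \mathcal{A}g - \bigl(w(t) - w(0)\bigr) = g - \mathcal{A}g .
\]
So the formula gives the solution only when $f \equiv g - \mathcal{A}g$. Concretely, take the heat equation on $\Omega=(0,\pi)$ with $f=0$ and $g=\sin x$. Then $u = e^{-t}\sin x$ and $w = e^{-t}\sin x$, but $g + \int_0^t w\,\mathrm{d}s = (2-e^{-t})\sin x$.

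If you take the paper's signs literally, $L=-\partial_{xx}$ and $w_t - Lw = 0$ is the backward equation. Then $w = e^{t}\sin x$ and the formula gives $e^{t}\sin x \neq e^{-t}\sin x$. The claim fails either way.

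\textbf{What to write instead.} The appropriate output is a counterexample together with the corrected statement, which your Steps 1--3 essentially supply. Present it as a correction, not as a proof of the corollary under a reinterpreted initial condition.

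The paper itself gives no proof. It only calls the corollary an immediate consequence of Duhamel's principle and alludes to superposing the homogeneous flow of $g$ with a forced part. If you want to keep $w(\cdot,0)=g$, the correct representation is $u(\cdot,t) = w(\cdot,t) + \int_0^t v(\cdot,t;s)\,\mathrm{d}s$. Here $v(\cdot,\cdot;s)$ is the homogeneous flow from time $s$ started at $f(\cdot,s)$. This is not $g + \int_0^t w\,\mathrm{d}s$.
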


\begin{corollary}[Scaling Invariant Property]
\label{scaling_invariant}
Let $L=\triangle$ and $\alpha>0$. $u$ satisfies the scaling invariant property such that $u(\frac{x}{\alpha}, \frac{t}{\alpha^{2}})$ is a solution to the PDE problem (\ref{linear_parabolic_pdes}) if and only if $u(x,t)$ is a solution to the PDE problem (\ref{linear_parabolic_pdes}).
\end{corollary}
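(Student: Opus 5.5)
Since the heat operator is invariant under the parabolic dilation, the plan is to verify directly that the rescaled function solves the same problem, and to obtain the converse by applying the forward direction with the reciprocal dilation. Fix $\alpha>0$ and set $v(x,t):=u(x/\alpha,\,t/\alpha^{2})$.

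The first step is the interior equation, which follows from the chain rule. For classical solutions,
\begin{equation*}
v_t(x,t)=\alpha^{-2}u_t\bigl(\tfrac{x}{\alpha},\tfrac{t}{\alpha^{2}}\bigr),\qquad \Delta v(x,t)=\alpha^{-2}(\Delta u)\bigl(\tfrac{x}{\alpha},\tfrac{t}{\alpha^{2}}\bigr),
\end{equation*}
so that
\begin{equation*}
v_t-\Delta v=\alpha^{-2}\,(u_t-\Delta u)\bigl(\tfrac{x}{\alpha},\tfrac{t}{\alpha^{2}}\bigr).
\end{equation*}
For weak solutions I would run the same computation in the weak formulation (i)--(iii). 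Test against $\varphi(x/\alpha)$ and change variables $y=x/\alpha$, $s=t/\alpha^{2}$; the Jacobians $\alpha^{d}$ in space and $\alpha^{2}$ in time appear identically on both sides of the identity $(u',\varphi)+B[u,\varphi;t]=(f,\varphi)$, where $B$ is the Dirichlet form for $L=\Delta$. The membership conditions $u\in L^2H^1_0$ and $u'\in L^2H^{-1}$ are preserved under the change of variables, with only the norms multiplied by powers of $\alpha$.

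The second step is the side conditions. The zero Dirichlet condition transfers because the spatial domain is mapped to $\alpha\Omega$ and the lateral boundary to $\partial(\alpha\Omega)$. The initial condition becomes $v(\cdot,0)=g(\cdot/\alpha)$.

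The converse is immediate: apply the forward direction to $v$ with $1/\alpha$ in place of $\alpha$, which recovers $u$.

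The main obstacle is that the statement as written is only literally correct once the data and the domain are rescaled consistently. On a fixed bounded $\Omega$ with fixed $f$, $g$ and $T$, the function $v$ solves the problem posed on $\alpha\Omega\times(0,\alpha^{2}T]$ with source $\alpha^{-2}f(x/\alpha,t/\alpha^{2})$ and initial datum $g(x/\alpha)$. It solves the same problem only in the homogeneous case, or on scale-invariant domains such as $\mathbb{R}^d$ with $f=0$ and dilation-invariant $g$. I would therefore state the result precisely in this rescaled form, interpreting ``is a solution to the PDE problem'' as ``with correspondingly rescaled domain and data'', and note that the constant factor $\alpha^{-2}$ on the source is harmless in the homogeneous case $f=0$. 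The homogeneous case is the one Duhamel's principle reduces to, and it is where the corollary is used.
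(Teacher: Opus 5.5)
Your proof is correct, and it takes a different route from the paper's.

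\textbf{The paper's route.} The paper never differentiates $u$ directly. It splits $u$ via Duhamel's principle into the homogeneous flow $w$ of the initial datum plus the time integral of the homogeneous flows $v(\cdot,\cdot;s)$ started from $f(\cdot,s)$. It then notes that $w(x/\alpha,t/\alpha^2)$ and $v(x/\alpha,t/\alpha^2)$ solve the homogeneous heat equation whenever $w$ and $v$ do, and asserts the claim for $u$ ``by direct computation''.

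\textbf{Your route.} Your chain-rule identity $v_t-\Delta v=\alpha^{-2}(u_t-\Delta u)(x/\alpha,t/\alpha^2)$, together with the change of variables in the weak form (i)--(iii), treats the inhomogeneous problem in one step. It needs no solution-operator representation and applies to weak solutions exactly as the paper defines them.

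\textbf{What each buys.} The paper's route reduces to homogeneous flows, where the invariance is exact. Yours gives the exact transformation law for the full problem, and so exposes what the paper's last step glosses over. Rescaling the Duhamel integral $\int_0^{t/\alpha^2}ds$ produces precisely your factor $\alpha^{-2}$ on the source. The domain, initial datum and horizon also move, to $\alpha\Omega$, $g(\cdot/\alpha)$ and $\alpha^2T$. You are therefore right that the statement, read literally with fixed bounded $\Omega$ and fixed $f,g,T$, is false. For example, take $\Omega=(0,\pi)$, $f=0$, $g=\sin x$: the rescaled solution has initial datum $\sin(x/\alpha)\neq g$. The rescaled formulation is the one to prove.

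\textbf{Two corrections to your closing paragraph.}
\begin{enumerate}
\item $f=0$ alone does not make it ``the same problem'' on a bounded domain. You also need $\alpha\Omega=\Omega$, $g(\cdot/\alpha)=g$ and $T=\infty$. More generally, any source with $f(x,t)=\alpha^{-2}f(x/\alpha,t/\alpha^2)$ is allowed.
\item The corollary's use in the minimax lower bound is not the homogeneous case. The bumps $g\bigl(\frac{x-x^i}{w},\frac{t-t^j}{w^2}\bigr)$ there are compactly supported, so they cannot be nonzero heat solutions. What the KL computation actually needs is your inhomogeneous identity: applying $\partial_t-\Delta$ to a rescaled bump costs a factor $w^{-2}$.
\end{enumerate}
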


\begin{proof}[Proof of Corollary~\ref{scaling_invariant}]
This proof follows directly from the fact that $w(\frac{x}{\alpha}, \frac{t}{\alpha^{2}})$  and $v(\frac{x}{\alpha}, \frac{t}{\alpha^{2}})$ solves $u_{t}-\triangle u =0$ if and only if $w(x,t)$ and $v(x,t)$ solves  $u_{t}-\triangle u =0$.
Therefore, it is immediately followed by direct computation that $u(x,t)$ also enjoys this scaling invariance property.
\end{proof}

\section{Derivation for Our Loss Function}
\label{motivation_of_loss_function}

Our loss function is motivated by solving linear parabolic PDEs using its corresponding convex optimization formulation. Let $f \in \calF$, $g\in \calG$ be right-hand side and boundary value of the PDE and define elliptic operator [see Chapter 6 in \cite{evans}]
\begin{equation}
L:= -\sum_{i,j=1}^{n} a^{ij}(x,t) u_{x_{i}x_{j}} + \sum_{i=1}^{n} b^{i}(x,t)u_{x_{i}} + c(x,t)u.
\end{equation}
Furthermore, there exists some constant $\theta>0$ such that
\begin{equation}
\sum_{i,j=1}^{n}a^{ij}(x,t) \xi_{i}\xi_{j} \geq \theta |\xi|^{2} \text{ for all } (x,t) \in \Omega_{T}, \xi\in \R^{d}.
\end{equation}
Solving the boundary valued problem of linear parabolic PDEs is defined as finding $u^{*}\in \U$ satisfying the following constraint
\begin{equation}
\begin{cases}
\quad u^{*}_{t} +Lu^{*} =f \quad in\,\, \Omega \times (0,T],\\
\quad\,\, u^{*}(x,t)=0 \,\,\,\quad on\,\, \partial\Omega \times (0,T],\\
\quad\,\,
u^{*}(x,0)=g \quad \,\,on \,\, \Omega.
\end{cases}
\end{equation}
Note $\calF$ and $\calG$ and $\U$ are three distinctive functional spaces where differentiability and integrability could be totally different.
Its relaxation is given as the equation below, and we leave the detailed discussion to Section C.1 and C.2
\begin{equation}
\,\, -\epsilon u^{*}_{tt}+u^{*}_{t} +Lu^{*} =f \quad in\,\, \Omega\times (0,T].
\end{equation}
The solution to the relaxed problem enjoys the following uniform in time approximation property. \cite[Theorem 1]{Mielke11} \cite[Section 7, Theorem 7.2]{Lions09}
\begin{equation}
\text{ As } \epsilon_{n}\to 0,\,\, u^{*}_{\epsilon_{n}} \to u^{*} \,\, in \,\, L^{2}(\Omega)\,\,, \forall\,t>0.
\end{equation}
This approximation theory allows us to solve linear parabolic PDEs in its relaxed version, and surprisingly the variational structure underlying the relaxed problem can be formulated as convex optimization problem for some strongly convex energy functional $I^{\text{parabolic}}_{\epsilon}(u)$ in some funtional space $\U_{parabolic}$
\begin{equation}
\underset{u\in \U_{parabolic}}{\min} \,\,I^{\text{parabolic}}_{\epsilon}(u).
\label{relaxed_parabolic_pdes}
\end{equation}

However, $I_{\epsilon}^{parabolic}(u)$, directly minimizing $I_{\epsilon}^{parabolic}$ is not applicable for serious drawbacks. First, $I_{\epsilon}^{parabolic}(u)$, though strongly convex,  is degenerate when $\epsilon$ approaches to $0$, leading to vanishing lower curvature bound in derived stability estimate. Therefore, we introduce additional data-fitting function $D(u,u^{*})$ over Sobolev space to improve lower curvature bound in derived stability estimate.

\subsection{Euler-Lagrange equation for the weighted energy functional~\eqref{energy_functional}}

In this section, we derive the Euler–Lagrange equations associated with the weighted energy functional \( I_\epsilon(u) \). As shown in Theorem~\ref{euler_lagrangian_for_I}, minimizing \( I_\epsilon(u) \) is equivalent to solving initial value problem of a regularized heat equation with zero Dirichlet condition in equation . The well-posedness ensures that the minimizer of \( I_\epsilon(u) \) is unique. Building upon this, we also show that the minimizer of the loss functional \( L_{\epsilon,\beta}(u) \) is the solution to the (exact) heat equation \eqref{eq:heat_pde}, as shown in Theorem~\ref{euler_lagrangian_for_L}.

We incorporate regularity theory from linear parabolic PDEs in accordance with hyperbolic PDEs. Assume that forcing function $f\in \mathcal{F}$ and initial data $u_{0}^{*} \in H^{2s+1}(\Omega) \cap H^{1}_{0}(\Omega)$ satisfy the compatibility condition in (\ref{compatibility_condition}), we consider the minimizer $u$ living in the search space $\U$ 
\begin{equation}
\U:= \bigg\{ u(x,t) \in H^{p}_{t}(0,T;H_{x}^{2s+2-2p}(\Omega)) \cap L^{2}_{t}(0,T;H^{1}_{0}(\Omega))\,\,\bigg|\,\, u(x,0) =u^{*}_{0}(x)\bigg\}.
\end{equation}
Moreover,  we denote $u^{*}\in \U$ as ground truth solution that solves initial value problem for heat equation with zero Dirichlet boundary in $(\ref{eq:heat_pde})$. Our search functional space $\U$, forcing function $f$, and initial data $u^{*}_{0}$ satisfy functional setting where Euler-Lagrange equation has unique minimizer. In literature, it is often required though with some variation that $f \in L^{2}(0,T;L^2(\Omega))$ and $u_{0}^{*}\in H^{1}_{0}(\Omega)$ and $u\in L^{2}(0,T;H^1_{0}(\Omega))$. See \cite{Mielke11,refId0, doi:10.1142/S0218202525400019}.

\subsection{Euler-Lagrange Equation for the weighted energy functional $I_{\epsilon}(u)$}\label{derivation of Euler-Langragian}

The following Theorem~\ref{euler_lagrangian_for_I} gives the first-order optimality condition for the weighted energy functional $I_{\epsilon}(u)$ in~\eqref{energy_functional}.

\begin{theorem}[Minimizer of the WEF $I_{\epsilon}(u)$]
There exists a unique minimizer $u\in \mathcal{U}$ of $I_{\epsilon}(u)$ such that $u$ is a solution to linear hyperbolic PDEs 
\label{euler_lagrangian_for_I}
\begin{equation}\label{euler-lagragian*}
\begin{cases}
&-\epsilon u_{tt} + u_{t} - \triangle u = f,
\quad (x,t) \in \Omega \times (0,T],\\
&u(x,t) = 0,\quad (x,t) \in \partial\Omega \times [0,T],\\
&u(x,0) = u_{0}^{*}(x),\quad x\in \Omega.
\end{cases}
\end{equation}
\end{theorem}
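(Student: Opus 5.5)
The argument has three parts: existence and uniqueness of a minimizer by strict convexity and the direct method, identification of the Euler--Lagrange equation by a first variation with test functions vanishing at $t=0$, and the natural boundary condition at $t=T$.

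\emph{Existence and uniqueness.} Consider the affine set $\U$. Fixing $u^*\in\U$, every admissible $u$ can be written $u=u^*+v$ with $v(\cdot,0)=0$ and $v(\cdot,t)\in H^1_0(\Omega)$. The quadratic part of $I_\epsilon$ is
\[
Q(v)=\int_0^T e^{-t/\epsilon}\int_\Omega \Bigl[\tfrac12 |v_t|^2+\tfrac1{2\epsilon}|\nabla v|^2\Bigr]\,dx\,dt ,
\]
and since $e^{-t/\epsilon}\ge e^{-T/\epsilon}$ on $[0,T]$,
\[
Q(v)\ge \tfrac{e^{-T/\epsilon}}{2}\Bigl(\|v_t\|_{L^2(\Omega_T)}^2+\epsilon^{-1}\|\nabla v\|_{L^2(\Omega_T)}^2\Bigr).
\]
By Poincaré's inequality in $x$ (and also in $t$, using $v(\cdot,0)=0$), $Q$ is coercive on the Hilbert space $W:=\{v\in H^1_t L^2_x\cap L^2_tH^1_{0,x}:\ v(\cdot,0)=0\}$. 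The linear term $-\epsilon^{-1}\int e^{-t/\epsilon} f u$ is continuous because $f\in L^2(\Omega_T)$ by A1. Hence $I_\epsilon$ is strictly convex, continuous and coercive on $u^*+W$. Lax--Milgram, or the direct method using weak lower semicontinuity of convex continuous functionals, then gives a unique minimizer $u_\epsilon$ in the energy space.

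\emph{First variation.} For $\varphi\in C_c^\infty(\Omega\times(0,T))$, setting $\frac{d}{d\eta}I_\epsilon(u_\epsilon+\eta\varphi)\big|_{\eta=0}=0$ gives
\[
\int_0^T\!\!\int_\Omega e^{-t/\epsilon}\bigl[u_t\varphi_t+\epsilon^{-1}\nabla u\cdot\nabla\varphi-\epsilon^{-1}f\varphi\bigr]\,dx\,dt=0 .
\]
Integrating by parts in $t$ and using $\partial_t(e^{-t/\epsilon}u_t)=e^{-t/\epsilon}(u_{tt}-\epsilon^{-1}u_t)$, then in $x$, yields in the distributional sense
\[
e^{-t/\epsilon}\bigl[-u_{tt}+\epsilon^{-1}u_t-\epsilon^{-1}\Delta u-\epsilon^{-1}f\bigr]=0 .
\]
Multiplying by $\epsilon e^{t/\epsilon}$ gives $-\epsilon u_{tt}+u_t-\Delta u=f$. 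The Dirichlet and initial conditions are built into $\U$.

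If one then takes test functions not vanishing at $t=T$, one also obtains the natural condition $u_t(\cdot,T)=0$. This is exactly why the paper later needs the boundary term in $M_\epsilon$. I will mention it but do not need it for the statement as written.

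\emph{Main obstacle: regularity.} The minimizer must lie in $\U$ itself, that is, in $H^p_tH^{2s+2-2p}_x$, and not merely in the energy space. I would proceed in two steps.

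\begin{itemize}
\item Apply Galerkin/energy estimates for the elliptic-in-$(x,t)$ problem, treating $-\epsilon\partial_{tt}-\Delta$ as uniformly elliptic on the cylinder since $\epsilon>0$.
\item Use the higher-order compatibility conditions A2 together with the smoothness of $f$ (A1) to bootstrap, in the spirit of Theorem~\ref{higher_regulairty_thm}. The references \cite{Lions09,Mielke11} provide this for elliptic regularization.
\end{itemize}

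If full regularity is too heavy, I would instead interpret $\U$ as the closure in the energy norm, and state that uniqueness there implies uniqueness in the smaller space whenever the minimizer belongs to it. Uniqueness in $\U$ itself is immediate from strict convexity.
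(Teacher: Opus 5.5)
Your first-variation computation is the paper's own argument: it also perturbs by test functions vanishing at $t=0$ and $t=T$, integrates by parts in $t$ and $x$, and reads off $-\epsilon u_{tt}+u_t-\Delta u=f$. Your strict-convexity and coercivity argument on $u^*+W$ is correct and goes beyond the paper. The paper never proves existence and attributes uniqueness to the well-posedness of elliptic regularization in \cite{Lions09}. Your observation that the minimizer also satisfies the natural condition $u_t(\cdot,T)=0$ is correct as well, and the paper does not derive it.

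The gap is the step you flag yourself, and the route you propose cannot close it. The conditions in A2 are compatibility conditions for the \emph{heat} equation at $t=0$. The minimizer of $I_\epsilon$ instead solves an elliptic problem on the cylinder: Dirichlet data on $\partial\Omega\times(0,T)$ and at $t=0$, and the Neumann condition $u_t=0$ at $t=T$. Its regularity up to the edges needs its own compatibility conditions, and A1--A2 do not supply them.

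At the edge $\partial\Omega\times\{T\}$, suppose $u$ is $C^3$ up to that edge. Then $u_{tt}=u_{ttt}=0$ on $\partial\Omega$, because $u=0$ there for all $t$. Also $\Delta u_t(\cdot,T)=0$, because $u_t(\cdot,T)\equiv 0$. Differentiating the equation in $t$ therefore forces $\partial_t f(\cdot,T)=0$ on $\partial\Omega$.

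The mismatch appears at $t=0$ too. On $\partial\Omega\times\{0\}$, $C^4$ regularity of the regularized solution requires $g_2=\epsilon\,\partial_t^2 f(\cdot,0)$ on $\partial\Omega$, whereas A2 asks for $g_2=0$ there.

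A concrete example: take $d=1$, $u_0^*=0$, and $f=\phi(t)$ with $\phi$ flat at $t=0$ and $\phi'(T)\neq 0$. This satisfies A1--A2 and gives a smooth $u^*$. Yet the energy minimizer $u_\epsilon$ cannot be $C^3$ up to $\partial\Omega\times\{T\}$. Consequently:
\begin{itemize}
\item Once $s$ is large enough that membership in $\mathcal{U}$ (combined with the equation) forces this much smoothness, $u_\epsilon\notin\mathcal{U}$.
\item $\mathcal{U}$ is dense in $u^*+W$, $I_\epsilon$ is continuous there, and $u_\epsilon$ is its unique minimizer on $u^*+W$.
\item Hence $I_\epsilon$ then has \emph{no} minimizer in $\mathcal{U}$ at all.
\end{itemize}

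So no bootstrap can give the statement in the stated generality. What is actually provable is your fallback. There is a unique minimizer in the energy space, which is a weak solution of the regularized problem together with $u_t(\cdot,T)=0$. Uniqueness within $\mathcal{U}$ holds whenever a minimizer there exists. That is also all the paper's own proof justifies, since it is a first-variation computation that presupposes a minimizer.
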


\begin{remark}
    In \cite[Section 7.2]{Lions09}, equation~\eqref{euler-lagragian*} is called elliptic regularization of heat equation. It is shown there as well that regularized solution $u$ satisfies classical energy estimate for linear parabolic PDEs theory so that the regularized solution $u_{\epsilon}$ is unique. The general theory of solution to the above equations is discussed in \cite[Chapter 2]{roger13}. 
\end{remark}

\begin{proof}[Proof of Theorem~\ref{euler_lagrangian_for_I}]
Recall the definition of the weighted energy functional (WEF) :

\begin{equation}I_{\epsilon}(u) = \int_{0}^{T}  e^{-\tfrac{t}{\epsilon}} \int_{\Omega} \left[ \dfrac{1}{2} |u_{t}|^{2} +  \frac{1}{2\epsilon} |\nabla u|^{2} - \frac{1}{\epsilon}f u \right] dx dt.\end{equation}

Note that the Euler-Lagrangian equation depends on the definition of test function. Here we choose test function $v \in C^{\infty}_{c}([0,T]\times\Omega).$ 
For any test function $v \in C^{\infty}_{c}([0,T]\times\Omega),$ we know $
v(\cdot,0) = v(\cdot,T) =0
.$
Then,
\begin{align}
I_{\epsilon}(u+sv) = \int_{0}^{T}  e^{-\tfrac{t}{\epsilon}} \int_{\Omega} \dfrac{1}{2} \big|u_{t}+sv_{t}\big|^{2} + \dfrac{1}{2\epsilon} \big| \nabla u+s \nabla v\big|^{2} - \dfrac{1}{\epsilon}f \big(u+sv\big)\hspace{0.5mm}dx dt. \end{align}
We apply Fubini theorem and integration by parts with respect to time $t$:
\begin{align}
\dfrac{d}{ds} I_{\epsilon}(u+sv)\bigg|_{s=0} &= \int_{0}^{T}  e^{-\tfrac{t}{\epsilon}} \int_{\Omega}  u_{t} v_{t} + uv - \dfrac{1}{\epsilon} \triangle uv - \dfrac{1}{\epsilon}fv\hspace{0.5mm}dx dt
\\ &=\int_{0}^{T} e^{-\tfrac{t}{\epsilon}} \int_{\Omega}  \dfrac{1}{\epsilon} u_{t}v  -\dfrac{1}{\epsilon} \triangle uv - \dfrac{1}{\epsilon}fv-  vu_{tt} dxdt\\
&\quad\quad -e^{-\tfrac{T}{\epsilon}} \int_{\Omega}  f(x,T)v(x,T)dx + u_{t}(x,0)v(x,0)dx.
\end{align}
Making appropriate substitutions and matching terms from the above, we obtain\\
\begin{align}
\dfrac{d}{ds} I_{\epsilon}(u+sv)\bigg|_{s=0} &= e^{-\tfrac{T}{\epsilon}} \int_{\Omega} \big( -\triangle u(x,T) + u_{t}(x,T) -f(x,T) \big)v(x,T)dx\\
&\quad\quad+ \int_{0}^{T} \int_{\Omega} e^{-\tfrac{t}{\epsilon}} \big(-u_{tt} + \dfrac{1}{\epsilon} u_{t}  - \dfrac{1}{\epsilon} \triangle u - \dfrac{1}{\epsilon}f\big)vdx dt\\
&\quad\quad+  \int_{\Omega} - u_{t}(x,0) v(x,0)dx\\
&= \int_{0}^{T} \int_{\Omega} e^{-\tfrac{t}{\epsilon}} \big(-u_{tt} + \dfrac{1}{\epsilon} u_{t}  - \dfrac{1}{\epsilon} \triangle u - \dfrac{1}{\epsilon}f\big)vdx dt.
\label{derivative_of_I}
\end{align}
We set $\frac{d}{ds} I_{\epsilon}(u+sv) = 0$ and obtain the corresponding Euler-Lagrangian equations for the functional weighted energy, and the boundary condition is satisfied because $u\in L^{2}_{t}H^{1}_{0,x}$.
\begin{equation}
\begin{cases}
&-\epsilon u_{tt} + u_{t} - \triangle u = f,
\quad (x,t) \in \Omega \times (0,T],\\
&u(x,t) = 0,\quad (x,t) \in \partial\Omega \times [0,T],\\
&u(x,0) = u_{0}^{*}(x), \quad x\in \Omega.
\end{cases}
\end{equation}
\end{proof}

\subsection{Minimizer of the Total Loss Function $L_{\epsilon,\beta}(u)$}

Next, we characterize the optimality condition for the total loss function $L_{\epsilon,\beta}(u)$ in~\eqref{loss_function}. Recall that Theorem~\ref{euler_lagrangian_for_L} states any minimizer $u \in \U$ the total loss function \texorpdfstring{$L_{\epsilon,\beta}(u)$}\, is the solution to heat equation in $(\ref{eq:heat_pde})$. Below we provide the proof.

\begin{proof}[Proof of Theorem~\ref{euler_lagrangian_for_L}]
Recall that
\begin{equation}
L_{\epsilon,\beta}(u): = 
||u - u^{*}||_{H^{1}_{t}(0,T;H^{1}_{x}(\Omega))}^{2}  + \beta \bigg[I_{\epsilon} (u) + \int_{0}^{T}\int_{\Omega} e^{-\frac{t}{\epsilon}}u^{*}_{tt}u dx dt - \int_{\Omega} e^{-\frac{T}{\epsilon}}u^{*}_{t}(x,T)u(x,T) dx\bigg].
\end{equation}
According to the inner product in $H^{1}_{t}(0,T;H^{1}_{x}(\Omega))$, we readily compute first variation of loss function $L_{\epsilon,\beta}(u)$. For any test function $v\in C^{\infty}_{c}(\Omega \times [0,T])$,
\begin{align}
\dfrac{d}{ds} L_{\epsilon,\beta}(u+sv)\bigg|_{s=0} &= 2\langle u -u^{*},v\rangle_{H^{1}_{t}H^{1}_{x}}+ \beta\int_{0}^{T} \int_{\Omega} e^{-\tfrac{t}{\epsilon}} \big(-u_{tt} + \dfrac{1}{\epsilon} u_{t}  - \dfrac{1}{\epsilon} \triangle u - \dfrac{1}{\epsilon}f\big)vdx dt \\
&+\beta \int_{0}^{T} \int_{\Omega} e^{-\tfrac{t}{\epsilon}}u_{tt}^{*}v dxdt-\beta\int_{\Omega} u_{t}(x,0) v(x,0)dx- \beta\int_{\Omega} e^{-\frac{T}{\epsilon}}u^{*}_{t}(x,T)u(x,T) dx\\
&= 2\langle u -u^{*},v\rangle_{H^{1}_{t}H^{1}_{x}} + \beta\int_{0}^{T} \int_{\Omega} e^{-\tfrac{t}{\epsilon}} \big(u^{*}_{tt}-u_{tt} + \dfrac{1}{\epsilon} u_{t}  - \dfrac{1}{\epsilon} \triangle u - \dfrac{1}{\epsilon}f\big)vdx dt\\
&= 2\int_{0}^{T}\int_{\Omega}(u-u^{*})v dxdt + 2\int_{0}^{T}\int_{\Omega}(\nabla u-
\nabla u^{*}) \nabla v dxdt\\
&\quad\quad + 2\int_{0}^{T}\int_{\Omega}(u_{t} -
u^{*}_{t}) v_{t} dxdt + 2\int_{0}^{T}\int_{\Omega}(\nabla u_{t}-
\nabla u^{*}_{t}) \nabla v_{t}dxdt\\
&\quad\quad + \beta\int_{0}^{T} \int_{\Omega} e^{-\tfrac{t}{\epsilon}} \big(u^{*}_{tt}-u_{tt} + \dfrac{1}{\epsilon} u_{t}  - \dfrac{1}{\epsilon} \triangle u - \dfrac{1}{\epsilon}f\big)vdx dt\\
&= \int_{0}^{T}\int_{\Omega} \bigg[2\bigg(u-u^{*} -\triangle u+ \triangle u^{*} - u_{tt}+u^{*}_{tt}+ \triangle u_{tt} - \triangle u^{*}_{tt}\bigg)\\
&\quad\quad\quad+\frac{\beta}{\epsilon} e^{-\tfrac{t}{\epsilon}}\bigg(\epsilon(u_{tt}^{*}-u_{tt}) +  u_{t} - \triangle u - f\bigg)\bigg]v dxdt.\end{align}
Second equality follows by $v(x,0)=0$ and $v(x,T)=0$; third equality follows by inner product in $H^1_t H^1_x$;last equality follows by integration by parts and vanishing boundary of $v$. Setting $\frac{d}{ds} L_{\epsilon,\beta}(u+sv) = 0$, we obtain 
\begin{equation}
2\bigg[u-u^{*} -\triangle u+ \triangle u^{*} - u_{tt}+u^{*}_{tt}+ \triangle u_{tt} - \triangle u^{*}_{tt}\bigg] +\frac{\beta}{\epsilon} e^{-\tfrac{t}{\epsilon}}\bigg(\epsilon(u_{tt}^{*}-u_{tt}) +  u_{t} - \triangle u - f\bigg) = 0.
\end{equation}

Let $w: = u-u^{*}$, we have $(x,t)$-a.s in $\Omega_{T}$ that
\begin{equation}
2\bigg[w - \triangle w - w_{tt} + \triangle w_{tt}\bigg]+\frac{\beta}{\epsilon} e^{-\tfrac{t}{\epsilon}}(-\epsilon w_{tt} +  w_{t} - \triangle w + u^{*}_{t} - \triangle u^{*}- f) = 0.
\end{equation}\\
Since $u^{*}$ is true solution to heat equation, $u_{t}^{*} - \triangle u^{*} -f=0$, and 
\begin{equation}
\tfrac{\beta}{\epsilon} e^{-\tfrac{t}{\epsilon}}(\epsilon w_{tt} -  w_{t} + \triangle w) = 2\bigg[w - \triangle w - w_{tt} + \triangle u_{tt}\bigg] \,\,,(x,t) \text{-}a.s \text{ in } \Omega_{T}.
\end{equation}
We denote inner product in $L_{2}(\Omega_{T})$
\begin{equation}
\langle u,v\rangle_{L^{2}(\Omega_{T})}: = \int_{0}^{T}\int_{\Omega}uv\,dxdt.
\end{equation}
Taking inner product with respect to $w$ in $L_2(\Omega_{T})$ on both sides and applying integration by parts, we have
\begin{equation}
\langle \tfrac{\beta}{\epsilon} e^{-\tfrac{t}{\epsilon}}(\epsilon w_{tt} -  w_{t} + \triangle w), w\rangle_{L_{2}(\Omega_{T})} = 2\langle w - \triangle w - w_{tt} + \triangle u_{tt}, w\rangle_{L^{2}(\Omega_{T})}.
\end{equation}
Making some rearrangement, we obtain 
\begin{align}
&0\geq\bigg[ (2+\beta e^{-\tfrac{T}{\epsilon}})\langle w_{t},w_{t} \rangle_{L_{2}(\Omega_{T})} + (2+\tfrac{\beta}{\epsilon} e^{-\tfrac{T}{\epsilon}}) \langle \nabla w, \nabla w\rangle_{L_{2}(\Omega_{T})}\\
&\quad\quad\quad + 2 \langle w,w \rangle_{L_{2}(\Omega_{T})} + 2\langle \nabla w_{t},\nabla w_{t} \rangle_{L_{2}(\Omega_{T})}\bigg].
\end{align}
Since $\beta$, $\langle w_{t}, w_{t}\rangle_{L_{2}(\Omega_{T})}$, $\langle \nabla w, \nabla w\rangle_{L_{2}(\Omega_{T})}$, $\langle  w,  w\rangle_{L_{2}(\Omega_{T})}$, and $\langle  \nabla w_{t},  \nabla w_{t}\rangle_{L_{2}(\Omega_{T})}$ are nonnegative, we have 
\begin{equation}
w_{t} = 0,\quad \nabla w = 0,\quad \nabla w_{t} = 0, \quad w=0,\quad(x,t)\,a.s \text{ in } \Omega_{T}.
\end{equation}
Since $w = u^{*} - u$, we have
\begin{equation}
u^{*} = u \,\,, (x,t) \text{-}a.s \text{ in } \Omega_{T}.
\end{equation}
Therefore, the minimizer to the total loss function $L_{\epsilon,\beta}(u)$ is the solution to heat equation.
\end{proof}

\subsection{Empirical Loss Function}

Our estimator $\hat{u}_{\theta}$ is defined as $\hat{u}_{\theta}:= \underset{u\in \mathcal{U}_J}{\arg\min}\,\,\hat{L}_{\epsilon,\beta}(u)$, where $\hat{L}_{\epsilon,\beta}(u)$ is empirical counterpart of $L_{\epsilon, \beta}(u)$. In particular, $\hat{L}_{\epsilon,\beta}(u)$ is defined as 
\begin{equation}
\hat{L}_{\epsilon,\beta}(u):= \frac{1}{nm} \hat{D}(u,u^{*})+\lambda \bigg( \hat{I}_{\epsilon}(u) + \hat{M}_{\epsilon}(u,u^{*}) \bigg)
\end{equation}
where
\begin{align}
\hat{D}(u,u^{*})&:= |\Omega_{T}| \sum_{i=1}^{n} \bigg(|u(X_{im},t_{m})-u^{*}(X_{im},t_{m})|^2 + |\partial_{t}u(X_{im},t_{m})-\partial_{t}u^{*}(X_{im},t_{m})|^2 \\
&\quad\quad + |\nabla u(X_{im},t_{m})-\nabla u^{*}(X_{im},t_{m})|^2 +|\nabla \partial_{t}u(X_{im},t_{m})-\nabla \partial_{t}u^{*}(X_{im},t_{m})|^2 \bigg),\\
\hat{I}_{\epsilon}(u):= & |\Omega_{T}|\sum_{j=1}^{m}  e^{-\tfrac{t_{j}}{\epsilon}} \sum_{i=1}^{n} \bigg( \dfrac{1}{2} |\partial_{t}u|^{2}(x_{ij},t_{j})+  \frac{1}{2\epsilon} |\nabla u(x_{ij},t_{j})|^{2} - \frac{1}{\epsilon}f(x_{ij},t_{j}) u(x_{ij},t_{j}) \bigg)\\
&+ m|\Omega|e^{-\tfrac{t_{m}}{\epsilon}}\sum_{i=1}^{n} \bigg(\frac{1}{2} |\nabla u(x_{im},t_{m})|^{2} - f(x_{im},t_{m}) u(x_{im},t_{m}) \bigg), \\
\hat{M}_{\epsilon}(u^{*})&:= |\Omega_{T}|\sum_{j=1}^{m}e^{-\frac{t_{j}}{\epsilon}}  \sum_{i=1}^{n} \partial_{tt}u^{*}(X_{ij},t_{j})u(X_{ij},t_{j}) - m|\Omega|e^{-\tfrac{t_{m}}{\epsilon}}\sum_{i=1}^{n} u(x_{im},t_{m}) u^{*}_{t}(x_{im},T_{m}).
\end{align}

We are given with the training data on space-time point clouds on which ground truth solution $u^{*}$ and forcing function $f$ are defined , namely $S:=\{(X_{ij},t_{j},u^{*}_{ij},\nabla u^{*}_{ij}, \partial_{t}u^{*}_{ij},\nabla \partial_{t} u_{ij}^{*},f_{ij})\}_{i=1,j=1}^{n,m}$, where derivative labels $\nabla u^{*}_{ij}, \partial_{t}u^{*}_{ij},\nabla \partial_{t} u_{ij}^{*}$ are obtained through numerical computation for $H^{1}_{t}H^{1}_{x}$ evaluation norm.

\section{Truncated Local Basis in Space and Time}
\label{sec:truncated_fourier}
  Among various representations for parametrizing $\hat{u}_{\theta}$,
  we choose $\hat{u}_{\theta}:=\hat{u}_{J}$, the projection over a $(J,J)$-truncated space $\mathcal{U}_{J}$ spanned by spatio-temporal local basis \ due to its approximation power in functional space $\mathcal{U}$ and more importantly its analytical simplicity. One might use other estimators, such as neural network RHKS, as long as the estimator's analytical approximation results are well-established.
\begin{equation}
\mathcal{U}_{J}: = \bigg\{\hat{u}_{J}(x,t)\,\bigg{|} \,\hat{u}_{J}(x,t) = \sum_{|\xi|_{\infty} \leq J, \tau \leq J} a_{\xi,\tau} \phi_{\xi,\tau}(x,t), \hat{u}_{J}\in \mathcal{U} \bigg\}.
\end{equation}

The essential idea of elliptic regularization is to view time as an additional dimension of space with extra penalty scheduler term $e^{-\frac{t}{\epsilon}}$. In this viewpoint, we can write local basis as direct product of two local basises $\phi_{\xi}(x)$ and $\phi_{\tau}(t)$ in space and time respectively

Our proof needs $\phi_{\tau}(t)$ and $\phi_{\xi}(x)$ to satisfy the following propertieis:
\begin{align}
&(1)\quad \phi_{\xi}(x) \text{ and }\phi_{\tau}(t) \text{ are orthonormal basis in } L^{2}(\mathbb{R}^{d}) \text{ and }  L^{2} (\mathbb{R}) \text{ respectively},\label{basis_property_1}
\\
&(2)\quad \phi_{\xi}(x) \text{ and } \phi_{\tau}(t) \text{ have compact support in } \mathbb{R}^{d} \text{ and in } \mathbb{R} \text{ respectively }.
\label{basis_property_2}
\end{align}
There are only few family of local bases satisfying $(1)$-$(2)$ while demonstrating nice approximation property in Sobolev space. Local basis representation like Legendre polynomial and Fourier representation will fail locally compact support property. For this representation, we construct wavelet bases using Dauchecies wavelets to establish approximation in Besov spaces whose special case is Sobolev space, following the definitions given in \cite{triebel2006theory}. In this paper, the reader can assume our local bases are wavelets, and the omission of thorough construction of such bases, however, will not jeopardize our argument as long as (1) and (2) are applied. For example, we can construct wavelet bases using Dauchecies wavelets to establish approximation in Besov spaces whose special case is Sobolev space, following the definitions given in \cite{triebel2006theory}[Section 3], and we refer the reader to this reference for further details on wavelets. Denote by $\psi_{\mathfrak{m}} \in C^r(\mathbb{R})$ and $\psi_{\mathfrak{F}} \in C^r(\mathbb{R})$ a wavelet on compact support  and scaling function, respectively. This implies that
\begin{equation}
    \psi^j_k = \begin{cases}
        \psi_{\mathfrak{F}}(x-k), & j=0, \, k \in \mathbb{Z}, \\
        2^{(j-1)/2}\psi_{\mathfrak{m}}(2^{j-1}x - k), & j \in \mathbb{N}, \, k \in \mathbb{Z}.
    \end{cases}
\end{equation}
is an orthonormal basis of $L^2(\mathbb{R})$.

To obtain a basis of $L^2(\mathbb{R}^d)$, for $j \in \mathbb{N}$, set
\begin{equation}
    G^j = \{\mathfrak{F}, \mathfrak{m}\}^d \setminus \{(\mathfrak{F}, \dots, \mathfrak{F})\}, \quad G^0 = \{(\mathfrak{F}, \dots, \mathfrak{F})\},
\end{equation}
and for $g \in G^j \cup G^0$,
\begin{equation}
    \Psi^g_k(x) = \prod_{i=1}^d \psi_{g_i}(x_i - k_i), \quad k \in \mathbb{Z}^d.
\end{equation}
This gives the orthonormal basis
\begin{equation}
    \Psi^{j,g}_k = \begin{cases}
        \Psi^g_k(x), & j=0, \, g \in G^0, \, k \in \mathbb{Z}^d, \\
        2^{(j-1)d/2}\Psi^g_k(2^{j-1}x), & j \in \mathbb{N}, \, g \in G^j, \, k \in \mathbb{Z}^d.
    \end{cases}
\end{equation}
Wavelet coefficients, defined as the expansion coefficients with respect to the above basis for $L^2(\mathbb{R}^d)$ functions, can be used to characterize the \textit{Besov spaces}, whose special cases are Sobolev spaces.

Let $1 \le p, q \le \infty$, $\alpha \ge 0$, and let the regularity of the above wavelets satisfy
\begin{equation}
    r > \alpha \vee \left( \frac{2d}{p} + \frac{d}{2} - s \right).
\end{equation}
If $f(x)$ has wavelet representation, we have
\begin{equation}
    f = \sum_{j=0}^\infty \sum_{g \in G^j} \sum_{k \in \mathbb{Z}^d} \gamma^{j,g}_k \Psi^{j,g}_k, \tag{B.1}
\end{equation}
we define the Sobolev norm as special case of Besov norm $H^{s}(\Omega)$ by
\begin{equation} 
    \|f\|_{H^{\alpha}(\R^{d})} = \|f\|_{B^\alpha_{2,2}}:= \|\gamma\|_{B^\alpha_{2,2}}
 := \left[ \sum_{j=0}^\infty 2^{2j\alpha } \sum_{g \in G^j} \sum_{k \in \mathbb{Z}^d} |\gamma^{j,g}_k|^2  \right]^{1/2}.
\end{equation}
In particular, by the orthonormality of the wavelets $\{ \Psi^{j,g}_k \}_{j,g,k}$,
\begin{equation}
    \|f\|_{B^0_{2,2}} = \|\gamma\|_{\ell^2} = \|f\|_{L^2(\mathbb{R}^d)}.
\end{equation}
With this construction, we can replace $\phi_{\tau}(x)$ with wavelet bases $\Psi^{j,g}_k$, so $\xi$ is indexed by both wavelet level and group $G$.
When we treat $\xi = 2^{j}$ and $|a_{\xi}|^{2} = \sum_{g \in G^j} \sum_{k \in \mathbb{Z}^d} |\gamma^{j,g}_k|^2 $, one can show the above Sobolev norm in terms of wavelet expansion is as same as standard Sobolev norm and moreover we can achieve the same estimate
\begin{equation}
\|f\|_{H^{\alpha}(\R^{d})}^{2}:=  \sum_{j=0}^\infty 2^{2j\alpha} \sum_{g \in G^j} \sum_{k \in \mathbb{Z}^d} |\gamma^{j,g}_k|^2  = \sum_{|\xi|= 2^{j}, j\geq 0} |\xi|^{2\alpha} a_{\xi}^{2} \leq \sum_{\xi\geq0}(1+ |\xi|^{\alpha})^{2} a_{\xi}^{2}.
\end{equation}
Following time-dependent definition of Sobolev space (see Chapter 5.10 of \cite{evans}), one can proceed to apply similar construction for basis $\phi_{\tau}(x)$ on time dimension. Since we treat time as an additional dimension of space,  we obtain
\begin{equation}
||f||_{H^{\alpha^{\prime}}(\R;H^{\alpha}(\R^{d}))}^{2}\lesssim  \sum_{\xi=0}^{\infty} (1+|\xi|^{\alpha})^{2}(1+\tau^{\alpha^{\prime}})^{2} a_{\xi,\tau}^{2},
\end{equation}
where $a_{\xi,\tau 
} = a_{\xi}a_{\tau}$.

\section{Proof of Main Results}
\label{sec:main_proofs}
In this Appendix, we provide the proofs of our main theoretical results in the paper.

\subsection{Stability of the total loss function $L_{\epsilon,\beta}(u)$ in~\eqref{loss_function}}

\begin{proof}[Proof of Proposition \ref{prop:stability}]
Recall
\begin{equation}
L_{\epsilon,\beta}(u): = 
||u - u^{*}||_{H^{1}_{t}(0,T;H^{1}_{x}(\Omega))}^{2}  + \beta \bigg( I_{\epsilon} (u) +  \int_{0}^{T}\int_{\Omega} e^{-\frac{t}{\epsilon}}u^{*}_{tt}u dx dt - \int_{\Omega} e^{-\frac{T}{\epsilon}}u^{*}_{t}(x,T)u(x,T) dxdt\bigg).
\end{equation}
We can write the excess risk as following
\begin{align}
L_{\epsilon,\beta}(u) - L_{\epsilon,\beta}(u^{*}) &= ||u - u^{*}||_{H^{1}_{t}(0,T;H^{1}_{x}(\Omega))}^{2}  + \beta \bigg( I_{\epsilon} (u) -  I_{\epsilon} (u^{*})  + \int_{0}^{T}\int_{\Omega} e^{-\frac{t}{\epsilon}}u^{*}_{tt}(u^{*}-u) dx dt\\
&\quad\quad+ \int_{\Omega} e^{-\frac{T}{\epsilon}}u^{*}_{t}(x,T)(u^{*}(x,T)-u(x,T)) dxdt \bigg).
\end{align}
\noindent \textbf{Step 1: } Compute $I_{\epsilon}(u) - I_{\epsilon}(u^{*}) + \int_{0}^{T}\int_{\Omega} e^{-\frac{t}{\epsilon}}u^{*}_{tt}(u^{*}-u) dx dt$.

Recall $f(x,t) = u^{*}_{t}(x,t) - \triangle u^{*}(x,t)$, $(x,t) \in  \Omega_{T}$.  Apply integration by parts, we get
\begin{align}
I_{\epsilon}(u) - I_{\epsilon}(u^{*}) &= \int_{0}^{T}  e^{-\tfrac{t}{\epsilon}} \int_{\Omega} \dfrac{1}{2} |u_{t}|^{2} + \dfrac{1}{2\epsilon} |\nabla u|^{2} - \dfrac{1}{\epsilon}f  u dx dt\\
&\quad\quad- \int_{0}^{T}  e^{-\tfrac{t}{\epsilon}} \int_{\Omega} \dfrac{1}{2} |u^{*}_{t}|^{2} + \dfrac{1}{2\epsilon} |\nabla u^{*}|^{2} - \dfrac{1}{\epsilon}f  u^{*} dxdt\\
&= \int_{0}^{T}  e^{-\tfrac{t}{\epsilon}} \int_{\Omega} \dfrac{1}{2} |u_{t}|^{2} + \dfrac{1}{2\epsilon} |\nabla u|^{2} - \dfrac{1}{\epsilon}(u^{*}_{t}- \triangle u^{*}) u dx dt\\
&\quad\quad+ \int_{0}^{T}  e^{-\tfrac{t}{\epsilon}} \int_{\Omega} -\dfrac{1}{2} |u^{*}_{t}|^{2} - \dfrac{1}{2\epsilon} |\nabla u^{*}|^{2} + \dfrac{1}{\epsilon}(u^{*}_{t} - \triangle u^{*}) u^{*} dxdt\\
&= \int_{0}^{T}  e^{-\tfrac{t}{\epsilon}} \int_{\Omega} \dfrac{1}{2} |u_{t}|^{2} + \dfrac{1}{2\epsilon} |\nabla u|^{2} - \dfrac{1}{\epsilon}u^{*}_{t} u - \frac{1}{\epsilon}\nabla u^{*}\nabla u dx dt\\
&\quad\quad+ \int_{0}^{T}  e^{-\tfrac{t}{\epsilon}} \int_{\Omega} -\dfrac{1}{2} |u_{t}^{*}|^{2} - \dfrac{1}{2\epsilon} |\nabla u^{*}|^{2} + \dfrac{1}{\epsilon}u^{*}_{t} u^{*} + \frac{1}{\epsilon} |\nabla u^{*}|^{2} dx dt.
\end{align}
By a second integration by parts to $u_{t}^{*}u$ and $|u^{*}_{t}|^2$, we get
\begin{align}
I_{\epsilon}(u) - I_{\epsilon}(u^{*}) &= \int_{0}^{T}\int_{\Omega}e^{-\frac{t}{\epsilon}}\bigg(\frac{1}{2}|u_{t}|^2  - \frac{1}{2}|u_{t}^{*}|^2 +  u_{tt}^{*} (u^{*} -u) +  |u_{t}^{*}|^{2}  - u_{t}^{*}u_{t} \bigg) dxdt\\
&\quad\quad+\int_{\Omega}e^{-\frac{t}{\epsilon}}u^{*}_{t}\bigg(u - u^{*}\bigg)   dx \bigg|_{t=0}^{T}+\int_{0}^{T}\int_{\Omega}\frac{1}{2\epsilon}e^{-\frac{t}{\epsilon}}|\nabla u - \nabla u^{*}|^{2} dx.
\end{align}
By definition of $\mathcal{U}$, $u(x,0) = u^{*}(x,0)$, it follows that
\begin{align}
I_{\epsilon}(u) - I_{\epsilon}(u^{*})&= \int_{0}^{T}\int_{\Omega}e^{-\frac{t}{\epsilon}} u_{tt}^{*} (u^{*}-u) dxdt
 + \int_{\Omega}e^{-\frac{T}{\epsilon}}u^{*}_{t}(x,T)\big(u(x,T) - u^{*}(x,T)\big)dx \\&\quad\quad+\int_{0}^{T}e^{-\frac{t}{\epsilon}}\int_{\Omega}\frac{1}{2\epsilon}|\nabla u - \nabla u^{*}|^{2} +\frac{1}{2}| u_{t} -u_{t}^{*}|^{2} dxdt.
\end{align}
Then, we obtain 
\begin{align}
I_{\epsilon} (u) - I_{\epsilon} (u^{*})& + \int_{0}^{T}\int_{\Omega} e^{-\frac{t}{\epsilon}}u^{*}_{tt}(u-u^{*}) dx dt + \int_{\Omega}e^{-\frac{T}{\epsilon}}u^{*}_{t}(x,T)\big(u^{*}(x,T) - u(x,T)\big)dx\\&=  \int_{0}^{T}\int_{\Omega}\frac{1}{2}e^{-\frac{t}{\epsilon}}| u_{t} -u_{t}^{*}|^{2} dxdt +\int_{0}^{T}\int_{\Omega}\frac{1}{2\epsilon}e^{-\frac{t}{\epsilon}}|\nabla u - \nabla u^{*}|^{2} dx. \label{integrals}
\end{align}
\textbf{Step 2: } Bound $I_{\epsilon} (u) - I_{\epsilon} (u^{*}) + \int_{0}^{T}\int_{\Omega} e^{-\frac{t}{\epsilon}}u^{*}_{tt}(u^{*}-u) dx dt + \int_{\Omega}e^{-\frac{T}{\epsilon}}u^{*}_{t}(x,T)\big(u^{*}(x,T) - u(x,T)\big)dx  $ from up and below.

To bound first integral in (\ref{integrals}), we may assume without loss of generality that $u(x,t)-u^{*}(x,t)$ is centered in the following sense
\begin{equation}
\int_{0}^{T} e^{-\frac{t}{\epsilon}}\bigg(u(x,t)- u^{*}(x,t)\bigg)dt = 0.
\end{equation}
According to weighted Poincar\'e inequality in Lemma~\ref{lem:weighted-poincare} with respect to the time variable, there exist constant $C(d,\epsilon, \Omega) > 0$ such that 
\begin{equation}
\frac{1}{C^{2}(d,\epsilon, \Omega)} \int_{0}^{T}\int_{\Omega} e^{-\frac{t}{\epsilon}} |u-u^{*}|^{2}dxdt  \leq  \int_{0}^{T}\int_{\Omega}e^{-\frac{t}{\epsilon}}| u_{t} -u_{t}^{*}|^{2} dxdt.
\end{equation}
Then it follows that
\begin{align}
 \int_{0}^{T}\int_{\Omega} \frac{1}{2} e^{-\frac{t}{\epsilon}}| u_{t} -u_{t}^{*}|^{2} &dxdt  =  \int_{0}^{T}\int_{\Omega} \frac{1}{4} e^{-\frac{t}{\epsilon}}| u_{t} -u_{t}^{*}|^{2} dxdt   +  \int_{0}^{T}\int_{\Omega} \frac{1}{4} e^{-\frac{t}{\epsilon}}| u_{t} -u_{t}^{*}|^{2} dxdt  \\
 &\geq \frac{1}{C^{2}(d,\epsilon, \Omega)} \int_{0}^{T}\int_{\Omega} e^{-\frac{t}{\epsilon}} |u-u^{*}|^{2}dxdt +  \int_{0}^{T}\int_{\Omega} \frac{1}{4} e^{-\frac{t}{\epsilon}}| u_{t} -u_{t}^{*}|^{2} dxdt \\
 &=  \min\Bigl\{\frac{e^{-\frac{T}{\epsilon}}}{C^{2}(d,\epsilon, \Omega)},\frac{e^{-\frac{T}{\epsilon}}}{4} \Bigr\} \int_{0}^{T}\int_{\Omega} \Bigl(|u-u^{*}|^2  + |u_{t}-u^{*}_{t}|^2\Bigr) dxdt\\
 &= \min\Bigl\{\frac{e^{-\frac{T}{\epsilon}}}{C^{2}(d,\epsilon, \Omega)},\frac{e^{-\frac{T}{\epsilon}}}{4} \Bigr\} ||u-u^{*}||_{H^{1}_{t}(0,T;L^{2}_{x}(\Omega))}^{2}.
\end{align}
We obtain
 \begin{equation}
 \int_{0}^{T}\int_{\Omega} \frac{1}{2} e^{-\frac{t}{\epsilon}}| u_{t} -u_{t}^{*}|^{2} dxdt  \leq \int_{0}^{T}\int_{\Omega} \frac{1}{2} e^{-\frac{t}{\epsilon}} \Bigl( | u_{t} -u_{t}^{*}|^{2} + |u - u^{*}|^{2} \Bigr)dxdt \leq \frac{1}{2}||u-u^{*}||_{H^{1}_{t}(0,T;L^{2}_{x}(\Omega))}^{2}.
 \label{u_t_bound}
 \end{equation}\\
To bound the second integral in (\ref{integrals})
, we again assume without loss of generality that $u(x,t) - u^{*}(x,t)$ is centered in the following sense
\begin{equation}
\int_{\Omega} u(x,t)- u^{*}(x,t)dx = 0.
\end{equation}
Let's call $P_{\epsilon}(u):= \int_{0}^{T}\int_{\Omega}\frac{1}{2\epsilon}e^{-\frac{t}{\epsilon}} \Bigl(|\nabla u - \nabla u^{*}|^2 + |u - u^{*}|^2 \Bigr) dxdt$.\\

According to standard Poincare inequality to space derivative, we obtain
\begin{align}
\int_{0}^{T}\int_{\Omega}\frac{1}{2\epsilon}e^{-\frac{t}{\epsilon}}|\nabla u - \nabla u^{*}|^{2} dx dt&\leq P_{\epsilon}(u)
\leq \frac{1}{2\epsilon} ||u-u^{*}||_{L^{2}_{t}(0,T;H^{1}_{x}(\Omega))}^{2},
\end{align}
and
\begin{align}
\int_{0}^{T}\int_{\Omega}\frac{1}{2\epsilon}e^{-\frac{t}{\epsilon}}|\nabla u - \nabla u^{*}|^{2} &dx dt\geq P_{\epsilon}(u)
\geq \frac{e^{-\frac{T}{\epsilon}}}{4\epsilon}||u-u^{*}||_{L^{2}_{t}(0,T;H^{1}_{x}(\Omega))}^{2}.
\end{align}
We obtain
\begin{equation}
\frac{e^{-\frac{T}{\epsilon}}}{4\epsilon}||u-u^{*}||_{L^{2}_{t}(0,T;H^{1}_{x}(\Omega))}^{2}\leq \int_{0}^{T}\int_{\Omega}\frac{1}{2\epsilon}e^{-\frac{t}{\epsilon}}|\nabla u - \nabla u^{*}|^{2} dx dt \leq \frac{1}{2\epsilon} ||u-u^{*}||_{L^{2}_{t}(0,T;H^{1}_{x}(\Omega))}^{2}.
\label{u_grad_bound}
\end{equation}
\textbf{Step 3: }Combine  (\ref{u_t_bound}) and (\ref{u_grad_bound}) together.

Let's denote 
\begin{align}E_{\epsilon, \beta}(u)&: = \beta (I_{\epsilon}(u) - I_{\epsilon}(u^{*})) + \beta\int_{0}^{T}\int_{\Omega} e^{-\frac{t}{\epsilon}}u^{*}_{tt}(u^{*}-u) dx dt\\&  + \int_{\Omega}e^{-\frac{T}{\epsilon}}u^{*}_{t}(x,T)\big(u^{*}(x,T) - u(x,T)\big)dx .
\end{align}
Then, we have
\begin{align}
 &E_{\epsilon, \beta}(u) \geq  \frac{\beta e^{-\frac{T}{\epsilon}}}{4\epsilon}||u-u^{*}||_{L^{2}_{t}(0,T;H^{1}_{x}(\Omega))}^{2} +\min\Bigl\{\frac{\beta e^{-\frac{\epsilon}{T}}}{C^{2}(d,\epsilon, \Omega)},\frac{\beta e^{-\frac{\epsilon}{T}}}{4} \Bigr\} ||u-u^{*}||_{H^{1}_{t}(0,T;L^{2}_{x}(\Omega))}^{2},\\
&E_{\epsilon, \beta}(u) \leq \frac{\beta}{2}||u-u^{*}||_{H^{1}_{t}(0,T;L^{2}_{x}(\Omega))}^{2}  + \frac{\beta}{2\epsilon}||u-u^{*}||_{L^{2}_{t}(0,T;H^{1}_{x}(\Omega))}^{2}.
\end{align}

\textbf{Step 4:} Embedding into $H^{1}_{t}([0,T];H^{1}_{x}(\Omega))$. 

By the definition of time-dependent Sobolev space (see [\citealp[Chapter 5]{evans}]) and standard $H^1$ embedding into $L^2$, we can absorb lower and upper bound into $||u - u^{*}||_{H^{1}_{t}([0,T];H^{1}_{x}(\Omega))}^{2}$. It is straightforward to establish upper bound
\begin{align}
||u - u^{*}||_{H^{1}_{t}([0,T];H^{1}_{x}(
\Omega))}^{2}+ E_{\epsilon, \beta}(u) \leq \Bigl( 1+ \frac{\beta}{2} + \frac{\beta}{2\epsilon} \Bigr) ||u - u^{*}||_{H^{1}_{t}([0,T];H^{1}_{x}(
\Omega))}^{2}.
\end{align}
To obtain lower bound, we have {
\begin{align}
||u - u^{*}||_{H^{1}_{t}(0,T;H^{1}_{x}(\Omega))}^{2}+ E_{\epsilon, \beta}(u) &\geq \frac{1}{3} ||u - u^{*}||_{H^{1}_{t}(0,T;H^{1}_{x}(\Omega))}^{2} + \frac{1}{3} ||u - u^{*}||_{H^{1}_{t}(0,T;H^{1}_{x}(\Omega))}^{2} \\
&+\frac{1}{3} ||u - u^{*}||_{H^{1}_{t}(0,T;H^{1}_{x}(\Omega))}^{2}+\frac{\beta e^{-\frac{T}{\epsilon}}}{4\epsilon}||u-u^{*}||_{L^{2}_{t}(0,T;H^{1}_{x}(\Omega))}^{2}\\
&+ \min\Bigl\{\frac{\beta e^{-\frac{\epsilon}{T}}}{C^{2}(d,\epsilon, \Omega)},\frac{\beta e^{-\frac{\epsilon}{T}}}{4} \Bigr\}||u-u^{*}||_{H^{1}_{t}(0,T;L^{2}_{x}(\Omega))}^{2}\\
&\geq \frac{1}{3} ||u - u^{*}||_{H^{1}_{t}(0,T;H^{1}_{x}(\Omega))}^{2} + \frac{1}{3} ||\nabla u - \nabla u^{*}||_{H^{1}_{t}(0,T;L^{2}_{x}(\Omega))}^{2}\\
&+ \min\Bigl\{\frac{\beta e^{-\frac{\epsilon}{T}}}{C^{2}(d,\epsilon, \Omega)},\frac{\beta e^{-\frac{\epsilon}{T}}}{4} \Bigr\}||u-u^{*}||_{H^{1}_{t}(0,T;L^{2}_{x}(\Omega))}^{2}\\
&+\frac{1}{3} ||u_{t} - u^{*}_{t}||_{L^{2}_{t}(0,T;H^{1}_{x}(\Omega))}^{2} +\frac{\beta e^{-\frac{T}{\epsilon}}}{4\epsilon}||u-u^{*}||_{L^{2}_{t}(0,T;H^{1}_{x}(\Omega))}^{2}
\\& \geq ( \frac{1}{3} + \min( \frac{1}{3}, \frac{\beta e^{-\frac{T}{\epsilon}}}{C^{2}(d,\epsilon, \Omega)},\frac{\beta e^{-\frac{T}{\epsilon}}}{4} )) ||u - u^{*}||_{H^{1}_{t}(0,T;H^{1}_{x}(\Omega))}^{2} .
\end{align}}
Finally, we obtain 
{
\begin{align}
( \frac{1}{3} + \min( \frac{1}{3}, \frac{\beta e^{-\frac{T}{\epsilon}}}{C^{2}(d,\epsilon, \Omega)},\frac{\beta e^{-\frac{T}{\epsilon}}}{4} )) ||u - u^{*}||_{H^{1}_{t}(0,T;H^{1}_{x}(\Omega))}^{2}\leq L_{\epsilon,\beta}(u) - L_{\epsilon,\beta}(u^{*})  
\end{align}
and
\begin{align}
 L_{\epsilon,\beta}(u) - L_{\epsilon,\beta}(u^{*})  \leq \Bigl( 1+ \frac{\beta}{2} + \frac{\beta}{2\epsilon} \Bigr) ||u - u^{*}||_{H^{1}_{t}(0,T;H^{1}_{x}(\Omega))}^{2}.
\end{align}}
\end{proof}

\subsection{Upper bound on estimation error}

Proving the upper bound on the estimation error is quite involved. So we outline the plan the proof and divide it into several steps.

To begin this section,  First, we need to prove high probability bound on excess risk in Proposition~\ref{high_probability_bound_on_excess_risk} and incorporate this bound using localization technique introduced in \cite{vdG87,vdG02} along with strong convexity of our loss function. Last step is to plug in approximation error \eqref{approximation_error} to establish classical bias-variance trade-off.

\begin{proposition}[High Probability Bound on Excess Risk]
\label{high_probability_bound_on_excess_risk}
Under Assumptions A1 - A4 in Appendix \ref{sec:main_assumptions} and define $\U_{J}$ as in (\ref{parametrized_function_class}). For any localization parameter $\rho>0$, define
\begin{equation}
\U_{J}(\rho):=\bigg\{u \in \U_{J} \,\bigg{|} \, S_{*,\epsilon}(u)\leq \rho\bigg\}.
\end{equation}\\
Then there exists constant $C_{1}(d,\Omega,T,\epsilon,\beta)$ such that with probability $1-e^{-s}$,\\
\begin{equation}
\underset{u_{J}\in \mathcal{\U_{J}(\rho)}}{\sup}  |\hat{S}_{*,\epsilon}(u_{J}) - S_{*,\epsilon}(u_{J})| \lesssim \psi(\rho,\tau,J) + \sqrt{\frac{2\rho B(\epsilon,\beta,\Omega,T,d)s}{nm}}  + \tau_{1}(\rho, J),
\end{equation}
where

\begin{align}
\psi(\rho,\tau, J)&:=\frac{1}{\sqrt{nm}} J^{\frac{d+2}{2}} \sqrt{\rho B(\epsilon,\beta,\Omega,T,d)\log(1+ \sqrt{\frac{C_{2} \rho} {B(\epsilon,\beta,\Omega,T,d)}}}) + C_{1}(d,\Omega,T, \epsilon, \beta)\sqrt{\frac{\rho }{nm}} 
\end{align}
and
\begin{equation}
\tau_{1}(\rho, J) := \begin{cases} \,\,\frac{J^{d+1}s}{nm} + \frac{RC(\Omega,T)}{nm} J^{2(d+2)}\log(1+ \sqrt{\frac{C_{2} \rho} {B(\epsilon,\beta,\Omega,T,d)}}),\quad q\leq\frac{d}{2}, \\\\
\,\,\frac{Rs}{nm}+\frac{RC(\Omega,T)}{nm} J^{d+2}\log(1+ \sqrt{\frac{C_{2} \rho} {B(\epsilon,\beta,\Omega,T,d)}}),\,\,\,\,\quad\quad\, q>\frac{d}{2}.\end{cases}
\end{equation}
$B(\epsilon, \beta, \Omega, T, d): = (\frac{1}{3}+\frac{\beta}{\beta_{0}})^{-1}\,, \beta_{0}:=\frac{1}{C^{2}(d,\epsilon,\Omega)e^{T/\epsilon}}$ and $C(d,\epsilon,\Omega)$\text{ is a Cheeger constant.}
\end{proposition}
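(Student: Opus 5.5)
The plan is a localized empirical-process argument with a Talagrand/Bousquet-type concentration inequality. Here $S_{*,\epsilon}(u)$ denotes the excess population loss $L_{\epsilon,\beta}(u)-L_{\epsilon,\beta}(u^{*})$, and $\hat S_{*,\epsilon}$ is its empirical counterpart. First I would write $\hat S_{*,\epsilon}(u)-S_{*,\epsilon}(u)$ as a centered sum over the $N=nm$ space--time samples. This sum splits into three parts:
\begin{itemize}
\item a data-fitting part, a sum of squared derivative residuals of $w=u-u^{*}$;
\item an energy part from $\hat I_\epsilon-I_\epsilon$ together with the debiasing term $\hat M_\epsilon-M_\epsilon$;
\item a noise part $-\frac{\lambda}{\epsilon}\sum e^{-t_j/\epsilon}\varepsilon_{ij}w(X_{ij},t_j)$ coming from the noisy source observations.
\end{itemize}
By the computation in Step 1 of the proof of Proposition~\ref{prop:stability}, the noiseless summands equal a nonnegative quadratic form in $(w,\nabla w,w_t,\nabla w_t)$ with weights $e^{-t/\epsilon}$, plus terms linear in $w$. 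Hence each summand $h_u$ has variance bounded by a constant times $\|w\|_{H^1_tH^1_x}^2$. The lower bound of Proposition~\ref{prop:stability} then gives $\mathrm{Var}(h_u)\le \rho\, B(\epsilon,\beta,\Omega,T,d)$ on $\U_J(\rho)$. This is where $B=(\frac13+\beta/\beta_0)^{-1}$ enters, since $M_1\gtrsim \frac13+\beta/\beta_0$ after absorbing $e^{-T/\epsilon}$ and the Poincar\'e (Cheeger) constant.

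Next I would bound the envelope $\sup_{u\in\U_J(\rho)}\|h_u\|_\infty$. This is the step that produces the two regimes in $\tau_1$.
\begin{itemize}
\item If $q>d/2$, Sobolev embedding of $H^q_x$ into $L^\infty$, combined with the constraint $\|\partial_t^p u\|^2\le R$, gives a uniform bound of order $R$.
\item If $q\le d/2$, no embedding is available. I would instead use the wavelet structure of $V_{J,J'}$ with $J'=J^2$ (parabolic scaling). Compact support and the $2^{jd/2}$ normalization give an inverse inequality: the sup norm is at most $J^{(d+1)/2}$ (times derivative factors) times the $L^2$ or $H^1_tH^1_x$ norm. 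This yields an envelope of order $J^{d+1}$.
\end{itemize}
Bousquet's inequality then gives, with probability $1-e^{-s}$, the supremum bound by $2\,\mathbb E\sup+\sqrt{2\rho Bs/N}+(\text{envelope})\,s/N$. This accounts for the middle term and the first term of $\tau_1$.

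To control $\mathbb E\sup$, I would symmetrize and apply Dudley's entropy integral in the $L^2(P_0)$ metric. Since $\U_J(\rho)$ lies in a linear space of dimension $\asymp J^{d}\cdot J^{2}=J^{d+2}$ and is contained in an $H^1_tH^1_x$ ball of radius $\sqrt{\rho B}$, its covering numbers satisfy $\log\mathcal N(\epsilon_1)\lesssim J^{d+2}\log(1+C_2\sqrt{\rho/B}/\epsilon_1)$. The Dudley integral up to $\sqrt{\rho B}$ then gives $J^{\frac{d+2}{2}}\sqrt{\rho B\log(1+\sqrt{C_2\rho/B})}/\sqrt N$. The Gaussian noise part is handled the same way through the Gaussian chaining bound, which contributes at most $C_1\sqrt{\rho/N}$ after conditioning on the design. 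Together these two pieces form $\psi$. The sup-norm part of the chaining, obtained via the Talagrand--Bernstein version of Dudley or by using the envelope in the entropy term, contributes $\frac{R C(\Omega,T)}{N}J^{\kappa}\log(\cdots)$. Here $\kappa=d+2$ for $q>d/2$, and $\kappa=2(d+2)$ for $q\le d/2$, where the envelope factor $J^{d+2}$ multiplies the dimension $J^{d+2}$.

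The main obstacle is the supercritical envelope. The terms in $h_u$ involve $\nabla\partial_t w$ and $\partial_{tt}u^{*}\,w$, and these cannot be bounded pointwise by $\rho$ without inverse estimates on the truncated wavelet space. The bookkeeping of powers of $J$ has to be done carefully, and in particular the $n\approx m^{2/d}$ and $J'=J^2$ balancing must be handled so that the exponent comes out as $2(d+2)$. I would first try the crude bound from compact support plus Cauchy--Schwarz over the $O(1)$ overlapping basis functions at each level, and only refine it if the exponents fail to match.
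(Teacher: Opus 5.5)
Your skeleton matches the paper's: stability-based localization, a $J^{d+2}$-dimensional entropy bound inside Bernstein-type chaining, the $q>\frac d2$ versus $q\le\frac d2$ envelope dichotomy, then Talagrand--Bousquet concentration. However, two steps fail as written.

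The first is the variance bound $\mathrm{Var}(h_u)\lesssim\rho B$. Your summands contain the nonnegative quadratic part $Q_u=|w|^2+|\nabla w|^2+|w_t|^2+|\nabla w_t|^2$, plus weighted energy terms and terms linear in $w$ with coefficients $u^*_t,\nabla u^*,u^*_{tt},f$. The factor $B$ cannot come from Proposition~\ref{prop:stability}. By definition $M_1(\beta,\epsilon)\le\frac23$, so that proposition gives only $\|w\|^2_{H^1_tH^1_x}\le 3\rho$. The $\beta$-gain in its proof reaches only the $H^1_tL^2_x$ and $L^2_tH^1_x$ parts of $w$, because $I_\epsilon$ contains no $\nabla w_t$. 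Hence $\mathbb{E}Q_u\ge|\Omega_T|^{-1}\|\nabla w_t\|^2_{L^2}$ is controlled by $\rho$, not by $\rho B$. Moreover, for a quadratic summand you only have $\mathrm{Var}(Q_u)\le\|Q_u\|_\infty\,\mathbb{E}Q_u$. So ``a constant times $\|w\|^2_{H^1_tH^1_x}$'' needs a $J$-free sup bound on $w,\nabla w,w_t,\nabla w_t$, and bounded $u^*_t,\nabla u^*,u^*_{tt},f$. For $q\le\frac d2$ your own envelope is a power of $J$. Even for $q>\frac d2$, the embedding $H^q_x\hookrightarrow L^\infty$ controls $w$ but not $\nabla w_t$; when the paper does work with the genuine loss classes, in the proof of Theorem~\ref{thm_3.4}, it has to assume $q>\frac d2+1$ and $p\ge2$. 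So Bousquet's inequality gives a middle term of order $\sqrt{\|Q_u\|_\infty\,\rho\,s/N}$, and the same inflated radius enters the upper limit of your Dudley integral.

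The paper gets the $J$-free $\sqrt{\rho B}$ second moment only because it writes $\hat S_{*,\epsilon}-S_{*,\epsilon}$ as $(\hat L_{\epsilon,\beta}-L_{\epsilon,\beta})(u_J-u^*)$. It then runs the symmetrized process over $h=\mathrm{ext}(u_J-u^*)$ itself, i.e.\ as if the loss were linear in $u$. Only $\|P_Jh\|_{L^2}$ (an $L^2$ norm of $w$, where the $\beta$-gain is available) and $\|P_Jh\|_\infty$ then enter. Your more faithful decomposition needs an additional argument to reach the stated form. Relatedly, an inverse inequality on $V_{J,J^2}$, whose dimension is $\asymp J^{d+2}$, costs $J^{(d+2)/2}$ rather than $J^{(d+1)/2}$. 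Your envelope $J^{d+1}$ is fitted to the $J^{d+1}s/(nm)$ in $\tau_1$, and it clashes with the envelope $J^{d+2}$ you invoke later for the $J^{2(d+2)}$ term.

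The second failing step is the source of the term $C_1\sqrt{\rho/(nm)}$ in $\psi$. In the paper it is the contribution of the tail $P_{>J,>J^2}h$, which is nonzero because $u^*\notin V_{J,J^2}$. After symmetrization it is bounded by $J^{-(2p+q-3)}\sqrt{\rho/(nm)}$, via Cauchy--Schwarz, orthogonality of the basis, the space--time extension lemma and the stability estimate. Covering the translate $\{u-u^*\}$ directly, as you do, is a legitimate way to avoid that split, but your substitute source for the term does not work. Conditionally on the design, $\frac{\lambda}{\epsilon}\sum_{i,j}e^{-t_j/\epsilon}\varepsilon_{ij}w(X_{ij},t_j)$ with $\lambda=\beta/N$ is a Gaussian process over a $J^{d+2}$-dimensional affine family. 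Chaining over the ball of radius $\sqrt{\rho B}$ that you use for the other terms gives $\frac{\beta}{\epsilon}J^{(d+2)/2}\sqrt{\rho B/N}$. That is the size of the first term of $\psi$ (up to $\beta/\epsilon$ and the logarithm), not $C_1(d,\Omega,T,\epsilon,\beta)\sqrt{\rho/N}$, and for $q<\frac d2$ the supremum genuinely grows with $J$. These summands are also unbounded, so they cannot be fed into Bousquet's inequality. You need a separate Gaussian concentration step conditional on the design, together with a uniform comparison of the empirical and population $L^2$ norms of $w$ over $\mathcal{U}_J(\rho)$, to use that radius. (The paper's proof does not treat the source noise at all.)
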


\begin{proof}[Proof of Proposition~\ref{high_probability_bound_on_excess_risk}]
For simplicity of notation, we denote the excess risks as\\
\begin{equation}
S_{*}(u):= L_{\epsilon,\beta}(u) - L_{\epsilon,\beta}(u^{*}),\,\,\hat{S}_{*}(u): = \hat{L}_{\epsilon,\beta}(u) -\hat{L}_{\epsilon,\beta}(u^{*})
\end{equation}
and the space of localized functional class $\U_{J}$ as\\
\begin{equation}
\U_{J}(\rho):= \bigg\{u_{J}\in \U_{J} \,\,\bigg| S_{*,\epsilon}(u_{J})\leq \rho   \bigg\}.
\end{equation}

Our goal is to control excess risk $L_{\epsilon,\beta}(\hat{u}_{J}) - L_{\epsilon,\beta}(u^{*})$ for a parametrized estimator $\hat{u}_{J}\in \U_{J}$, where $\U_{J}$ is a class of functions that have Fourier series representations with truncation of the level $J$. First, we decompose the excess risk $L_{\epsilon,\beta}(\hat{u}_{J}) - L_{\epsilon,\beta}(u^{*})$ into telescoping sums, and we view the telescoping sums as a localized version of the empirical process and control empirical process for $u_{J}\in \U_{J}(\rho)$.\\

To start,  we center all functions appropriately and consider the extension of a restriction of the functions $u_{J}$ and $u^{*}$ to compact space $\Omega$ over finite time horizon $[0,T]$ , which
allows us to use Fourier expansions for harmonic analysis. Note that  $u_{J}$ and $u^{*}$ are defined in all of space-time $\R^{d}\times \R$, but we do not need to control their norms over the entire
space-time domain. The existence of such extension is mainly attributed to extension Theorem~\ref{Sobolev_extension_theorem}, which induces space-time expansion Lemma~\ref{space_time_extension}.

With notation of linear extension operator $ext$ defined in Theorem~\ref{space_time_extension}, we use the following notation 
\begin{equation}
ext\, u_{J} = u_{J},\,\, ext\, u^{*} = u^{*}.
\end{equation}

For any $u_{J} \in \U_{J}(p)$, by optimality of $\hat{u}_{J}$ we have
\begin{align}
& L_{\epsilon, \beta}(\hat{u}_{J}) - L_{\epsilon, \beta}(u_{J}) \\
=& L_{\epsilon,\beta}(\hat{u}_{J})- L_{\epsilon,\beta}(u^{*})  +   \hat{L}_{\epsilon,\beta}(u^{*})- \hat{L}_{\epsilon,\beta}(\hat{u}_{J})+\hat{L}_{\epsilon,\beta}(\hat{u}_{J}) -\hat{L}_{\epsilon,\beta}(u^{*}) - L_{\epsilon,\beta}(u_{J}) + L_{\epsilon,\beta}(u^{*})\\
=& L_{\epsilon,\beta}(\hat{u}_{J})- L_{\epsilon,\beta}(u^{*})   -   \hat{L}_{\epsilon,\beta}(\hat{u}_{J})  - \hat{L}_{\epsilon,\beta}(u^{*}) + \hat{L}_{\epsilon,\beta}(\hat{u}_{J}) -\hat{L}_{\epsilon,\beta} (u^{*}) -  L_{\epsilon,\beta}(u_{J}) - L_{\epsilon,\beta}(u^{*})\\
\leq& \bigg(S_{*}(\hat{u}_{J}) - \hat{S}_{*} (\hat{u}_{J})\bigg) + \bigg(\hat{S}_{*}(u_{J}) - S_{*}(u_{J})  \bigg).
\end{align}
From the above inequality, it is enough to bound $S_{*}(\hat{u}_{J}) - \hat{S}_{*}(u_{J})$ to control $L_{\epsilon,\beta}(\hat{u}_{J}) - L_{\epsilon,\beta}(u_{J})$. To proceed, we consider bounding a localized version of the above empirical process 
\begin{equation}
 \underset{u_{J}\in \U_{J}(\rho)}{\sup}S_{*}(u_{J}) - \hat{S}_{*} (u_{J}).
\end{equation}
We want to first control the expectation of the supremum. The expectation control has two components: the first component contains the bounding tail error arising from the truncation of the Fourier series at the truncation of level $J$ and the second part contains statistical error from truncated Fourier series itself. Once we obtain upper bound for the expectation, we are able to produce a high-probability bound via Talagrand's concentration inequality, cf. Theorem~\ref{Talagrand Inequality}.
\\\\
For convenience, let's write $h= ext\,(u_{J}- u^{*})$ and its local basis Representation as\\
\begin{align}
h &= \sum_{0\leq \tau, |\xi|_{\infty}\leq \infty} a_{\tau,\xi} \phi_{\tau,|\xi|_{\infty}}(x,t)\\
&= \sum_{0\leq \tau \leq J^{2},0\leq|\xi|_{\infty} \leq J} a_{\tau,\xi} \phi_{\tau,|\xi|_{\infty}}(x,t) + \sum_{\tau>J^{2},|\xi|_{\infty} >J} a_{\tau,\xi} \phi_{\tau,|\xi|_{\infty}}(x,t)\\
&= P_{J,J^{2}}h + P_{>J,>J^{2}}h.
\end{align}
Plugging local basis representation, we obtain 

\begin{align}
& \E \bigg[ \underset{u_{J}\in \U_{J}(\rho)}{\sup} |S_{*,\epsilon}(u_{J}) - \hat{S}_{*.\epsilon}(u_{J})| \bigg] \\
=& \E \bigg[ \underset{u_{J}\in \U_{J}(\rho)}{\sup} | L_{\epsilon,\beta}(u_{J}) -  \hat{L}_{\epsilon,\beta}(u_{J}) + \hat{L}_{\epsilon,\beta}(u^{*}) - L_{\epsilon,\beta}(u^{*})  | \bigg]\\
=&  \E \bigg[ \underset{u_{J}\in \U_{J}(\rho)}{\sup} | (L_{\epsilon,\beta} - \hat{L}_{\epsilon,\beta})(u_{J}-u^{*}) | \bigg] \\
\leq&  \underbrace{\E \bigg[ \underset{u_{J}\in \U_{J}(\rho)}{\sup} | (L_{\epsilon,\beta} - \hat{L}_{\epsilon,\beta})P_{J, J^{2}}h | \bigg]}_{I_{1}}+ \underbrace{\E \bigg[ \underset{u_{J}\in \U_{J}(\rho)}{\sup} | (L_{\epsilon,\beta} - \hat{L}_{\epsilon,\beta})P_{>J,>J^{2}}h | \bigg]}_{I_{2}}.
\end{align}
Here we want to apply Symmetrization technique to $I_{1}$ and $I_{2}$.  Since the sampling scheme \eqref{regression_model} and \eqref{regression_model_2} treats time dimension $t$ as another additional spatial dimension, we flatten $d+1$-d spatial-temporal training data matrix $\{ (x_{ij},t_{j} ) \}_{i=1,j=1}^{n,m}$ to $1$-d data vector $\{z_{i}\}_{i=1}^{nm}$ such that
\[
z_{1} = (x_{11},t_{1}),....,z_{m} = (x_{m1},t_{1}), z_{m+1} =(x_{12},t_{2}),.... ,z_{2m} = (x_{m2},t_{2}),....,z_{mn} = (x_{mn},t_{n}).
\]
In this way, standard symmetrization argument with $nm$ copies of independent Rademacher random variables $\sigma_{i}$ is applied
\begin{equation}
I_{1}\leq \underbrace{\frac{1}{nm}\E \bigg[ \underset{u_{J}\in \U_{J}(\rho)}{\sup}  \sum_{i=1}^{nm} \sigma_{i}P_{J,J^{2}}h(z_{i})\bigg]}_{T_{1}}, \,\,\,\, I_{2}\leq \underbrace{\frac{1}{nm}\E \bigg[ \underset{u_{J}\in \U_{J}(\rho)}{\sup}  \sum_{i=1}^{nm} \sigma_{i}P_{>J,>J^{2}}h(z_{i})\bigg]}_{T_{2}}.
\end{equation}
The rest of the subsection is devoted to first bound $T_{2}$ and then bound $T_{1}$.
\vspace{1cm}\\
\textbf{Control of $T_{2}$}:
\begin{equation}
T_{2}: = \frac{1}{nm}\E \bigg[ \underset{u_{J}\in \U_{J}(\rho)}{\sup}  \sum_{i=1}^{nm} \sigma_{i}P_{>J,>J^{2}}h(z_{i})\bigg] = \frac{1}{nm}\E \bigg[ \underset{u_{J}\in \U_{J}(\rho)}{\sup}  \sum_{i=1}^{nm} \sigma_{i} \sum_{\tau>J^{2},|\xi|_{\infty} >J} a_{\tau,\xi} \phi_{\tau,\xi}(x,t) \bigg].
\end{equation}
By Cauchy-Schwarz inequality, we obtain
\begin{align}
\sum_{i=1}^{nm} \sigma_{i} \sum_{\tau>J^{2},|\xi|_{\infty} >J} a_{\tau,\xi} \phi_{\tau,\xi}(x,t) \leq \underbrace{{\bigg(\sum_{i=1}^{nm} \sigma_{i}^{2}\bigg)^{\frac{1}{2}}}}_{T_{2,1}} \underbrace{\bigg(\sum_{i=1}^{nm} \bigl(\sum_{\tau>J^{2},|\xi|>J}a_{\tau,\xi} \phi_{\tau,\xi}(x,t)\bigr)^{2}\bigg)^{\frac{1}{2}}}_{T_{2,2}}\\
T_{2,2} = \bigg(\sum_{\tau>J^{2},|\xi|_{\infty} >J} a_{\tau,\xi}^{2} \phi_{\tau,\xi}^{2}(z_{i}) + \sum_{1\leq j\neq k\leq nm } a_{\tau,\xi}^{2}\phi_{\tau,\xi}(z_{j}) \phi_{\tau,\xi}(z_{k}) \bigg)^{\frac{1}{2}}.
\end{align}
By Jensen inequality, we can pass expectation over $\sigma$ inside $T_{2,1}$ and expectation over $z$ inside $T_{2,2}$. Note $\sigma_{i}$ and $\sigma_{j}$ are independent variables with $\E \sigma_{i} = \E\sigma_{j} = 0$, and $||\phi_{\tau,\xi}||_{L_{2}}^{2} =1$.
\begin{align}
\E_{\sigma,z}\bigg[\,\, T_{2,2}\,\,\bigg] &\leq \big(\sum_{\tau>J^{2},|\xi|_{\infty} >J} a_{\tau,\xi}^{2} ||\phi_{\tau,\xi}||_{L_{2}}^{2} + \sum_{\tau,\tau^{\prime}>J^{2}} \sum_{|\xi|_{\infty}, |\xi^{\prime}|_{\infty}>J}a_{\tau,\xi}a_{\tau^{\prime},\xi^{\prime}}  ||\phi_{\tau,\xi} \phi_{\tau^{\prime},\xi}(z_{k})||_{L_1} \big)^{\frac{1}{2}} \\
&= \big(\sum_{\tau>J^{2},|\xi|_{\infty} >J} a_{\tau,\xi}^{2} \big)^{\frac{1}{2}}\hspace{3.2cm} - \text{ Orthogonality of } \phi_{\tau,\xi}(z) \ \\
&\leq ||P_{>J, >J^{2}}h||_{H^{1}_{t}(\R;H^{1}_{x}(\R^{d}))}\\
&\leq C(\Omega,T) ||P_{>J,>J^{2}}h||_{H^{1}_{t}(0,T;H^{1}_{x}(\Omega))}\hspace{1.1cm} - \text{ space-time extension Lemma~\ref{space_time_extension} }\\
&\lesssim  RJ^{-(2p+q-3)}\sqrt{\rho}. \hspace{3.8cm} - \text{ stability estimate Proposition~\ref{prop:stability}}
\end{align}
Combining $T_{2,1}$ and $T_{2,2}$ all together, we obtain
\begin{equation}
T_{2} = T_{2,1} * \E_{\sigma,z}[T_{2,2}]\lesssim J^{-(2p+q-3)}\sqrt{\frac{\rho}{nm}} \label{T_2}.
\end{equation}
\textbf{Control of $T_{1}$}:\\
In this section, we stick with $h:= ext\,(u_{J} - u^{*})$. We use a chaining bound for metric entropy to bound $T_{1}$ using $L^2_{t}(\R;L^2_{x}(\R^{d}))$ bracketing number, and note $L^2_{t}(\R;L^2_{x}(\R^{d}))$ is equivalent to $L^2_{t,x}(\R\times \R^{d})$.  In particular, we construct a bracketing number for $J$ finitely many Fourier Coefficients $a_{\xi,\tau}$ in $L^{2}$ balls. \\\\
For $h_{1}, h_{2} \in \U_{J}(\rho)$,

\begin{align}
\big|\big|\,P_{J, J^{2}}h_{1} - P_{J,J^{2}}h_{2}\,\big|\big|_{L^{2}_{t}(0,T;L^{2}_{x}(
\Omega))}^{2} &= \big|\big| \sum_{1\leq |\xi|_{\infty}\leq J, 1\leq \tau \leq J^{2}} a^{(1)}_{\xi,\tau} \phi_{\xi,\tau}(x,t) - a^{(2)}_{\xi,\tau} \phi_{\xi,\tau}(x,t) \,\big|\big|_{L^{2}_{t}(0,T;L^{2}_{x}(
\Omega))}^{2}\\
&\lesssim 
\big|\big| \sum_{1\leq |\xi|_{\infty}\leq J, 1\leq \tau \leq J^{2}} (\,a^{(1)}_{\xi,\tau} - a^{(2)}_{\xi,\tau}\,) \phi_{\xi,\tau}(x,t) \,\,\big|\big|_{L^{2}_{t}(0,T;L^{2}_{x}(\Omega))}^{2} 
\\
&\leq \sum_{1\leq |\xi|_{\infty}\leq J, 1\leq \tau \leq J^{2}} \big|\,a^{(1)}_{\xi,\tau} - a^{(2)}_{\xi,\tau}\,\big|^{2}.
\end{align}
Hence we obtain 
\begin{equation}
\big|\big|\,P_{J}h_{1} - P_{J}h_{2}\,\big|\big|_{L^{2}_{t}(0,T;L^{2}_{x}(
\Omega))}^{2} \lesssim \sum_{1\leq |\xi|_{\infty} \leq J, 1\leq \tau \leq J^{2}} |a^{(1)}_{\xi,\tau} - a^{(2)}_{\xi,\tau}|^{2}. \label{packing-number-inequality}
\end{equation}
The inequality above suggests a control of finitely many spatial-temporal cofficients $a_{\xi,\tau}$ in $L^{2}$ ball of coefficient space is sufficient to construct an $\epsilon$-net for $h\in \U_{J}(\rho)$.\\\\
We want to construct $L^{2}$ packing number for the space of  finitely many Fourier coefficients $a_{\xi,\tau}$. Using standard $L^{2}$-packing for $L^{2}$ balls argument in Lemma \ref{lemma_F.6} and note the dimension of  coefficients are at most $J^{d+2}$, we have

\begin{align}
\log N(\U_{J}(\rho), ||\cdot||_{L^{2}_{t}(0,T;L^{2}_{x}(\Omega))},\epsilon)
&\lesssim  \log(\frac{3}{\delta})^{J^{d+2}}= J^{d+2} \log(\frac{1}{\epsilon}).
\end{align}

With the above packing number at hand, we're ready to apply Bernstein Chaining in Theorem~\ref{Bernstein Chaining}. Before applying Bernstein Chaining, we nee to check finite second moment condition and finite $L_{\infty}$ norm condition.\\

From the lower bound of stability estimates in Proposition~\ref{prop:stability} and Lemma~\ref{J_truncation_estimates}, we can verify second moment condition for $P_{J}h\in \U_{J}(\rho)$
\begin{align}
\label{second-moment-condition}
||P_{J}h||_{L^{2}_{t}(\R ;L^{2}_{x}( \R^{d}))} \leq ||h||_{L^{2}_{t}(\R ;L^{2}_{x}( \R^{d}))}\lesssim C(\Omega,T) \sqrt{\rho B(\epsilon,\beta,\Omega,T,d)},
\end{align}
where $B(\epsilon,\beta,\Omega,T,d):= (1+\frac{\beta}{\beta_{0}})^{-1}$, $\beta_{0} =  e^{T/\epsilon} C^{2}(d,\epsilon,\Omega)$, and $C(d,\epsilon, \Omega)$ is the Cheeger constant here.\\

To verify $L_\infty$ condition, we use Lemma~\ref{J_truncation_estimates} to obtain
\begin{equation}
||P_{J}h||_{\infty}
\lesssim 
\begin{cases}
 R, \,\,\,\quad q>\frac{d}{2},\\
 J^{d+2}, \quad q\leq\frac{d}{2}.
\end{cases}
\label{L_infty_condition}
\end{equation}
Next we will incorporate bracketing numbers into the Dudley integral theorem (cf. Theorem \ref{theorem_A.3} ) to get
\begin{align}
D_{[]}&:=D_{[ ]}\bigg(\sqrt{\rho B(\epsilon,\beta,\Omega,T,d)},\,\, \U_{J}(\rho), \,\,||\cdot||_{L^{2}_{t,x}(\mathcal{P})}\bigg)\\
&=\displaystyle\int_{0}^{\sqrt{\rho B(\epsilon,\beta,\Omega,T,d)}} \sqrt{logN_{[ ]}\bigg(\epsilon, \U_{J}(\rho), ||\cdot||_{L^{2}_{t,x}(\mathcal{P})}\bigg)} \,\, d\epsilon\\
&\lesssim \int_{0}^{\sqrt{ \rho B(\epsilon,\beta,\Omega,T,d)}} J^{\frac{d+2}{2}} \sqrt{ \log(1+ \frac{1}{\epsilon})}\,\, d\epsilon
\end{align}
and
\begin{equation}
D_{[ ]}\lesssim J^{(d+2)/2}\sqrt{\rho B(\epsilon,\beta,\Omega,T,d)\log(1+ \sqrt{\frac{C_{2} \beta_{0}}{\rho \beta}}}),
\end{equation}
where $C_{2}$ is some appropriately chosen constant.
\\\\
Plugging second moment condition (\ref{second-moment-condition}) and $L^{\infty}$ condition (\ref{L_infty_condition}) into Theorem  \ref{Bernstein Chaining}, we obtain 
\begin{equation}
T_{1}: \lesssim \frac{1}{\sqrt{nm}}D_{[]} \bigg(1+ \frac{D_{[]}}{\sqrt{nm}||P_{J}h||_{L^{2}_{t}([0,T];L^{2}_{x}(\Omega))}^{2}} ||P_{J}h||_{\infty}\bigg).
\end{equation}
We obtain 
\begin{equation}
T_{1}\lesssim \frac{1}{\sqrt{nm}} J^{\frac{d+2}{2}} \sqrt{\rho B(\epsilon,\beta,\Omega,T,d)\log(1+ \sqrt{\frac{C_{2} \rho} {B(\epsilon,\beta,\Omega,T,d)}}}) + \tau(\rho, J),\label{T_1}
\end{equation}
where 
\begin{equation}
\tau(\rho, J)\lesssim \begin{cases} \frac{R}{nm} J^{2(d+2)}\log(1+ \sqrt{\frac{C_{2} \rho} {B(\epsilon,\beta,\Omega,T,d)}}),\hspace{0.6cm} q\leq\frac{d}{2},\\
\frac{R}{nm} J^{d+2}\log(1+ \sqrt{\frac{C_{2} \rho} {B(\epsilon,\beta,\Omega,T,d)}}),\hspace{1cm} q>\frac{d}{2}.
\end{cases}
\end{equation}
\textbf{Final Statistical Estimation Error Bound: }
\hfill\\
From control of $T_{1}$ (\ref{T_1}) and control of $T_{2}$ (\ref{T_2}), we obtain\\

\begin{align}
T_{1}&\lesssim \frac{1}{\sqrt{nm}} J^{\frac{d+2}{2}} \sqrt{\rho B(\epsilon,\beta,\Omega,T,d)\log(1+ \sqrt{\frac{C_{2} \rho} {B(\epsilon,\beta,\Omega,T,d)}}}) + \tau(\rho, J),\\
T_{2}&\lesssim  C(d,\Omega,T, \epsilon, \beta)\sqrt{\frac{\rho}{nm}},\\
T_{1} + T_{2} &= \psi(\rho,\tau, J) + \tau(\rho,J).
\end{align}
\\
Combining the bound on $T_{1}$ and $T_{2}$ all together yields
\begin{equation} \E \bigg[ \underset{u_{J}\in \U, S_{*,\epsilon}(u_{J})\leq \rho}{\sup} |S_{*,\epsilon}(u_{J}) - \hat{S}_{*.\epsilon}(u_{J})| \bigg] \lesssim \psi(\rho,\tau, J) + \tau(\rho,J),
\end{equation}\\
where \begin{align}\psi(\rho,\tau, J)&:=\frac{1}{\sqrt{nm}} J^{\frac{d+2}{2}} \sqrt{\rho B(\epsilon,\beta,\Omega,T,d)\log(1+ \sqrt{\frac{C_{2} \rho} {B(\epsilon,\beta,\Omega,T,d)}}}) + C(d,\Omega,T, \epsilon, \beta)\sqrt{\frac{\rho }{nm}}. \end{align}
Finally, we apply Talagrand's concentration inequality Theorem \ref{Talagrand Inequality} to bound $I_{\epsilon}(h) - \hat{I}_{\epsilon}(h)$, which upper bounds excess risk. In order to apply the concentration inequality, we again need to verify the same second moment condition (\ref{second-moment-condition}) and $L_{\infty}$ condition (\ref{L_infty_condition}), which were verified earlier:
\\
\begin{equation}
||h||_{L^{2}_{t,x}(\R\times \R^{d})} \lesssim \sqrt{\rho B(\epsilon,\beta,\Omega,T,d)}, \,\,
||P_{J}h||_{\infty}
\lesssim 
\begin{cases}
 R, \hspace{0.9cm} q>\frac{d}{2},\\
 J^{d+2}, \hspace{0.4cm} q\leq\frac{d}{2}.
\end{cases}
\end{equation}
With probability $1-e^{-s}$,
\begin{equation}
\underset{u_{J}\in \mathcal{\U_{J}(\rho)}}{\sup}  |\hat{S}_{*,\epsilon}(u_{J}) - S_{*,\epsilon}(u_{J})| \lesssim \psi(\rho,\tau,J) + \sqrt{\rho B(\epsilon,\beta,\Omega,T,d)\cdot\frac{2s}{nm}}  + \tau_{1}(\rho, J),
\end{equation}
where 
\begin{equation}
\tau_{1}(\rho, J) := \begin{cases} \frac{J^{d+1}s}{nm} + \frac{R}{nm} J^{2(d+2)}\log(1+ \sqrt{\frac{C_{2} \rho} {B(\epsilon,\beta,\Omega,T,d)}}),\hspace{0.5cm} q\leq\frac{d}{2}, \\\\
\frac{Rs}{nm}+\frac{R}{nm} J^{d+2}\log(1+ \sqrt{\frac{C_{2} \rho} {B(\epsilon,\beta,\Omega,T,d)}}), \hspace{1.2cm} q>\frac{d}{2}.
\end{cases}
\end{equation}
\end{proof}

\begin{remark} we obtain $L_{\infty}$ bound on $h$ would be through the fact that $L^{\infty}$ norm agrees with $C^{0}$ norm on compact set $\Omega_{T}$. By embedding theorem \ref{lem:morrey_embedding}, we can use $H^{q}_{x}$ norm to bound $C^{0}_{x}$ and therefore $L^{\infty}_{x}$ when spatial regularity $ q> \frac{d}{2} $, while temporal domain lies in 1d and therefore it's naturally embedded into $L^{\infty}_{t}$.
However, this only works when spatial regularity $q$ is large enough against dimension $d$.
\end{remark}

Next we use localization technique to prove the following proposition.

\begin{proposition}[Localization]
\label{localization}
Under Assumptions A1- A4 in Appendix ~\ref{sec:main_assumptions} hold and define $\U_{J}$ as in (\ref{parametrized_function_class}). Then, there exist constant $C_{2}:=C_{2}(\Omega,T)$ such that estimator $\hat{u}_{J}$ defined in (\ref{empirical_loss_function}) satisfies with probability at least $1-e^{-s}$,\\\\
If $\,q<\frac{d}{2},$
\begin{align}
||\hat{u}_{J}- u^{*}||_{H^{1}_{t}(0,T;H^{1}_{x}(\Omega))}^2&\lesssim \bigg[1+(1+\frac{\beta}{2\epsilon}) B(\epsilon
\beta, \Omega,T,d)\bigg]||u_{J}-u^{*}||_{{H^{1}_{t}(0,T;H^{1}_{x}(\Omega))}}^{2}\\& + (1+ \frac{\beta}{2\epsilon} )B^{2}(\epsilon,
\beta, \Omega,T,d)\frac{J^{2(d+2)}}{nm}\log{(1+C_{2}nm)}\\
&+ \frac{J^{2(d+2)}s+1}{B(\epsilon,
\beta, \Omega,T,d)nm}.
\end{align}
Moreover, 
\begin{align}
\E||\hat{u}_{J}- u^{*}||_{H^{1}_{t}([0,T];H^{1}_{x}(\Omega))}^2&\lesssim \bigg[1+(1+\frac{\beta}{2\epsilon}) B(\epsilon,
\beta, \Omega,T,d)\bigg]||u_{J}-u^{*}||_{{H^{1}_{t}(0,T;H^{1}_{x}(\Omega))}}^{2}\\& + (1+ \frac{\beta}{2\epsilon} )B^{2}(\epsilon,
\beta, \Omega,T,d)\frac{J^{2(d+2)}}{nm}\log{(1+C_{2}nm)}\\
&+ \frac{1}{B(\epsilon,
\beta, \Omega,T,d)nm}.
\end{align}
On the other hand when $q\geq\frac{d}{2}$, with the probability $1-e^{-s}$,
\begin{align}
||\hat{u}_{J}- u^{*}||_{H^{1}_{t}(0,T;H^{1}_{x}(\Omega))}^2 &\lesssim  \bigg[1+(1+\frac{\beta}{2\epsilon}) B(\epsilon,
\beta, \Omega,T,d)\bigg]||u_{J}-u^{*}||_{{H^{1}_{t}(0,T;H^{1}_{x}(\Omega))}}^{2}\\& +  B(\epsilon,
\beta, \Omega,T,d)(1+\frac{\beta}{2\epsilon})\frac{J^{d+2}}{nm}\log{(1+C_{2}nm)}+ \frac{1+s}{nm}.
\end{align}
Moreover,
\begin{align}
\E||\hat{u}_{J}- u^{*}||_{H^{1}_{t}(0,T;H^{1}_{x}(\Omega))}^2 &\lesssim_{R}  \bigg[1+(1+\frac{\beta}{2\epsilon}) B(\epsilon,
\beta, \Omega,T,d)\bigg]||u_{J}-u^{*}||_{{H^{1}_{t}(0,T;H^{1}_{x}(\Omega))}}^{2}\\& +  B(\epsilon,
\beta, \Omega,T,d)(1+\frac{\beta}{2\epsilon})\frac{J^{d+2}}{nm}\log{(1+C_{2}nm)}+ \frac{1}{nm} 
\end{align}
\label{prop_8.2}
\end{proposition}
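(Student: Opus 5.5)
The plan is the standard localization (peeling / fixed-point) argument of \cite{vdG87,vdG02}. It combines the basic inequality from optimality of $\hat{u}_J$, the high-probability uniform deviation bound of Proposition~\ref{high_probability_bound_on_excess_risk}, and the two-sided curvature of Proposition~\ref{prop:stability}.

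First, fix the comparison element $u_J\in\U_J$ (the population minimizer over $\U_J$). From the decomposition already derived in the proof of Proposition~\ref{high_probability_bound_on_excess_risk} we have
\begin{equation*}
S_*(\hat u_J)\le S_*(u_J)+\bigl(S_*(\hat u_J)-\hat S_*(\hat u_J)\bigr)+\bigl(\hat S_*(u_J)-S_*(u_J)\bigr).
\end{equation*}
The upper curvature in Proposition~\ref{prop:stability} gives $S_*(u_J)\le(1+\tfrac{\beta}{2}+\tfrac{\beta}{2\epsilon})\|u_J-u^*\|^2_{H^1_tH^1_x}$. This is where the approximation factor $1+(1+\beta/2\epsilon)B$ comes from, after rescaling by $B$, which is the inverse of the lower curvature constant. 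The last bracket is a single fixed function, so Bernstein's inequality bounds it by $\sqrt{S_*(u_J)B s/(nm)}+\|u_J-u^*\|_\infty s/(nm)$, and AM--GM absorbs this into the bias term plus $s/(nm)$.

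Second, control the middle bracket by peeling. Set $\rho_k=2^k\rho_0$ and apply Proposition~\ref{high_probability_bound_on_excess_risk} on each shell $\U_J(\rho_{k})\setminus\U_J(\rho_{k-1})$ with confidence $s+\log(k^2)$, then take a union bound. On the event that $\hat u_J$ lies in shell $k$, $S_*(\hat u_J)\ge\rho_{k-1}=\rho_k/2$, so we get
\begin{equation*}
\tfrac{\rho}{2}\le S_*(u_J)+\psi(\rho,\tau,J)+\sqrt{2\rho Bs/(nm)}+\tau_1(\rho,J).
\end{equation*}
Since $\psi$ is of order $\sqrt{\rho}\cdot J^{(d+2)/2}\sqrt{B\log(1+C_2 nm)/(nm)}$, I would bound the logarithm by $\log(1+C_2nm)$ after restricting to $\rho\le\rho_{\max}$ polynomial in $nm$; this is possible because $\U_J$ is $R$-bounded, so $S_*\lesssim R$. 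Then I would use $a\sqrt{\rho}\le\rho/8+2a^2$ to get the fixed point
\begin{equation*}
\rho^*\lesssim S_*(u_J)+B\,\tfrac{J^{d+2}}{nm}\log(1+C_2nm)+\tfrac{Bs}{nm}+\tau_1.
\end{equation*}
In the regime $q\ge d/2$, $\tau_1\lesssim (Rs+RJ^{d+2}\log)/(nm)$, which gives the stated bound. For $q<d/2$, the $L^\infty$ envelope is $J^{d+2}$ rather than $R$, and this produces the $J^{2(d+2)}$ terms (and $J^{2(d+2)}s$ in the confidence term). Finally, apply the lower curvature $S_*(\hat u_J)\ge M_1\|\hat u_J-u^*\|^2$ with $M_1^{-1}\asymp B$ to convert to the $H^1_tH^1_x$ norm; the prefactors $(1+\beta/2\epsilon)B$ and $B^2$ come from this step.

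Third, for the expectation bounds, integrate the tail: $\E X\le\int_0^\infty P(X>t)\,dt$ with $t$ linear in $s$ gives $\int e^{-s}\,ds=O(1)$. The $s$-terms therefore become constants times $1/(B\,nm)$ or $1/(nm)$. On the complementary event I would use the a priori bound $\|\hat u_J-u^*\|^2\lesssim R$ from the constraint in \eqref{parametrized_function_class}.

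The main obstacle is the peeling step for $q<d/2$. Here the Bernstein/Talagrand term $\|P_Jh\|_\infty\,D_{[]}^2/(nm\,\|h\|^2)$ is not monotone in $\rho$ in a clean way, and it also inflates the confidence term. I would handle it by bounding it crudely by $J^{2(d+2)}\log(1+C_2nm)/(nm)$ uniformly in $\rho\ge 1/(nm)$, and treating $\rho<1/(nm)$ trivially, which accounts for the additive $1/(B\,nm)$.
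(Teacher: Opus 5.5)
Your argument works at the paper's level of rigor, but it uses a different localization device.

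\textbf{The paper's route.} The paper does not peel. It fixes $r>0$ and sets $\hat u_s=s\hat u_J+(1-s)u_J$ with $s=\sqrt r/(\sqrt r+\|\hat u_J-u_J\|_{H^1_tH^1_x})$. Then $\|\hat u_s-u_J\|_{H^1_tH^1_x}\le\sqrt r$ holds automatically, and the upper curvature puts $\hat u_s\in\U_J(\rho)$ for the single radius $\rho=(1+\tfrac{\beta}{2\epsilon})(r+\|u_J-u^*\|^2_{H^1_tH^1_x})$. Convexity of $\hat S_*$ and optimality of $\hat u_J$ give $\hat S_*(\hat u_s)\le\hat S_*(u_J)$, hence $S_*(\hat u_s)\le S_*(u_J)+2\sup_{\U_J(\rho)}|S_*-\hat S_*|$. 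Since $\|\hat u_J-u_J\|\ge\sqrt r$ if and only if $\|\hat u_s-u_J\|\ge\sqrt r/2$, one application of Proposition~\ref{high_probability_bound_on_excess_risk} and a fixed-point choice of $r$ finish the proof.

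\textbf{What each route buys.} The paper's route needs no union bound, but it does use convexity of $\U_J$ and of $\hat L_{\epsilon,\beta}$; both hold here. Your dyadic peeling in $S_*$ uses no convexity, so it would also cover non-convex classes such as the MLPs in the experiments. The price is a union bound over $O(\log(nm))$ shells, whose $\log\log(nm)$ cost is absorbed into $\log(1+C_2nm)$. Because you localize directly in $S_*$ rather than in the norm around $u_J$, the upper curvature enters your bound only through $S_*(u_J)$. Your variance terms therefore carry no $(1+\tfrac{\beta}{2\epsilon})$ factor. They still imply the stated ones; for $q<d/2$ this uses $(1+\tfrac{\beta}{2\epsilon})B\ge\min\{3,\beta_0/(2\epsilon)\}\gtrsim1$ with $\beta_0=C^2(d,\epsilon,\Omega)e^{T/\epsilon}$. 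Your separate Bernstein step for $u_J$ is unnecessary: $u_J\in\U_J(\rho)$ as soon as $\rho\ge S_*(u_J)$, which is how the paper covers that term.

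\textbf{Two caveats.} First, in the expectation bound for $q<d/2$, the Talagrand term $\|P_Jh\|_\infty s/(nm)$ carries a power of $J$. Integrating over $s$ therefore does not give $O(1/(B\,nm))$, as you claim. After your curvature conversion it gives $O(BJ^{d+2}/(nm))$, which you must absorb into the variance term using the inequality above. If you instead carried the statement's $J^{2(d+2)}s/(B\,nm)$, its integral would not be dominated by the stated terms for large $\beta$; the paper drops it without comment.

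Second, the identification $M_1^{-1}\asymp B$, which you share with the paper, is valid only for $\beta\lesssim\beta_0$. Proposition~\ref{prop:stability} gives $M_1\le 2/3$, so for $\beta\gg\beta_0$ the lower-curvature conversion costs a constant rather than a factor $B$. This affects both arguments equally.
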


\begin{proof}[Proof of Proposition~\ref{localization}]
Let $\hat{u}_{J}:= \underset{u\in \U_{J}(\rho)}{\argmin}\, \hat{L}_{\epsilon,\beta} (u)$ and $u_{J}:= \underset{u\in \U_{J}(\rho)}{\argmin}\, L_{\epsilon,\beta} (u)$. \\\\
Fix $r>0$ to be determined later, we set 
\begin{equation}
\hat{u}_{s}:= s\hat{u}_{J} +(1-s) u_{J}, \,\,s = \frac{\sqrt{r}}{\sqrt{r}+ ||\hat{u}_{J} - u_{J} ||_{H^{1}_{t}(0,T;H^{1}_{x}(\Omega))}}.
\end{equation}
Since $s \in [0,1]$ and $\U_{J}$ is convex, we have $\hat{u}_{s}\in \U_{J}$ and $\hat{u}_{s}$ is localized in the following sense 
\begin{equation}
||\hat{u}_{s} - u_{J}||_{H^{1}_{t}(0,T;H^{1}_{x}(\Omega))}\leq \frac{\sqrt{r}|| \hat{u}_{s} - u_{J}||_{{H^{1}_{t}(0,T;H^{1}_{x}(\Omega))}}}{\sqrt{r}+ || \hat{u}_{s} - u_{J}||_{{H^{1}_{t}(0,T;H^{1}_{x}(\Omega))}}} \leq \sqrt{r}.
\end{equation}
Using stability estimate in Proposition \ref{prop:stability} and the triangle inequality, we therefore have $\hat{u}_{s}\in \U_{J}(\rho)$.
{\begin{align}
S_{*,\epsilon}(\hat{u}_{s}) \leq& \bigg(1+ \frac{\beta}{2\epsilon}\bigg) ||\,\hat{u}_{s} - u^{*}||_{{H^{1}_{t}(0,T;H^{1}_{x}(\Omega))}}^{2} 
\leq \bigg(1+ \frac{\beta}{2\epsilon}\bigg)  \bigg( r + ||\,u_{J} - u^{*}||_{{H^{1}_{t}(0,T;H^{1}_{x}(\Omega))}}^{2}\bigg):=\rho.
\label{r_definition}
\end{align}}
By Convexity of $\hat{S}_{*,\epsilon}$, we have 
\begin{equation}
\hat{S}_{*,\epsilon}(\hat{u}_{s})\leq  \hat{S}_{*,\epsilon}(u_{J}) + s\bigg(\hat{S}_{*,\epsilon}(\hat{u}_{J}) - \hat{S}_{*,\epsilon}(u_{J}) \bigg).
\end{equation}
By the definition of empirical minimizer $\hat{u}$, we have 
\begin{equation}
 \hat{L}_{\epsilon} (\hat{u}_{J}) \leq  \hat{L}(u_{J}).
\end{equation}
It follows  
\begin{equation}
\hat{S}_{*,\epsilon}(\hat{u}_{J}) - \hat{S}_{*,\epsilon}(u_{J}) = \hat{L}_{\epsilon} (\hat{u}_{J}) - \hat{L}(u_{J}) \leq 0.
\end{equation}
Therefore, the empirical excess risk of $\hat{u}_{s}$ is smaller than $u_{J}$
\begin{equation}
\hat{S}_{*,\epsilon}(\hat{u}_{s}) \leq \hat{S}_{*,\epsilon}(u_{J}).
\end{equation}
Next we want to bound excess risk $S_{*,\epsilon}$ using generaliztion error
\begin{align}
&\hat{S}_{*,\epsilon}(\hat{u}_{s}) -S_{*,\epsilon}(\hat{u}_{s}) + S_{*,\epsilon}(\hat{u}_{s})\leq \hat{S}_{*,\epsilon}(u_{J}) + S_{*,\epsilon}(u_{J}) -S_{*,\epsilon}(u_{J}),\\
& S_{*,\epsilon}(\hat{u}_{s})\leq  S_{*,\epsilon} (u_{J}) + \hat{S}_{*,\epsilon}(u_{J}) -S_{*,\epsilon}(u_{J}) +S_{*,\epsilon}(\hat{u}_{s}) - \hat{S}_{*,\epsilon}(\hat{u}_{s}),\\
&S_{*,\epsilon}(\hat{u}_{s})\leq S_{*,\epsilon}(u_{J}) + 2\underset{u\in \U_{J}}{sup} \big |S_{*,\epsilon}(u) - \hat{S}_{*,\epsilon}(u) \big|.
\end{align}
\label{generalization_bound}
Using lower bound of stability estimate in Proposition \ref{prop:stability} and Proposition \ref{high_probability_bound_on_excess_risk}, we obtain 
\begin{align}
||u_{J} - \hat{u}_{s}||^{2}_{H^{1}_{t}(0,T;H^{1}_{x}(\Omega))} &\leq 2||u_{J} - u^{*}||^{2}_{{H^{1}_{t}(0,T;H^{1}_{x}(\Omega))}} + 2||\hat{u}_{s} - u^{*}||^{2}_{{H^{1}_{t}(0,T;H^{1}_{x}(\Omega))}}\\
&\leq B(\epsilon,\beta,\Omega,T,d) \bigg[2S_{*,\epsilon}(u_{J}) + 2S_{*,\epsilon}(\hat{u}_{s}) - e^{-\tfrac{T}{\epsilon}} ||\nabla u(\cdot,T) - \nabla u^{*}(\cdot,T)||^{2}_{L^{2}(\Omega)}  \bigg]\\
&\leq 4B(\epsilon,\beta,\Omega,T,d) \bigg[ S_{*,\epsilon}(u_{J}) + \underset{u\in \U_{J}}{sup} ( S_{*,\epsilon}(u) - \hat{S}_{*,\epsilon}(u)) - E_{T}(\epsilon)\bigg], 
\end{align}
where we denote $E_{T}(\epsilon):= \frac{e^{-\tfrac{T}{\epsilon}}}{2} ||\nabla u(\cdot,T) - \nabla u^{*}(\cdot,T)||^{2}_{L^{2}(\Omega)}$.
\\\\
With $s= \frac{1}{2}$, we have $||\hat{u}_{s} - u_{J}||_{H^{1}_{t}([0,T];H^{1}_{x}(\Omega))}\geq \frac{\sqrt{r}}{2}$ iff $||\hat{u}_{J} - u_{J}||_{H^{1}_{t}([0,T];H^{1}_{x}(\Omega))}\geq \sqrt{r} $. Using upper and lower bound of stability estimate in  Proposition \ref{prop:stability} and the relation (\ref{r_definition}) between $\rho$ and $r$ leads to the following
{
\begin{align}
&\mathbb{P}\Bigl(||\hat{u}_{J} - u^{*}||_{H^{1}_{t}(0,T;H^{1}_{x}(\Omega))}^{2} \geq r + ||u_{J}- u^{*}||_{H^{1}_{t}(0,T;H^{1}_{x}(\Omega))}^{2}\Bigr)\\
&\leq \mathbb{P}\Bigl(||\hat{u}_{s} - u_{J}||_{H^{1}_{t}(0,T;H^{1}_{x}(\Omega))}^{2} \geq \frac{r}{4} \Bigr)\\
&\leq \mathbb{P}\bigg(\underset{u\in \U_{J}(\rho)}{\sup}  (S_{*,\epsilon}(u) - \hat{S}_{*,\epsilon}(u) ) \gtrsim \frac{r}{16B(\epsilon,\beta,\Omega,T,d)} - S_{*,\epsilon}(u_{J}) + E_{T}(\epsilon)   \bigg)\,\, \\
&\leq \mathbb{P}\Bigl(\underset{u\in \U_{J}(\rho)}{\sup}  (S_{*,\epsilon}(u) - \hat{S}_{*,\epsilon}(u) ) \gtrsim \frac{\rho - (1+\frac{\beta}{2\epsilon}) ||u_{J} - u^{*}||_{H^{1}_{t}(0,T;H^{1}_{x}(\Omega))}^{2}}{16(1+\frac{\beta}{2\epsilon})B(\epsilon,\beta,\Omega,T,d)} - S_{*,\epsilon}(u_{J}) + E_{T}(\epsilon) \Bigr)\,\,.
\end{align}
}
In observation of the high probability bound of the excess risk, we can use the previous result on probabilistic bound of $S_{*,\epsilon}(u) - \hat{S}_{*,\epsilon}(u)$. In order to apply this previous result, we want the following to hold:
\begin{align}
\frac{\rho }{16(1+\frac{\beta}{2\epsilon})B(\epsilon,\beta,\Omega,T,d)}  &\geq S_{*,\epsilon}(u_{J}) + \sqrt{\rho B(\epsilon, \beta, \Omega, T,d)\frac{2s}{nm}}  + \frac{||u_{J} - u^{*}||^{2}_{H^{1}_{t}([0,T];H^{1}_{x}(\Omega))}}{16B(\epsilon, \beta, \Omega,T,d)} \\
& \quad - E_{T}(\epsilon)+ 2 \psi(\rho,\tau,J)+ \tau_{1}(\rho,J).
\end{align}
In order to make the inequality hold, we substitute the upper bound of stability estimate in Proposition \ref{prop:stability}. The terminal term $E_{T}(\epsilon)$ will vanish due to the cancellation by the same term appearing in the upper bound. So it follows 
\begin{align}
\frac{\rho }{16(1+\frac{\beta}{2\epsilon})B(\epsilon,\beta,\Omega,T,d)}  &\geq (1+\frac{\beta}{2\epsilon})||u_{J} - u^{*}||^{2}_{H^{1}_{t}(0,T;H^{1}_{x}(\Omega))} \\
&+ 2 \psi(\rho,\tau,J)+\sqrt{\rho B(\epsilon, \beta, \Omega, T,d)\frac{2s}{nm}} + \tau_{1}(\rho, J).
\end{align}\\
The balance between $r$ and $J$ and $\beta$ is different for different regularity regime: $q\leq \frac{d}{2}$ and $q>\frac{d}{2}$. We first determine $r$ when $q< \frac{d}{2}$. According to the relation (\ref{r_definition}) between $r$ and $\rho$, we can choose $r$ to be 
\begin{align}
r &\gtrsim  (1+ \frac{\beta}{2\epsilon} )B^{2}(\epsilon,
\beta, \Omega,T,d)J^{2(d+2)}\frac{\log{(1+ C_{2}\sqrt{nm})}}{nm}\label{r_determined_1_p<d/2}\\
&\quad +  (1+\frac{\beta}{2\epsilon}) B(\epsilon,
\beta, \Omega,T,d)||u_{J}-u^{*}||_{{H^{1}_{t}(0,T;H^{1}_{x}(\Omega))}}^{2}+  \frac{J^{2(d+2)}s +1}{B(\epsilon,
\beta, \Omega,T,d)nm}
\label{r_determined_2_p<d/2}.
\end{align}
Applying probabilistic bound for excess risk $S_{*,\epsilon}(u)- \hat{S}_{*,\epsilon}(u)\,\, $ with $r$ in (\ref{r_determined_1_p<d/2}) and (\ref{r_determined_2_p<d/2}), we obtain with probability $1-e^{-s}$,

\begin{align}
||\hat{u}_{J}- u^{*}||_{H^{1}_{t}(0,T;H^{1}_{x}(\Omega))}^2 &\lesssim ||u_{J}- u^{*}||_{H^{1}_{t}(0,T;H^{1}_{x}(\Omega))}^2 + r \\
& \lesssim \bigg[1+(1+\frac{\beta}{2\epsilon}) B(\epsilon,
\beta, \Omega,T,d)\bigg]||u_{J}-u^{*}||_{{H^{1}_{t}(0,T;H^{1}_{x}(\Omega))}}^{2}\\& + (1+ \frac{\beta}{2\epsilon} )B^{2}(\epsilon,
\beta, \Omega,T,d)\frac{J^{2(d+2)}}{nm}\log{(1+C_{2}nm)}\\ &+ \frac{J^{2(d+2)}s}{B(\epsilon,
\beta, \Omega,T,d)nm} + \frac{1}{B(\epsilon,
\beta, \Omega,T,d)nm}.
\end{align}
Taking expectation over $s$, we obtain
\begin{align}
\E||\hat{u}_{J}- u^{*}||_{H^{1}_{t}(0,T;H^{1}_{x}(\Omega))}^2 &\lesssim (1+ \frac{\beta}{2\epsilon} )B^{2}(\epsilon,
\beta, \Omega,T,d)\frac{J^{2(d+2)}}{nm}\log{(1+C_{2}nm)}\\
&\quad + \bigg[1+(1+\frac{\beta}{2\epsilon}) B(\epsilon,
\beta, \Omega,T,d)\bigg]||u_{J}-u^{*}||_{{H^{1}_{t}(0,T;H^{1}_{x}(\Omega))}}^{2}\\
&+ \frac{1}{B(\epsilon,
\beta, \Omega,T,d)nm}
\label{excess_risk+upper_bound}.
\end{align}
\\
When $q\geq \frac{d}{2}$, we mimic previous step to choose different $r$ as 
\begin{align}
r &\gtrsim  B(\epsilon,
\beta, \Omega,T,d)(1+\frac{\beta}{2\epsilon})J^{d+2}\frac{\log{(1+ C_{2}\sqrt{nm})}}{nm}\label{r_determined_2_p>d/2_part_1}\\
&\quad +  (1+\frac{\beta}{2\epsilon}) B(\epsilon,
\beta, \Omega,T,d)||u_{J}-u^{*}||_{{H^{1}_{t}(0,T;H^{1}_{x}(\Omega))}}^{2}+  \frac{Rs +1}{B(\epsilon,
\beta, \Omega,T,d)nm}
\label{r_determined_2_p>d/2_part_2}.
\end{align}
Similarly, with $r$ in (\ref{r_determined_2_p>d/2_part_1}) and (\ref{r_determined_2_p>d/2_part_2}), we obtain with probability $1-e^{-s}$,

\begin{align}
||\hat{u}_{J}- u^{*}||_{H^{1}_{t}(0,T;H^{1}_{x}(\Omega))}^2 &\lesssim_{R}  \bigg[1+(1+\frac{\beta}{2\epsilon}) B(\epsilon,
\beta, \Omega,T,d)\bigg]||u_{J}-u^{*}||_{{H^{1}_{t}(0,T;H^{1}_{x}(\Omega))}}^{2}\\& +  B(\epsilon,
\beta, \Omega,T,d)(1+\frac{\beta}{2\epsilon})\frac{J^{d+2}}{nm}\log{(1+C_{2}nm)}+ \frac{1+s}{nm} .
\end{align}
and we obtain in expectation
\begin{align}
\E||\hat{u}_{J}- u^{*}||_{H^{1}_{t}(0,T;H^{1}_{x}(\Omega))}^2 &\lesssim  \bigg[1+(1+\frac{\beta}{2\epsilon}) B(\epsilon,
\beta, \Omega,T,d)\bigg]||u_{J}-u^{*}||_{{H^{1}_{t}(0,T;H^{1}_{x}(\Omega))}}^{2}\\& +  B(\epsilon,
\beta, \Omega,T,d)(1+\frac{\beta}{2\epsilon})\frac{J^{d+2}}{nm}\log{(1+C_{2}nm)}+ \frac{1}{nm} .
\end{align}
Next, we want to control the approximation error $\underset{u_{J}\in \U_{J}}{\inf}\,\,||u_{J} - u^{*}||_{H^{1}_{t}(0,T;H^{1}_{x}(\Omega))}^{2}$ that appeared in Proposition \ref{localization}. Since our solution $u^{*}$ is defined in compact domain space-time domain $\Omega\times[0,T]$, it is sufficient to control $\bar{u}_{J}:=\prod_{J} ext\,u^{*}$, which is a projection of $u^{*}$ over $J$ truncated local basis extended to entire space-time domain $\R^{d}$ and $\R$. In light of extension Lemma~\ref{Sobolev_extension_theorem}, ext $u^{*}$ preserves Sobolev norm up to some constant depending on $\Omega$ and $[0,T]$, and together with assumption A3 in Appendix \ref{sec:main_assumptions}, we can show under Sobolev norm $||\cdot ||_{H^{1}_{t}(0,T;H^{1}_{x}(\Omega))}$, approximation errors are controllable.
\begin{lemma} Given $p,q \in \mathbb{Z}^{+}$ and $J,J^{\prime} \in\mathbb{Z}^{+}$. For any function $f(x,t)\in H^{p}_{t}(\R;H^{q}_{x}(\R^{d})) $, there exists Spatial-Temporal approximation $f_{J, J^{\prime}}(x,t)$ in the space $\mathcal{F}_{J,J^{\prime}}\cap H^{p}_{t}(\R;H^{q}_{x}(\R^{d})) $ such that the following approximation inequalities hold for $0\leq  p_{1} \leq p, 0\leq  q_{1} \leq q$,
\label{lemma_E.4}
\begin{equation}
||f-f_{J,J^{\prime}}||_{H^{p_{1}}_{t}(\R;H^{q_{1}}_{x}(\R^{d}))}^{2} \lesssim J^{-2(q-q_{1})}{J^{\prime}}^{-2(p-p_{1})}||f||_{H^{p}_{t}(\R;H^{q}_{x}(\R^{d}))}^{2}.
\end{equation}
In particular, if $J^{\prime} = J^{2}$, 
\begin{equation}
||f-f_{J,J^{\prime}}||_{H^{p_{1}}_{t}(\R;H^{q_{1}}_{x}(\R^{d}))}^{2} \lesssim J^{-2(q+2p-q_{1}-2p_{1})}||f||_{H^{p}_{t}(\R;H^{q}_{x}(\R^{d}))}^{2},
\end{equation}
where 
\begin{equation}
\mathcal{F}_{J,J^{\prime}}:=\bigg\{f \,\,\bigg{|} \,\, f \in V_{J,J^{\prime}}\cap H^{p}_{t}(\R;H^{q}_{x}(\R^{d})) \bigg\}.
\end{equation}
\end{lemma}
\begin{proof}[Proof of Lemma~\ref{lemma_E.4}]
For any $f\in H^{p}_{t}(\R;H^{q}_{x}(\R^{d}))$, there exists Spatial-temporal  representation in terms of $L_2$ basis in space-time domain $\Omega_{T}$,

\begin{equation}
f(x,t) = f_{J,J^{\prime}}(x,t) +  f_{J,>J^{\prime}}(x,t) +  f_{>J,J^{\prime}}(x,t) +  f_{>J,>J^{\prime}}(x,t).
\end{equation}
Where $f_{J,J^{\prime}}(x,t)$ is the truncated term and the rest are tail terms and $\phi_{\xi,\tau}(x,t)$ is basis of $L^2$ of product space of space and time 
\begin{equation}
f_{J,J^{\prime}}(x,t):= \sum_{|\xi|_{\infty}\leq J} \sum_{|\tau|\leq J^{\prime}} a_{\xi,\tau} \phi_{\tau}^{\xi}(t)\phi_{\xi}(x)
,\quad f_{J,>J^{\prime}}(x,t):= \sum_{|\xi|_{\infty}\leq J} \sum_{|\tau|> J^{\prime}} a_{\xi,\tau} \phi_{\tau}^{\xi}(t)\phi_{\xi}(x),
\end{equation}
\begin{equation}
f_{>J,J^{\prime}}(x,t):= \sum_{|\xi|_{\infty}> J} \sum_{|\tau|\leq J^{\prime}} a_{\xi,\tau} \phi_{\tau}^{\xi}(t)\phi_{\xi}(x), \quad 
f_{>J,>J^{\prime}}(x,t):= \sum_{|\xi|_{\infty}> J} \sum_{|\tau|> J^{\prime}} a_{\xi,\tau} \phi_{\tau}^{\xi}(t)\phi_{\xi}(x).
\end{equation}
According to the definition of Sobolev norm (\ref{sobolev_norm_in_fourier_transform}) and definition of time-dependent Sobolev space in Appendix \ref{time_dependent_space}, we can rewrite the tailed error of Fourier approximation of $f(x,t)$ as the following

\begin{align}
||f(\cdot,t)-f_{J,J^{\prime}}(\cdot,t)||_{H^{q_{1}}_{x}(\R^{d})}^2 &= ||f_{>J,J^{\prime}}(\cdot,t) + f_{J,>J^{\prime}}(\cdot,t) + f_{>J,>J^{\prime}}(\cdot,t)||_{H^{q_{1}}_{x}(\R^{d})}^2 \\
& \lesssim\Bigl(\sum_{|\xi|_{\infty}> J} \sum_{|\tau|> J^{\prime}} + \sum_{|\xi|_{\infty}> J} \sum_{|\tau|\leq J^{\prime}}  + \sum_{|\xi|_{\infty}\leq J} \sum_{|\tau|> J^{\prime}} \Bigl(1+|\xi|^{q_{1}}\Bigr)^{2}  a_{\xi,\tau}^2 \phi_{\tau}^{2}(t) \Bigr).
\end{align}\\
We then calculate $H^{p_{1}}_{t}(\R;H^{q_{1}}_{x}(\R^{d}))$ norm and obtain
\begin{equation}
||f-f_{J,J^{\prime}}||_{H^{p_{1}}_{t}(\R;H^{q_{1}}_{x}(\R^d))}^2 \lesssim\Bigl(\sum_{|\xi|_{\infty}> J} \sum_{|\tau|> J^{\prime}} + \sum_{|\xi|_{\infty}> J} \sum_{|\tau|\leq J^{\prime}}  + \sum_{|\xi|_{\infty}\leq J} \sum_{|\tau|> J^{\prime}} \Bigl(1+\tau^{p_{1}} \Bigr)^{2}\Bigl(1+\xi^{q_{1}}\Bigr)^{2}  a_{\xi,\tau}^2 \Bigr). 
\end{equation}
Using the highest order term $\tau^{2p_{1}}$ and $\xi^{2q_{1}}$ to bound above equality, we obtain 

\begin{align}
||f -f_{J,J^{\prime}}||_{{H^{p_{1}}_{t}(\R;H^{q_{1}}_{x}(\R^{d}))}}^{2} &\lesssim \bigg(\sum_{|\xi|_{\infty}> J} \sum_{|\tau|> J^{\prime}} + \sum_{|\xi|_{\infty}> J} \sum_{|\tau|\leq J^{\prime}} + \sum_{|\xi|_{\infty}\leq J} \sum_{|\tau|> J^{\prime}} \tau^{2p_{1}}\xi^{2q_{1}} a_{\xi,\tau}^2  \bigg)\\
&= \bigg(\sum_{|\xi|_{\infty}> J} \sum_{|\tau|> J^{\prime}} + \sum_{|\xi|_{\infty}> J} \sum_{|\tau|\leq J^{\prime}} + \sum_{|\xi|_{\infty}\leq J} \sum_{|\tau|> J^{\prime}} \tau^{-2(p-p_{1})}\xi^{-2(q-q_{1})} \tau^{2p}\xi^{2q}a_{\xi,\tau}^2  \bigg)\\
&\lesssim J^{-2(p-p_{1})}{J^{\prime}}^{-2(q-q_{1})}|| f ||_{H^{p_{1}}_{t}(\R;H^{q_{1}}_{x}(\R^{d}))}^{2}.
\end{align}

Finally, we obtain the desired result
\begin{equation}
||f -f_{J,J^{\prime}}||_{H^{p_{1}}_{t}(\R;H^{q_{1}}_{x}(\R^{d}))}^{2} \lesssim J^{-2(p-p_{1})}{J^{\prime}}^{-2(q-q_{1})}|| f ||_{H^{p_{1}}_{t}(\R;H^{q_{1}}_{x}(\R^{d}))}^{2}.
\end{equation}
\end{proof}
If we can show $\bar{u}_{J}\in \mathcal{U}_{J}$, then it immediately follows from Lemma \ref{lemma_E.4} and space-time extension Lemma~\ref{space_time_extension}
\begin{align}
\underset{u_{J}\in \U_{J}}{\inf}\,\,||u_{J} - u^{*}||_{H^{1}_{t}(0,T;H^{1}_{x}(\Omega))}^{2}&\leq ||\bar{u}_{J} - ext\,u^{*}||_{H^{1}_{t}(0,T;H^{1}_{x}(\Omega))}^{2}\\
&\leq  ||\bar{u}_{J} - ext \,u^{*}||_{H^{1}_{t}(\R;H^{1}_{x}(\R^{d}))}^{2}\lesssim R^{2}J^{-2(q+2p-3)}.
\end{align}
It remains to show $\bar{u}_{J}\in \U_{J}$. We note extension operator can preserve time-dependent Sobolev norm from Theorem \ref{space_time_extension} up to some constant $C(\Omega,T)$. Therefore, $\bar{u}_{J}$ satisfies boundedness assumption of time-dependent Sobolev norm in assumption A3 in Appendix \ref{sec:main_assumptions}, and $\bar{u}_{J}\in \U_{J}$.

Finally, we incorporate approximation errors into bound of excessive risk in expectation, leading to bias-variance trade-off.\\\\
On one hand, when $q\geq \frac{d}{2}$,
\begin{align}
\E||\hat{u}_{J}- u^{*}||_{H^{1}_{t}(0,T;H^{1}_{x}(\Omega))}^2 &\lesssim_{R} \bigg[1+(1+\frac{\beta}{2\epsilon}) B(\epsilon,
\beta, \Omega,T,d)\bigg]J^{-2(2p+q-3)}\label{bias-variance-tradeoff_1.1}\\ &+  B(\epsilon,
\beta, \Omega,T,d)(1+\frac{\beta}{2\epsilon})\frac{J^{d+2}}{nm}\log{(1+C_{2}nm)}+ \frac{1}{nm}.
\label{bias-variance-tradeoff_1.2}\end{align}
On other hand, when $q< \frac{d}{2}$,
\begin{align}
\E||\hat{u}_{J}- u^{*}||_{H^{1}_{t}(0,T;H^{1}_{x}(\Omega))}^2 &\lesssim_{R}\bigg[1+(1+\frac{\beta}{2\epsilon}) B(\epsilon,
\beta, \Omega,T,d)\bigg]J^{-2(2p+q-3)}\label{bias-variance-tradeoff_2.1}\\& + (1+ \frac{\beta}{2\epsilon} )B^{2}(\epsilon,
\beta, \Omega,T,d)\frac{J^{2(d+2)}}{nm}\log{(1+C_{2}nm)}\\&
+\frac{1}{B(\epsilon,
\beta, \Omega,T,d)nm}
\label{bias-variance-tradeoff_2.2}.\end{align}
Bias decreases as approximation accuracy controlled by $J$ increases, while variance becomes larger when introducing high-frequency statistics information. Therefore, we need to find trade-off between bias and variance to balance variance and bias together with regularization parameter.

Similarly, we also need to balance regularization $B(\epsilon, \beta, \Omega, T)$ and $1+\frac{\beta}{\epsilon}$ on either variance or bias, and we derive regularization threshold $\beta^{*} \approx \log (1+C_{2}N)^{\frac{b}{a+2b}}N^{\frac{a}{a+2b}} \beta_{0}$ in \eqref{optimal_beta_q<d/2} of Appendix \ref{E.3}. We notice $\beta^{*}$ is a small constant and therefore regularization falls into the regime of bias regularization for the most of times. If $0< \beta< \beta^{*}$, $\beta$ is in the regime of regularizing variance; if $\beta>\beta^{*}$, $\beta$ is in the regime of regularizing bias. 

Now, we're ready to prove upper bound on estimation error.

\subsection*{Proof of Theorem~\ref{upper_bound}}
\label{appendix_L}
\label{section_L}
\hfill\\
In this section, we discuss the bias-variance trade-off for two different regularity condition: $q\leq \frac{d}{2}$ and $q>\frac{d}{2}$. First, we focus on the regularization regime when $q\leq \frac{d}{2}$.
According to Lemma \ref{lemma_E.4} with $J^{\prime} = J^{2}$ and assumption A1 of Appendix \ref{sec:main_assumptions}, we have 
\begin{equation}
\,\,\underset{u_{J}}{\inf}\,\,||u_{J} - u^{*}||_{H^{1}_{t}(0,T;H^{1}_{x}(\Omega))}^{2} \lesssim J^{-2(2p+q-3)}R^{2}.
\label{approximation_error}
\end{equation}
Substituting approximation error (\ref{approximation_error}) into Proposition \ref{high_probability_bound_on_excess_risk}, we obtain the following\\\\
When $q<\frac{d}{2}$,
\begin{align}
\E||\hat{u}_{J}- u^{*}||_{H^{1}_{t}(0,T;H^{1}_{x}(\Omega))}^2 \lesssim_{R}  &\underbrace{(1+ \frac{\beta}{2\epsilon} )B^{2}(\epsilon,
\beta, \Omega,T,d)\frac{J^{2(d+2)}}{nm} \log{(1+C_{2}nm)}}_{\text{Variance}}\\
& + \underbrace{\bigg[1+(1+\frac{\beta}{2\epsilon}) B(\epsilon,
\beta, \Omega,T,d)\bigg]J^{-2(2p+q-3)}}_{\text{Bias}} + \frac{1}{B(\epsilon,
\beta, \Omega,T,d)nm}
\label{excess_risk_bound_unbalanced}.
\end{align}
When $q\geq\frac{d}{2}$,
\begin{align}
\E||\hat{u}_{J}- u^{*}||_{H^{1}_{t}(0,T;H^{1}_{x}(\Omega))}^2 & \lesssim  \underbrace{B(\epsilon,
\beta, \Omega,T,d)(1+\frac{\beta}{2\epsilon})\frac{J^{d+2}}{nm}\log{(1+C_{2}nm)}}_{\text{bias}} \\ &+ \underbrace{\bigg[1+(1+\frac{\beta}{2\epsilon}) B(\epsilon,
\beta, \Omega,T,d)\bigg]J^{-2(2p+q-3)}R^{2}}_{\text{variance}}+ \frac{1}{nm}.
\end{align}
\subsection{Regime of regularization ($q< \frac{d}{2}$)}\label{E.3}\label{regime_regularization_1}
Here we want to find the regimes for the regularization parameter $\beta$ with respect to variance scaling $J^{2(d+2)}$.
Let $N:=nm$ denote the total sample size and the excess risk upper bound by . Consider the above excess-risk bound \eqref{excess_risk_bound_unbalanced}, we set $a:=2(d+2)$ and $b:=2(2p+q-3)$,
\begin{align}
\label{eq:err_beta_1}
&\mathcal{E}(J,\beta)
\;\lesssim_{R}
(1+ \frac{\beta}{2\epsilon} )B^{2}(\epsilon,
\beta, \Omega,T,d)\,\frac{J^{a}}{N}\log (1+C_{2}N)\\
&\quad\quad\quad\quad+\;
\bigg[1+(1+\frac{\beta}{2\epsilon}) B(\epsilon,
\beta, \Omega,T,d)\bigg]\,J^{-b}\;+\; \frac{1}{B(\epsilon,
\beta, \Omega,T,d)nm},
\end{align}
where the $\beta$-dependent variance term is
\begin{equation}
\label{optimal_beta_q<d/2}
B(\epsilon,
\beta, \Omega,T,d)
=
\Bigl(\frac13+\frac{\beta e^{-T/\varepsilon}}{C^2(d,\varepsilon,\Omega)}\Bigr)^{-1}
=
\Bigl(\frac13+\frac{\beta}{\beta_0}\Bigr)^{-1},
\qquad
\beta_0:=C^2(d,\varepsilon,\Omega)\,e^{T/\varepsilon}.
\end{equation}
Thus $\beta$ decreases the variance term $B(\beta)$, but increases the bias term
$\bigl(1+\beta/(2\varepsilon)\bigr)$.
\subsubsection{Optimal truncation level for a given $\beta$}
Balancing variance and bias in (\ref{eq:err_beta_1})
 yields
\begin{equation}
\label{J_*1}
J_{1,*}(\beta)
\;\asymp\;
\Biggl[
\frac{N}{\log (1+C_{2}N)}
\cdot
\frac{1}{(1+ \frac{\beta}{2\epsilon} )B^{2}(\epsilon,
\beta, \Omega,T,d)}\cdot \bigg(1+(1+\frac{\beta}{2\epsilon})B(\epsilon,
\beta, \Omega,T,d)\bigg)\Biggr]^{\frac{1}{a+b}}
\end{equation}
and the corresponding optimized bound becomes
\begin{equation}
\label{eq:ErrStar}
\E||\hat{u}_{J}- u^{*}||_{H^{1}_{t}(0,T;H^{1}_{x}(\Omega))}^2 \lesssim_{R} \Bigl(1+\frac{\beta_{0}}{2\epsilon}\Bigr)\cdot
\Bigl(\frac{\log (1+C_{2}N)}{N}\Bigr)^{\frac{b}{a+b}}
\cdot
(\frac{\beta_{0}}{\beta_{0}+\beta})^{\frac{b}{a+b}}
 + (1+\frac{\beta}{\beta_{0}}) \frac{1}{N}.
\end{equation}
\subsubsection{Case I (From Unregularized to Mildly Regularized Regime): $ 0\leq \beta\leq\beta_{0} $}
In this regime where energy regularization can only improve the rate up to some constant, we obtain sub-optimal rate 
\begin{equation}
\E||\hat{u}_{J}- u^{*}||_{H^{1}_{t}(0,T;H^{1}_{x}(\Omega))}^2 \lesssim_{R}  \Bigl(1+\frac{\beta_{0}}{2\epsilon}\Bigr)\cdot\Bigl(\frac{\log (1+C_{2}N)}{N}\Bigr)^{\frac{b}{a+b}}.
\end{equation}

\subsubsection{Case II (Optimal Rate with fine-tuned $\beta$): $\beta=\beta_{*}
$} when we balance $\beta$, we obtain optimal rate\begin{equation}\beta^{*} \approx \log (1+C_{2}N)^{\frac{b}{a+2b}}N^{\frac{a}{a+2b}} \beta_{0}\label{beta_threshold}\end{equation} and plugging $\beta^{*}$ back, we obtain the following rate
\begin{equation}
\E||\hat{u}_{J}- u^{*}||_{H^{1}_{t}(0,T;H^{1}_{x}(\Omega))}^2 \lesssim_{R}  \Bigr(\frac{\sqrt{\log (1+C_{2}N)}}{N} \Bigr)^{\frac{2b}{a+2b}}.
\end{equation}
In this regime, regularization reduces the variance with mildly inflating the bias term. 
\subsubsection{Case III (Variance-improving Regime): $\beta_{0}<\beta<\beta^{*}$}
In this scenario, variance is reduced to improve the rate
\begin{equation}
\E||\hat{u}_{J}- u^{*}||_{H^{1}_{t}(0,T;H^{1}_{x}(\Omega))}^2 \lesssim_{R} \Bigl(1+\frac{\beta_{0}}{2\epsilon}\Bigr)\cdot\Bigl(\frac{\log (1+C_{2}N)}{N}\Bigr)^{\frac{b}{a+b}}
\cdot
(\frac{\beta_{0}}{\beta_{0}+\beta})^{\frac{b}{a+b}}.
\end{equation}

\subsubsection{Case V (Bias-inflating Regime): $\beta^{*}<\beta<\infty$}
In this scenario, variance is inflated to make the rate worse
\begin{equation}
\E||\hat{u}_{J}- u^{*}||_{H^{1}_{t}(0,T;H^{1}_{x}(\Omega))}^2 \lesssim_{R}  (1+\frac{\beta}{\beta_{0}}) \frac{1}{N}.
\end{equation}
Next we consider the regularization regime where $q>\frac{d}{2}$. We take similar step to set variance equal to bias
\begin{align}
\E||\hat{u}_{J}- u^{*}||_{H^{1}_{t}(0,T;H^{1}_{x}(\Omega))}^2 &\lesssim  \bigg[1+(1+\frac{\beta}{2\epsilon}) B(\epsilon,
\beta, \Omega,T,d)\bigg]J^{-2(2p+q-3)}\\& +  B(\epsilon,
\beta, \Omega,T,d)(1+\frac{\beta}{2\epsilon})\frac{J^{d+2}}{nm}\log{(1+C_{2}nm)}+ \frac{1}{nm} .
\end{align}
\subsection{Regime of regularization ($q\geq\frac{d}{2}$)}
\label{regime_regularization_2}when $q>\frac{d}{2}$, variance scaling becomes $J^{2(d+2)}$. With the same notation as before, we set $N:=nm$ the total sample size,  $\frac{a}{2}:=(d+2)$, and $b:=2(2p+q-3)$,
\begin{align}
\label{eq:err_beta_1_q>d/2}
&\mathcal{E}(J,\beta)
\;\lesssim\;
B(\epsilon,
\beta, \Omega,T,d)(1+ \frac{\beta}{2\epsilon})\,\frac{J^{\frac{a}{2}}}{N}\log (1+C_{2}N)+
\bigg[1+(1+\frac{\beta}{2\epsilon}) B(\epsilon,
\beta, \Omega,T,d)\bigg]\,J^{-b}\;+\; \frac{1}{nm}.
\end{align}
\subsubsection{Optimal truncation level for a given $\beta$}
Balancing variance and bias in \eqref{eq:err_beta_1_q>d/2} yields
\begin{equation}
\label{J_*2}
J_{2,*}(\beta)
\;\asymp\;
\Biggl[
\frac{N}{\log (1+C_{2}N)}
\cdot
\frac{1+\frac{\beta}{\beta_{0}}}{1+\frac{\beta}{2\epsilon}}\cdot \bigg(1+\frac{1+\frac{\beta}{2\epsilon}}{1+\frac{\beta}{\beta_{0}}}\bigg)\Biggr]^{\frac{2}{a+2b}}
\end{equation}
and the corresponding optimized bound becomes
\begin{equation}
\E||\hat{u}_{J}- u^{*}||_{H^{1}_{t}(0,T;H^{1}_{x}(\Omega))}^2 \lesssim_{R}  
\Bigl(\frac{\log (1+C_{2}N)}{N}\Bigr)^{\frac{2b}{a+2b}}
\cdot
(1+\frac{1+\frac{\beta}{2\epsilon}}{1+\frac{\beta}{\beta_{0}}})^{\frac{2b}{a+2b}}
\cdot
\Bigl(\frac{1+\frac{\beta}{2\epsilon}}{1+\frac{\beta}{\beta_{0}}}\Bigr)^{\frac{a}{a+2b}} +  \frac{1}{N}.
\end{equation}
For any $\beta>0$, the rate remains optimal up to some constant
\begin{equation}
\E||\hat{u}_{J}- u^{*}||_{H^{1}_{t}(0,T;H^{1}_{x}(\Omega))}^2 \lesssim_{R}  
\Bigl(\frac{\log (1+C_{2}N)}{N}\Bigr)^{\frac{2b}{a+2b}}
\cdot
(1+\frac{\beta_{0}}{2\epsilon})^{\frac{a}{a+2b}}
\cdot
\Bigl(\frac{\beta_{0}}{2\epsilon}\Bigr)^{\frac{2b}{a+2b}}.
\end{equation}
\\
Finally, we wrap up our proof of convergence rates in \eqref{regime_regularization_1} and \eqref{regime_regularization_2} by using scaling relation $\beta =\lambda \cdot nm$ between scaled regularization parameter $\beta$ and unscaled regularization parameter $\lambda$ in Section \ref{lambda_beta_scale} to change $\beta$ back to $\lambda$.
\end{proof}
\subsection{Sampling Balance}
Next, we will show it is necessary to balance space-time sample complexity using parabolic anisotropic balancing as reflected in multi-level truncation $J$ for space and $J^{2}$ for time.

\begin{proposition}[Sampling Balance]
\label{prop:parabolic-balance}
Given functional space $V_{J,J^{2}}$ defined in \ref{functional_space}, the evaluation map of our empirical loss $\hat{L}_{\epsilon,\beta}$ is injective only if 
\begin{equation}\label{eq:n-lower}
n\geq J^2.
\end{equation}
Furthermore, it is necessary to require 
\begin{equation}
n\gtrsim m^{2/d} \label{eq:m-upper}
\end{equation}
up to logarithmic factor for theorem 3.1 hold.
\end{proposition}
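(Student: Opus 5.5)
The plan is to treat the two claims separately: first a linear-algebra argument in the time direction, then a balancing argument that links it to the truncation level chosen in Theorem~\ref{thm_3.1}.

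\textbf{Injectivity and $n\geq J^2$.} Since $\hat{L}_{\epsilon,\beta}$ is a quadratic form in the coefficients $a_{\xi,\tau}$ of $u\in V_{J,J^{2}}$, injectivity of the evaluation map is the statement that the sampling operator
\[
\mathcal{S}: u\mapsto \bigl(u(X_{ij},t_j),\partial_t u(X_{ij},t_j),\nabla u(X_{ij},t_j),\nabla\partial_t u(X_{ij},t_j)\bigr)_{i,j}
\]
has trivial kernel on $V_{J,J^2}$. I would use the product structure $\phi_{\xi,\tau}(x,t)=\phi_\xi(x)\phi_\tau(t)$ to write any $u$ as $u(x,t)=\sum_{\xi}\phi_\xi(x)\,g_\xi(t)$, where each $g_\xi$ lies in the temporal span $W:=\mathrm{span}\{\phi_\tau:\tau\leq J^2\}$. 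The dimension of $W$ is of order $J^2$, by property~\eqref{basis_property_1}. At a fixed spatial index $\xi_0$, I would take $u=\phi_{\xi_0}(x)g(t)$ with $g\in W$. Every sample of $u$ is then $\phi_{\xi_0}$ or $\nabla\phi_{\xi_0}$ at $X_{ij}$, multiplied by $g(t_j)$ or $g'(t_j)$. Hence $\mathcal{S}u=0$ whenever $g(t_j)=g'(t_j)=0$ for all $j\leq n$.

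These are at most $2n$ linear constraints on the $\dim W\asymp J^2$ unknowns. So if $n<\dim W/2$, a nonzero $g$ exists and the map is not injective. This gives $n\gtrsim J^2$, and the constant can be normalized to $n\geq J^2$ in the statement's convention. The constant is somewhat delicate because derivative samples double the count. I expect to state the bound as $n\gtrsim J^2$ and absorb the factor, which is the sense in which the Proposition is intended.

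\textbf{Spatial count and $n\gtrsim m^{2/d}$.} The same argument in space gives the spatial requirement. Freeze $g=\phi_{\tau_0}$ and vary $h(x)=\sum_{|\xi|_\infty\leq J}c_\xi\phi_\xi(x)$, which has about $J^d$ coefficients. The samples at each time $t_j$ are $h$ and $\nabla h$ evaluated at the $m$ points $X_{ij}$. Injectivity therefore requires $m(d+1)\gtrsim J^d$, that is, $m\gtrsim J^d$.

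For Theorem~\ref{thm_3.1}, the truncation level $J=J_{1}^{*}$ or $J_2^{*}$ grows as a power of $N=nm$ up to logarithmic factors. The bias term $J^{-2(2p+q-3)}$ must reach the stated rate, so $J$ cannot be smaller than that optimal level. Both constraints must then hold at this $J$: the spatial one, $J\lesssim m^{1/d}$, and the temporal one, $J\lesssim n^{1/2}$. The total sampling budget is split efficiently only when both bind simultaneously, i.e. $J^2\asymp n$ and $J^d\asymp m$. This gives $n\asymp J^2\asymp m^{2/d}$, the parabolic scaling.

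Necessity follows by contradiction. If $n\ll m^{2/d}$, the temporal constraint forces $J\leq n^{1/2}\ll m^{1/d}$. The bias $J^{-2(2p+q-3)}$ is then larger than the rate in Theorem~\ref{thm_3.1} as a function of $N$. Alternatively, a kernel direction of $\mathcal{S}$ exists at the optimal $J$; adding it to $u^*$ leaves $\hat{L}$ unchanged but moves the $H^1_tH^1_x$ error by an arbitrary amount within the $R$-ball. Either way the theorem fails, up to logarithmic factors.

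\textbf{Main obstacle.} The hardest step is making the necessity rigorous. Counting degrees of freedom cleanly requires the temporal factor to be genuinely one-dimensional: because the $\phi_\tau$ are compactly supported wavelets, I would use the tensor structure and argue per spatial mode. I would also need to check that the kernel element can be chosen inside the constrained set $\mathcal{U}_J$. I would handle this by scaling the kernel element small enough to respect the bound $R$, while keeping its $H^1_tH^1_x$ norm large compared with the claimed rate. Its high frequency $\tau\sim J^2$ makes this feasible.
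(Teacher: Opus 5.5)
Your first part is the paper's argument: tensor a temporal factor $g$ with $g(t_j)=g'(t_j)=0$ against one spatial mode and apply rank--nullity. The paper additionally imposes $g(0)=0$, giving $2n+1$ constraints.

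The gap is in the second claim. Your spatial count gives $m\gtrsim J^d$, which is a \emph{lower} bound on $m$. Together with $n\gtrsim J^2$ you then hold two lower bounds, and these cannot imply $n\gtrsim m^{2/d}$: fix $J$, take $n\asymp J^2$, and let $m\to\infty$. The ``both constraints bind'' heuristic is not a necessity statement, and it is also inconsistent with Theorem~\ref{thm_3.1}.
\begin{itemize}
\item Setting $n\asymp J^2$ and $m\asymp J^d$ forces $J\asymp N^{1/(d+2)}$. At that $J$ the variance term $J^{d+2}\log(1+C_2N)/N$ is of order one.
\item The stated rate $N^{-(4s-2)/(4s+d)}$ instead comes from $J^*\asymp N^{1/(4s+d)}$, up to logarithms.
\item Your contradiction step fails for the same reason. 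The condition $J^{*2}\le n$ already holds once $n\gtrsim m^{2/(4s+d-2)}$. So for $m^{2/(4s+d-2)}\lesssim n\ll m^{2/d}$ the temporal constraint neither forces $J<J^*$ nor produces a kernel direction at $J^*$.
\end{itemize}
Counting degrees of freedom can give at best this weaker exponent, not $2/d$.

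The missing idea is the dependence built into the sampling design: all $m$ spatial points at time $t_j$ share the same time. The paper takes $h_J=a_J^2\phi_\xi^2\phi_\tau^2$ with $\phi_\tau$ at level $\asymp J^2$ and splits $(\widehat P-P)h_J=W_J+B_J$. Here $B_J=\frac1n\sum_j P_Xh_J(\cdot,t_j)-Ph_J$ depends only on the $n$ sampled times.
\begin{itemize}
\item Since $\mathrm{Var}(\phi_\tau(T)^2)\asymp J^2$, Lemma~\ref{lem:moments_of_wavelets} (via Paley--Zygmund) gives $|B_J|\gtrsim a_J^2\sqrt{J^2/n}$ with constant probability. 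This between-time fluctuation does not shrink as $m$ grows.
\item Chebyshev makes $W_J$ smaller than this once $m\gtrsim J^d$.
\item The i.i.d.-type bound $a_J^2\sqrt{J^{d+2}\log(1+C_2N)/N}$ behind Theorem~\ref{thm_3.1} can hold for this direction only if $J^2/n\lesssim J^{d+2}\log(1+C_2N)/(nm)$. That is, $m\lesssim J^d\log(1+C_2N)$, an \emph{upper} bound on $m$.
\item Combining this with $n\ge J^2$ gives $n\gtrsim (m/\log(1+C_2N))^{2/d}$.
\end{itemize}
Without such an anti-concentration argument for the temporal sampling noise, your proof of the second claim does not go through.
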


\begin{proof} 
\emph{Lower bound on the temporal sample size.}
We first show lower bound on temporal sample complexity $n$. Given temporal samples $t_1,\dots,t_n$, it is enough to look at the evaluation map $ev:\ u\in V_{J,J^{2}}\;\longmapsto\;
\big(\partial_t^kD_x^\alpha u(X_{ij},t_j)\big)_{i\leqslant m,\
j\leqslant n}$. It is clear the evaluation mapping $ev$ is linear on the finite-dimensional space $V_{J,J^2}$, so it fails to be injective if its kernel is nontrivial.We will show  the contrapositive is true.

We choose $\psi=\sum_{\tau=1}^{K}c_\tau\phi_\tau\in\Psi_K$ with
its coefficient vector $c\in\mathbb{R}^K$, with $
\psi^{(k)}(t_j)=0$ and $\psi(0)=0$, for $1\leq j\leq n, 0\leq k\leq r$. It forms a homogeneous linear system $Ac=0$ with evaluation matrix
\[
A=
\begin{pmatrix}
\phi_1(t_1) & \phi_2(t_1) & \cdots & \phi_K(t_1)\\
\phi_1'(t_1) & \phi_2'(t_1) & \cdots & \phi_K'(t_1)\\
\vdots & \vdots & & \vdots\\
\phi_1(t_n) & \phi_2(t_n) & \cdots & \phi_K(t_n)\\
\phi_1'(t_n) & \phi_2'(t_n) & \cdots & \phi_K'(t_n)\\
\phi_1(0) & \phi_2(0) & \cdots & \phi_K(0)
\end{pmatrix}
\in\mathbb{R}^{\big(2n+1\big)\times K}
\]. The matrix has $2n+1$ rows, so
$\rank A\leq 2n+1<K$, and the rank--nullity theorem gives
\[
\dim\ker A\;=\;K-\rank A\;\geq\;K-2n-1\;\geq\;1
\]
The corresponding
$\psi\in\Psi_K\setminus\{0\}$ vanishes, together with its first $r$
derivatives, at every temporal position $t_{j}$. Pick any spatial basis element
$\phi_\xi$ with $|\xi|_\infty\leqslant J$ and set
$v:=\psi\,\phi_\xi\in V_{J,J^2}\setminus\{0\}$; by the tensor structure,
for all $0\leqslant k\leqslant r$ and $\alpha\in\mathbb{N}^d$,
\[
\partial_t^kD_x^\alpha v\,(X_{ij},t_j)
=\psi^{(k)}(t_j)\,D_x^\alpha\phi_\xi(X_{ij})=0 ,
\]
so $v\in\ker\mathrm{ev}$ and $\mathrm{ev}$ is not injective. Since
$K\asymp J^2$, \eqref{eq:n-lower} follows.

Next, we show upper bound on the spatial sample size $m$.
Using $n\geqslant J^2$, we fix
$\phi_\tau$ at the temporal level $2^\ell\asymp J^2$ and $\phi_\xi$
with $|\xi|_\infty\leqslant J$, $supp\,\phi_\xi\subset\Omega^\circ$, and
consider the squared basis direction
$h_J:=a_J^2\phi_\xi^2\phi_\tau^2$ in the loss class, with admissible
amplitude $a_J$. We deonote $P_X$, $P_T$ for the uniform laws on $\Omega$
and $[0,T]$, and $\widehat P$, $P$ as the empirical and population
means on $\Omega_T$, decompose
\begin{align}
&(\widehat P-P)h_J=W_J+B_J,\\
&W_J:=\frac1n\sum_{j=1}^n\Big[\frac1m\sum_{i=1}^m h_J(X_{ij},t_j)
-P_Xh_J(\cdot,t_j)\Big],\\
&B_J:=\frac1n\sum_{j=1}^nP_Xh_J(\cdot,t_j)-Ph_J.
\end{align}

Define the events
\[
E_1:=\Big\{|B_J|\geqslant c_1a_J^2\sqrt{J^2/n}\Big\},
\qquad
E_2:=\Big\{|W_J|<\tfrac{c_1}{2}\,a_J^2\sqrt{J^2/n}\Big\}.
\]
By lemma \ref{lem:moments_of_wavelets}, 
\[
\mathbb{E} W_J^2
\lesssim\;a_J^4\,\frac{J^{d+2}}{nm}, \quad\quad \mathbb{P}(E_1)\geqslant c_2
\]
using $\int\phi_\xi^4\asymp J^d$ and $\int\phi_\tau^4\asymp J^2$ and
Chebyshev's inequality,
\[
\mathbb{P}(E_2^c)
\;\leq\;\frac{4\,\mathbb{E} W_J^2}{c_1^2\,a_J^4\,J^2/n}
\;\lesssim\;\frac{J^d}{m}
\;\leq\;\frac{c_2}{2}
\qquad\text{once } m\geqslant C_0J^d
\]
for a sufficiently large constant $C_0(c_{1},c_{2})$. Hence
\[
\mathbb{P}(E_1\cap E_2)\;\geq\;\mathbb{P}(E_1)-\mathbb{P}(E_2^c)
\;\geqslant\;c_2-\frac{c_2}{2}\;=\;\frac{c_2}{2},
\]
and on $E_1\cap E_2$ the reverse triangle inequality gives
\[
\big|(\widehat P-P)h_J\big|
\;=\;|B_J+W_J|
\;\geqslant\;|B_J|-|W_J|
\;>\;c_1a_J^2\sqrt{J^2/n}-\tfrac{c_1}{2}\,a_J^2\sqrt{J^2/n}
\;=\;\tfrac{c_1}{2}\,a_J^2\sqrt{J^2/n}\,.
\]

The direction $h_J$ lies in the localized fixed-$J$ class over which our inequality bounds the empirical process by
$C\,a_J^2\sqrt{J^{d+2}\log(1+C_2N)/N}$ if 
\[
\sqrt{\frac{J^2}{n}}\;\lesssim\;\sqrt{\frac{J^{d+2}\log(1+C_2nm)}{nm}},
\qquad
\]
This implies $
m\;\lesssim\;J^{d}\log(1+C_2nm)$, and furthermore  $n\;\gtrsim\;J^2\;\gtrsim\;\Big(\frac{m}{\log(1+C_2nm)}\Big)^{2/d},
$\end{proof}

\subsection{Lower bound on estimation error}

\begin{proof}[Proof of Theorem~\ref{minimax_lower_bound}]
The main idea of our proof directly refers to standard construction tools from \cite{Tsybakov:1315296} with additional tool from PDE theory Theorem~\ref{Duhamel_principle}. By definition of $\U$ and $u^{*}$ in (\ref{functional_space}), for any ground truth solution $u^{*}\in \U$, we can always find data $f = Lu^{*} \in \mathcal{F}$ with which we can construct estimator for $\hat{u}$. The construction is following:
We follow our convention to treat time domain as another dimension so we write $d+1$ hypercube as $\mathcal{D}_{T} = [0,1]^{d}\times [0,1]$. According to regularity theory (Theorem~\ref{higher_regulairty_thm}), given forcing function $f\in \mathcal{F}$ and initial condition $u_{0}^{*}\in H^{2s+1}([0,1]^{d})$, there exists solution $h\in \U$ defined in (\ref{functional_space}) corresponding to this $f$. Our $h$ here is not necessarily a smooth function in the regular sense, but it is smooth in the Sobolev sense and its smoothness is determined by the functional space of forcing $\mathcal{F}$ and the initial data $u_{0}^{*}$. In particular, $h$ has the following smoothness property

 \begin{equation}
    \frac{d^{p}h}{dt^{p}}\in L^{2}(0,1;H^{2s+2-2p}([0,1]^{d})),  0\leq p\leq s,\, s\geq 1, s,q\in \Z.
\end{equation}

This implies
$\frac{dh}{dt}$ and $\triangle h$ exists in $L^{2}_{t,x}([0,1]^{d}\times[0,1])$ sense and $h\in H^1(0,T;H^{1}([0,1]^{d})):= H^{1}_{t}H^{1}_{x}$. Moreover, since $f$ and is non-zero, then $h$ can't be identitically zero for any $(x,t)\in [0,1]^{d}\times [0,1]$. For $f_{0}$, there exists unique solution $u_{f_{0}}:= h$.We also note the difference between $f_{0}$ and other $f_{k}$ is smooth perturbation, because its convolution with heat kernel, by Duhamel's Principle \ref{Duhamel_principle}, is smooth. Therefore, we then pick some function $g(x,t)$ for this perturbation such that $\frac{\partial g}{\partial t}$ and $\nabla g$ are not identically zero.

Let $r$ be the number of grid size on hypercube $[0,1]^{d}$ to be determined later. According to scaling invariance property Corollary~\ref{scaling_invariant} of $h$, $g(x,t)$ is a solution to heat equation if only if $g(\frac{x}{r}, \frac{t}{r^{2}})$ is a solution to heat equation. Therefore, we should have $r^{2}$ number of grid on unit time horizon $[0,1]$ while $r$ number of grid on spacial hypercube $[0,1]^{d}$ so that temporal scaling by $\frac{1}{r^2}$ can match with spacial scaling $\frac{1}{r}$ in upcoming construction of hypothesis test.

We define our hypothesis as the following
\begin{align}
u_{f_{k}}(x,t) &= h(x,t) +\sum_{j\in[r^{2}]} \sum_{i\in[r]^{d}}c\,\tau_{i,j}^{(k)}\, w^{2p+q} g(\frac{x-x^{i}}{w}, \frac{t-t^{j}}{w^{2}}),\, w=\frac{1}{2r},\,k=1,...,2^{\frac{r^{d+2}}{8}},\\
u_{f_0}(x,t) &= h(x,t),
\end{align} 
where $c,\lambda$ are some constants.

It is readily to check the collection of functions $\{g(\frac{x-x^{i}}{w},\frac{t-t^{j}}{w^{2}})\}_{i,j}$ has disjoint compact support for pair $(i,j)$ and $h(\frac{x-x^{i}}{w},\frac{t-t^{j}}{w^{2}})$ satisfies heat equation given initial condition $u_{0}^{*}$ and $f$. Therefore, $u_{k}$ satisfies the heat equation as well.

We discretize the space $[0,1]^{d}$ by defining a collection of vectors $\{\,x^{i}: x^{i} =(x^{i}_{1}, ..., x^{i}_{d}), i \in [r]^{d}\,\}\subset [0,1]^{d}$ to have coordinates $x_{l}^{i}: =\frac{i_{l}-1}{r}$, $i_{l} \in [r]$, $l=1,...,d$. Again, we similarly define another collection of numbers $\{ t^{j}: t^{j} = \frac{j-1}{r^{2}},\,\,j\in [r^{2}] \}\subset[0,1]$. According to the Varshamov-Gilbert lemma, there exists $\frac{r^{d+2}}{8}$ many $0$-$1$ sequences $\tau^{(1)}\,,...,\,\tau^{(2^{r^{d+2}/8})} \in \{0,1\}^{r^{d+2}}$ such that   $||\,\tau^{k} - \tau^{k^{\prime}}\,||^{2}\geq \frac{{r^{d+2}}}{8}\,,k\neq k^{\prime}$. Then we cast solving parabolic PDEs Problem into high-dimensional multiple hypothesis testing on hypercube $[0,1]^{d}\times [0,1]$ which considers all mappings $\Phi$ from $nm$ sampled data on domain $\Omega_{T}$ to the hypothesis 
\begin{equation}
\Phi: (x,t)\in(\mathbb{R})^{{d+1}^{\otimes{nm}}} \times \mathbb{R}^{\otimes nm} \rightarrow U:= \big\{u_{k}(x,t) \mid k=1,2,...,2^{\frac{r^{d+2}}{8}} \big\}.
\end{equation}
We recall Fano's inequality
\begin{equation}
P(\hat{u} \neq u) \geq 1 - \frac{I(U;X) + \log{2}}{ \log(|U|)}, \quad\quad \text{for any estimator } \hat{u}.
\end{equation}
We obtain a constant lower bound of $P(u_{k}^{\prime}\neq u_{k})$ by bounding KL-divergence from above 
\begin{equation}
I(U;X) \leq \frac{1}{|U|} \sum_{(x,t)} \sum_{u_{k}\neq u_{0}
} D_{KL}(\mathbb{P}_{u_{k}}||\mathbb{P}_{u_{0}}).
\end{equation}
where $P_{u_{k}}$ is a joint distribution of the sampled data $((x,t),f_{k}(x,t))$ for hypothesis $u_{k}$ corresponding to noisy observation of force function $f_{k}$, and $(x,t)\sim P_{0}$, $P_{0}$ is a product of Lebesgue measure on $[0,1]^{d}$ and Lebesgue measure on $ [0,1]$; $ y_{k}=f_{k}(x,t)+ \epsilon$ where $\epsilon$ is an i.i.d Gaussian noise for each location point $(x,t)$, and so $y_{k} \sim Q_{k}$, $Q_{k}$ is probability measure of standard normal random variable with mean $f_{k}$. Therefore, we can write $W_{f_{k}}:= P_{0}\otimes Q_{k}$, where $P_{0}$ is independent from $Q_{k}$ across tensor product\\
Note $\triangle u_{k} - \frac{\partial u}{\partial{t}} = f_{k}$ and $\triangle u_0 - \frac{\partial u_{0}}{\partial t} = \triangle h - \frac{\partial h}{\partial t} = f_{0}$. 
\begin{align}
D_{KL}(W_{f_{k}}||W_{f_{0}}) &= D_{KL}(P_{0}\otimes Q_{k} || P_{0}\otimes Q_{0}) \\
& = \E \log(\frac{dQ_{k}}{dQ_{{0}}})\\
&= ||f_{k} - f_{0}||_{L^{2}_{t,x}([0,1]\times[0,1]^{d})}^{2}\\
& = ||(\triangle u_{f_k} - \frac{\partial u_{f_k}}{\partial{t}}) - (\triangle u_{f_0} - \frac{\partial u_{f_0}}{\partial{t}}) ||_{L^{2}_{t,x}([0,1]\times[0,1]^{d})}^{2}\\
&= \bigg|\bigg|\sum_{i\in[r^{d+1}]}c\,\tau_{i}^{(k)}\, w^{2p+q-1} \Big( g_{t}(\dfrac{x-x^{i}}{w}, \frac{t-t^{j}}{w^{2}}) - \triangle g(\dfrac{x-x^{i}}{w}, \frac{t-t^{j}}{w^{2}}) \Big) \bigg|\bigg|_{L^{2}_{t,x}([0,1]\times[0,1]^{d})}^{2} \\
&\leq  c^2 w^{2(2p+q-3)}  \big|\big|g_{t} - \triangle g\big|\big|_{L^{2}_{t,x}([0,1]\times[0,1]^{d})}^{2} \\
&\lesssim \frac{c^2}{r^{2(2p+q-3)}}.
\end{align}
Next, we want to use energy estimate from Theorem 5 in chapter 7.1 of \cite{evans}, which upper bounds $u_{k}$ using $f_{k}$. Note $u(0,x) =0$ in our setting, so we have 
\begin{equation}
||u_{k} - u_{0}||^{2}_{L^{2}([0,1];L^{2}([0,1]^{d}))} \leq C ||f_{k} - f_{0}||^{2}_{L^{2}([0,1];L^{2}([0,1]^{d}))} \lesssim\frac{c^2}{r^{2(2p+q-3)}}.
\end{equation}

With $r = [(nm)^{\frac{1}{2(2p+q-3)+d+2}}]$, 
\begin{equation}
\mathbb{P}(u_{k} \neq u^{\prime}_{k} )\geq 1-\dfrac{nmD_{KL}(\mathbb{P}_{u}||\mathbb{P}_{u^{\prime}})}{\log(|U|)}\geq 1-\dfrac{\frac{8c^2}{r^{2(2p+q-3)}}}{r^{d+2}\log 2} =1-\dfrac{8c^2 }{r^{2{(2p+q-3)}+d+2} \log{2}}.
\end{equation}
We then choose large $c$ and $\lambda$ such that $\mathbb{P}(u_{k} \neq u_{k}^{\prime} )\geq \dfrac{1}{2}$.Finally, we check separation condition under the evaluation metric $H^{1}_{t}H^{1}_{x}$ for hypothesis $u_{k}$ and $u_{k}^{\prime}$
\begin{equation}
||u_{k}- u^{\prime}_{k}||_{H^{1}_{t}H^{1}_{x}}^{2} = H(\tau^{k},\tau^{k^{\prime}}) c^{2} w^{2(2p+q-3)}||\,g (\frac{x-x^{i}}{w},\frac{t-t^{j}}{w^{2}}))\,||_{L^2_{t,x}([0,1]^{d}\times [0,1])}^{2} w^{d+2} \gtrsim  r^{-2(2p+q-3)}.
\end{equation}
Plugging in $r= [(nm)^{\frac{1}{2(2p+q-3)+d+2}}]$ and $2p+q=2s+2$, we obtain
\begin{equation}
\underset{\hat{u}}{\inf}\,\,\underset{u^{*}\in H^{s}(\Omega)}{\sup}\big|\big|\, \hat{u} - u^{*}\,\big|\big|_{H^{1}_{t}(0,1;H^{1}_{x}([0,1]^{d})}^{2} \gtrsim {(nm)}^{-\frac{2(2p+q-3)}{2(2p+q-3)+d+2}} ={(nm)}^{-\frac{4s-2}{4s+d}}.
\end{equation}
\end{proof}

\subsection{Excess Risk Bound}

\begin{proof}[Proof of Theorem~\ref{thm_3.3}]
     We fix $\epsilon>0$,\,\,$s\geq 1$ and $2p+q=2s+2$ for $0\leq p \leq s$. According to Theorem~\ref{thm_3.1}, regardless of the relation between $p$ and $d$, as long as we choose
\begin{equation}\lambda^{*}:=\log (1+C_{2}nm)^{\frac{4s-2}{4s+d}}(nm)^{-\frac{2+d}{4s+d}} e^{T/\epsilon}.
\end{equation}
Our estimation rate becomes optimal as follows
\begin{equation}
\underset{}{\mathbb{E}}||\hat{u}_{J}- u^{*}||_{H^{1}_{t}(0,T;H^{1}_{x}(\Omega))}^2 \lesssim \Bigl(\frac{\log (1+C_{2}nm)}{nm}\Bigr)^{\frac{4s-2}{4s+d}}.
\end{equation}
Thanks to the stability estimate in Proposition~\ref{prop:stability}, excess risk is upper bounded by estimation error
\begin{equation}
\underset{}{\mathbb{E}}\,\, L_{\epsilon,\beta}(\hat{u}_{J})-L_{\epsilon,\beta}(u^{*}) \lesssim  \frac{e^{T/\epsilon}}{\epsilon}\Bigl(\frac{\log (1+C_{2}nm)}{nm}\Bigr)^{\frac{4s-2}{4s+d}}.
\end{equation}
\end{proof}

\subsection{Generalization Error Bound}
To prove generalization error bound,  We define functional classes that are closely associated with our loss function $L_{\epsilon,\beta}(u)$
{\footnotesize
\begin{equation}
\begin{aligned}
\mathcal{G}_{\mathrm{data}}
&:= \Bigl\{ g \,\Big|\, g = u-u^{*} + \nabla(u-u^{*}) + \partial_t(u-u^{*})+ \nabla\partial_t(u-u_t^{*}) \Bigr\}, \\
\mathcal{G}_{\mathrm{energy}}
&:= \Bigl\{ g \,\Big|\, g = \frac{e^{-t/\epsilon}}{2}|u_t|^2
+ \frac{1}{2\epsilon}|\nabla u|^2
- \frac{e^{-t/\epsilon}}{\epsilon}fu \Bigr\}, \\
\mathcal{G}_{\mathrm{correction}}
&:= \Bigl\{ g \,\Big|\, g = e^{-T/\epsilon}u_t^{*}(x,T)u(x,T)
+ e^{-t/\epsilon}u_{tt}^{*}u \Bigr\}.
\end{aligned}
\end{equation}
}
where $u^{*}, u\in \mathcal{U}$  and $u^{*}_{t}(x,T)$ and $u(x,T)$ are evaluated at $t=T$
\\\\
We relate the above functional classes  $\mathcal{G}_{data}$, $\mathcal{G}_{energy}$ and $\mathcal{G}_{correction}$ to their Rademacher Complexities $\mathcal{R}$
\begin{lemma}
\label{lemma_E.5} Given the functional spaces defined above, the generalization error bound is upper bound by their Rademacher Complexities as follows

\begin{equation}
\E\Delta \mathcal{E}_{S,gen} \leq 2[\, \mathcal{R}_{N}(\mathcal{G}_{data}) + \mathcal{R}_{N}(\mathcal{G}_{energy}) + \mathcal{R}_{N}(\mathcal{G}_{correction})\,].
\end{equation}

\begin{proof}[Proof of Lemma~\ref{lemma_E.5}]
    Fix $\epsilon>0$ and $\beta = \beta_{reg}$ and denote $N=nm$.
\begin{align}
\E\Delta\mathcal{E}_{S,gen}\,&\leq \mathbb{E} \underset{u\in \mathcal{U}}{\sup} |L_{\epsilon,\beta}(\hat{u}_{S}) - \hat{L}_{\epsilon,\beta}(\hat{u}_{S})|\\
&\leq \mathbb{E} \underset{u\in \mathcal{U}}{\sup} |\mathcal{E}_{data} - \mathcal{E}_{data,n}| + \mathbb{E} \underset{u\in \mathcal{U}}{\sup} |\mathcal{E}_{energy} - \mathcal{E}_{energy,n}| + \mathbb{E} \underset{u\in \mathcal{U}}{\sup} |\mathcal{E}_{data} - \mathcal{E}_{data,n}|\\
&\leq \mathbb{E}\,\bigg|\frac{1}{nm} \sum_{i=1,j=1}^{nm}g_{data}(x_{ij},t_{j}) - \mathbb{E} [g_{data}]\,\bigg|\\
&\quad+ \mathbb{E}\,\bigg|\frac{1}{nm} \sum_{i=1,j=1}^{nm}g_{correction}(x_{ij},t_{j}) - \mathbb{E} [g_{correction}]\,\bigg|\\
&\quad\ + \mathbb{E}\,\bigg|\frac{1}{nm} \sum_{i=1,j=1}^{nm}g_{energy}(x_{ij},t_{j}) - \mathbb{E} [g_{energy}]\,\bigg|.
\end{align}
Thanks to Lemma ~$\ref{lemma_F.2}$, we have 
\begin{align}
\E\Delta\mathcal{E}_{S,gen}\,&\leq 2\mathcal{R}_{N}(\mathcal{G}_{data}) + 2\mathcal{R}_{N}(\mathcal{G}_{energy}) + 2\mathcal{R}_{N}(\mathcal{G}_{correction}).
\end{align}
Next we apply Dudley's theorem (cf. Theorem~\ref{theorem_A.3}) to bound the $\epsilon_{1}$-covering number of the $\mathcal{R}_{N}(\mathcal{G}_{data})$, $\mathcal{R}_{N}(\mathcal{G}_{energy})$, and $\mathcal{R}_{N}(\mathcal{G}_{correction})$.
\end{proof}

According to the Lemma \ref{lemma_E.5}, it is essential to bound Rademacher complexities to control the generalization error bound $\mathcal{E}_{S,gen}$. The idea is to bound the Rademacher complexities in terms of the metric entropy by using the well-known Dudley theorem \cite{Dudley1967TheSO}.
\end{lemma}

\begin{lemma}
\label{lemma_E.6}
Let $\mathcal{N}(\mathcal{F}, \rho ,\epsilon_{1})$ be $\epsilon_{1}$-covering number for functional space $\mathcal{F}$ under evaluation metric $\rho$.  When $\rho:=||\cdot||_{L^{2}_{t}(0,T;L^{2}_{x}(\Omega))}$, then
\begin{align}
&\log N(\mathcal{G}_{data}, \rho,\epsilon_{1}) \lesssim  J^{d+2}\bigg[\log{\frac{1}{\epsilon_{1}}} + 2d \log{J} \bigg], \\
&\log N(\mathcal{G}_{energy}, \rho,\epsilon_{1}) \lesssim_{R,F}  J^{d+2}\bigg[\log{\frac{\beta}{\epsilon\cdot\epsilon_{1}}} + 4dq\log{J} \bigg],\\
&\log N(\mathcal{G}_{correction}, \rho,\epsilon_{1}) \lesssim_{R,T}  J^{d+2}\bigg[ \log{\frac{\beta}{\epsilon \cdot \epsilon_{1}}} + 4dq \log{J}\bigg].
\end{align}
\end{lemma}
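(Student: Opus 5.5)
The plan is to reduce each class to a Lipschitz image of the finite-dimensional coefficient ball of $\mathcal{U}_J$ and then apply the standard volumetric covering bound (Lemma~\ref{lemma_F.6}). Every $u\in\mathcal{U}_J$ is $u=\sum_{|\xi|_\infty\le J,\tau\le J^2}a_{\xi,\tau}\phi_{\xi,\tau}$, so it is determined by a coefficient vector $a\in\mathbb{R}^{K}$ with $K\lesssim J^{d+2}$. The constraint $\|\partial_t^p u\|^2_{L^2_tH^{2s+2-2p}_x}\le R$, together with the wavelet characterization of Sobolev norms, confines $a$ to a Euclidean ball of radius $\lesssim\sqrt{R}$. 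Hence the $\delta$-covering number of the coefficient set satisfies $\log N\lesssim J^{d+2}\log(3\sqrt R/\delta)$.

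The key step is to show that each map $a\mapsto g\in\mathcal{G}_\bullet$ is Lipschitz from $\ell^2$ into $L^2(\Omega_T)$ with constant polynomial in $J$. Once that is available, choosing $\delta=\epsilon_1/\mathrm{Lip}$ gives $\log N(\mathcal{G}_\bullet,\rho,\epsilon_1)\lesssim J^{d+2}[\log(1/\epsilon_1)+\log \mathrm{Lip}]$, and $\log\mathrm{Lip}$ produces the $\log J$ terms.

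\textbf{Data class.} Here $g$ is linear in $u-u^*$, so $\|g_1-g_2\|_{L^2}\le\|u_1-u_2\|_{H^1_tH^1_x}$. By the inverse (Bernstein) inequality for wavelets truncated at level $(J,J^2)$, $\|u_1-u_2\|_{H^1_tH^1_x}\lesssim J\cdot J^2\|a_1-a_2\|_{\ell^2}$. This gives $\log\mathrm{Lip}\lesssim\log J$, and the crude factor $2d\log J$ absorbs it.

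\textbf{Energy class.} This class is quadratic in $u$, so I will write
\[
|u_{1,t}|^2-|u_{2,t}|^2=(u_{1,t}-u_{2,t})(u_{1,t}+u_{2,t}),
\]
and likewise for $|\nabla u|^2$. Bounding the $L^2$ norm of the product requires an $L^\infty$ bound on $u_{1,t}+u_{2,t}$ and $\nabla(u_1+u_2)$. I plan to get this from the assumptions: when $q>d/2+1$ and $p\ge2$, Sobolev/Morrey embedding gives $\|\nabla u\|_\infty,\|u_t\|_\infty\lesssim R$. In general one can instead use the crude inverse estimate $\|\cdot\|_\infty\lesssim J^{(d+2)/2}\|\cdot\|_{L^2}$ combined with derivative factors $J^{q}$, which yields a Lipschitz constant of size $J^{O(dq)}$; this is the source of the $4dq\log J$ term. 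The linear term $\epsilon^{-1}e^{-t/\epsilon}fu$ contributes $\|f\|_{L^\infty}\epsilon^{-1}\|u_1-u_2\|_{L^2}$, with $\|f\|_\infty\lesssim F$ from $\mathcal{F}(R)$. Including the prefactor $\beta/\epsilon$ of the energy term in the loss gives the $\log(\beta/(\epsilon\epsilon_1))$ term.

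\textbf{Correction class.} This class is linear in $u$, with coefficients $u^*_{tt}$ and $u^*_t(\cdot,T)$ that are bounded by the regularity of $u^*\in\mathcal{U}(R)$ (again via embedding, which needs $p\ge2$). The trace $u(\cdot,T)$ is controlled by $\|u\|_{H^1_tL^2_x}$, so it costs one more inverse-inequality factor of $J^2$.

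The main obstacle I expect is the energy class, specifically obtaining $L^\infty$ control of $\nabla u$ and $u_t$ uniformly over $\mathcal{U}_J$ without losing more than polynomial factors in $J$. I would try the Morrey embedding under $q>d/2+1$ first, and fall back on the wavelet inverse inequality only to absorb constants into the $\log J$ term.
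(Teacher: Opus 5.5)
Your proposal is correct and follows essentially the same route as the paper. Both arguments cover the coefficient ball of $\mathcal{U}_J$ (dimension $\lesssim J^{d+2}$) via Lemma~\ref{lemma_F.6}, push that net through a map into $L^2(\Omega_T)$ whose Lipschitz constant is polynomial in $J$, and handle the quadratic energy terms by a difference-of-squares factorization under $q>d/2+1$, $p\geq 2$. The one difference is that the paper invokes the Sobolev multiplication theorem where you use Morrey's $L^\infty$ embedding; this is the same mechanism and gives a slightly cleaner $O(\log J)$ loss. One small precision in the correction class: with only $p\geq 2$ you get $u^*_{tt}\in L^2_tH^q_x\subset L^2_tL^\infty_x$ rather than an $L^\infty$ bound. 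So pair it with $\|u_1-u_2\|_{L^\infty_tL^2_x}\lesssim\|u_1-u_2\|_{H^1_tL^2_x}\lesssim J^2\|a_1-a_2\|_{\ell^2}$, which keeps the Lipschitz constant polynomial in $J$.
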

Next we use covering numbers in Lemma \ref{lemma_E.6} along with Dudley theorem to prove the following theorem.

\begin{proof}[Proof of Lemma~\ref{lemma_E.6}]
Our estimator is defined as $\hat{u} :=\hat{u}_{\theta}=\hat{u}_{J}$, which is discussed in \eqref{empirical_loss_function} and \eqref{parametrized_function_class}). Before we dive into our proof, we want to define the following relevant functional spaces:
\begin{align}
\mathcal{A}&: = \{\,a \mid a:= a_{\xi,\tau}, ||a_{\xi,\tau}||_{2}\leq R\,\},\\
\mathcal{G}_{data,1}&: = \{\,g \mid g:=\nabla u_{t} -\nabla u_{t}^{*}, u^{*},u\in \mathcal{U}\,\},\\
\mathcal{G}_{energy,1}&: = \{\,g \mid g:= \frac{\beta \cdot e^{-\frac{t}{\epsilon}}}{2} |u_{t}|^{2}, u\in \mathcal{U}_{J}\,\},\\
\mathcal{G}_{energy,2}&: = \{\,g \mid g:= \frac{\beta\cdot e^{-\frac{t}{\epsilon}}}{2\epsilon} |\nabla u|^{2}, u\in \mathcal{U}_{J}\,\},\\
\mathcal{G}_{energy,2}&: = \{\,g \mid g:= -\frac{\beta\cdot e^{-\frac{t}{\epsilon}}}{\epsilon} fu, u\in \mathcal{U}_{J}, \sup_{t,x}|f|\le F\,\},\\
\mathcal{G}_{correction,1}&: = \{\,g \mid g:= \beta e^{-\frac{T}{\epsilon}}u^{*}_{t}(x,T)u(x,T),\,\, u^{*},u\in \mathcal{U}_{J}\,\},\\
\mathcal{G}_{correction,2}&: = \{\,g \mid g:= \beta e^{-\frac{t}{\epsilon}}u^{*}_{tt}u,\,\, u^{*},u\in \mathcal{U}_{J}\,\}.
\end{align}

We first obtain covering number for $\mathcal{G}_{data}$. It is sufficient to focus on $\nabla \hat{u}_{t} -\nabla u_{t}^{*}$ in data fitting term because it contains largest metric entropy compared to other terms due to loss of space-time derivatives.
\begin{equation}
\bigg|\,(\nabla \hat{u}_{t}^{1} -  \nabla u_{t}^{*}) - (\nabla \hat{u}_{t}^{2} -  \nabla u_{t}^{*})\,\bigg|=\bigg|\,\nabla \hat{u}_{t}^{1} - \nabla \hat{u}_{t}^{2}\,\bigg| \leq  \sum_{1\leq|\xi|,\tau \leq J} |a_{\xi,\tau}^{1} - a_{\xi,\tau}^{2} |\cdot|\xi|\cdot|\tau |\cdot\phi_{\xi,\tau}(x,t).
\end{equation}
According to orthogonality property of local basis $\phi_{\xi,\tau}(x,t)$ , we have 
\begin{align}
||\,\nabla \hat{u}_{t}^{1} - \nabla \hat{u}_{t}^{2}\,||_{L^{2}_{t}(\mathbb{R}; L^{2}_{x}(\mathbb{R}^{d}))}^{2}& \leq  \sum_{1\leq|\xi|_{\infty}\leq J,1\leq \tau \leq J^{2}} |a_{\xi,\tau}^{1} - a_{\xi,\tau}^{2} |^{2}\cdot|\xi|^{2}\cdot|\tau |^{2 } \\& \leq J^{2(d+2)} \sum_{1\leq|\xi|_{\infty}\leq J,1\leq \tau \leq J^{2}} |a_{\xi,\tau}^{1} - a_{\xi,\tau}^{2} |^{2}.
\end{align}

The inequality above suggests a control of finitely many spatial-temporal cofficients $a_{\xi,\tau}\in \mathcal{A}$ in $L^{2}$ ball of coefficient space is sufficient to construct an $\epsilon_{1}$-net for $g\in \mathcal{G}_{data,1}$.Let $\delta:= \frac{\epsilon_{1}}{J^{d+1}}$, we want to construct $L^{2}$ packing number for the space of  finitely many Fourier coefficients $a_{\xi,\tau}$. Using standard $L^{2}$-packing for $L^{2}$ balls argument in Lemma ~\ref{lemma_F.6} and note the dimension of coefficients of local basis representation are at most $J^{d+1}$, we have

\begin{equation}
\log N(\mathcal{A}, ||\cdot||_2,\delta)
\lesssim  \log(\frac{J^{d+2}}{\epsilon_{1}})^{J^{d+2}}= J^{d+2}\bigg[\log{\frac{1}{\epsilon_{1}}} + (d+2)\log{J}\bigg].
\end{equation}

In this way, we obtain $\delta$-covering number for the functional space $\mathcal{G}_{data,1}$
\begin{equation}
\log N(\mathcal{G}_{data,1}, ||\cdot||_{L^{2}_{t}(0,T;L^{2}_{x}(\Omega))},\epsilon_{1}) \lesssim
\log N(\mathcal{A}, ||\cdot||_{2},\delta) \lesssim  J^{d+2}\bigg[\log{\frac{1}{\epsilon_{1}}} + (d+2)\log{J}\bigg].
\end{equation}
Next we omit the same procedure applied to $\mathcal{G}_{energy}$ and $\mathcal{G}_{correction}$. In similar fashion, we have the following elementary estimates 
\begin{align}
|g_{energy,1}|&=\bigg|\frac{\beta \cdot e^{-\frac{t}{\epsilon}}}{2} |\hat{u}_{t}^{1}|^{2} - \frac{\beta \cdot e^{-\frac{t}{\epsilon}}}{2} |\hat{u}_{t}^{2}|^{2}\bigg|^{2}\leq \frac{\beta^2}{4} \bigg|\hat{u}_{t}^{1} - \hat{u}_{t}^{2}\bigg|^{2} \bigg(\frac{3}{2}|\hat{u}_{t}^{1}|^{2} +\frac{3}{2} | \hat{u}_{t}^{2}|^{2}\bigg),\\
|g_{energy,2}|&=\bigg|\frac{\beta \cdot e^{-\frac{t}{\epsilon}}}{2\epsilon} |\nabla \hat{u}^{1}|^{2} - \frac{\beta \cdot e^{-\frac{t}{\epsilon}}}{2\epsilon} |\nabla \hat{u}_{t}^{2}|^{2}\bigg|^{2}\leq \frac{\beta^2}{4\epsilon^{2}} \bigg|\nabla \hat{u}^{1} - \nabla \hat{u}^{2}\bigg|^{2} \bigg(\frac{3}{2}|\nabla \hat{u}^{1}|^{2} +\frac{3}{2} |\nabla \hat{u}^{2}|^{2}\bigg),\\
|g_{energy,3}|&=\bigg|\frac{\beta\cdot e^{-\frac{t}{\epsilon}}}{\epsilon} fu^{1} - \frac{\beta\cdot e^{-\frac{t}{\epsilon}}}{\epsilon} fu^{2}  \bigg|^{2}\leq \frac{\beta^2}{\epsilon^{2}} \cdot f\cdot \bigg|\hat{u}^{1} - \hat{u}^{2}\bigg|^{2} .
\end{align}
Thanks to Sobolev Multiplication theorem Theorem~\ref{Thm.A5}, when $q\geq \frac{d}{2} +1$, we can obtain for any $t$,
\begin{align}
||g_{energy,1}(\cdot,t)||_{L^{2}_{x}(\mathbb{R}^{d})}\leq \frac{3\beta}{4}||\hat{u}_{t}^{1}(\cdot,t) - \hat{u}_{t}^{2}(\cdot,t)||_{H^{q}_{x}(\mathbb{R}^{d})}(||\hat{u}_{t}^{1}(\cdot,t)||_{H^{q}_{x}(\mathbb{R}^{d})} +||\hat{u}_{t}^{2}(\cdot,t)||_{H^{q}_{x}(\mathbb{R}^{d})}),\\
||g_{energy,2}(\cdot,t)||_{L^{2}_{x}(\mathbb{R}^{d})}\leq \frac{3\beta}{4\epsilon}||\hat{u}^{1}(\cdot,t) - \hat{u}^{2}(\cdot,t)||_{H^{q}_{x}(\mathbb{R}^{d})}(||\hat{u}^{1}(\cdot,t)||_{H^{q}_{x}(\mathbb{R}^{d})} +||\hat{u}^{2}(\cdot,t)||_{H^{q}_{x}(\mathbb{R}^{d})}).
\end{align}
Applying uniform control of Sobolev norm in time $t$ and orthogonality property of local basis $\phi_{\xi,\tau}(x,t)$, we have
\begin{align}
||g_{energy,1}||_{L^{2}_{t}(\mathbb{R};L^{2}_{x}(\mathbb{R}^{d}))}^{2}&\leq \frac{9\cdot R^{2}\cdot\beta^{2}}{16}||\hat{u}^{1} - \hat{u}^{2}||_{H^{1}_{t}(\mathbb{R};H^{q}_{x}(\mathbb{R}^{d}))}^{2}\\
&\leq \frac{9\cdot R^{2}\cdot\beta^{2}}{16}J^{2+4dq} \sum_{1\leq |\xi|_{\infty},\tau\leq J} |a_{\xi,\tau}^{1} - a_{\xi,\tau}^{2}|^{2},
\end{align}
\begin{align}
||g_{energy,2}||_{L^{2}_{t}(\mathbb{R};L^{2}_{x}(\mathbb{R}^{d}))}^{2}&\leq \frac{9\cdot R^{2}\cdot \beta^{2}}{16\epsilon^{2}}||\hat{u}^{1} - \hat{u}^{2}||_{L^{2}_{t}(\mathbb{R};H^{q}_{x}(\mathbb{R}^{d}))}^{2}\\
&\leq \frac{9\cdot R^{2}\cdot \beta^{2}}{16\epsilon^{2}}J^{4dq} \sum_{1\leq |\xi|_{\infty},\tau\leq J} |a_{\xi,\tau}^{1} - a_{\xi,\tau}^{2}|^{2}\\,
\end{align}
\begin{align}
||g_{energy,3}||_{L^{2}_{t}(\mathbb{R};L^{2}_{x}(\mathbb{R}^{d}))}^{2}&\leq \frac{\beta^{2}}{\epsilon^{2}}\cdot F^{2}  ||\hat{u}^{1} - \hat{u}^{2}||_{L^{2}_{t}(\mathbb{R};L^{2}_{x}(\mathbb{R}^{d}))}^{2}\\
&\leq \frac{\beta^{2}}{\epsilon^{2}}\cdot F^{2}J^{2d+4} \sum_{1\leq |\xi|_{\infty},\tau\leq J} |a_{\xi,\tau}^{1} - a_{\xi,\tau}^{2}|^{2}.
\end{align}
It follows as before
\begin{align}
\log N(\mathcal{G}_{energy,1}, ||\cdot||_{L^{2}_{t}(0,T;L^{2}_{x}(\Omega))},\epsilon_{1})  &\lesssim J^{d+2}\bigg[\log{\frac{3R\cdot\beta}{4\epsilon_{1}}} + (2+dq)\log{J}\bigg],  \\
\log N(\mathcal{G}_{energy,2}, ||\cdot||_{L^{2}_{t}(0,T;L^{2}_{x}(\Omega))},\epsilon_{1}) &\lesssim J^{d+2}\bigg[\log{\frac{3R\cdot\beta}{4\epsilon\cdot \epsilon_{1}}} +dq\log{J}\bigg] ,\\
\log N(\mathcal{G}_{energy,3}, ||\cdot||_{L^{2}_{t}(0,T;L^{2}_{x}(\Omega))},\epsilon_{1}) &\lesssim J^{d+1}\bigg[\log{\frac{F\cdot \beta}{\epsilon\cdot\epsilon_{1}}} + (d+2)\log{J}\bigg].
\end{align}
we then bound $\mathcal{G}_{correction}$
\begin{align}
|g_{correction, 1}|&=\bigg| e^{-\frac{T}{\epsilon}} u_{t}^{*}(x,T) \hat{u}^{1}(x,T) -e^{-\frac{T}{\epsilon}} u_{t}^{*}(x,T) \hat{u}^{2}(x,T) \bigg|^{2}\\
&\leq  \bigg|\hat{u}_{t}^{1}(x,T) - \hat{u}_{t}^{2}(x,T)\bigg|^{2}\cdot \bigg|u^{*}_{t}(x,T)\bigg|^{2},\\
|g_{correction, 2}|&=\bigg|e^{-\frac{t}{\epsilon}}u^{*}_{tt}\hat{u}^{1} - e^{-\frac{t}{\epsilon}}u^{*}_{tt}\hat{u}^{2}\bigg|^{2}\leq e^{-\frac{2t}{\epsilon}} \bigg|\hat{u}^{1} -   \hat{u}^{2}\bigg|^{2} \cdot \bigg|u_{tt}^{*}\bigg|^{2}.
\end{align}
Again, we apply the Sobolev multiplication theorem (cf. Theorem~\ref{thm:sobolev_multiplication}) and use upper bound on Sobolev norm uniformly in time $t$ to get
\begin{align}
||\,g_{correction,1}\,||_{L^{2}_{t}(0,T;L^{2}_{x}(\mathbb{R}^{d})}&\leq \beta\cdot T ||\,\hat{u}^{1}_{t}(\cdot,T) - \hat{u}^{2}_{t}(\cdot,T)\,||_{ H^{q}_{x}(\mathbb{R}^{d})}||\,u_{t}^{*}(\cdot,T)\,||_{ H^{q}_{x}(\mathbb{R}^{d})}\\
&\leq \beta\cdot R\cdot T||\,\hat{u}^{1}_{t}(\cdot,T) - \hat{u}^{2}_{t}(\cdot,T)\,||_{ H^{q}_{x}(\mathbb{R}^{d})},\\
||g_{correction, 2}||_{L^{2}_{t}(0,T;L^{2}_{x}(\mathbb{R}^{d})}&\leq   \beta\cdot ||\hat{u}^{1} -   \hat{u}^{2}||_{L^{2}_{t}(0,T;H^{q}_{x}(\mathbb{R}^{d}))} \cdot ||u_{tt}^{*}||_{L^{2}_{t}(0,T;H^{q}_{x}(\mathbb{R}^{d}))}\\
&\leq R||\hat{u}^{1} -   \hat{u}^{2}||_{L^{2}_{t}(0,T;H^{q}_{x}(\mathbb{R}^{d}))}.
\end{align}
Here we use property \ref{basis_property_2} of our local basis $\phi_{\tau}(t)$ to get $\sup_{t} \phi_{\tau}\leq C(T)$, where $C(T)$ is some constant depending on $T$
\begin{align}
||g_{correction, 1}||_{L^{2}_{t}(0,T;L^{2}_{x})}^{2} &\leq  R^{2} T^{2} C(T)^{2} \beta^{2}J^{2+2dq} \sum_{1\leq |\xi|_{\infty},\tau\leq J} |a_{\xi,\tau}^{1} - a_{\xi,\tau}^{2}|^{2},\\
||g_{correction, 2}||_{L^{2}_{t}(0,T;L^{2}_{x})}^{2} &\leq R^{2} \beta^{2}J^{2dq} \sum_{1\leq |\xi|_{\infty},\tau\leq J} |a_{\xi,\tau}^{1} - a_{\xi,\tau}^{2}|^{2}.
\end{align}
It follows
\begin{align}\log N(\mathcal{G}_{correction,1}, ||\cdot||_{L^{2}_{t}(0,T;L^{2}_{x}(\Omega))},\epsilon_{1}) &\lesssim J^{d+2}\bigg[\log{\frac{R\cdot T\cdot C(T)\cdot \beta}{4\epsilon\cdot \epsilon_{1}}} + (2+dq)\log{J}\bigg], \\
\log N(\mathcal{G}_{correction,2}, ||\cdot||_{L^{2}_{t}(0,T;L^{2}_{x}(\Omega))},\epsilon_{1})  &\lesssim J^{d+2}\bigg[\log{\frac{R\cdot \beta}{4\epsilon_{1}}} + dq\log{J}\bigg].
\end{align}
Finally, we obtain the following covering numbers for $\mathcal{G}_{data}$, $\mathcal{G}_{energy}$, and $\mathcal{G}_{correction}$ respectively
\begin{align}
\log N(\mathcal{G}_{data}, ||\cdot||_{L^{2}_{t}(0,T;L^{2}_{x}(\Omega))},\epsilon_{1}) & \lesssim  J^{d+2}\bigg[\log{\frac{1}{\epsilon_{1}}} + (1+d) \log{J} \bigg], \\
\log N(\mathcal{G}_{energy}, ||\cdot||_{L^{2}_{t}(0,T;L^{2}_{x}(\Omega))},\epsilon_{1}) & \lesssim  C_{1}(F)J^{d+2}\bigg[\log{\frac{\beta}{\epsilon_{1}}} + (1+dq)\log{J}\bigg],  \\
\log N(\mathcal{G}_{correction}, ||\cdot||_{L^{2}_{t}(0,T;L^{2}_{x}(\Omega))},\epsilon_{1}) &\lesssim C_{2}(R,T) J^{d+2}\bigg[\log{\frac{\beta}{\epsilon\cdot \epsilon_{1}}} + (1+dq)\log{J}\bigg],
\end{align}
where $C_{1}$ is some constant depending on $F$ and $R$; $C_{2}$ is some constant depending on $R$ and $T$.
\end{proof}
Finally, we present our proof to Theorem ~\ref{thm_3.4}.
\begin{proof}[Proof of Theorem~\ref{thm_3.4}]
First, we define population risk and its empirical parts:
\begin{align}
\mathcal{E}_{data}&:= {\mathbb{E}}\bigg[u(x,t) - u^{*}(x,t) + \nabla (u(x,t) - u^{*}(x,t)) \\&\quad\quad\quad\quad\quad+  (u_{t}(x,t) - u^{*}_{t}(x,t))+ \nabla u_{t}(x,t) -  \nabla u^{*}_{t}(x,t))\bigg], \\
\mathcal{E}_{data,n}&:= \frac{1}{nm} \sum_{i=1,j=1}^{n,m} \bigg[u(x_{ij},t_{j}) - u^{*}(x_{ij},t_{j}) + \nabla (u(x_{ij},t_{j}) - u^{*}(x_{ij},t_{j}))\\
&\quad\quad\quad\quad\quad\quad+  (u_{t}(x_{ij},t_{ij})- u^{*}_{t}(x_{ij},t_{j}))+ \nabla( u_{t}(x_{ij},t_{j})-  u^{*}_{t}(x,t))\bigg],\\
\mathcal{E}_{energy}&:= {\mathbb{E}}\bigg[\dfrac{\beta\cdot e^{-\frac{t}{\epsilon}}}{2}\bigg( |u_{t}|^{2}(x,t) +  \frac{1}{2\epsilon} |\nabla u|^{2}(x,t)- f(x,t)u(x,t) \bigg) \bigg], \\
\mathcal{E}_{energy,n}&:= \frac{\beta}{nm} \sum_{i=1,j=1}^{n,m}\cdot\bigg[\dfrac{e^{-\frac{t}{\epsilon}}}{2} \bigg(|u_{t}|^{2}(x_{ij},t_{j}) +  \frac{1}{2\epsilon} |\nabla u|^{2}(x_{ij},t_{j})- f(x_{ij},t_{j})u(x_{ij},t_{j}) \bigg) \bigg],\\
\mathcal{E}_{correction}&:= \beta\cdot{\mathbb{E}} \bigg[e^{-\frac{T}{\epsilon}}u^{*}_{t}(x,t_{m})u(x,t_{m})+  e^{-\frac{t}{\epsilon}}u^{*}_{tt}(x,t)u(x,t) \bigg],\\
\mathcal{E}_{correction,n}&:= \frac{\beta}{nm} \sum_{i=1,j=1}^{n,m} \bigg[e^{-\frac{T}{\epsilon}}u^{*}_{t}(x_{ij},t_{m})u(x_{ij},t_{m})+  e^{-\frac{t}{\epsilon}}u^{*}_{tt}(x_{ij},t_{j})u(x_{ij},t_{j}) \bigg].
\end{align}
It is essential to prove Lemma~\ref{lemma_E.5} and Lemma~\ref{lemma_E.6} before proving Theorem \ref{thm_3.4}.

With $L^{2}$-covering numbers at our hand, we can directly apply Dudley theorem to bound Rademacher complexities. Before applying Lemma \ref{lemma_F.8}, we need to find $\sup_{f\in \mathcal{F}} \mathbb{E}(f^{2})$. For $\mathcal{G}_{data}$, we have
\begin{equation}
\mathbb{E}[g_{data,1}^{2}]\lesssim \rho.
\end{equation}
For $\mathcal{G}_{energy}$, since $||f||_{L^{\infty}} \leq F$ and when $q\geq \frac{d}{2}+1$, we can apply embedding Theorem \ref{lemma_F.8} to spatial variable and obtain $H^{q}(\mathbb{R}^{d}) \subset W^{1,4}(\mathbb{R}^{d})$. Furthermore, when $p\geq2$, we have $u_{t}(t,\cdot) \in L^{4}([0,T])$ by standard Sobolev embedding theorem as well, and it follows that
\begin{align}
\mathbb{E}[g_{energy}^{2}]&\lesssim \int_{0}^{T} \int_{\mathbb{R}^{d}} (1+\frac{e^{\frac{-2t}{\epsilon}}}{4} )|u_{t}^{4}| + (1+\frac{1}{4\epsilon^{2}} )|\nabla u|^{4} + (1+\frac{e^{\frac{-2t}{\epsilon}}}{\epsilon^{2}}) f^{4} + (1+\frac{e^{\frac{-2t}{\epsilon}}}{\epsilon^{2}})u^{4}dxdt \\
&\lesssim \frac{5}{4}||u_{t}||_{L^{4}_{t}(0,T;L^{4}_{x}(\mathbb{R}^{d}))}^{4} +  (1+\frac{5}{4\epsilon^{2}})||u||_{L^{4}_{t}(0,T;W^{1,4}_{x}(\mathbb{R}^{d}))}^{4} + (1+\frac{1}{\epsilon^{2}})F^{4}\\
&\lesssim (1+\frac{5}{4\epsilon^{2}})(R^{4} + F^{4}).
\end{align}
For $\mathcal{G}_{correction}^{2}$, it follows that
\begin{align}
\mathbb{E}[\mathcal{G}_{correction}^{2}]&\leq \int_{0}^{T}\int_{\Omega} \frac{3}{4}|u_{tt}|^4 +\frac{3}{2} |u|^{4} dxdt + T\int_{\Omega} u_{t}^{4}(x,T) dx \\
&\lesssim \frac{3}{4}||u_{tt}||^{4}_{L^{4}_{t}(0,T;L^{4}_{x}(\mathbb{R}^{d}))} + \frac{3}{2}||u||^{4}_{L^{4}_{t}(0,T;L^{4}_{x}(\mathbb{R}^{d}))} + T||u_{t}(\cdot,T)||^{4}_{L^{4}_{x}(\mathbb{R}^{d})}\\
&\lesssim \frac{3}{2}R^{4} + T \max_{t\in[0,T]} ||u_{t}(\cdot,t)||_{L^{4}_{x}(\mathbb{R}^{d})}^{4} = \max\{\frac{3}{2},T\} (R^{4}+M^{4}(T)).
\end{align}
Finally, we can calculate with $C_{1}(R,F):={(1+\frac{5}{4\epsilon^{2}})(R^{4} + F^{4})} $ and $C_{2}(R,F) = (1+\frac{5}{4\epsilon^{2}})(R^{4} + F^{4})$, 
\begin{equation}
\mathcal{R}_n(\mathcal{G}_{data}) \le\inf_{0 \le \delta}\left(
4\delta + \frac{12}{\sqrt{nm}} \int_{\delta}^{R^{2}}\sqrt{J^{d+1}\bigg[\log{\frac{1}{\epsilon_{1}}} + (2+d) \log{J} \bigg]}\,d\varepsilon_{1}\right),
\end{equation}
\begin{equation}
\mathcal{R}_n(\mathcal{G}_{energy}) \le\inf_{0 \le \delta}\left(
4\delta + \frac{12}{\sqrt{nm}} \int_{\delta}^{C_{1}(R,F)}\sqrt{J^{d+1}\bigg[\log{\frac{\beta}{\epsilon\cdot\epsilon_{1}}} + (2+dq)\log{J} \bigg]}\,d\varepsilon_{1}\right),
\end{equation}
\begin{equation}
\mathcal{R}_n(\mathcal{G}_{correction}) \le\inf_{0 \le \delta}\left(
4\delta+ \frac{12}{\sqrt{nm}} \int_{\delta}^{C_{2}(R,F)}\sqrt{J^{d+1}\bigg[\log{\frac{\beta}{\epsilon\cdot\epsilon_{1}}} + (2+dq)\log{J} \bigg]}\,d\varepsilon_{1}\right).
\end{equation}
It follows that
\begin{align}
\mathcal{R}_{n}(\mathcal{G}_{data})&\lesssim  \sqrt{\frac{\rho}{nm}} J^{\frac{d+1}{2}}  \bigg[\sqrt{\log(\frac{1}{\rho}+1)} + \sqrt{(2+d)\log{J}}\bigg] \\
&\lesssim C_{3}(R) \sqrt{\frac{\rho}{nm}} J^{\frac{d+1}{2}} \sqrt{\log J \log (1+\frac{1}{\rho})},\\ 
\mathcal{R}_{n}(\mathcal{G}_{data})&\lesssim \sqrt{\frac{\beta}{nm}} J^{\frac{d+2}{2}} \sqrt{(1+\frac{5}{4})(R^{4}+F^{4})}\bigg[\sqrt{\log(\frac{1}{{(1+\frac{5}{4})(R^{4}+F^{4})\epsilon}}+1)}\\
&\quad+ \sqrt{(2+dq)\log{J}}\bigg]\\
&\lesssim C_{4}(\epsilon,d,q,R,F)\sqrt{\frac{\beta}{nm}} J^{\frac{d+2}{2}}\sqrt{\log{J}},  \\
\mathcal{R}_{n}(\mathcal{G}_{data})&\lesssim\sqrt{\frac{\beta}{nm}} J^{\frac{d+1}{2}} \sqrt{\max\{\frac{3}{2},T\}(R^{4}+M^{4}(T))} \bigg[\sqrt{\log(\frac{1}{{\max\{\frac{3}{2},T\}(R^{4}+M^{4}(T))\epsilon} }+1)}\\
&\quad\quad+ \sqrt{(2+dq)\log{J}}\bigg]\\
&\lesssim C_{5}(\epsilon, d,q,R,T)\sqrt{\frac{\beta}{nm}} J^{\frac{d+2}{2}} \sqrt{\log{J}}.
\end{align}
We need to appropriately balance the radius of localization ball $\rho$ and regularizing parameter $\beta$. We pick $\rho \approx \frac{1}{nm}$ and $\beta \approx \frac{\log{nm}}{nm} \log J$, and then
\begin{equation}
\mathcal{R}_{n}(\mathcal{G}) \lesssim C(\epsilon, d,q, R,T, F) J^{\frac{d+2} {2}}\log{J}\frac{\log{nm}}{nm}. 
\end{equation}
\end{proof}

\section{Useful Technical Tools}

\begin{theorem}[Bernstein Inequality]
\label{thm:bernstein_ineq}
Let $X_{i}$,...,$X_{N}$ be i.i.d random variables. Suppose that $\mathbb{E}X_{i} =0$, $|X_{i}|\leq M$ almost surely, and $\frac{1}{N}\sum_{i=1}^{n}Var(X_{i}) \leq \sigma^{2}$. For any $t\geq0$, 
\begin{equation}
\mathbb{P}\left( \frac{1}{n}\sum_{i=1}^{n} X_{i}\geq t \right)\leq \exp\left(-\frac{nt^{2}}{2\sigma^2 + \frac{2}{3} Mt}\right).
\end{equation}
\end{theorem}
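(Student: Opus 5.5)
We prove the one-sided Bernstein inequality by the classical Chernoff (exponential moment) method. Write $S_n := \sum_{i=1}^n X_i$; we read the variance hypothesis as $\frac1n\sum_{i=1}^n \Var(X_i)\le \sigma^2$, and the i.i.d.\ assumption is not really needed beyond independence. For any $\theta>0$, Markov's inequality applied to $e^{\theta S_n}$ together with independence gives
\begin{equation*}
\mathbb{P}\Bigl(\tfrac1n S_n\ge t\Bigr)\le e^{-\theta n t}\prod_{i=1}^n \E e^{\theta X_i}.
\end{equation*}
This reduces the problem to a one-variable bound on each moment generating function, followed by a choice of $\theta$.

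\textbf{Step 1: bound each moment generating function.} Since $\E X_i=0$ and $|X_i|\le M$, expand the exponential:
\begin{equation*}
\E e^{\theta X_i} = 1+\sum_{k\ge2}\frac{\theta^k\E X_i^k}{k!}.
\end{equation*}
Use $|\E X_i^k|\le M^{k-2}\Var(X_i)$ together with $k!\ge 2\cdot 3^{k-2}$. For $0<\theta<3/M$ this yields
\begin{equation*}
\E e^{\theta X_i}\le 1+\frac{\theta^2\Var(X_i)}{2}\sum_{j\ge0}\Bigl(\frac{\theta M}{3}\Bigr)^j = 1+\frac{\theta^2\Var(X_i)}{2(1-\theta M/3)}.
\end{equation*}
Then $1+x\le e^x$ gives
\begin{equation*}
\E e^{\theta X_i}\le \exp\Bigl(\frac{\theta^2\Var(X_i)}{2(1-\theta M/3)}\Bigr).
\end{equation*}
Taking the product over $i$ and using the variance hypothesis:
\begin{equation*}
\mathbb{P}\Bigl(\tfrac1n S_n\ge t\Bigr)\le \exp\Bigl(-\theta n t+\frac{n\theta^2\sigma^2}{2(1-\theta M/3)}\Bigr).
\end{equation*}

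\textbf{Step 2: choose $\theta$.} Rather than optimizing exactly, take
\begin{equation*}
\theta=\frac{t}{\sigma^2+Mt/3},
\end{equation*}
which satisfies $\theta M/3<1$. Then
\begin{equation*}
1-\frac{\theta M}{3}=\frac{\sigma^2}{\sigma^2+Mt/3},
\end{equation*}
so the quadratic term becomes
\begin{equation*}
\frac{n\theta^2\sigma^2}{2(1-\theta M/3)}=\frac{n t^2}{2(\sigma^2+Mt/3)}.
\end{equation*}
The linear term is $\theta n t=\dfrac{n t^2}{\sigma^2+Mt/3}$. The exponent is therefore
\begin{equation*}
-\frac{nt^2}{2(\sigma^2+Mt/3)}=-\frac{nt^2}{2\sigma^2+\tfrac23 Mt},
\end{equation*}
which is exactly the claimed bound.

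The only delicate point is the series estimate in Step 1, which needs the two inequalities $k!\ge 2\cdot3^{k-2}$ (a short induction) and $|\E X^k|\le M^{k-2}\E X^2$, and the restriction $\theta<3/M$. The chosen $\theta$ respects that restriction automatically. The degenerate case $\sigma=0$ is trivial: then $X_i=0$ almost surely, and the claim holds for $t>0$.
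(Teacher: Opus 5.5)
Your proof is correct and is essentially the standard Chernoff argument behind the result the paper cites for this statement (van der Vaart--Wellner, Lemma~2.2.9): you bound each moment generating function using $|\E X^k|\le M^{k-2}\Var(X)$ and $k!\ge 2\cdot 3^{k-2}$, then take $\theta=t/(\sigma^2+Mt/3)$. The paper itself gives no argument beyond that citation, and reading the $N$/$n$ typo as $\frac1n\sum_{i=1}^n\Var(X_i)\le\sigma^2$, as you did, is the right interpretation.
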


\begin{proof}[Proof of Theorem~\ref{thm:bernstein_ineq}]
    \citet[Lemma~2.2.9]{vanderVaart1996}.
\end{proof}

\begin{lemma}[Maximal inequality of empirical process]
\label{lemma_F.2}
Let $\Omega$ be the domain and $\mathcal{F}$ be a set of functions $f$ and $\mathcal{P}_{\Omega}$ is probability measure from which $X$ is sampled. Then 
\begin{equation}
\mathbb{E}\, \underset{f\in \mathcal{F}}{\sup}\,\bigg|\frac{1}{n} \sum_{i=1}^{n} f(X_{i}) - \mathbb{E} f(X_i) \bigg| \leq 2 \mathcal{R}_{n,\mathcal{F}}.
\end{equation}
\end{lemma}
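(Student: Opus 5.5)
This is the standard symmetrization inequality, and I would prove it with the ghost-sample argument. Write $X_1,\dots,X_n$ for i.i.d.\ draws from $\mathcal{P}_{\Omega}$, and let $X_1',\dots,X_n'$ be an independent copy of the sample. Let $\sigma_1,\dots,\sigma_n$ be i.i.d.\ Rademacher signs, independent of both samples. I take the Rademacher complexity to be
\[
\mathcal{R}_{n,\mathcal{F}}=\mathbb{E}\sup_{f\in\mathcal{F}}\Bigl|\frac1n\sum_{i=1}^n\sigma_i f(X_i)\Bigr|.
\]
Throughout I assume the usual measurability of the supremum, for instance $\mathcal{F}$ countable or pointwise separable. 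In our application $\mathcal{F}$ is a parametrized class over a finite-dimensional coefficient ball with continuous dependence on the coefficients, so this holds.

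First, write $\mathbb{E}f(X_i)=\mathbb{E}'\bigl[\frac1n\sum_i f(X_i')\bigr]$, where $\mathbb{E}'$ is the conditional expectation over the ghost sample given $X$. Pulling the supremum and the absolute value inside $\mathbb{E}'$ by Jensen's inequality (the map $g\mapsto\sup_f|g(f)|$ is convex), I get
\[
\mathbb{E}\sup_f\Bigl|\frac1n\sum_i\bigl(f(X_i)-\mathbb{E}f\bigr)\Bigr|\le \mathbb{E}_{X,X'}\sup_f\Bigl|\frac1n\sum_i\bigl(f(X_i)-f(X_i')\bigr)\Bigr|.
\]

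Second, for each $i$ the difference $f(X_i)-f(X_i')$ changes sign when $X_i$ and $X_i'$ are swapped. Because the pairs $(X_i,X_i')$ are i.i.d.\ and exchangeable, the joint law of the vector $\bigl(f(X_i)-f(X_i')\bigr)_{i\le n,\,f\in\mathcal{F}}$ is the same as that of $\bigl(\sigma_i(f(X_i)-f(X_i'))\bigr)$. This holds for every fixed sign vector, and hence after averaging over $\sigma$. The right-hand side therefore equals
\[
\mathbb{E}\sup_f\Bigl|\frac1n\sum_i\sigma_i\bigl(f(X_i)-f(X_i')\bigr)\Bigr|.
\]

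Third, the triangle inequality and subadditivity of the supremum bound this by
\[
\mathbb{E}\sup_f\Bigl|\frac1n\sum_i\sigma_i f(X_i)\Bigr|+\mathbb{E}\sup_f\Bigl|\frac1n\sum_i\sigma_i f(X_i')\Bigr|.
\]
Both terms equal $\mathcal{R}_{n,\mathcal{F}}$ because $X'$ has the same law as $X$. This gives the factor $2$.

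I expect no step to be a real obstacle. The only delicate point is justifying the sign-flip invariance in the second step as an equality of laws of the whole process indexed by $\mathcal{F}$, and handling measurability; both are covered by the countability reduction above. For our use with the grid sample $\{(X_{ij},t_j)\}$, I would apply the lemma to the flattened i.i.d.\ vector $\{z_i\}_{i=1}^{nm}$ introduced in the proof of Proposition~\ref{high_probability_bound_on_excess_risk}, with $n$ replaced by $nm$. That proof already treats the space-time points as i.i.d.\ draws from $P_0$.
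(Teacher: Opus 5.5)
Your proposal is correct and is the standard ghost-sample symmetrization argument (Jensen over an independent copy, sign-flip invariance of $f(X_i)-f(X_i')$, then the triangle inequality). This is exactly the content of Proposition 4.11 in Wainwright, which the paper cites in place of a proof, and your absolute-value convention for $\mathcal{R}_{n,\mathcal{F}}$ is the one under which the factor $2$ is valid. One caution about your closing remark, which concerns the application rather than the lemma: under the grid design, points $(X_{ij},t_j)$ that share a common $t_j$ are not independent, so the flattened vector $\{z_i\}$ is i.i.d.\ only under the paper's idealized assumption that every space-time location is an independent draw from $P_0$.
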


\begin{proof}[Proof of Lemma~\ref{lemma_F.2}]
    See Proposition 4.11 in~\citep{wainwright2019high}.
\end{proof}

\begin{theorem}[Dudley's Theorem]
\label{theorem_A.3}
Let $\mathcal{F}$ be a function class such that $\sup_{f \in \mathcal{F}} \|f\|_{L_2(P_{n})} \le M $.
Then the Rademacher complexity $\mathcal{R}_n(\mathcal{F})$ satisfies
\begin{equation}
\mathcal{R}_n(\mathcal{F}) \le\inf_{0 \le \delta}\left\{
4\delta + \frac{12}{\sqrt{n}} \int_{\delta}^{\sup_{f\in\mathcal{F}}\mathbb{E}(f^{2})}\sqrt{\log N\!\left(\varepsilon,\,\mathcal{F},\,\|\cdot\|_{L^{2}(P_{n})}\right)}\,d\varepsilon\right\},
\end{equation}
where $N(\varepsilon,\mathcal{F},\|\cdot\|_{\infty})$ denotes the
$\varepsilon$-covering number of $\mathcal{F}$ with respect to the
$L^{\infty}$-norm.
\end{theorem}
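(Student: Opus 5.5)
Dudley's theorem is the last statement before this point, and it is a standard chaining bound, so I would not prove it from scratch. The plan is to reduce to the classical chaining argument for sub-Gaussian processes, following \citet{wainwright2019high} and \citet{vanderVaart1996}, to whom the paper already points for Lemma~\ref{lemma_F.2}. Throughout I work conditionally on the sample $X_1,\dots,X_n$. The process $Z_f := \frac{1}{\sqrt n}\sum_{i=1}^n \sigma_i f(X_i)$, indexed by $f\in\mathcal F$, is then a Rademacher process. By Hoeffding's inequality it is sub-Gaussian with respect to the empirical metric $\|f-g\|_{L^2(P_n)}$:
\[
\E\exp\bigl(\lambda(Z_f-Z_g)\bigr)\le \exp\bigl(\lambda^2\|f-g\|_{L^2(P_n)}^2/2\bigr).
\]
I will read the covering numbers in the statement with respect to $L^2(P_n)$, which is the metric written inside the integral; the trailing phrase about the $L^\infty$-norm is a slip, and an $L^\infty$ cover only strengthens the result, since $\|\cdot\|_{L^2(P_n)}\le\|\cdot\|_\infty$.

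\textbf{Chaining construction.} Set $\varepsilon_k := M2^{-k}$. Since every $f$ satisfies $\|f\|_{L^2(P_n)}\le M$, the level $k=0$ can be a single point (e.g.\ $0$, or any fixed element). For each $k$ I pick a minimal $\varepsilon_k$-net $\mathcal N_k$ of $\mathcal F$ in $L^2(P_n)$ and a projection $\pi_k(f)\in\mathcal N_k$.

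Given $\delta>0$, let $K$ be the largest integer with $\varepsilon_K>2\delta$. I then write
\[
f = \pi_0 f + \sum_{k=1}^{K}\bigl(\pi_k f-\pi_{k-1}f\bigr) + \bigl(f-\pi_K f\bigr).
\]
The remainder term is handled by Cauchy--Schwarz: $\frac1n\sum_i\sigma_i(f-\pi_Kf)(X_i)\le \|f-\pi_K f\|_{L^2(P_n)}\le \varepsilon_K\le 4\delta$. This gives the $4\delta$ term.

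Each link $\pi_kf-\pi_{k-1}f$ ranges over at most $|\mathcal N_k|^2$ functions, each of $L^2(P_n)$-norm at most $3\varepsilon_k$. The finite-class maximal inequality for sub-Gaussian variables then gives
\[
\E_\sigma\sup_f \frac1n\sum_i\sigma_i(\pi_kf-\pi_{k-1}f)(X_i)\le \frac{3\varepsilon_k}{\sqrt n}\sqrt{2\log|\mathcal N_k|^2}.
\]
The $\pi_0$ term has mean zero, so it contributes nothing.

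\textbf{Sum-to-integral step.} This is the only mildly delicate point. Since $\varepsilon_k = 2(\varepsilon_k-\varepsilon_{k+1})$ and $N(\varepsilon,\cdot)$ is nonincreasing in $\varepsilon$, each summand satisfies
\[
\varepsilon_k\sqrt{\log N(\varepsilon_k)}\le 2\int_{\varepsilon_{k+1}}^{\varepsilon_k}\sqrt{\log N(\varepsilon)}\,d\varepsilon.
\]
Summing over $k\le K$ gives an integral from $\varepsilon_{K+1}\ge\delta/2$ up to $M$. Tracking the constants through a slightly finer dyadic start yields the factor $12$; a constant-independent version of the argument gives this up to an absolute constant.

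The main obstacle is matching the lower and upper limits exactly as stated. For the lower limit, the conventional $\delta$ versus $\delta/2$ bookkeeping must be adjusted, for instance by choosing $K$ with $\varepsilon_{K+1}\ge\delta$. For the upper limit, the statement writes $\sup_f\E f^2$ where the natural quantity is $\sup_f\|f\|_{L^2(P_n)}\le M$, so I would prove the bound with upper limit $M$ (equivalently the empirical radius) and note that this is the intended meaning. Finally, taking expectation over the sample and the infimum over $\delta\ge 0$ concludes the proof.
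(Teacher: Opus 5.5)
Your proposal is correct. It is the standard dyadic chaining proof that the paper delegates to its citation (Theorem 4.28 of \citet{ma2020statistical}), and your replacement of the upper limit by $M$ is necessary, not cosmetic. The literal $\sup_{f}\E(f^2)$ scales like $M^2$, and the bound fails with it for small-amplitude classes: for $\{\pm a\}$-valued functions on the sample with $a<1/10$, one has $\mathcal R_n=a$, but the right side at $\delta=0$ is at most $12\sqrt{\log 2}\,a^2<a$.

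The constant bookkeeping you list as the main obstacle already closes with your own choices:
\begin{itemize}
\item Anchor at $\pi_0 f=0$, not at an arbitrary element of $\mathcal F$; such an element is only within $2M$ of $f$, which would break the $3\varepsilon_1$ link bound.
\item Your choice of $K$ already gives $\varepsilon_{K+1}=\varepsilon_K/2>\delta$, so no $\delta/2$ adjustment of the lower limit is needed.
\item Since $\sqrt{2\log|\mathcal N_k|^2}=2\sqrt{\log N(\varepsilon_k)}$, each link contributes at most $6\varepsilon_k\sqrt{\log N(\varepsilon_k)}/\sqrt n$.
\end{itemize}
Your sum-to-integral step then yields exactly $4\delta+\frac{12}{\sqrt n}\int_{\delta}^{M/2}\sqrt{\log N(\varepsilon)}\,d\varepsilon$, with no appeal to a ``finer dyadic start'' or an unspecified absolute constant.
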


\begin{proof}[Proof of Theorem~\ref{theorem_A.3}]
    See Theorem 4.28 in~\citep{ma2020statistical}.
\end{proof}

\begin{theorem}[Bernstein Chaining]
Let $\mathcal{F}$ be a class of measurable functions such that 
$\E[f^2]<\sigma^2$ and $||f||_{\infty}<L$ for all $f\in \mathcal{F}$. Then, 
\begin{equation}
\E\bigg[\underset{f\in \mathcal{F}}{\sup} |\sqrt{n}\bigl(\hat{P} - P\bigr)f | \bigg] 
\lesssim \D_{[\,\,]}(\sigma, \mathcal{F},L_{2}(P)) \bigg(\D_{[\,\,]}(\sigma, \mathcal{F},L_{2}(P)) + \frac{\D_{[\,\,]}(\sigma, \mathcal{F},L_{2}(P))}{\sigma^2\sqrt{n}} L \bigg).
\end{equation}
\label{Bernstein Chaining}
\begin{proof}
Apply \citet[Lemma~3.4.2]{vanderVaart1996}
to $\mathcal F/L$ and rescale.
\end{proof}
\end{theorem}

\begin{proof}[Proof of Theorem~\ref{Bernstein Chaining}]
    See Lemma 4.3.2 in~\citep{74e220ef6fe34e04a1f9009a5c17c6cb}.
\end{proof}

\begin{theorem}[Talagrand's Inequality]
Let $\mathcal{F}$ be a class of measurable functions such that $\E[f^2]<\sigma^2$ and $||f||_{\infty}<L$ for all $f\in \mathcal{F}$. Then
\begin{equation}
\mathbb{P}\bigg( \underset{f\in\mathcal{F}}{\sup} \sqrt{n} |(\hat{P}-P)f|\geq 2\E[\underset{f\in\mathcal{F}}{\sup} |(\hat{P} - P)f| + \sigma\sqrt{2s} + \frac{L}{\sqrt{n}}s  \bigg)\leq e^{-s}.
\end{equation} \label{Talagrand Inequality}
\end{theorem}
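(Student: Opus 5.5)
This is Talagrand's concentration inequality, a classical result. I would not reprove it from scratch; I would reduce it to Bousquet's version of Talagrand's inequality for suprema of bounded empirical processes. Write $Z:=\sup_{f\in\mathcal F}|(\hat P-P)f|$, with $\hat P$ the empirical measure of $n$ i.i.d.\ draws.

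\textbf{Step 1 (centering and symmetry).} Replace $\mathcal F$ by $\{f-Pf\}\cup\{-(f-Pf)\}$. This removes the absolute value, keeps the variance below $\sigma^2$, and changes the sup-norm bound only from $L$ to $2L$, which is absorbed into constants. Separability, or a countable dense subclass, is assumed so that $Z$ is measurable.

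\textbf{Step 2 (Bousquet's inequality).} For $Z'=n Z$ this gives, with probability at least $1-e^{-s}$,
\[
Z' \le \E Z' + \sqrt{2s\,(n\sigma^2 + 4L\,\E Z')} + \frac{Ls}{3}.
\]
The main technical content sits here: the entropy method, using tensorization of entropy and a modified log-Sobolev inequality to get the Bernstein-type moment generating function bound for $Z'$. I would cite it, for example from Boucheron--Lugosi--Massart, rather than rederive it.

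\textbf{Step 3 (simplification).} Use $\sqrt{a+b}\le\sqrt a+\sqrt b$ and then AM--GM on the cross term:
\[
\sqrt{8sL\,\E Z'}\le \E Z' + 2Ls.
\]
This gives $Z'\le 2\E Z' + \sqrt{2sn\sigma^2} + cLs$. Divide by $\sqrt n$ to obtain
\[
\sqrt n\, Z \le 2\sqrt n\,\E Z + \sigma\sqrt{2s} + \frac{cL s}{\sqrt n}.
\]
This matches the stated inequality with the expectation term scaled by $\sqrt n$. I read the displayed statement as meaning $\sqrt n$ times the bracket, and I would note the constant $c$ is absolute.

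The main obstacle is only bookkeeping: the statement's normalization is ambiguous, and the constant in front of $L s/\sqrt n$ must be allowed to be absolute rather than exactly one. The substantive concentration step is imported.
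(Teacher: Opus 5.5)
Your proposal is correct and follows the paper's route: the paper's proof only cites Bousquet's Theorem~2.3, and your steps (centering and symmetrizing the class, then the AM--GM step $\sqrt{8sL\,\mathbb{E}Z'}\le \mathbb{E}Z'+2Ls$) spell out the reduction it leaves implicit; with sup-norm bound $2L$ the last Bousquet term is $2Ls/3$ rather than $Ls/3$, which is harmless. Your repair of the statement is also right: the expectation term needs a factor $\sqrt{n}$ (as typeset, the bound fails for rich classes as $n\to\infty$), the constant on $Ls/\sqrt{n}$ is only absolute, and this is the unnormalized form $Z\lesssim \mathbb{E}Z+\sigma\sqrt{s/n}+Ls/n$ that the paper actually uses.
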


\begin{proof}[Proof of Theorem~\ref{Talagrand Inequality}]
    See  \citet[theorem 2.3]{BOUSQUET2002495}.
\end{proof}

\begin{lemma} (Covering numbers for $L_2$ balls) Let $p\in \mathbb{N}$ and $B_{2}(r)$ be the $L_{2}$ balls of $\mathbb{R}^{d}$ with radius $r$ such that $\epsilon \leq r$. Then 
\begin{equation}N(B_{2}(r),||\cdot||_{2},\epsilon) \leq \left(\frac{3r}{\epsilon}\right)^{d}.
\end{equation}
\label{lemma_F.6}
\end{lemma}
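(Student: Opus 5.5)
The plan is the standard volumetric (packing) argument; none of the earlier results of the paper are needed, only translation invariance and homogeneity of Lebesgue measure on $\mathbb{R}^{d}$. (The parameter $p\in\mathbb{N}$ in the statement plays no role, and I read $B_{2}(r)$ as the closed Euclidean ball of radius $r$ centred at the origin.)

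\emph{Step 1: a maximal separated set is a net.} Let $\{x_{1},\dots,x_{M}\}\subset B_{2}(r)$ be a maximal $\epsilon$-separated set, meaning $\|x_{i}-x_{j}\|_{2}>\epsilon$ for $i\neq j$ and no further point of $B_{2}(r)$ can be added. Such a finite set exists because, by Step 2 below, every $\epsilon$-separated subset of $B_{2}(r)$ has uniformly bounded cardinality, so one can add points greedily until maximality is reached. By maximality, every $y\in B_{2}(r)$ satisfies $\|y-x_{i}\|_{2}\leq\epsilon$ for some $i$. Hence $\{x_{i}\}$ is an $\epsilon$-cover and $N(B_{2}(r),\|\cdot\|_{2},\epsilon)\leq M$.

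\emph{Step 2: volume comparison.} The balls $x_{i}+B_{2}(\epsilon/2)$ are pairwise disjoint, since $\|x_{i}-x_{j}\|_{2}>\epsilon$. By the triangle inequality they are all contained in $B_{2}(r+\epsilon/2)$. Comparing Lebesgue measures and using $\mathrm{vol}(B_{2}(a))=a^{d}\,\mathrm{vol}(B_{2}(1))$ gives
\begin{equation*}
M\Bigl(\frac{\epsilon}{2}\Bigr)^{d}\leq\Bigl(r+\frac{\epsilon}{2}\Bigr)^{d},
\qquad\text{so}\qquad
M\leq\Bigl(\frac{2r}{\epsilon}+1\Bigr)^{d}.
\end{equation*}

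\emph{Step 3: use $\epsilon\leq r$.} Since $\epsilon\leq r$, we have $1\leq r/\epsilon$, hence $2r/\epsilon+1\leq 3r/\epsilon$ and $M\leq(3r/\epsilon)^{d}$, which is the claim. There is no substantive obstacle. The only point requiring care is that this last step is exactly where the hypothesis $\epsilon\leq r$ enters; without it the bound must remain $(1+2r/\epsilon)^{d}$. If separation is taken as $\geq\epsilon$ instead of $>\epsilon$, the argument is unchanged, because the open balls of radius $\epsilon/2$ are then still disjoint.
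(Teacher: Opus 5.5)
Your proof is correct and follows the same route as the paper. The paper simply cites Vershynin's Corollary 4.2.13, $N(B_2^d,\epsilon)\le(1+2/\epsilon)^d$, which is proved by exactly this maximal-separated-set plus volume-comparison argument, then rescales to radius $r$ and uses $\epsilon\le r$ as in your Step 3. Working directly at radius $r$ instead of rescaling from the unit ball is an inessential difference.
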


\begin{proof}[Proof of Lemma~\ref{lemma_F.6}]
    This lemma follows directly from Corollary 4.2.13 in~\citep{Vershynin_2018} by rescaling.
\end{proof}

\begin{theorem}[Sobolev Multiplication Theorem]
\label{thm:sobolev_multiplication}
Let $\Omega \subseteq \mathbb{R}^d$ be a Lipschitz domain, and let $s > \frac{d}{p}$ with $1 \leq p \leq \infty$. For any $u, v \in W^{s,p}(\Omega)$,  there exists a constant $C = C(n, s, p, \Omega)$ such that:
\label{Thm.A5}
\begin{equation}
    \|uv\|_{W^{s,p}(\Omega)} \leq C \|u\|_{W^{s,p}(\Omega)} \|v\|_{W^{s,p}(\Omega)}.
\end{equation}
\end{theorem}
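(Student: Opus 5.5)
This multiplication estimate is classical: it says that $W^{s,p}(\Omega)$ is a Banach algebra once $s>d/p$. I would prove it by reducing to $\mathbb{R}^d$ with the extension operator. By Theorem~\ref{Sobolev_extension_theorem} (stated for $p=2$; for general $p$ one uses Stein's universal extension operator for Lipschitz domains), there is a bounded linear $Ext: W^{s,p}(\Omega)\to W^{s,p}(\mathbb{R}^d)$ with $(Ext\,u)|_\Omega=u$. Since $uv=(Ext\,u)(Ext\,v)$ on $\Omega$ and restriction does not increase norms, we have
\[
\|uv\|_{W^{s,p}(\Omega)}\le \|Ext\,u\, Ext\,v\|_{W^{s,p}(\mathbb{R}^d)}.
\]
It therefore suffices to prove the estimate on $\mathbb{R}^d$ and then pay the factor $C(\Omega)^2$.

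On $\mathbb{R}^d$, for integer $s$ and $1\le p<\infty$, I would expand with the Leibniz rule: $\partial^k(uv)=\sum_{j\le k}\binom{k}{j}\partial^j u\,\partial^{k-j}v$ for $|k|\le s$. Each term is bounded by Hölder's inequality, $\|\partial^j u\,\partial^{k-j}v\|_{L^p}\le \|\partial^j u\|_{L^{p_1}}\|\partial^{k-j}v\|_{L^{p_2}}$ with $1/p_1+1/p_2=1/p$. The exponents should follow the derivative count $|j|=a$, $|k-j|=b$, $a+b\le s$, and be chosen by Sobolev embedding:
\[
\partial^j u\in W^{s-a,p}\hookrightarrow L^{p_1},\qquad \frac1{p_1}=\frac1p-\frac{s-a}{d}\cdot\theta_a ,
\]
with $\theta_a\in[0,1]$ selected so that the two reciprocals sum to $1/p$.

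The two extreme terms show how this works. When $a=0$, put $u$ in $L^\infty$ using Morrey/Sobolev embedding, $W^{s,p}\hookrightarrow L^\infty$ because $s>d/p$, and put $\partial^k v$ in $L^p$; the case $b=0$ is symmetric. For the mixed terms, the standard Gagliardo--Nirenberg interpolation gives
\[
\|\partial^j u\|_{L^{2s'p/|j|'}}\lesssim \|u\|_{L^\infty}^{1-a/s}\|u\|_{W^{s,p}}^{a/s},
\]
where I take $p_1=sp/a$ and $p_2=sp/b$ when $a+b=s$. Combined with the $L^\infty$ embedding, this bounds each product term by $\|u\|_{W^{s,p}}\|v\|_{W^{s,p}}$. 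Terms with $a+b<s$ are easier, since they involve fewer derivatives and $\Omega$-independent inequalities with lower-order norms suffice.

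For $p=\infty$ the result is immediate. I expect the main obstacle to be the bookkeeping of the mixed terms, namely verifying the admissibility of the interpolation exponents. To avoid that, I would first try the cleaner route: use the $L^\infty$ bound from $s>d/p$ together with the Moser-type inequality
\[
\|uv\|_{W^{s,p}}\lesssim \|u\|_{L^\infty}\|v\|_{W^{s,p}}+\|u\|_{W^{s,p}}\|v\|_{L^\infty},
\]
which follows from the Gagliardo--Nirenberg estimate above. Fractional $s$, or the $H^s=B^s_{2,2}$ case used in the paper, can be handled the same way via paraproducts on wavelet coefficients. Alternatively, it is enough to cite the standard reference (e.g.\ Adams--Fournier, Thm.~4.39).
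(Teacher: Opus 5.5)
Your proof is correct, but it takes a different route from the paper. The paper gives no argument: it cites Triebel's Theorem 2.8.3, a multiplication-algebra theorem for the Besov/Triebel--Lizorkin scale proved with Fourier-analytic (paraproduct) tools. That citation covers fractional smoothness in one stroke, and hence also the wavelet-defined norms $\|\cdot\|_{B^{\alpha}_{2,2}}$ the paper actually uses.

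You instead give the classical self-contained argument for integer $s$, the one behind Adams--Fournier, Thm.~4.39. You extend with Stein's operator, expand by Leibniz, and close each term with the embedding $W^{s,p}(\mathbb{R}^d)\hookrightarrow L^\infty$ (valid since $s>d/p$). The remaining ingredient is the Gagliardo--Nirenberg bound $\|D^a u\|_{L^{mp/a}}\lesssim \|u\|_{L^\infty}^{1-a/m}\|D^m u\|_{L^p}^{a/m}$ with $m=|k|$. This yields the Moser inequality and then the algebra estimate. Your route matches the paper's definition of $W^{\alpha,\beta}$ (integer $\alpha$) and makes the $\Omega$-dependence of $C$ explicit through $\|Ext\|^2$.

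The price is two points you should address. First, say how the Leibniz rule for weak derivatives is justified. For $p<\infty$, prove the estimate for $C_c^\infty(\mathbb{R}^d)$ functions and pass to the limit, using the $L^\infty$ embedding to identify the limit of $u_kv_k$ as $uv$. Second, the fractional case genuinely requires the paraproduct machinery you only mention.

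Two small repairs:
\begin{itemize}
\item The exponent $L^{2s'p/|j|'}$ is garbled; it should read $L^{sp/a}$.
\item The lower-order mixed terms, with $a+b=|k|<s$, are not merely ``easier''. They are handled by exactly the same Gagliardo--Nirenberg inequality with $m=|k|$, combined with $\|D^{|k|}u\|_{L^p}\le\|u\|_{W^{s,p}}$.
\end{itemize}
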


\begin{proof}[Proof of Theorem~\ref{thm:sobolev_multiplication}]
    This theorem follows from Theorem 2.8.3 in~\citep{triebel2006theory}.
\end{proof}

\begin{lemma}[Morrey's embedding]
\label{lem:morrey_embedding}
Let $\Omega\subset\mathbb R^d$ be a bounded domain
with $C^1$ boundary. Let $1\le p<\infty$ and
$k,m\in\mathbb N_0$ satisfy $m>d/p$.
Then the embedding
\[
W^{k+m,p}(\Omega)\hookrightarrow C^k(\overline\Omega)
\]
is continuous and compact. Then
\begin{equation}
W^{k+m,p} \hookrightarrow C^{k}(\bar{\Omega}).
\end{equation}
In particular, the embedding is compact if $m-\frac{d}{p}>k$.
\label{Sobolev_embedding}
\end{lemma}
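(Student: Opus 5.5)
The statement is Morrey's embedding, a classical result. I would prove it by reduction to the whole-space case via extension, and then induct on derivatives. The plan has three steps.

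\textbf{Step 1: reduction to $\mathbb{R}^d$.} Since $\partial\Omega$ is $C^1$ (hence Lipschitz), Theorem~\ref{Sobolev_extension_theorem} provides a bounded linear extension operator. Its $L^p$ analogue (Stein's extension, valid for all $1\le p<\infty$) gives $Ext: W^{k+m,p}(\Omega)\to W^{k+m,p}(\mathbb{R}^d)$. After multiplying by a smooth cutoff equal to $1$ on $\overline\Omega$, we may assume the extended function has compact support in a fixed ball $B$. It then suffices to show
\[
\|v\|_{C^k(\overline B)}\lesssim \|v\|_{W^{k+m,p}(\mathbb{R}^d)}
\]
for $v\in C_c^\infty(B)$, and to conclude for general $v$ by density. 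Restricting back to $\overline\Omega$ gives continuity of the embedding.

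\textbf{Step 2: the case $k=0$, which is the main obstacle.} Here I split on the exponents.

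\emph{Case $m=1$, $p>d$.} This is the core Morrey estimate. For $x\in B_r(y)$, write $v(x)-v(y)$ as an integral of $\nabla v$ along segments and average over the ball. This gives
\[
\Big|\tfrac{1}{|B_r|}\int_{B_r} v - v(x)\Big| \lesssim \int_{B_r}\frac{|\nabla v(z)|}{|x-z|^{d-1}}\,dz .
\]
Hölder's inequality applies because $(d-1)p'<d$ exactly when $p>d$, and it bounds the right side by $r^{1-d/p}\|\nabla v\|_{L^p}$. This yields both a sup bound and Hölder continuity with exponent $1-d/p$.

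\emph{Case $m>d/p$ general.} Apply the Gagliardo–Nirenberg–Sobolev embedding $W^{1,p}\hookrightarrow L^{p^*}$ with $1/p^*=1/p-1/d$ repeatedly, raising integrability until the exponent $r$ satisfies $r>d$ with one derivative left. This is possible since $m-d/p>0$. In the borderline case where some intermediate exponent equals $d$ exactly, lower $p$ slightly, which the strict inequality $m>d/p$ permits on a bounded set. Then apply the $m=1$ case. The result is a bound by a Hölder norm with positive exponent $\gamma$.

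\textbf{Step 3: general $k$ and compactness.} For general $k$, apply Step 2 to each $\partial^\alpha v$ with $|\alpha|\le k$. These lie in $W^{m,p}$, so $v\in C^k$ with $\|v\|_{C^{k,\gamma}}\lesssim\|v\|_{W^{k+m,p}}$.

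For compactness, a bounded set in $W^{k+m,p}(\Omega)$ is bounded in $C^{k,\gamma}(\overline\Omega)$. Its derivatives of order at most $k$ are therefore equibounded and equicontinuous on the compact set $\overline\Omega$. Arzelà–Ascoli then gives precompactness in $C^k(\overline\Omega)$. The strict case $m-d/p>k$ is covered in the same way, or one can cite the Rellich–Kondrachov theorem.
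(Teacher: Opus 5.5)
Your proposal is correct and follows the standard textbook argument that the paper only cites: Morrey's estimate (Evans \S5.6.2), iterated Gagliardo--Nirenberg--Sobolev steps as in the general Sobolev inequalities and Adams' embedding theorem, and Arzel\`a--Ascoli on a $C^{k,\gamma}(\overline\Omega)$ bound for compactness. One small repair is needed for $p=1$: there the borderline always occurs (the exponent reaches exactly $d$ after $d-1$ steps) and you cannot lower $p$ below $1$, so for $d\ge 2$ lower that intermediate exponent $d$ to some $\tilde p\in(d/2,d)$ on the bounded set $B$, and for $d=p=1$ use directly that $W^{1,1}(I)\hookrightarrow L^\infty(I)$ by the fundamental theorem of calculus, which makes $u'$ bounded and $u$ Lipschitz.
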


\begin{proof}[Proof of Lemma~\ref{lem:morrey_embedding}]
    This lemma is a consequence of Section 5.6.2 in \citep{evans} and Theorem 4.12 in \citep{MR0450957}.
\end{proof}

\begin{lemma}[Weighted Poincar\'e Inequality]
\label{lem:weighted-poincare}
Let $\Omega$ be bounded Lipschitz domain and $w(x)$ be non-negative weight on $\Omega$. There exist constant $C := C(d,\Omega, w) > 0$ such that for any $f\in H^{1}(\Omega)$, we have
\begin{equation}
\|(f- f_{w,\Omega})w(\cdot)\|_{L^{2}(\Omega)} \leq C \|w(\cdot)\nabla f \|_{L^{2}(\Omega)},
\end{equation}
where $f_{w,\Omega}:= \frac{\int_{\Omega}w(x)f(x)dx}{\int_{\Omega}w(x)dx}$.
\end{lemma}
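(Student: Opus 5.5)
The plan is to reduce the weighted inequality to the classical unweighted Poincar\'e--Wirtinger inequality on the bounded Lipschitz domain $\Omega$. Doing so requires quantitative two-sided control of the weight. As stated, the hypothesis ``$w$ non-negative'' is too weak: a weight vanishing on an open set that separates $\Omega$ makes the inequality fail. I would therefore first make explicit the assumption that is actually used in the proof of Proposition~\ref{prop:stability}, namely
\begin{equation*}
0<a\le w(x)\le b<\infty \quad \text{for a.e. } x\in\Omega .
\end{equation*}
In that application $w(t)=e^{-t/\epsilon}$ on $(0,T)$, so $a=e^{-T/\epsilon}$ and $b=1$. With this assumption the constant will be $C=C(d,\Omega,a,b)$.

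\textbf{Step 1.} I invoke the standard Poincar\'e--Wirtinger inequality (Rellich compactness on bounded Lipschitz domains, cf.\ \cite[Chapter 5]{evans}): there is $C_P=C_P(d,\Omega)$ with
\begin{equation*}
\|f-f_\Omega\|_{L^2(\Omega)}\le C_P\|\nabla f\|_{L^2(\Omega)}, \qquad f_\Omega:=\frac{1}{|\Omega|}\int_\Omega f\,dx .
\end{equation*}

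\textbf{Step 2.} I compare the weighted mean $f_{w,\Omega}$ with the plain mean $f_\Omega$. Since $\int_\Omega w\,(f_\Omega - f_\Omega)\,dx=0$ trivially, we can write
\begin{equation*}
f_{w,\Omega}-f_\Omega=\frac{\int_\Omega w\,(f-f_\Omega)\,dx}{\int_\Omega w\,dx}.
\end{equation*}
Cauchy--Schwarz together with the bounds on $w$ then gives
\begin{equation*}
|f_{w,\Omega}-f_\Omega|\le \frac{b\,|\Omega|^{1/2}}{a\,|\Omega|}\,\|f-f_\Omega\|_{L^2(\Omega)} .
\end{equation*}

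\textbf{Step 3.} I assemble the pieces. Using $w\le b$ and the triangle inequality,
\begin{equation*}
\|(f-f_{w,\Omega})w\|_{L^2}\le b\big(\|f-f_\Omega\|_{L^2}+|\Omega|^{1/2}|f_\Omega-f_{w,\Omega}|\big)\le b\Big(1+\frac{b}{a}\Big)\|f-f_\Omega\|_{L^2}.
\end{equation*}
By Step 1 this is at most $b(1+b/a)\,C_P\,\|\nabla f\|_{L^2}$. Finally, $w\ge a$ gives $\|\nabla f\|_{L^2}\le a^{-1}\|w\nabla f\|_{L^2}$. Hence the lemma holds with
\begin{equation*}
C=\frac{b}{a}\Big(1+\frac{b}{a}\Big)C_P(d,\Omega).
\end{equation*}
For the time-variable application, $\Omega$ is replaced by the interval $(0,T)$ and the bound is applied pointwise in $x$ before integrating over $x$. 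This explains why the constant appearing in Proposition~\ref{prop:stability} depends on $\epsilon$ through $b/a=e^{T/\epsilon}$.

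\textbf{Main obstacle.} The only genuine difficulty is the degenerate case, in which $w$ may approach $0$ or be unbounded. There the reduction above collapses, and one would instead need a Muckenhoupt $A_2$-type condition on $w^2$ together with a compactness argument in the weighted Sobolev space. If that generality were required, my first attempt would be the contradiction argument: normalize $\|(f_k-f_{k,w,\Omega})w\|=1$ with $\|w\nabla f_k\|\to 0$, extract a convergent subsequence by weighted Rellich, and derive that the limit is a constant with zero weighted mean, hence zero, contradicting the normalization. For the purposes of this paper, however, the bounded-above-and-below case above suffices.
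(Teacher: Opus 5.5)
Your argument is correct for the corrected statement, and the correction you make is necessary. With only $w\ge 0$ the lemma is false, as your separating-set example shows. Even weights that are positive a.e.\ can fail: take $w(x)=|x-x_0|^{\alpha}$ with $\alpha\ge\frac12$ at an interior point of an interval. Then $f$ can switch between two constants across $x_0$ at arbitrarily small cost $\|w\nabla f\|_{L^2}$, while the left-hand side stays bounded below.

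The paper gives no argument of its own. It defers to the Deny--Lions lemma and to the weighted-Sobolev literature (Turesson, Rathmair). By the examples above, any such result must carry structural hypotheses on the weight (e.g.\ of Muckenhoupt type), and the stated lemma omits them.

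Your route is different and elementary. Under $0<a\le w\le b$ you reduce to the unweighted Poincar\'e--Wirtinger inequality and bound the shift between the weighted and plain means by Cauchy--Schwarz. This gives the explicit constant $C=\frac{b}{a}\bigl(1+\frac{b}{a}\bigr)C_P(d,\Omega)$. It depends on $w$ only through the scale-invariant ratio $b/a$, as it must: both sides are homogeneous of degree one in $w$, and $f_{w,\Omega}$ is unchanged by rescaling $w$. What your approach buys is a self-contained proof with a transparent constant, exponentially large in $T/\epsilon$, which is exactly the regime the paper uses. The cited theory would matter only for degenerate weights, which never arise here.

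One small correction to your application remark. In Proposition~\ref{prop:stability} the factor $e^{-t/\epsilon}$ multiplies the squares $|u-u^*|^2$ and $|u_t-u_t^*|^2$. In the lemma's normalization this means $w=e^{-t/(2\epsilon)}$ and $b/a=e^{T/(2\epsilon)}$. The centering there is also against $e^{-t/\epsilon}=w^{2}$ rather than $w$. Your mean-comparison step works verbatim for any centering weight bounded above and below, so only the constant changes.
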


\begin{proof}[Proof of Lemma~\ref{lem:weighted-poincare}]
The lemma can be found in \citep{AIF_1954__5__305_0} and Section 3 of \citep{Turesson}. This variant of Poincare Inequality is also called the Deny-Lions Lemma. Detailed proof for the case $p=q=2$ can be found in \citep{Rathmair}.
\end{proof}

\begin{lemma}\label{lemma_F.8} Given $p,q\geq 0$. Assume $f\in H^{p}_{t}(\R;H^{q}_{x}(\R^{d}))\cap \mathcal{F}_{J,J}$ and $f$ admits spatial-temporal Fourier series with smooth basis dense in $L^{2}$ on compact space-time domain $\Omega_{T} = \Omega\times [0,T]$, then the following estimates hold \\
\begin{align}
||P_{J}f||_{L^{2}_{t}(\R;L^{2}_{x}(\R^{d}))}^{2} &\leq C(\Omega,T)||f||_{H^{1}_{t}([0,T];H^{1}_{x}(\Omega))}^{2}, \\
||P_{J}f||_{\infty}&\leq C(\Omega)R, \hspace{1.7cm}\text{if }\,\, q>\frac{d}{2},\\
||P_{J}f||_{\infty}&\leq C(\Omega,T)J^{d+2}, \hspace{0.8cm}\text{if }\,\, q\leq\frac{d}{2},
\end{align}
where $C_{1}(\Omega, T)$ is a constant depending on $\Omega_{T}$
\label{J_truncation_estimates}.
\end{lemma}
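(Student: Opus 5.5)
The statement has three parts: an $L^2$ bound for $P_J f$ and two sup-norm bounds. I will work throughout with the extension $\mathrm{ext}\, f$ from Lemma~\ref{space_time_extension}. I will also use its tensor-wavelet expansion $\sum a_{\xi,\tau}\phi_\xi(x)\phi_\tau(t)$, with $P_J$ the orthogonal projection onto the parabolic block $|\xi|_\infty\le J$, $\tau\le J^2$ (the same truncation as $V_{J,J^2}$).

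\textbf{The $L^2$ bound.} By property \eqref{basis_property_1}, the basis $\{\phi_\xi\phi_\tau\}$ is orthonormal in $L^2(\R\times\R^d)$, so $P_J$ is a contraction. This gives
\[
\|P_Jf\|_{L^2_tL^2_x}^2\le\|\mathrm{ext}\,f\|_{L^2_tL^2_x}^2\le\|\mathrm{ext}\,f\|_{H^1_tH^1_x}^2\le C(\Omega,T)^2\|f\|_{H^1_t(0,T;H^1_x(\Omega))}^2,
\]
where the last step is Lemma~\ref{space_time_extension}(ii) with $\alpha_1=\alpha_2=1$.

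\textbf{The sup bound when $q>d/2$.} I handle time and space separately. For each fixed $t$, Morrey's embedding (Lemma~\ref{lem:morrey_embedding}) with $k=0$ and $H^q\hookrightarrow C^0$ gives $\|P_Jf(\cdot,t)\|_\infty\lesssim\|P_Jf(\cdot,t)\|_{H^q_x}$. In time, $H^1(\R)\hookrightarrow L^\infty(\R)$, so $\sup_t\|P_Jf(\cdot,t)\|_{H^q_x}\lesssim\|P_Jf\|_{H^1_tH^q_x}$. It remains to show that $P_J$ is bounded on $H^1_tH^q_x$. This follows from the wavelet characterization: the norm is a weighted $\ell^2$ sum $\sum(1+|\xi|^q)^2(1+\tau)^2a_{\xi,\tau}^2$, and truncation only removes nonnegative terms. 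The regularity hypothesis (Assumption A3 with $p\ge1$) bounds the full norm by $R$, which yields $C(\Omega)R$. If $p=0$, I would instead use the $H^{2s+2}_x$ regularity with $2s+2>d/2+1/2$ and treat $(x,t)$ jointly as a $(d+1)$-dimensional Sobolev function.

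\textbf{The sup bound when $q\le d/2$.} This is where the embedding fails, so I would use a direct finite-dimensional argument. Write $P_Jf=\sum_{|\xi|_\infty\le J,\tau\le J^2}a_{\xi,\tau}\phi_\xi\phi_\tau$ and apply Cauchy--Schwarz:
\[
|P_Jf(x,t)|\le\Big(\sum a_{\xi,\tau}^2\Big)^{1/2}\Big(\sum\phi_\xi(x)^2\phi_\tau(t)^2\Big)^{1/2}.
\]
The first factor is at most $\|f\|_{L^2}$, which is bounded by the first part. For the second factor, compact support (property \eqref{basis_property_2}) means that at each dyadic level only $O(1)$ translates are nonzero at a given point. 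A level-$j$ wavelet has sup norm $2^{jd/2}$, so the spatial sum over levels up to $J$ is at most $\sum_{j\le\log J}2^{jd}\lesssim J^d$; similarly the temporal sum is $\lesssim J^2$. This gives $|P_Jf|\lesssim J^{(d+2)/2}\|f\|_{L^2}$. Even with the crude count (the number of indices times $\max\|\phi\|_\infty^2$), the result is $\lesssim J^{d+2}$, which is the stated bound.

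\textbf{Main obstacle.} I expect the delicate point to be the bookkeeping between the index $\xi$ and the dyadic level, since the paper identifies $\xi=2^j$. The $J^{d+2}$ bound must be stated in whichever convention $V_{J,J^2}$ uses. I would fix the convention so that the space has dimension $\asymp J^{d+2}$, matching the entropy count used in Proposition~\ref{high_probability_bound_on_excess_risk}, and then the crude bound suffices.
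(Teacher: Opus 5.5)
Your part (1) is exactly the paper's argument: $P_J$ is an $L^2$-contraction, then Lemma~\ref{space_time_extension}. Parts (2) and (3) use the paper's ideas but are more careful than the written proof.

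For $q>d/2$, the paper applies Morrey in $x$ at each fixed $t$ and then simply asserts $\|P_Jf(\cdot,t)\|_{H^q}\le R$ for every $t$. You justify this uniform-in-time control through $H^1(\R;H^q_x)\hookrightarrow L^\infty(\R;H^q_x)$ and the monotonicity of the weighted $\ell^2$ norm under truncation. This makes explicit that a time derivative ($p\ge 1$) is being used.

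For $q\le d/2$, the paper pairs $\|a\|_\infty$ with $\sum_{\xi,\tau}\phi_{\xi,\tau}(x,t)$ and bounds the sum by an index count, tacitly treating the basis functions as uniformly bounded. Your $\ell^2$--$\ell^2$ Cauchy--Schwarz instead tracks the dyadic structure: $O(1)$ translates per level at a point, with sup norms $2^{jd/2}$ and $2^{\ell/2}$. This accounts for the growth of normalized wavelets at fine levels. It also gives the sharper bound $J^{(d+2)/2}\|f\|_{L^2}$, which implies the stated $J^{d+2}$.

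The step that fails is your fallback for $p=0$. There the only information is $f\in L^2_tH^q_x$, which has no regularity in $t$. So $f$ is not a $(d+1)$-dimensional Sobolev function of any positive order, and the inequality $2s+2>d/2+1/2$ does not help.

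In fact no argument can give a $J$-independent sup bound when $p=0$. Take $f(x,t)=\chi(x)\phi_\tau(t)$, with $\chi$ a fixed smooth spatial scaling function and $\phi_\tau$ an $L^2$-normalized temporal wavelet at level $2^\ell\asymp J^2$. Then $f\in\mathcal{F}_{J,J^2}$ and $\|f\|_{L^2_tH^q_x}=\|\chi\|_{H^q}$ does not depend on $J$. Yet $\|f\|_\infty\asymp 2^{\ell/2}\asymp J$, so the second estimate fails at $p=0$ for any $q>d/2$.

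The second estimate should therefore be stated for $p\ge 1$ (any $p>1/2$ suffices), and there your main argument is complete. The paper's proof carries the same restriction silently, inside its unjustified uniform-in-$t$ claim. For $p=0$ one can only get a $J$-dependent bound, e.g.\ the one from your part (3).
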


\begin{proof}[Proof of Lemma~\ref{lemma_F.8}]
To prove (F.1), we note 
\begin{align}
||P_{J}f||_{L^{2}_{t}(\R;L^{2}_{x}(\R^{d}))}^{2}  &\leq ||f||_{H^{1}_{t}(\R;H^{1}_{x} (\R^{d}))}^{2}\\
&\leq C(\Omega,T) ||f||_{H^{1}_{t}([0,T];H^{1}_{x} (\Omega))}^{2} \hspace{0.5cm} - \text{ Lemma~\ref{space_time_extension} }.
\end{align}
To prove (70), we only need to rewrite 
\begin{equation}
P_{J}f = \sum_{1\leq \tau,|\xi|\leq J} |\xi|^{p} \tau^{q} \frac{1}{|\xi|^{p}\tau^{q}} a_{\xi,\tau} \phi_{\xi,\tau}(x,t).
\end{equation}
From Morrey's embedding in case of $q>\frac{d}{2}$ and the Sobolev norm of $f$ is bounded by $R$ and the extension Theorem \ref{Sobolev_extension_theorem},
\begin{align}
||P_{J}f(\cdot,t)||_{\infty} \leq C(\Omega)||P_{J}f(\cdot,t)||_{H^{q}(\Omega)}\leq C(\Omega)R,\quad \forall t\in[0,T].
\end{align}
Otherwise, if $q\leq \frac{d}{2}$, we note there are finitely many coefficients $a_{\xi,\tau}$ and each $\phi_{\xi,\tau}(x,t)$ has local compact support
\begin{equation}
P_{J}f = \sum_{1\leq \tau\leq J^{2},1\leq |\xi|\leq J}  a_{\xi,\tau} \phi_{\xi,\tau}(x,t) \leq ||a_{\xi,\tau}||_{\infty}  \sum_{1\leq \tau\leq J^{2},1\leq |\xi|\leq J}  \phi_{\xi,\tau}(x,t) \leq C(\Omega, T) J^{d+2}.
\end{equation}
\end{proof}

\begin{lemma}\label{lem:moments_of_wavelets}
Let $\phi_\tau(t)=2^{\ell/2}\varphi(2^\ell t-k_0)$ be an $L^2$-normalized
compactly supported wavelet at level $2^\ell\asymp J^2$, let
$T_1,\dots,T_n$ be i.i.d.\ uniform on $[0,T]$, and set
$Y_j:=\phi_\tau(T_j)^2$,
$B:=\frac1n\sum_{j=1}^n\big(Y_j-\mathbb{E} Y_1\big)$. Then
\begin{equation}\label{eq:moments}
\Var(Y_1)\asymp J^2,\qquad
\mathbb{E}\big|Y_1-\mathbb{E} Y_1\big|^4\lesssim J^6,
\end{equation}
and there exist constants $c_1,c_2>0$ such that, whenever
$n\geq J^2$,
\begin{equation}\label{eq:anticonc}
\mathbb{P}\Big(|B|\geq c_1\sqrt{J^2/n}\Big)\;\geq\;c_2.
\end{equation}
\end{lemma}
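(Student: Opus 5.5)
The plan is to compute the moments of $Y_1$ directly by a change of variables, and then derive \eqref{eq:anticonc} from the Paley--Zygmund inequality applied to $B^2$. Throughout, I assume the translate $k_0$ is chosen so that $\operatorname{supp}\phi_\tau\subset[0,T]$; this is possible once $2^\ell\asymp J^2$ exceeds a constant depending on $T$ and on the support length of $\varphi$, and for the finitely many small $J$ the statement follows by adjusting constants.

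\textbf{Step 1 (raw moments).} For $k\geq 1$, substituting $s=2^\ell t-k_0$ gives
\begin{equation*}
\mathbb{E} Y_1^k=\frac1T\int_0^T 2^{k\ell}\varphi(2^\ell t-k_0)^{2k}\,dt=\frac{2^{(k-1)\ell}}{T}\|\varphi\|_{L^{2k}}^{2k}.
\end{equation*}
All these norms are finite and positive because $\varphi$ is bounded, compactly supported and nonzero. Hence $\mathbb{E} Y_1\asymp 1$, $\mathbb{E} Y_1^2\asymp 2^\ell\asymp J^2$, and $\mathbb{E} Y_1^4\asymp 2^{3\ell}\asymp J^6$.

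\textbf{Step 2 (central moments).} We have $\Var(Y_1)=\mathbb{E} Y_1^2-(\mathbb{E} Y_1)^2$, which is $\asymp J^2$ once $J$ is large enough that $\mathbb{E} Y_1^2\geq 2(\mathbb{E} Y_1)^2$. For the fourth central moment, $(a-b)^4\leq 8(a^4+b^4)$ gives $\mathbb{E}|Y_1-\mathbb{E} Y_1|^4\leq 8\bigl(\mathbb{E} Y_1^4+(\mathbb{E} Y_1)^4\bigr)\lesssim J^6$. This proves \eqref{eq:moments}.

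\textbf{Step 3 (anticoncentration).} Set $Z_j:=Y_j-\mathbb{E} Y_1$. These are i.i.d.\ and centered, so
\begin{equation*}
\mathbb{E} B^2=\frac{\Var(Y_1)}{n}\asymp \frac{J^2}{n},\qquad \mathbb{E} B^4=\frac{n\,\mathbb{E} Z_1^4+3n(n-1)\Var(Y_1)^2}{n^4}\lesssim \frac{J^6}{n^3}+\frac{J^4}{n^2}\lesssim\frac{J^4}{n^2}.
\end{equation*}
The last inequality uses the hypothesis $n\geq J^2$, which gives $J^6/n^3\leq J^4/n^2$. Paley--Zygmund with $\theta=1/2$ then yields
\begin{equation*}
\mathbb{P}\bigl(B^2\geq \tfrac12\mathbb{E} B^2\bigr)\geq \frac14\frac{(\mathbb{E} B^2)^2}{\mathbb{E} B^4}\geq c_2>0,
\end{equation*}
with $c_2$ independent of $n$ and $J$. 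Writing $\Var(Y_1)\geq c_vJ^2$ and taking $c_1:=\sqrt{c_v/2}$ gives \eqref{eq:anticonc}.

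\textbf{Main obstacle.} The computations themselves are routine. The only delicate point is the support issue in the opening paragraph: if $\operatorname{supp}\phi_\tau$ were cut off by the boundary of $[0,T]$, the integrals in Step 1 would be only partial. That case still works provided a fixed fraction of $\varphi^2$'s mass remains inside, which holds for interior $k_0$, and the lemma only requires one such wavelet. The other requirement is that the condition $n\geq J^2$ is what makes the Gaussian-like term $J^4/n^2$ dominate $\mathbb{E} B^4$; without it, the heavy-tail term $J^6/n^3$ would break the uniform lower bound $c_2$.
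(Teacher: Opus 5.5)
Your proposal is correct and matches the paper's proof step for step. Both use the change of variables giving $\mathbb{E}Y_1^k = 2^{(k-1)\ell}\|\varphi\|_{L^{2k}}^{2k}/T$, the fourth-moment expansion of $B$, the hypothesis $n\geq J^2$ to obtain $\mathbb{E}B^4\lesssim(\mathbb{E}B^2)^2$, and Paley--Zygmund with $\theta=1/2$. Your explicit assumption $\operatorname{supp}\phi_\tau\subset[0,T]$ and your restriction to large $J$ for $\Var(Y_1)\asymp J^2$ only make precise what the paper uses implicitly, for instance when it writes $\mathbb{E}Y_1=T^{-1}$.
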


\begin{proof}
By the change of variables $u=2^\ell t-k_0$,
$\int\phi_\tau^{2q}=2^{\ell(q-1)}\|\varphi\|_{L^{2q}}^{2q}$ for
$q\in\{1,2,4\}$, whence $\mathbb{E} Y_1=T^{-1}$,
$\mathbb{E} Y_1^2\asymp2^\ell\asymp J^2$, and
$\mathbb{E} Y_1^4\asymp2^{3\ell}\asymp J^6$; \eqref{eq:moments} follows. For
i.i.d.\ centered variables,
\[
\mathbb{E} B^2=\frac{\Var(Y_1)}{n},
\qquad
\mathbb{E} B^4=\frac{\mathbb{E}(Y_1-\mathbb{E} Y_1)^4}{n^3}
+\frac{3(n-1)}{n^3}\Var(Y_1)^2
\lesssim\frac{J^6}{n^3}+\frac{J^4}{n^2}.
\]
If $n\geq J^2$, then $J^6/n^3\leq J^4/n^2$, so
$\mathbb{E} B^4\leq\kappa\,(\mathbb{E} B^2)^2$ for an absolute constant
$\kappa$. The Paley--Zygmund inequality applied to $B^2$ gives, for
$\theta\in(0,1)$,
\[
\mathbb{P}\big(B^2\geqslant\theta\,\mathbb{E} B^2\big)
\geq(1-\theta)^2\frac{(\mathbb{E} B^2)^2}{\mathbb{E} B^4}
\geq\frac{(1-\theta)^2}{\kappa};
\]
taking $\theta=\tfrac12$ and using $\mathbb{E} B^2\asymp J^2/n$ yields
\eqref{eq:anticonc}.
\end{proof}

\section{More Experiments}
\label{A.7}
\subsubsection{Implementation Details}
The spatial domain $\Omega$ is set as $[-1,1]^{d}$ and the finite time horizon is $[0,1]$. All simulations are implemented in Python 3.11, Pytorch 2.8.0,  Numpy 2.3.0, and Scipy 1.16.2. Numerical derivatives are computed using findiff library 0.12.1 with an accuracy of  order 6. In low dimensions, numerical integration is performed using Scipy's integration tools in combination with vectorized Einstein summation through NumPy's einsum function. In higher dimensions, where traditional Numerical integration becomes  computationally expensive, so we employ a Monte Carlo method by randomly sampling 4096 points to approximate the integral.

\subsubsection{Data Preparation} 
Our estimator defined in \eqref{empirical_loss_function} assumes the access to training data on space-time cloud points $S:=\{(X_{ij},t_{j},u^{*}_{ij},\nabla u^{*}_{ij}, \partial_{t}u^{*}_{ij},\nabla \partial_{t} u_{ij}^{*},f_{ij})\}_{i=1,j=1}^{n,m}$. Following the sampling procedure described in (\ref{regression_model}), we uniformly sample $N_{t}$ time locations from interval $[0,1]$ and at each time location $t_{j}$, we then uniformly sample $N_{x}$ space location $x_{ij}$ from domain $\Omega$. To compute the data fidelity term under the $H^{1}_{t}H^{1}_{x}$ norm, we require not only ground truth solution $u^{*}$ as our primary label but also derivative labels $u_{t}^{*}$, $u_{x}^{*}$ and $\nabla u_{t}^{*}$ through numerical derivative computation. Training models with additional derivative information is commonly refered to Sobolev Training \citep{10.5555/3294996.3295182, lu2022sobolev, pmlr-v89-mroueh19a}. These derivative labels are generated using numerical solver of PDEs and the findiff library. In our experiments, accurate computation of derivative labels is critical. We notice imprecise derivatives can significantly degrade performance and even destabilize the learning process. To obtain high fidelity derivative datas, we propose two common sources to obtain derivatives: (1) numerically solved PDEs solutions (2) known analytical solutions. We test data from both sources. In Section ~\ref{numerical_experiments}, our training data is obtained from numerically solved PDEs solutions, with derivatives calculated by findiff, and our examples all have known analytical solutions, with derivatives calculated by autograd. For each random seed, we sample 50 time coordinates uniformly from
$[0,1]$. At every sampled time, we independently draw 80 spatial locations
uniformly from $[-1,1]$, giving 4,000 co-located training points.

\subsubsection{Training} Our loss function is the empirical counterpart of the population loss function defined in (\ref{loss_function}). In practice, we observe model correction term has negligible impact on training outcomes. To accommodate for solving different PDEs, one might need to modify energy regularization term accordingly.
The model architecture is based on a standard multilayer perceptron (MLP), following design choices commonly adopted in recent state-of-the-art deep learning approaches for physics learning tasks. We believe such architecture is not optimal architecture suitable for our framework, but we stick with MLP for fair experiments.  In this work, we adopt MLP structure with fully connected layers and Tanh activations. Optimization is carried out using the Adam optimizer \cite{Kingma2014AdamAM} as implemented in torch.optim with a fixed learning rate $1\text{e}^{-3}$. We optimize the empirical loss using Adam with a fixed learning rate of $1\text{e}^{-3}$, which we empirically observe it achieves unstable convergence with fixed learning rate Figure~\ref{fig_13}. Adaptive or smaller learning rate will make convergence stable Figure~\ref{fig_15}. Understanding the convergence behavior and robustness of different optimizers in solving PDEs remains an important question, which leave as a direction for future investigation.

\subsubsection{Model Validation} Our dataset consists of $N_t \times N_x$ spatio-temporal samples, each associated with corresponding ground-truth labels. For evaluation purposes, we use mainly relative $L_2$ norm for evaluation metric. The accuracy metric is the relative space-time $L^2$ error
\[
\operatorname{RelErr}_{L^2}(\widehat u,u^{*})
=
\frac{
\|\widehat u-u^{*}\|_{L^2(\Omega\times[0,T])}
}{
\|u^{*}\|_{L^2(\Omega\times[0,T])}
}.
\]
For these benchmarks, all reported scalar values are medians over 10
independent random seeds.However, to assess the effectiveness of our proposed method, we compare its performance against a vanilla Physics-Informed Neural Network (PINN) baseline. Both models are evaluated on the test set using the mean squared error (MSE) between predicted and ground-truth solution values as the primary performance metric. It has been reported PINNs usually have low MSE error but high $L_2$ relative error, which is also verified in table \ref{table_3}.

\subsection{Parabolic PDEs}
\label{more_experiments}
\begin{table}[htbp]
\centering
\caption{
Overview of the additional PDEs experiments .
}
\label{tab:additional-pde-overview}
\small
\setlength{\tabcolsep}{5pt}
\begin{tabular}{llll}
\toprule
Benchmark
& PDE type
& Dimension
\\
\midrule
Heat equation
& Linear parabolic
& $d=1,3$
\\
Sine--Gordon equation
& Nonlinear hyperbolic
& $d=1,3$
\\
LP1--LP3
& Variable-coefficient parabolic
& $d=3$
\\
Viscous HJ equation
& Nonlinear parabolic
& $d=1,3,10,20,50$
\\
\bottomrule
\end{tabular}
\end{table}
We are motivated by a toy example solving the Cauchy problem for the 1D heat equation. The equation has a uniquely determined solution $u(x,t) = e^{-t}\sin(x)$ for $u_t - \Delta u = 0$ with $u_0 = \sin(x)$. We compare several methods against the vanilla PINN baseline before and after introducing new labeled data.

\begin{figure}[htbp]
    \centering
    \begin{minipage}{0.7\textwidth}
        \centering
     \includegraphics[width=\textwidth]{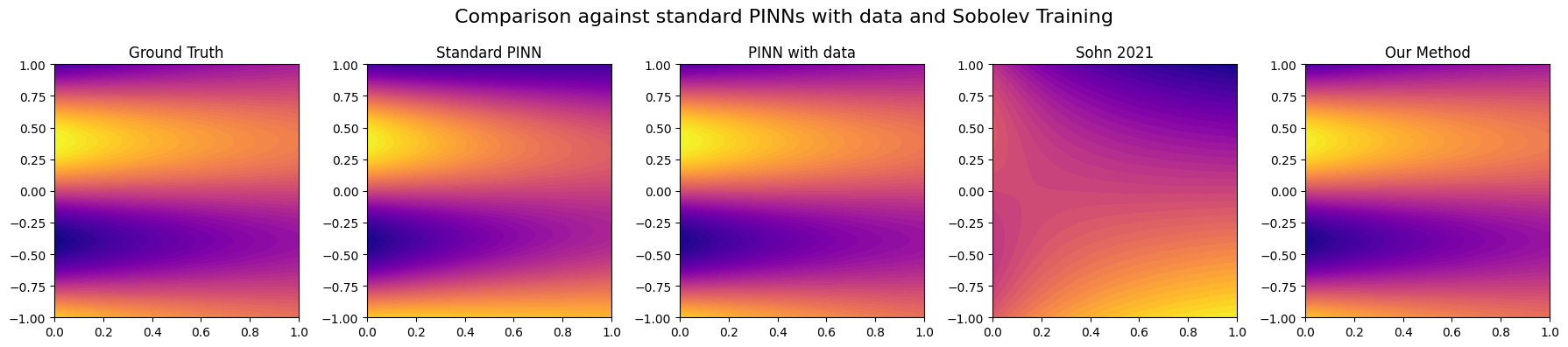}
        \includegraphics[width=\textwidth]{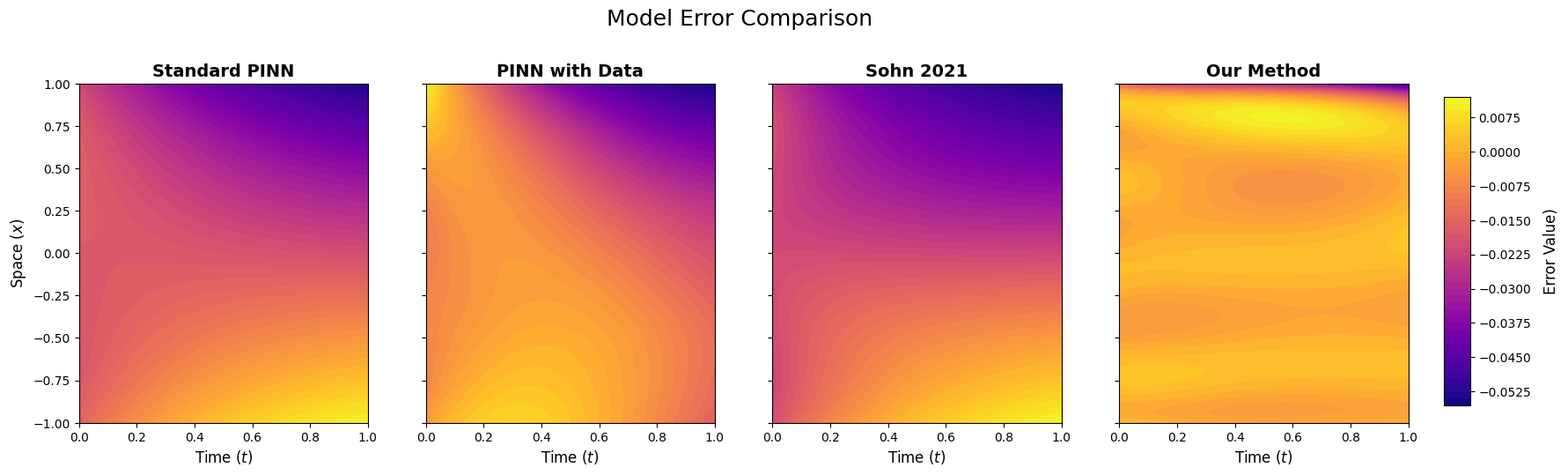}
        \caption{Error plot.}
        \label{fig:sobolev_comparison}
    \end{minipage}
\end{figure}
In standard PINNs \cite{PINN_Karniadakis2021}, we choose weight $w=1.0$ for the residuals of the initial conditions $\mathcal{R}_{\text{ic}}$. Based on the standard PINN, introducing a data-fitting term alongside $\mathcal{R}_{\text{pde}}$ and $\mathcal{R}_{\text{ic}}$ leads to significant improvement. Meanwhile, Sobolev training \cite{10.5555/3294996.3295182} uses derivative labels to provide extra information. While \cite{son2021sobolevtrainingphysicsinformed} proposes incorporating higher derivatives in the PDE loss and derivative labels in boundary/initial conditions, their experiments primarily show improvement via supervised components. We investigate whether Sobolev training is applicable to the \textit{unsupervised} component of PINNs. Our results suggest it is not directly applicable here, as forcing higher-order derivatives in $\mathcal{R}_{\text{pde}}$ creates a solution search class that is too large due to the infinite number of solutions satisfying higher-order constraints.

Since our approach aims to provide data-driven alternative to solve for non-smooth solutions, we also compare our method against other variants of PINNs that improve accuracy to learn weak solutions for PDEs. We pick \textbf{W-PINNs}\cite{doi:10.1137/22M1522504}, \textbf{hp-vPINNs} \cite{kharazmi2021hp}, \textbf{SA-PINNs} \cite{McClenny2023SelfAdaptivePINN}, and \textbf{adversarial training} for PINNs \cite{wang2022is} as these methods stand out among many invariants of PINNs that address discontinuity of shocks waves and variational forms of PDEs to solve PDEs. We briefly walk through the key-features of these methods: \textbf{hp-vPINNs} utilize domain decomposition to physically isolate sharp gradients and smooth out using test functions, preventing error propagation across the solution domain. Alternatively, \textbf{W-PINNs} and \textbf{SA-PINNs} fall under the umbrella of adversarial frameworks to force network attention onto high-error regions dynamically by continuously optimizing trainable residual weights for weak-form residuals on collocation points. Adversarial training for PINNs \cite{wang2022is} extend this adversarial philosophy by abandoning fixed points entirely, utilizing gradient ascent to physically shift collocation coordinates into worst-case regions to evaluate loss in $L^{\infty}$ norm.

\begin{figure}[htbp]
    \centering
    \includegraphics[width=0.5\textwidth]{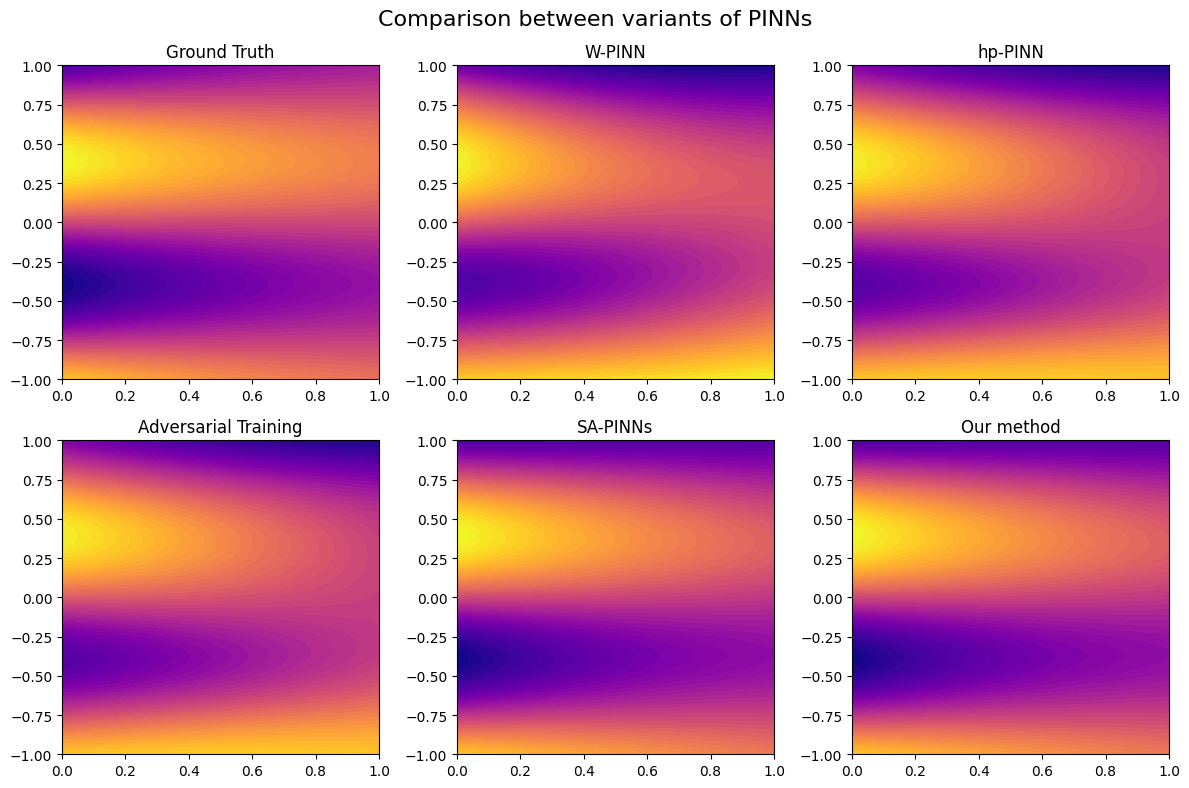}
    \caption{Learned solutions for PINN variants for 1d heat equation.}
    \includegraphics[width=0.8\textwidth]{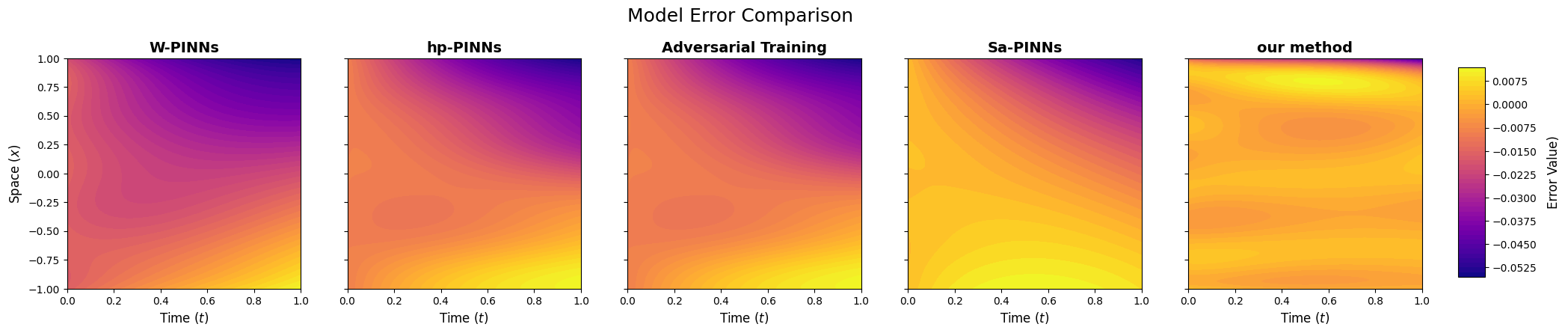}
    \caption{Error plot for PINN variants.}
    \label{fig:variants_comparison}
\end{figure}

\paragraph{Variable-coefficient and nonlinear parabolic equations.}

We next consider three linear variable-coefficient parabolic equations and
one nonlinear viscous Hamilton--Jacobi equation in 3d dimensional setting. These experiments examine
whether the implementation can accommodate diffusion, reaction, and drift
coefficients that are more general than those of the heat equation. The four equations are defined as follows.

\paragraph{LP1: Variable diffusion--reaction equation.}
\[
u_t-\partial_x\!\left(a_1(x)u_x\right)+c_1(x)u=f,
\]
where
\[
a_1(x)=1+\frac12\sin(\pi x),
\qquad
c_1(x)=\frac12+x^2.
\]

\paragraph{LP2: Space--time-dependent diffusion--reaction equation.}
\[
u_t-\partial_x\!\left(a_2(x,t)u_x\right)+c_2(x,t)u=f,
\]
where
\[
a_2(x,t)
=
1+\frac14\sin(\pi x)\cos(2\pi t),
\]
and
\[
c_2(x,t)
=
\frac12+\frac14(1+t)\cos^2(\pi x).
\]

\paragraph{LP3: Reversible variable-drift equation.}
\[
u_t
-
\mu(x)^{-1}
\partial_x\!\left(\mu(x)a_3(x)u_x\right)
+
c_3(x)u
=
f,
\]
where
\[
a_3(x)=1+\frac{x^2}{4},
\qquad
\mu(x)=e^{-x^2/4},
\qquad
c_3(x)=\frac14(1+x^2).
\]

\paragraph{HJ: Viscous quadratic Hamilton--Jacobi equation.}
\[
u_t-0.1u_{xx}+\frac12u_x^2=f.
\]

For LP1--LP3, we use the common manufactured solution
\[
u^{*}(x,t)=G(t)P(x),
\]
where
\[
G(t)
=
e^{-t}\left[1+\frac14\sin(2\pi t)\right],
\qquad
P(x)
=
(1-x^2)\left[1+\frac15\sin(\pi x)\right].
\]
The required derivatives are
\[
G'(t)
=
e^{-t}
\left[
\frac{\pi}{2}\cos(2\pi t)
-1
-\frac14\sin(2\pi t)
\right],
\]
\[
P'(x)
=
-2x\left[1+\frac15\sin(\pi x)\right]
+
\frac{\pi}{5}(1-x^2)\cos(\pi x),
\]
and
\[
P''(x)
=
-2\left[1+\frac15\sin(\pi x)\right]
-\frac{4\pi x}{5}\cos(\pi x)
-\frac{\pi^2}{5}(1-x^2)\sin(\pi x).
\]

For the one-dimensional Hamilton--Jacobi equation, we use
\[
u_{\mathrm{HJ}}^{*}(x,t)
=
\frac12(1-e^{-t})(1-x^2).
\]
The initial and boundary conditions are prescribed consistently with the
corresponding solutions.

Substituting the manufactured solutions into the equations gives
\[
\begin{aligned}
f_{\mathrm{LP1}}(x,t)
={}&
G'(t)P(x)
-
G(t)
\left[
\left(1+\frac12\sin(\pi x)\right)P''(x)
+
\frac{\pi}{2}\cos(\pi x)P'(x)
\right]
\\
&+
\left(\frac12+x^2\right)G(t)P(x),
\\[1.5mm]
f_{\mathrm{LP2}}(x,t)
={}&
G'(t)P(x)
-
G(t)
\left[
\left(
1+\frac14\sin(\pi x)\cos(2\pi t)
\right)P''(x)
\right.
\\
&\left.
\hspace{31mm}
+
\frac{\pi}{4}
\cos(\pi x)\cos(2\pi t)P'(x)
\right]
\\
&+
\left[
\frac12+\frac14(1+t)\cos^2(\pi x)
\right]G(t)P(x),
\\[1.5mm]
f_{\mathrm{LP3}}(x,t)
={}&
G'(t)P(x)
-
G(t)
\left[
\left(1+\frac{x^2}{4}\right)P''(x)
-
\frac{x^3}{8}P'(x)
\right]
\\
&+
\frac14(1+x^2)G(t)P(x),
\\[1.5mm]
f_{\mathrm{HJ}}(x,t)
={}&
\frac12e^{-t}(1-x^2)
+
0.1(1-e^{-t})
+
\frac12(1-e^{-t})^2x^2.
\end{aligned}
\]

\paragraph{High-dimensional viscous Hamilton--Jacobi equations.}

To study \textbf{dimensional scalability} on a controlled nonlinear family, we consider
\[
u_t-0.1\Delta u+\frac12\|\nabla u\|_2^2=f
\]
on
\[
\Omega_d\times[0,1]=[-1,1]^d\times[0,1],
\qquad
d\in\{3,10,20,50\}.
\]
We use the manufactured solution
\[
u_d^{*}(x,t)
=
\frac12(1-e^{-t})
\left(
d-\|x\|_2^2
\right),
\]
which gives
\[
f_d(x,t)
=
\frac12e^{-t}
\left(
d-\|x\|_2^2
\right)
+
0.1d(1-e^{-t})
+
\frac12(1-e^{-t})^2\|x\|_2^2.
\]
The initial condition is zero, and the boundary condition is prescribed by
the trace of $u_d^{*}$ on $\partial\Omega_d$.

\subsection{Comparison with Other Methods on LP1--LP3 and HJB}
\label{sec:lp-method-comparison}
\label{sec:pde-method-comparison}

For comparison baselines, we consider \textbf{W-PINNs} \citep{doi:10.1137/22M1522504}, \textbf{hp-vPINNs} \citep{kharazmi2021hp}, \textbf{SA-PINNs} \citep{McClenny2023SelfAdaptivePINN}, and \textbf{adversarial training} for PINNs \citep{wang2022is} as these methods stand out among many invariants of PINNs that address discontinuity of shocks waves and variational forms to solve PDEs. Figures~\ref{fig:lp1-method-comparison}--\ref{fig:lp3-method-comparison} compare our method with five PINN-based baselines on 3d LP1--LP3 benchmarks. Our method achieves the lowest mean error in both metrics across parabolic equations. Notably, on LP3, it reduces test MSE and relative $L^2$ error by 27.2\% and 16.5\%, respectively, compared with the strongest baseline, Adversarial PINN. 

\begin{figure}[!htbp]
    \centering
    \includegraphics[width=0.95\linewidth]{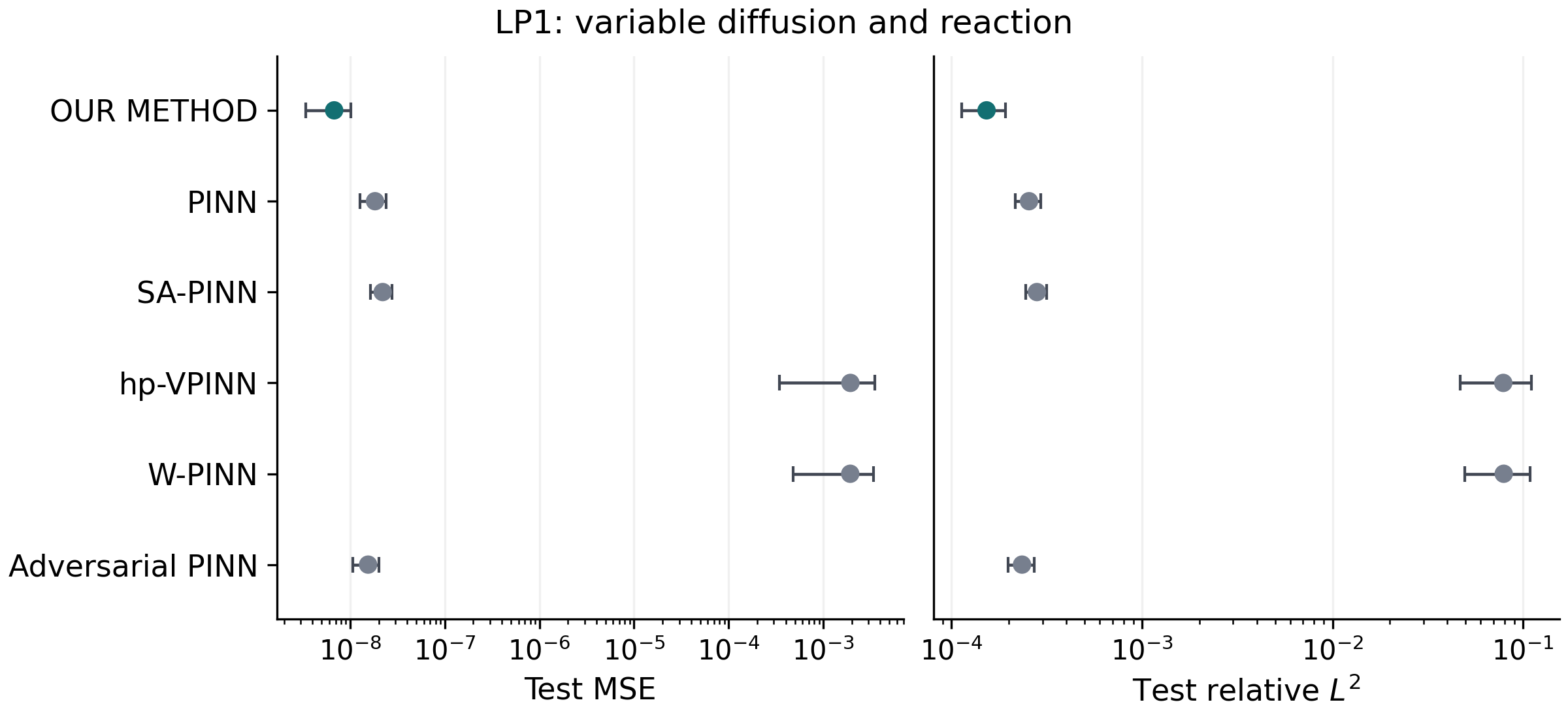}
    \caption{Comparison with other methods on LP1. Points show mean test errors and bars indicate one sample standard deviation over five seeds.}
    \label{fig:lp1-method-comparison}
\end{figure}

\begin{figure}[!htbp]
    \centering
    \includegraphics[width=0.95\linewidth]{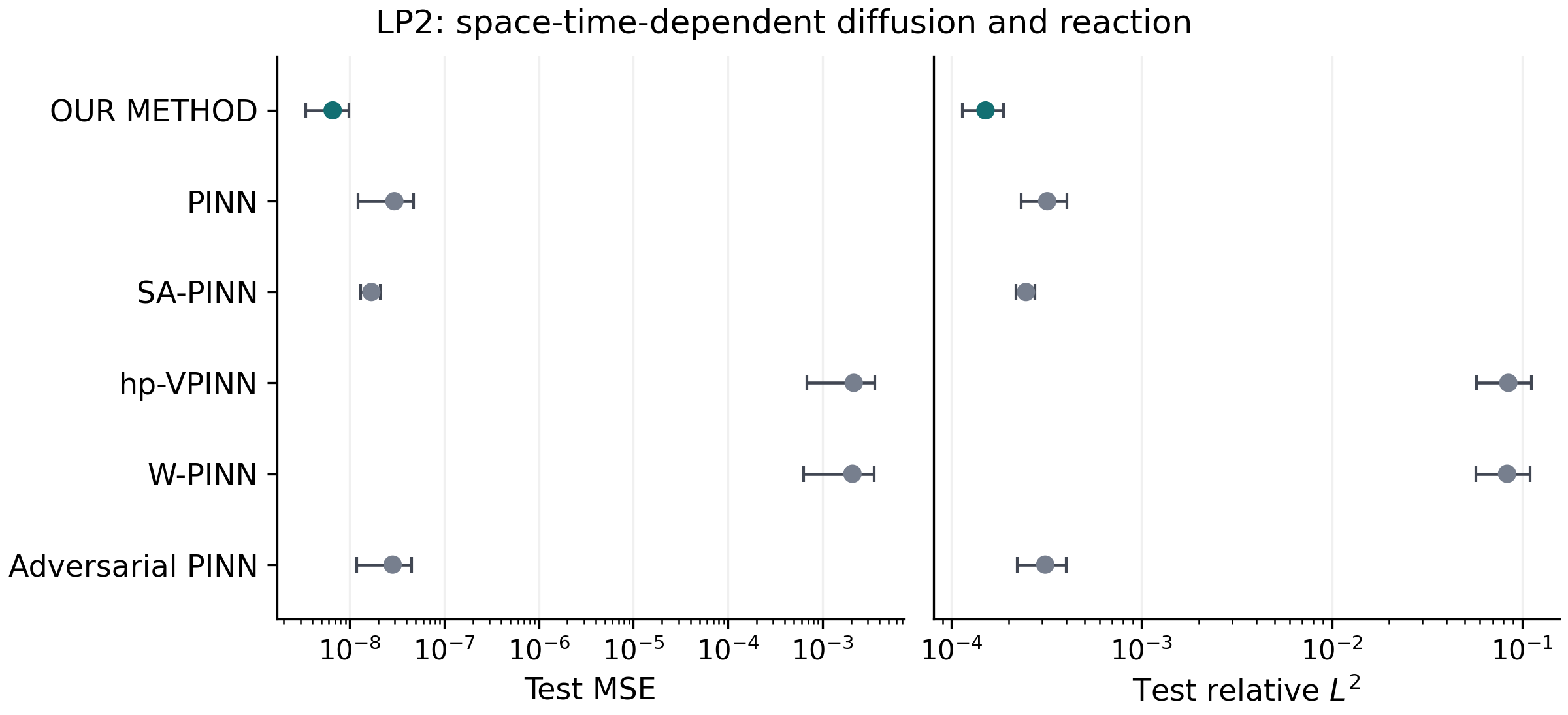}
    \caption{Comparison with other methods on LP2. Points show mean test errors and bars indicate one sample standard deviation over five seeds.}
    \label{fig:lp2-method-comparison}
\end{figure}

\begin{figure}[!htbp]
    \centering
    \includegraphics[width=0.95\linewidth]{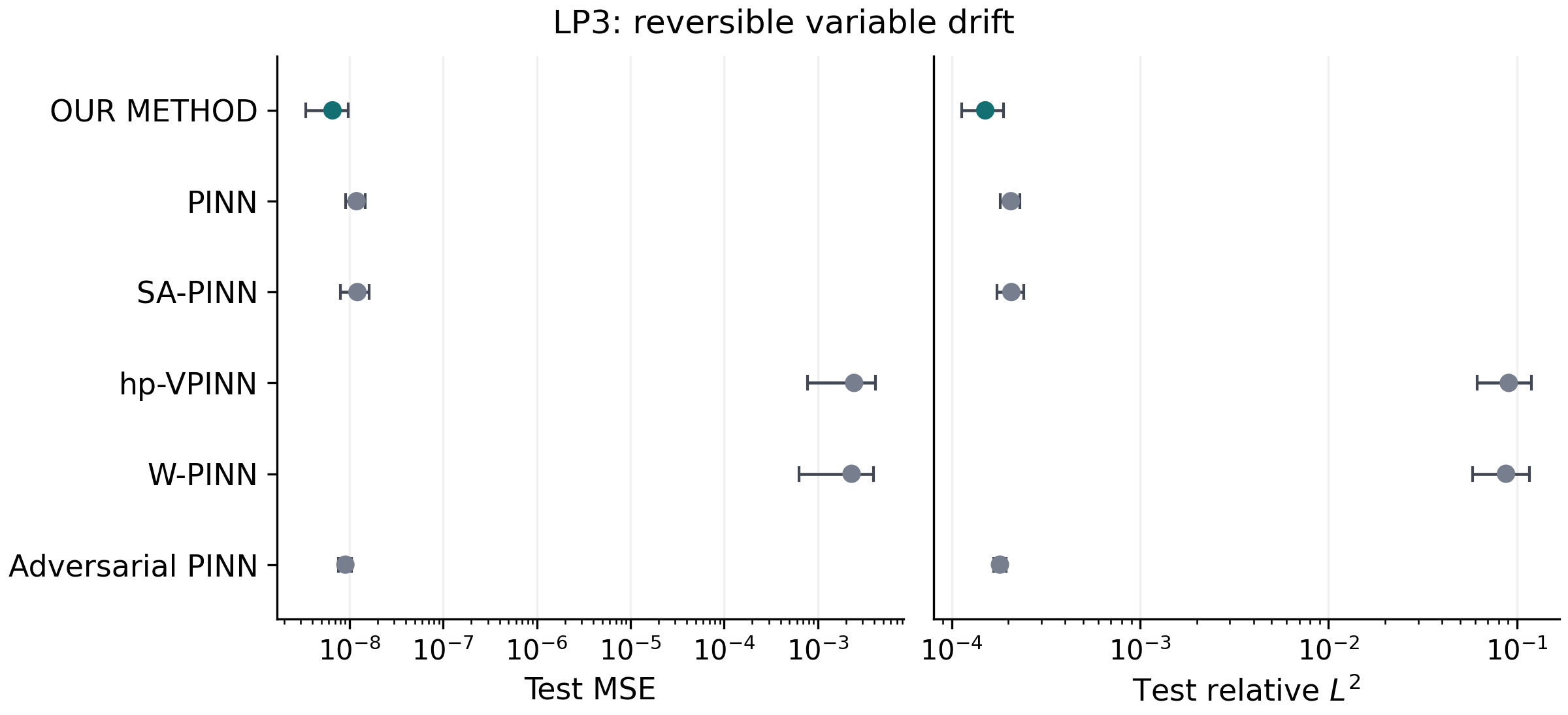}
    \caption{Comparison with other methods on LP3. Points show mean test errors and bars indicate one sample standard deviation over five seeds.}
    \label{fig:lp3-method-comparison}
\end{figure}
\newpage
\begingroup
\raggedbottom
\paragraph{ Viscous Hamilton Jacobi Equation in various dimensions.}
Figures~\ref{fig:hjb-d1-method-comparison}--\ref{fig:hjb-d50-method-comparison}
show the same six methods for the viscous Hamilton--Jacobi family
(HJB, denoted HJ above) in $d\in\{1,3,10,20,50\}$. 

Figures~\ref{fig:lp3-method-comparison}--\ref{fig:hjb-d50-method-comparison}
compare our method with five PINN-based baselines using test MSE and
relative $L^2$ error. Results are averaged over ten random seeds, with error bars indicating one sample standard deviation. Our method achieves the lowest mean error in all five HJB. Across $d\in\{1,3,10,20,50\}$, our method reduces test MSE by 79.5\%--89.4\% and relative $L^2$ error by 55.3\%--67.4\% compared with the best competing method in each configuration. At $d=50$, it achieves a relative $L^2$ error of 0.964\%, compared with 2.959\% for SA-PINN, while reducing test MSE by 89.4\%. Although its errors increase from $d=3$ to $d=50$, our method maintains an accuracy advantage throughout the dimensions. Besides, our results show all PINNs baselines and our method suffer from the curse of dimensionality.

\begin{center}
\begin{minipage}{0.95\textwidth}
\centering
\includegraphics[width=\linewidth]{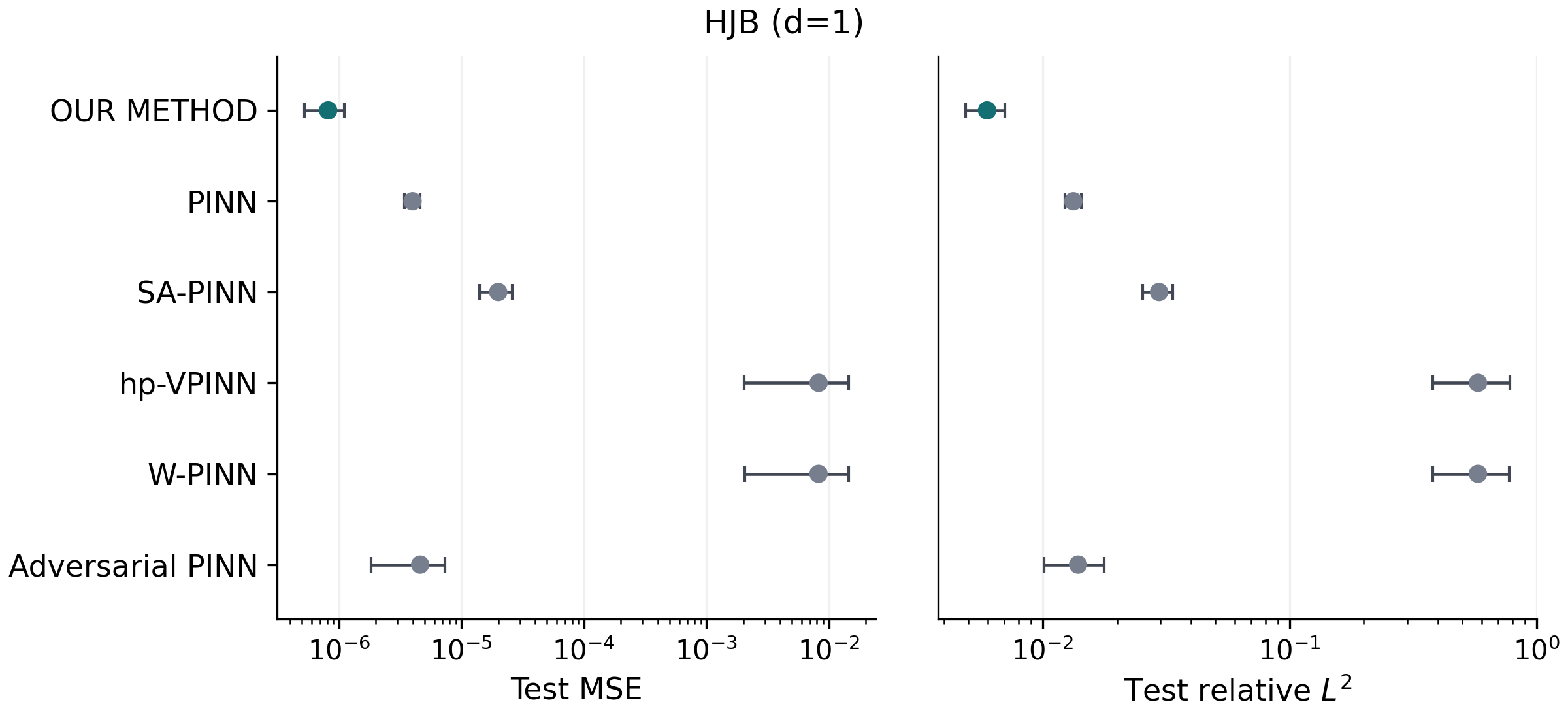}
\captionof{figure}{Comparison with other methods on HJB in $d=1$. Points show mean test errors and bars indicate one sample standard deviation over ten seeds.}
\label{fig:hjb-d1-method-comparison}
\end{minipage}
\end{center}

\begin{center}
\begin{minipage}{0.95\textwidth}
\centering
\includegraphics[width=\linewidth]{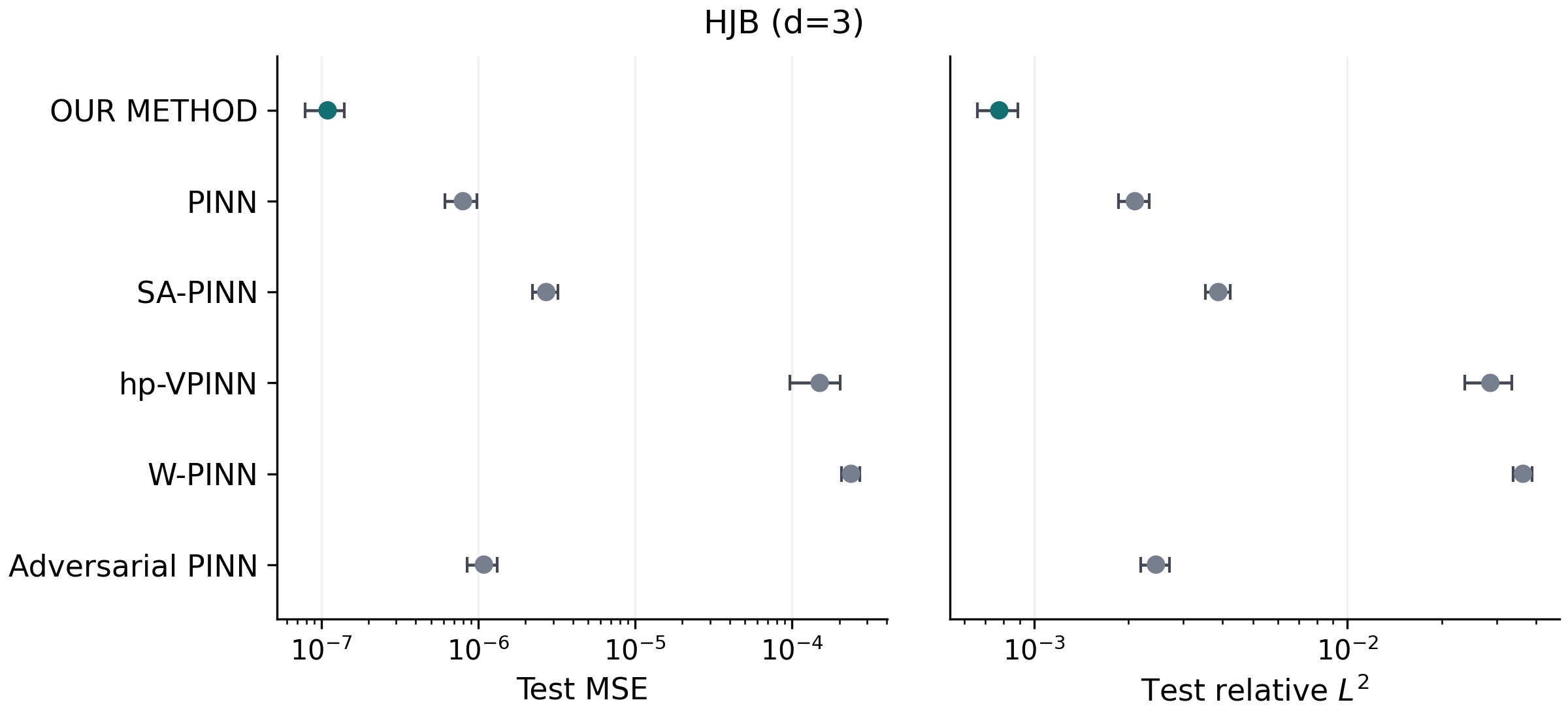}
\captionof{figure}{Comparison with other methods on HJB in $d=3$. Points show mean test errors and bars indicate one sample standard deviation over ten seeds.}
\label{fig:hjb-d3-method-comparison}
\end{minipage}
\end{center}

\begin{center}
\begin{minipage}{0.95\textwidth}
\centering
\includegraphics[width=\linewidth]{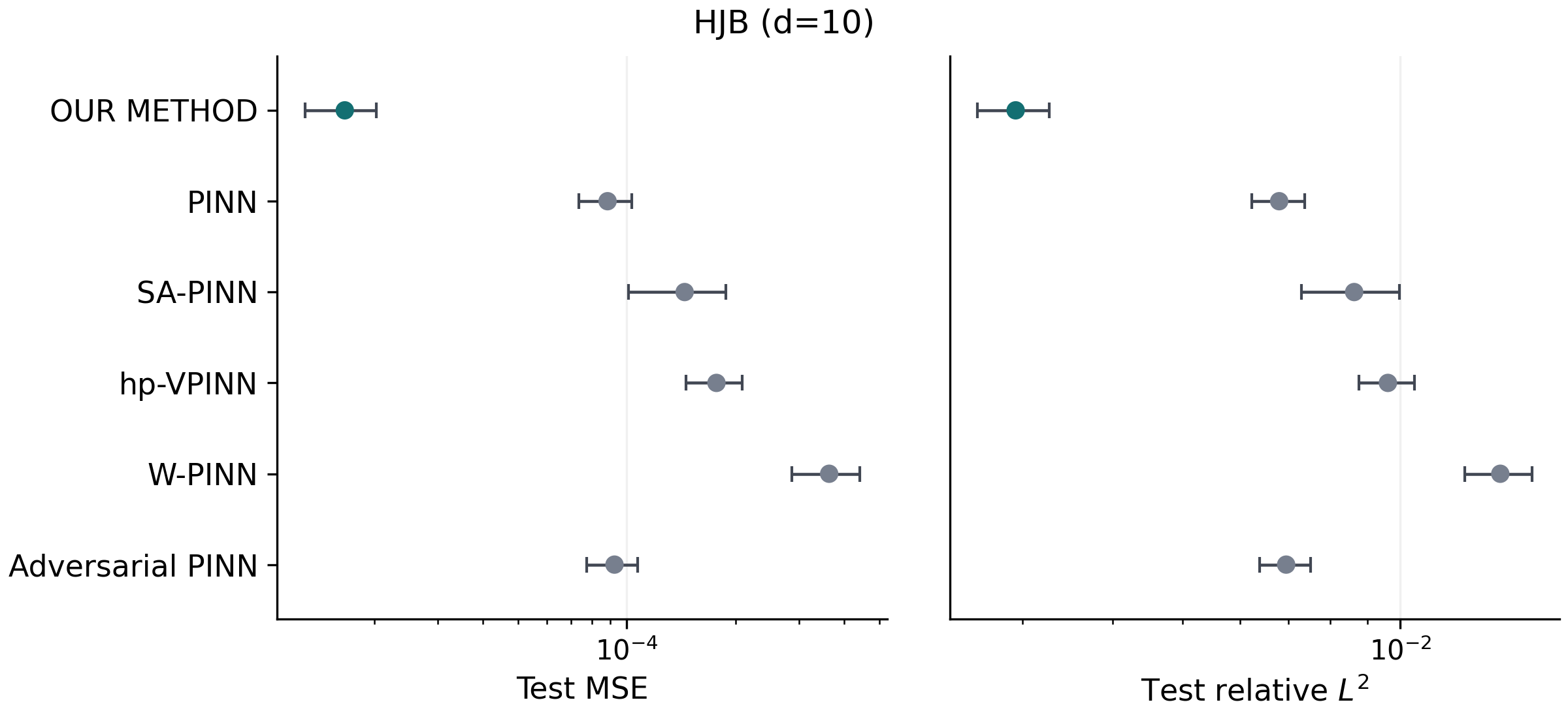}
\captionof{figure}{Comparison with other methods on HJB in $d=10$. Points show mean test errors and bars indicate one sample standard deviation over ten seeds.}
\label{fig:hjb-d10-method-comparison}
\end{minipage}
\end{center}

\begin{center}
\begin{minipage}{0.95\textwidth}
\centering
\includegraphics[width=\linewidth]{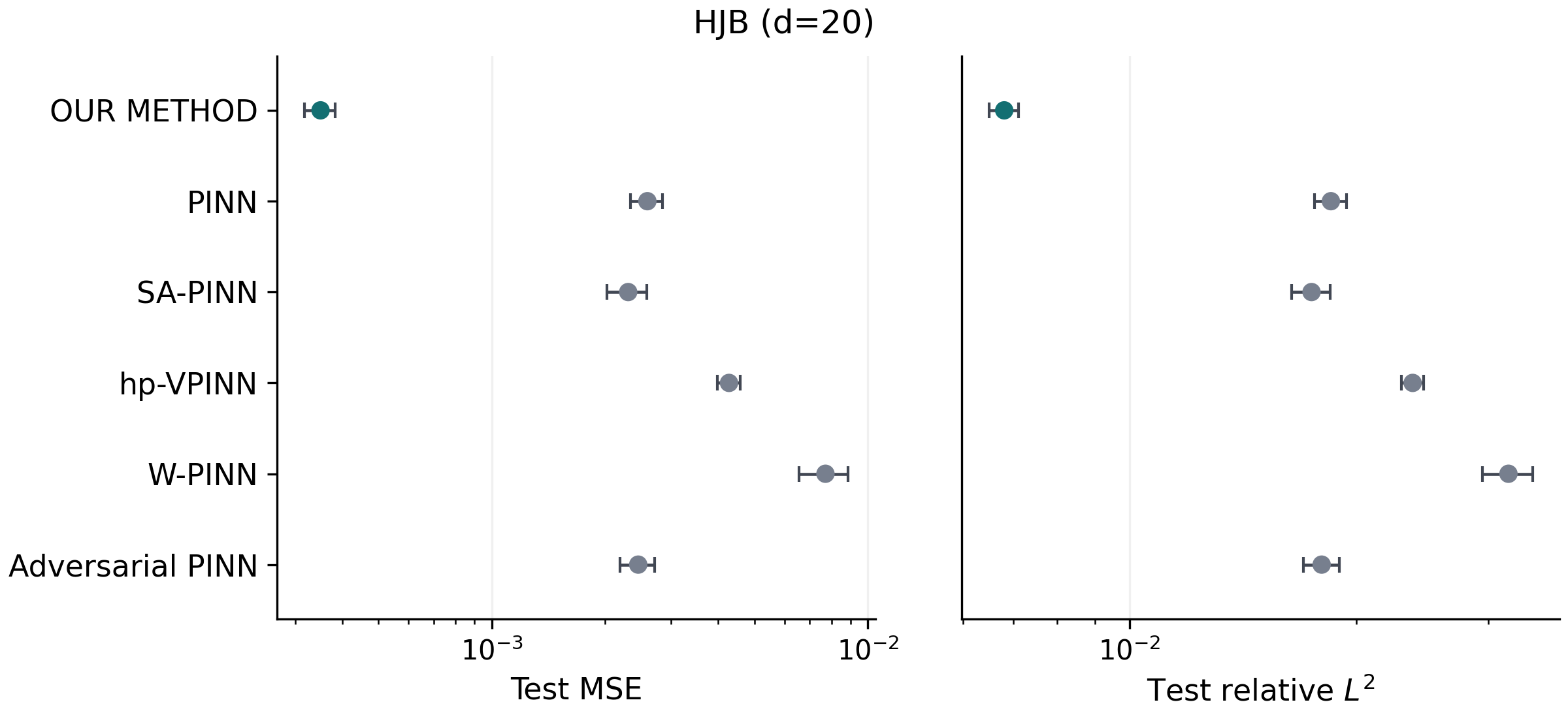}
\captionof{figure}{Comparison with other methods on HJB in $d=20$. Points show mean test errors and bars indicate one sample standard deviation over ten seeds.}
\label{fig:hjb-d20-method-comparison}
\end{minipage}
\end{center}

\begin{center}
\begin{minipage}{0.95\textwidth}
\centering
\includegraphics[width=\linewidth]{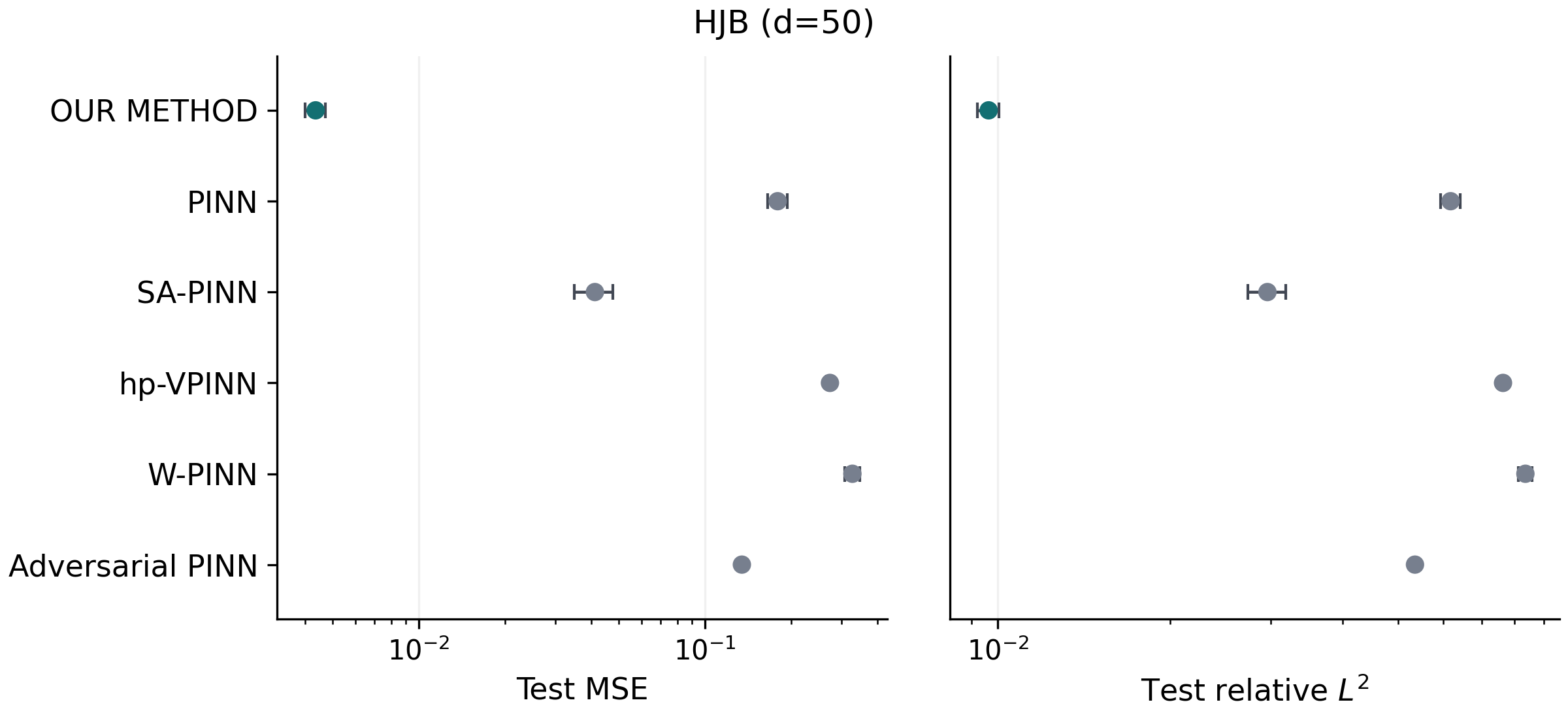}
\captionof{figure}{Comparison with other methods on HJB in $d=50$. Points show mean test errors and bars indicate one sample standard deviation over ten seeds.}
\label{fig:hjb-d50-method-comparison}
\end{minipage}
\end{center}

\begingroup
\raggedbottom
\subsection{Regularization Improvement for Some Parabolic PDEs}
\label{sec:controlled-parabolic-regularization}

We examine regularization effects on heat equations and other parabolic equations LP1-LP3. We compare the purely Sobolev fitting term $\widehat D_{H_t^1H_x^1}$ with the regularized term $\widehat D_{H_t^1H_x^1}+\beta(I_\epsilon+M_\epsilon)$, with the optimal selected epsilon-beta pair. Our first example includes fixed-size comparison using fixed regularization $\beta$, and then we will exam how unregularized estimator versus regularized estimator using fixed regularization and adaptive regularization across various sample size.
\paragraph{Fixed-size paired comparison.}
We compare the two regularized and unregularized methods using $n=128$ and $m=128$ space time sampling locations and forcing function with $5\%$ noise level.
\begin{center}
\begin{minipage}{\linewidth}
\centering
\small
\setlength{\tabcolsep}{3.3pt}
\begin{tabular}{lrrr}
\toprule
Example
& \shortstack{Sobolev $H^1$\\MSE}
& \shortstack{Regularized Sobolev $H^1$\\MSE}
& \shortstack{MSE gain\\(\%)} \\
\midrule
Heat & $(4.44\pm3.65)\times10^{-6}$
     & $(9.45\pm7.28)\times10^{-7}$ & 78.7 \\
LP1  & $(1.89\pm2.09)\times10^{-5}$
     & $(1.79\pm1.22)\times10^{-6}$ & 90.5 \\
LP2  & $(3.21\pm2.97)\times10^{-7}$
     & $(3.32\pm1.93)\times10^{-8}$ & 89.7 \\
LP3  & $(1.40\pm1.13)\times10^{-7}$
     & $(3.21\pm1.72)\times10^{-9}$ & 97.7 \\
\bottomrule
\end{tabular}
\end{minipage}
\end{center}
\paragraph{Fixed-size comparison.}
At a fixed sample size of $n=128$ and $m=128$ and a noise level of $5\%$,
we compare the mean MSE over 100 runs.
As shown in Figure~\ref{fig:positive-fixed-comparison},
regularization reduces the mean MSE by $78.7\%$, $90.5\%$,
$89.7\%$, and $97.7\%$ for Heat, LP1, LP2, and LP3,
respectively, relative to the unregularized Sobolev baseline.

\begin{figure}[htbp]
    \centering
    \includegraphics[width=0.7\linewidth]
    {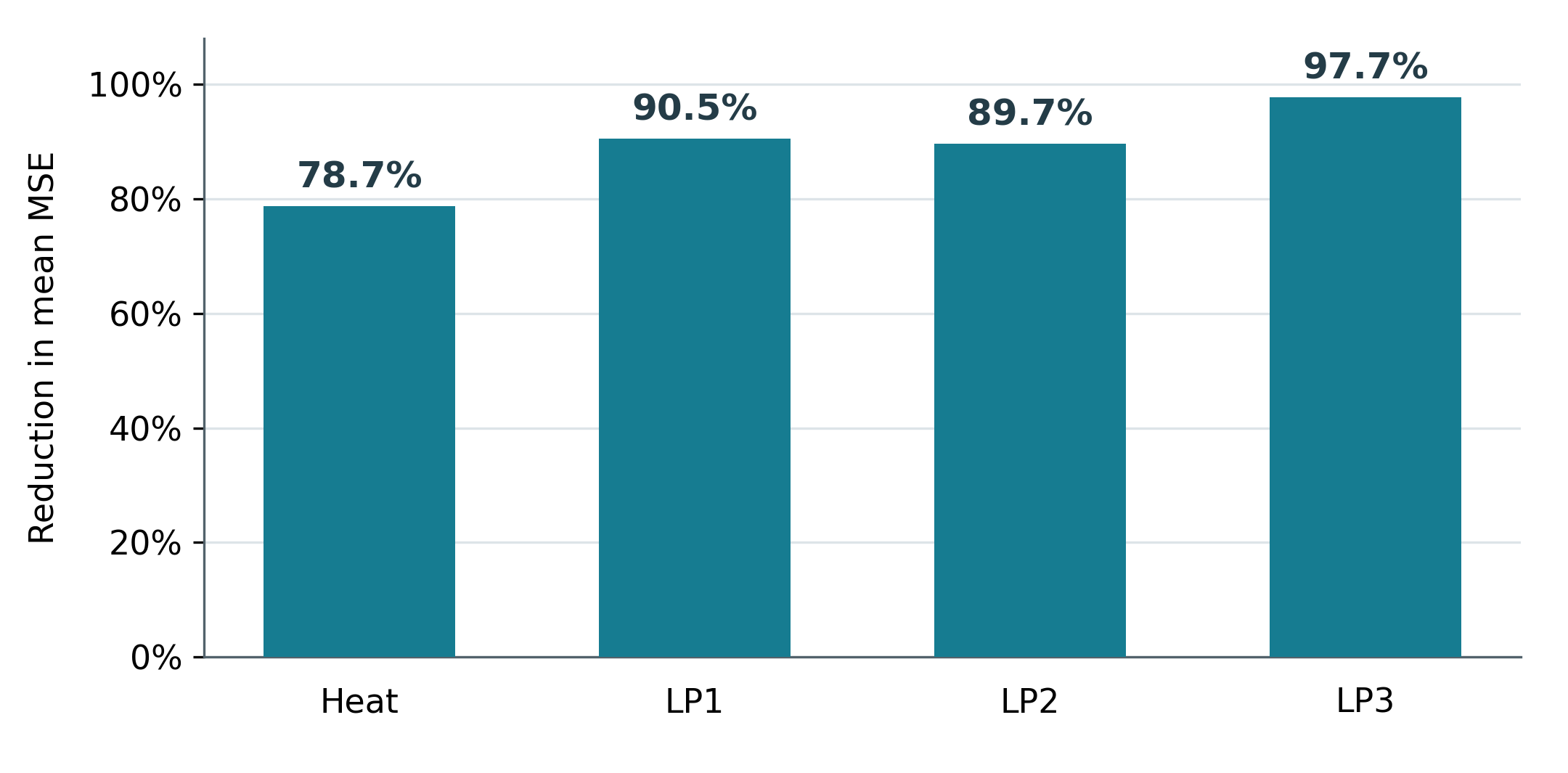}
    \caption{MSE reduction from regularization for parabolic
    equations at $n,m=128$ and $5\%$ noise.}
    \label{fig:positive-fixed-comparison}
\end{figure}

We next evaluate generalization gap over various space-temporal temporal size $N=64,128,256,512,1024,2048,4096,16{,}384,65{,}536,262{,}144,10^6$.

\paragraph{Generalization Error}
We define $E_{\rm test}$ be testing error and
$E_{\rm train}$ training error .
We report absolute generalization gap as 
\[
 G_{generalization}=|E_{\rm test}-E_{\rm train}|
\]
Figures~\ref{fig:positive-sample-test-error}
and~\ref{fig:positive-sample-clean-gap} compare the test MSE
and the mean absolute clean-reference generalization gap,
respectively, as the sample size increases. In particular, we implement unregularized baseline, regularization using fixed $\beta$ and adaptive $\beta$ proposed in our framework. For heat and LP1--LP3, regularization consistently reduces both test mse and generalization gap relative to the unregularized Sobolev baseline across various validation sample sizes. What we found is that constant $\beta$ only improves the rate by only constant, while adaptive regularization as proposed in our scheme results in change of rate order.

\paragraph{Relation to the theoretical parameter choice.}
Under the manuscript's rescaling $\lambda=\beta/N$, our choice for $\beta:= (1/N)^{\frac{1}{2}}$ differs only a bit from
the sample-dependent optimal choice in the supercritical regime
of Theorem~\ref{thm_3.1}, which gives
\[
\beta^*(N)=N\lambda^*(N)
=
e^{T/\epsilon}
N^{\frac{d+2}{4s+d}}
\bigl[\log(1+C_2N)\bigr]^{\frac{2s-1}{4s+d}}.
\]
We admit that our experiments do not exactly verify theoretically optimal regularization, and we notice sample-size dependent tuning regularization parameter in general improve the rate of convergence.

\begin{center}
\begin{minipage}{.8\textwidth}
\includegraphics[width=\linewidth]{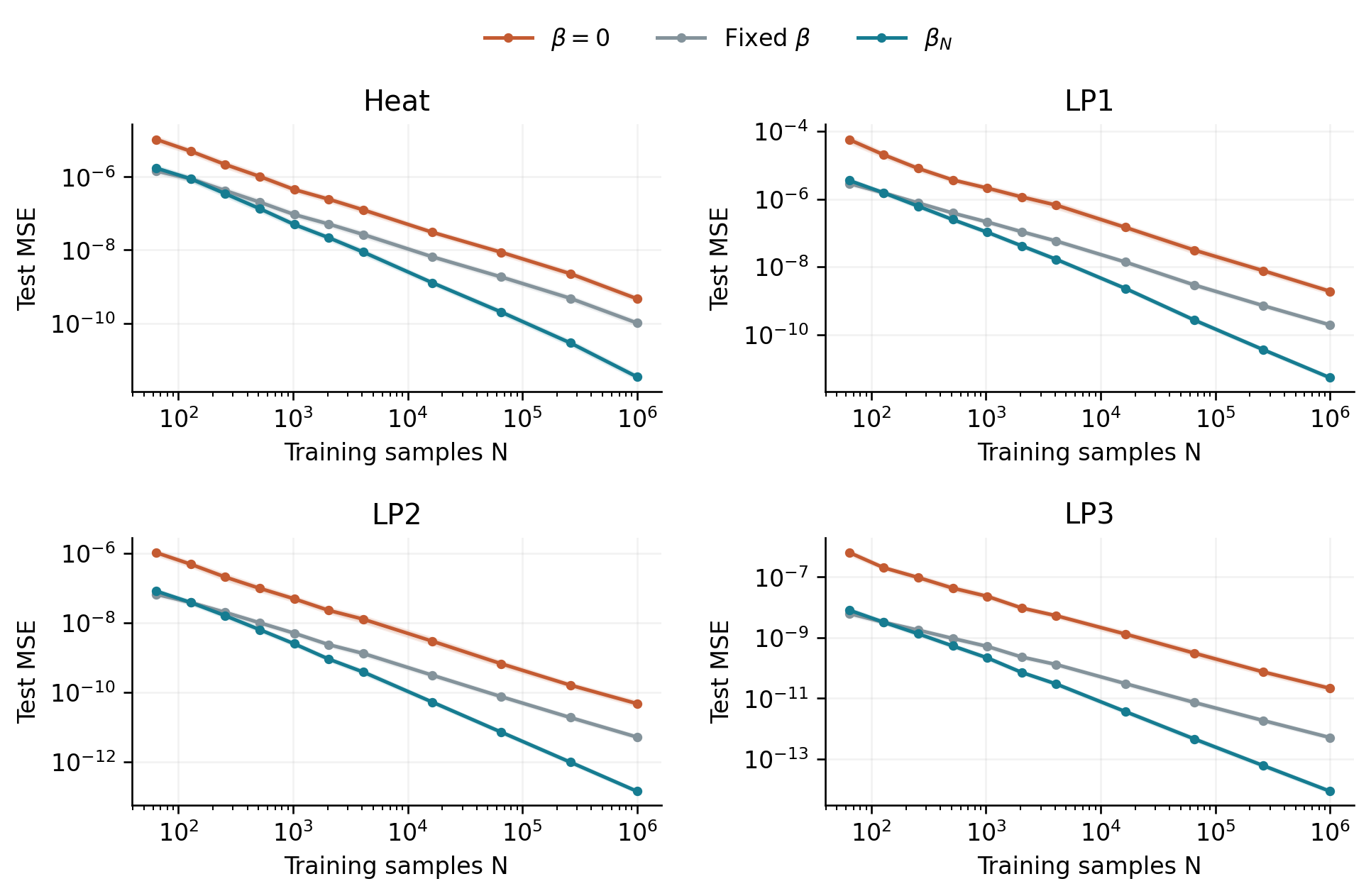}
\captionof{figure}{Test MSE versus sample size for parabolic equations with and without regularization.}
\label{fig:positive-sample-test-error}
\end{minipage}
\end{center}
\begin{center}
\begin{minipage}{.8\textwidth}
\includegraphics[width=\linewidth]{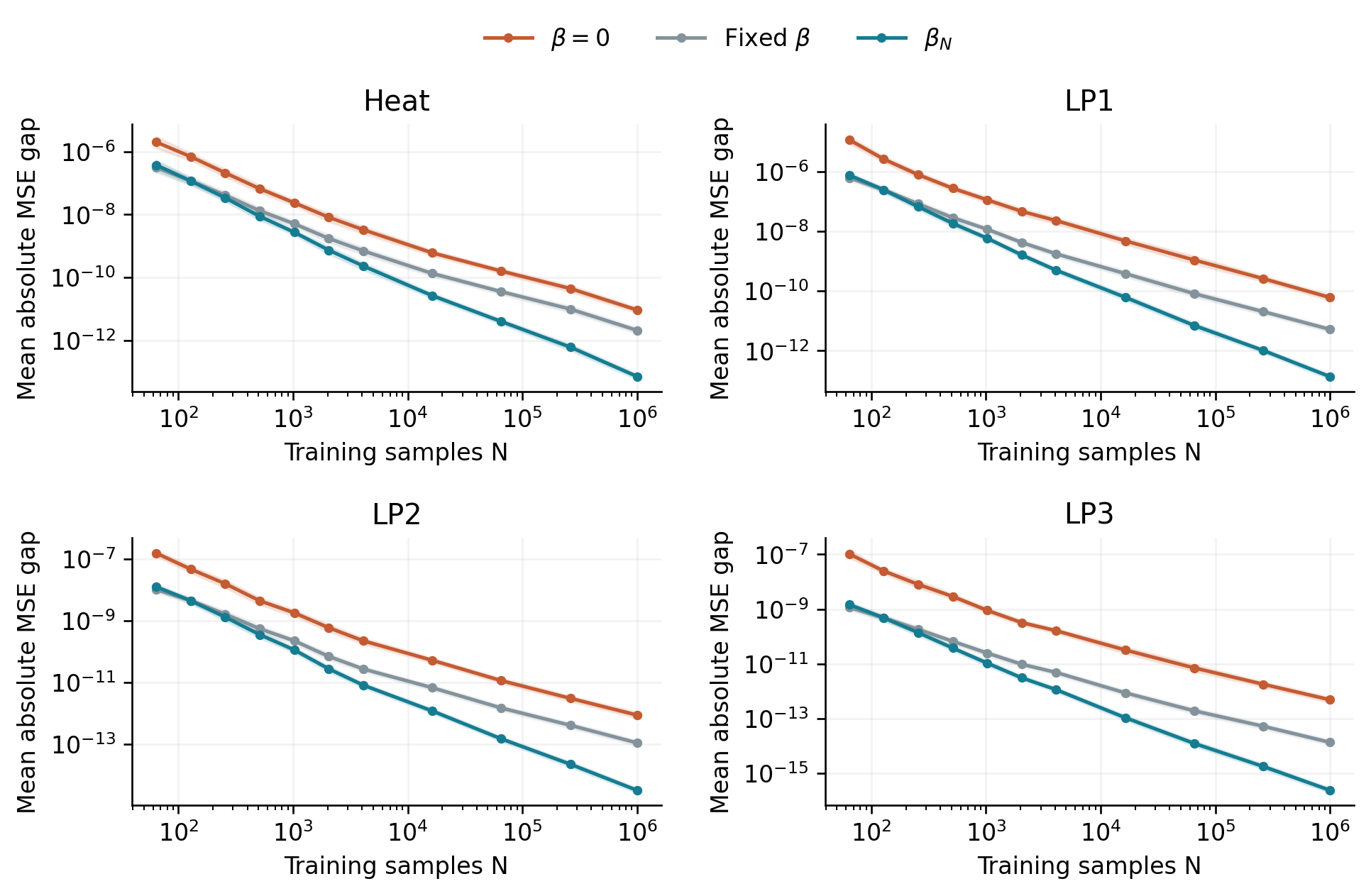}
\captionof{figure}{Mean absolute generalization gap versus sample size for parabolic equations with and without regularization.}
\label{fig:positive-sample-clean-gap}
\end{minipage}
\end{center}

\subsubsection{Noise ablation studies}
\label{A.8.2}

\paragraph{Noise in the solution and forcing labels.}
\label{app:noise-ablation}

We now keep the parabolic equations in previous experiments, network architecture, sampling
procedure, and optimization settings fixed, and vary only the observation
noise. We distinguish two observation models:

\begin{enumerate}
    \item additive noise in the solution and forcing labels, while keeping
    the derivative labels exact; and
    \item additive perturbations applied directly to the derivative labels,
    while keeping the solution labels exact.
\end{enumerate}

The second experiment is not equivalent to differentiating noisy solution
observations. This distinction is discussed explicitly below.

\begin{table}[htbp]
\centering
\caption{Median test relative $L^2$ error over ten paired seeds for the
heatmap-selected mixed-$H^1$ estimator under additive solution and forcing
noise. Derivative labels and validation/test values are exact.}
\label{tab:solution-forcing-noise}
\small
\setlength{\tabcolsep}{6pt}
\begin{tabular}{ccccc}
\toprule
Equation & $\rho_u$ & $\rho_f=0$ & $\rho_f=0.05$ & $\rho_f=0.10$ \\
\midrule
LP1 & $0$ & $1.616\times10^{-4}$ & $1.609\times10^{-4}$ & $1.608\times10^{-4}$ \\
LP1 & $0.05$ & $1.644\times10^{-4}$ & $1.641\times10^{-4}$ & $1.636\times10^{-4}$ \\
LP1 & $0.1$ & $2.070\times10^{-4}$ & $2.073\times10^{-4}$ & $2.072\times10^{-4}$ \\
\midrule
LP2 & $0$ & $1.604\times10^{-4}$ & $1.593\times10^{-4}$ & $1.597\times10^{-4}$ \\
LP2 & $0.05$ & $1.659\times10^{-4}$ & $1.657\times10^{-4}$ & $1.654\times10^{-4}$ \\
LP2 & $0.1$ & $2.091\times10^{-4}$ & $2.074\times10^{-4}$ & $2.067\times10^{-4}$ \\
\midrule
LP3 & $0$ & $1.593\times10^{-4}$ & $1.591\times10^{-4}$ & $1.595\times10^{-4}$ \\
LP3 & $0.05$ & $1.660\times10^{-4}$ & $1.666\times10^{-4}$ & $1.663\times10^{-4}$ \\
LP3 & $0.1$ & $2.097\times10^{-4}$ & $2.094\times10^{-4}$ & $2.090\times10^{-4}$ \\
\midrule
HJ & $0$ & $5.569\times10^{-3}$ & $6.127\times10^{-3}$ & $5.839\times10^{-3}$ \\
HJ & $0.05$ & $6.434\times10^{-3}$ & $6.075\times10^{-3}$ & $5.797\times10^{-3}$ \\
HJ & $0.1$ & $8.666\times10^{-3}$ & $1.015\times10^{-2}$ & $9.574\times10^{-3}$ \\
\bottomrule
\end{tabular}
\end{table}

Table~\ref{tab:joint-noise} extracts the diagonal entries of
Table~\ref{tab:solution-forcing-noise}. For median error $E_{\rho_u,\rho_f}$,
the change relative to the clean result is
$\Delta_\rho=100(E_{\rho,\rho}/E_{0,0}-1)\%$.

\begin{table}[htbp]
\centering
\caption{Fresh reruns of the clean and jointly noisy conditions with
ten paired seeds. These values are the corresponding diagonal entries of
Table~\ref{tab:solution-forcing-noise}; percentage changes use unrounded medians.}
\label{tab:joint-noise}
\small
\setlength{\tabcolsep}{4pt}
\begin{tabular}{cccccc}
\toprule
Equation & $(0,0)$ & $(0.05,0.05)$ & $(0.10,0.10)$ & $\Delta_{0.05}$ & $\Delta_{0.10}$ \\
\midrule
LP1 & $1.616\times10^{-4}$ & $1.641\times10^{-4}$ & $2.072\times10^{-4}$ & $+1.50\%$ & $+28.17\%$ \\
LP2 & $1.604\times10^{-4}$ & $1.657\times10^{-4}$ & $2.067\times10^{-4}$ & $+3.28\%$ & $+28.85\%$ \\
LP3 & $1.593\times10^{-4}$ & $1.666\times10^{-4}$ & $2.090\times10^{-4}$ & $+4.59\%$ & $+31.22\%$ \\
HJ & $5.569\times10^{-3}$ & $6.075\times10^{-3}$ & $9.574\times10^{-3}$ & $+9.09\%$ & $+71.90\%$ \\
\bottomrule
\end{tabular}
\end{table}

Across all nine noise settings, the median relative error for LP1--LP3
ranges from $0.0159\%$ to $0.0210\%$.
Across all nine noise settings, the median relative error for HJ
ranges from $0.5569\%$ to $1.0154\%$.
The largest signed relative change along the jointly noisy diagonal is
$+71.90\%$ for HJ at $\rho_u=\rho_f=0.10$.
The absolute errors and relative changes should be considered together,
because a small clean error can produce a large percentage increase.
\clearpage
\section{Parameter Sensitivity Analysis}
\label{sec:parameter-sensitivity-analysis}
In this section, we present careful analysis on the relation between regularization parameter $\beta$ and PDE regularization parameter $\epsilon$ and total sample size $N$ to examine how $N$, $\epsilon$, and $\beta$ jointly affect the benefit of  energy regularization over mixed-$H^1$
data fitting term. Figures~\ref{fig:psa-lp1-accuracy}, \ref{fig:psa-lp2-accuracy},
\ref{fig:psa-lp3-accuracy}, and~\ref{fig:psa-hjb3-accuracy} show relative
$L^2$ error and test MSE; the companion figures report the signed
generalization gap and the test-MSE ratio to the fitting-only baseline.
The latter ratio makes improvements over $\beta=0$ directly visible.

\paragraph{Interaction between $\epsilon$ and $\beta$.}
In the second heatmaps of Figures~\ref{fig:psa-lp2-generalization}
and~\ref{fig:psa-lp3-generalization}, the rows with very small $\beta$
remain close to a ratio of one, while intermediate weights produce
localized improvements. For example, in the upper noise row at
$N=100$, $\epsilon=0.5$, and $\beta=10^{-2}$, the LP2 and LP3 ratios
are approximately $0.63$ and $0.64$, corresponding to test-MSE reductions
of $37.3\%$ and $36.2\%$. The corresponding relative-$L^2$ panels in
Figures~\ref{fig:psa-lp2-accuracy} and~\ref{fig:psa-lp3-accuracy}
also improve, by $17.1\%$ and $16.8\%$.
At the same $(N,\beta)$, the LP2 ratio is approximately one when
$\epsilon=0.01$, whereas the $\beta=1$ rows show substantial error
increases throughout both LP ratio heatmaps. Thus, the benefit depends
on the combination of $\epsilon$ and $\beta$; increasing the energy weight alone does not ensure improvement. This matches with our upper bound where $\beta$ depends on both $epsilon$ and $N$.

\paragraph{Dependence on sample size and noise location.}
Reading the $\beta=0$ rows from left to right in
Figures~\ref{fig:psa-lp1-accuracy}, \ref{fig:psa-lp2-accuracy},
\ref{fig:psa-lp3-accuracy}, and~\ref{fig:psa-hjb3-accuracy}
shows substantially smaller baseline errors at the largest sample sizes.
The orange outlines at $N=10^6$ in
Figures~\ref{fig:psa-lp2-generalization} and~\ref{fig:psa-lp3-generalization}
select $\beta=0$ for every displayed $\epsilon$ and both noise settings,
indicating  additional improvement for these LP examples at large $N$.
For HJB-3D, the upper noise row of the ratio heatmap in
Figure~\ref{fig:psa-hjb3-generalization} retains a ratio of about $0.92$
at $N=10^6$, $\epsilon=0.5$, and $\beta=0.1$.
The two noise rows also differ: at $N=100$ with the same
$(\epsilon,\beta)=(0.5,0.1)$, the HJB ratio is $0.87$ for noisy $f$
but $1.98$ for noisy $u$. The corresponding accuracy panels in
Figure~\ref{fig:psa-hjb3-accuracy} also shows this reversal relation.
These observations support adapting the parameters to the equation,
sample size, and noise location.

\paragraph{Prediction accuracy and generalization gap.}
The first heatmap in Figure~\ref{fig:psa-lp1-generalization} shows why
the gap must be read together with the MSE ratio. In its upper noise
row at $N=100$ and $\epsilon=0.2$, increasing $\beta$ from $0$ to $0.1$
changes the mean signed gap from approximately $1.8\times10^{-8}$
to $-9.4\times10^{-7}$, while the companion ratio rises to about $24.2$.
A more negative gap therefore need not indicate better prediction or
a smaller train--test discrepancy. In contrast, the upper noise row
of Figure~\ref{fig:psa-hjb3-generalization} at $N=100$,
$\epsilon=0.5$, and $\beta=0.1$ shows both a lower test-MSE ratio
and a positive mean gap closer to zero than the baseline.

\paragraph{Consistency with the theoretical discussion.}
The improved regions in Figures~\ref{fig:psa-lp2-generalization}
and~\ref{fig:psa-lp3-generalization} provide empirical support for
finite-sample improvement from fine-tuned weighted energy guidance. Their limited gains at the largest $N$ are qualitatively compatible with the subcritical discussion following Theorem~\ref{thm_3.1}, where energy guidance need not improve the leading-order rate for sufficiently smooth targets. The HJB observations in
Figure~\ref{fig:psa-hjb3-generalization} illustrate the empirical
extension beyond the heat-equation setting of that theorem.
This agreement remains qualitative: the heatmaps measure value-based
errors and a clean value-MSE gap for neural approximations, whereas
the theory concerns its specified estimator and the full regularized
loss.

\clearpage
\begin{figure}[p]
\centering
\includegraphics[width=\linewidth]{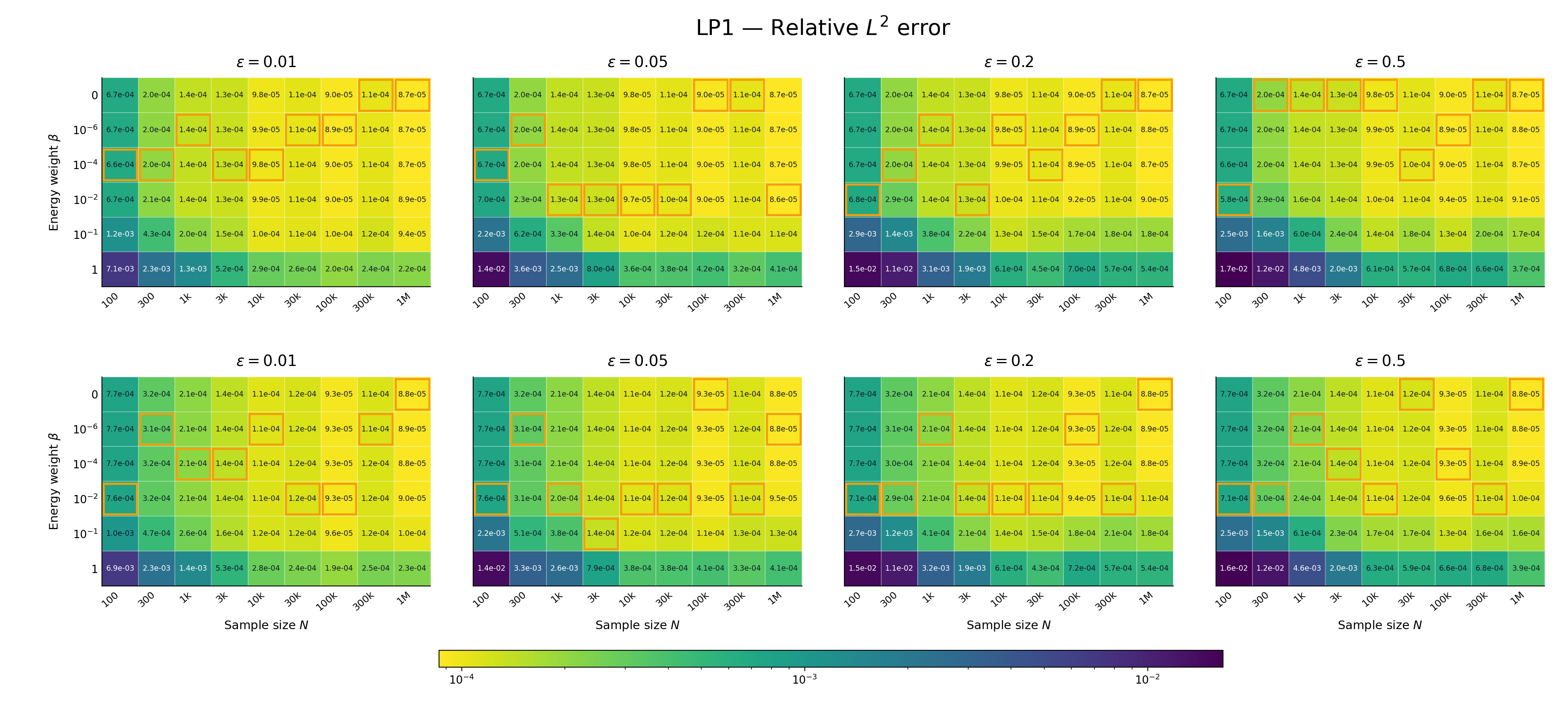}
\par\vspace{0.9em}
\includegraphics[width=\linewidth]{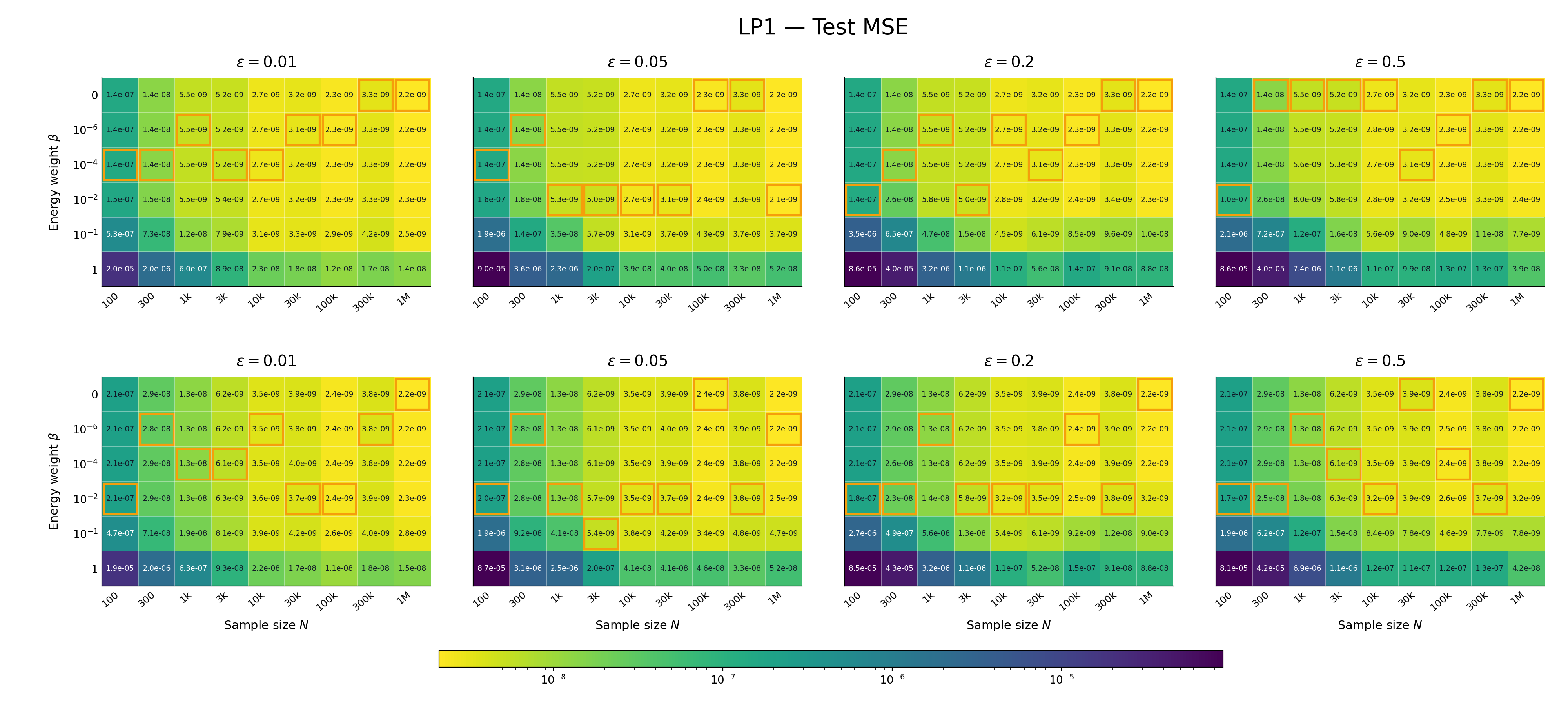}
\caption{LP1: relative $L^2$ error (first heatmap) and test MSE (second heatmap). Within each heatmap, the upper row has noisy $f$ and clean $u$; the lower row has clean $f$ and noisy $u$. Orange outlines mark the validation-selected $\beta$ for each fixed $(\epsilon,N)$.}
\label{fig:psa-lp1-accuracy}
\end{figure}

\begin{figure}[p]
\centering
\includegraphics[width=\linewidth]{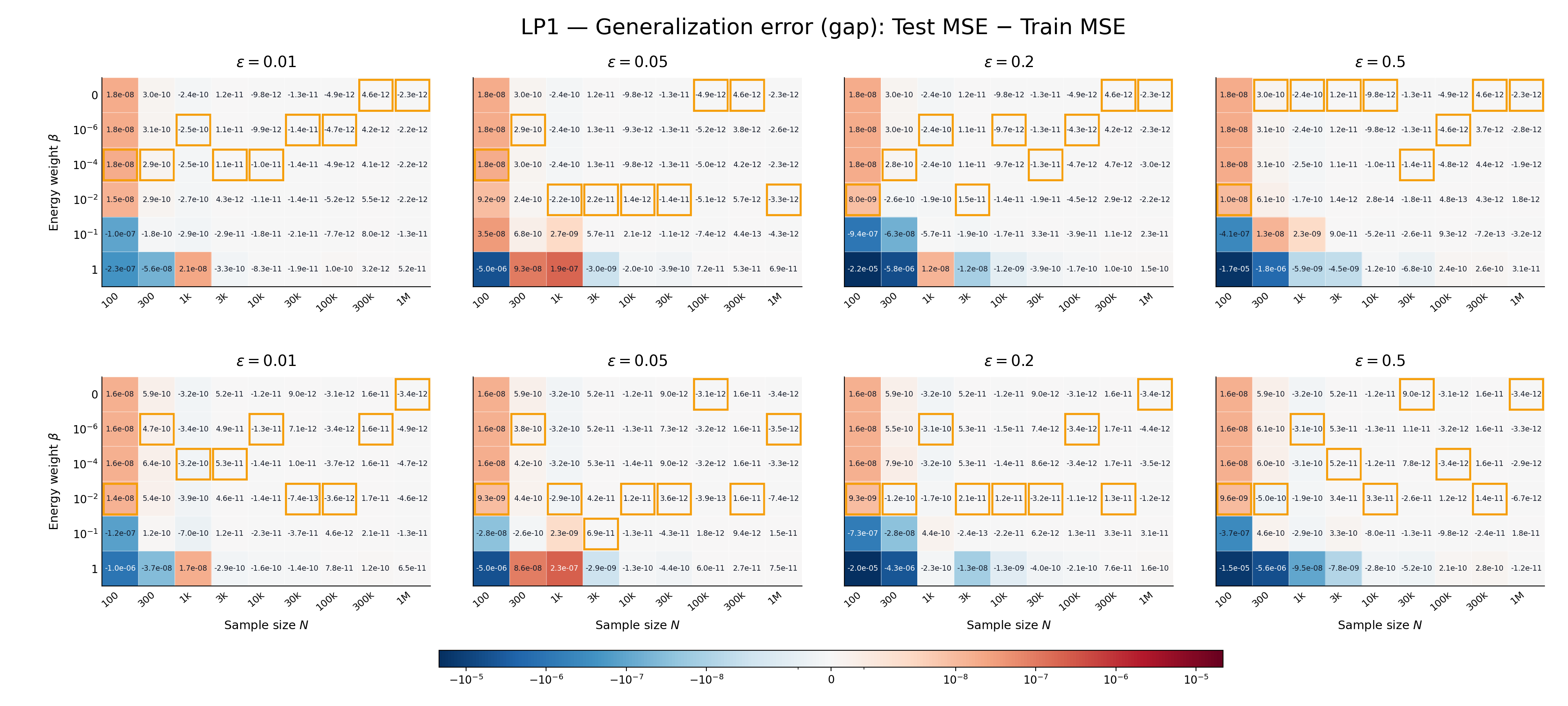}
\par\vspace{0.9em}
\includegraphics[width=\linewidth]{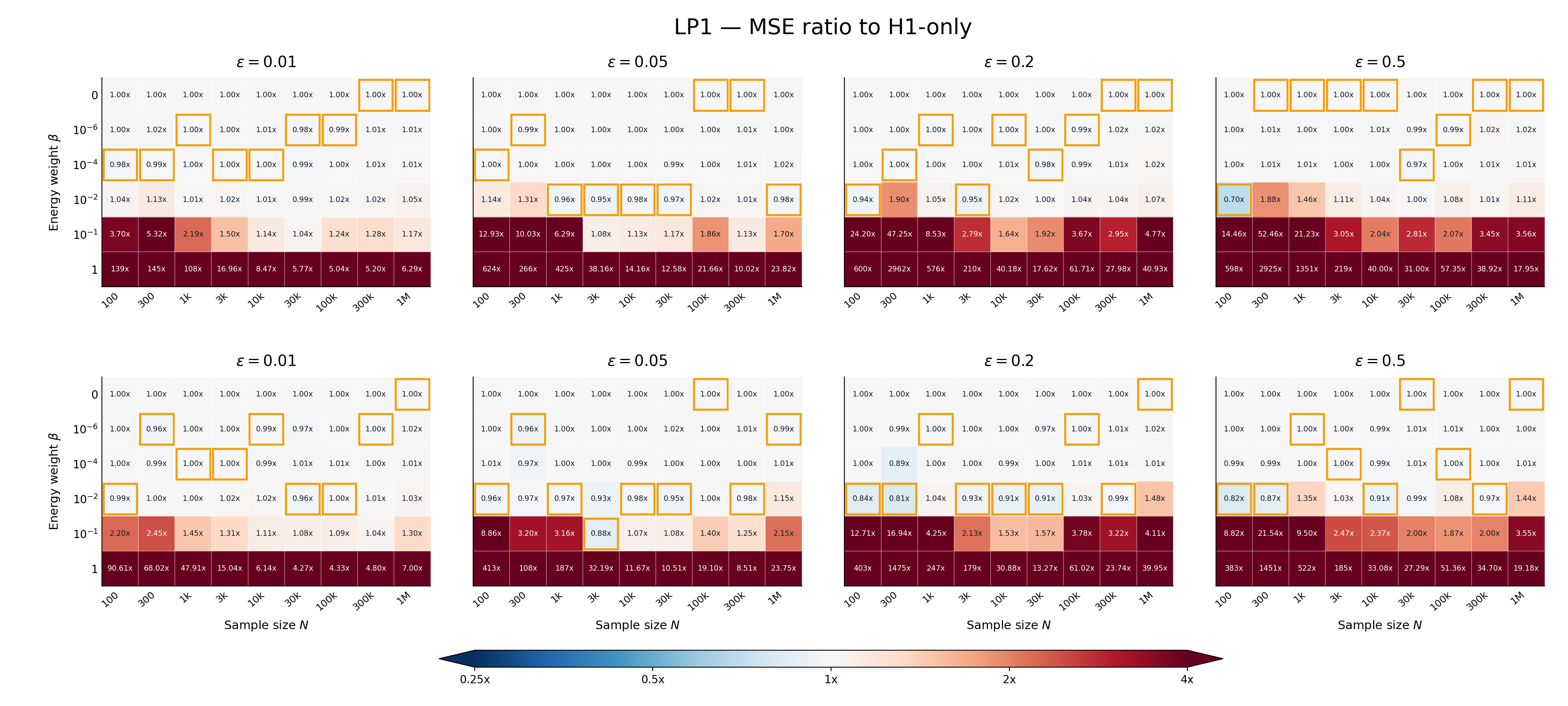}
\caption{LP1: signed value-MSE generalization gap, $G=E_{\mathrm{test}}-E_{\mathrm{train}}$ (first heatmap), and test-MSE ratio to the paired mixed-$H^1$ fitting baseline (second heatmap). Gap cells are means of signed gaps across seeds; ratios compare mean test MSEs. Ratios below one indicate improvement. Noise rows and orange outlines follow Figure~\ref{fig:psa-lp1-accuracy}.}
\label{fig:psa-lp1-generalization}
\end{figure}

\begin{figure}[p]
\centering
\includegraphics[width=\linewidth]{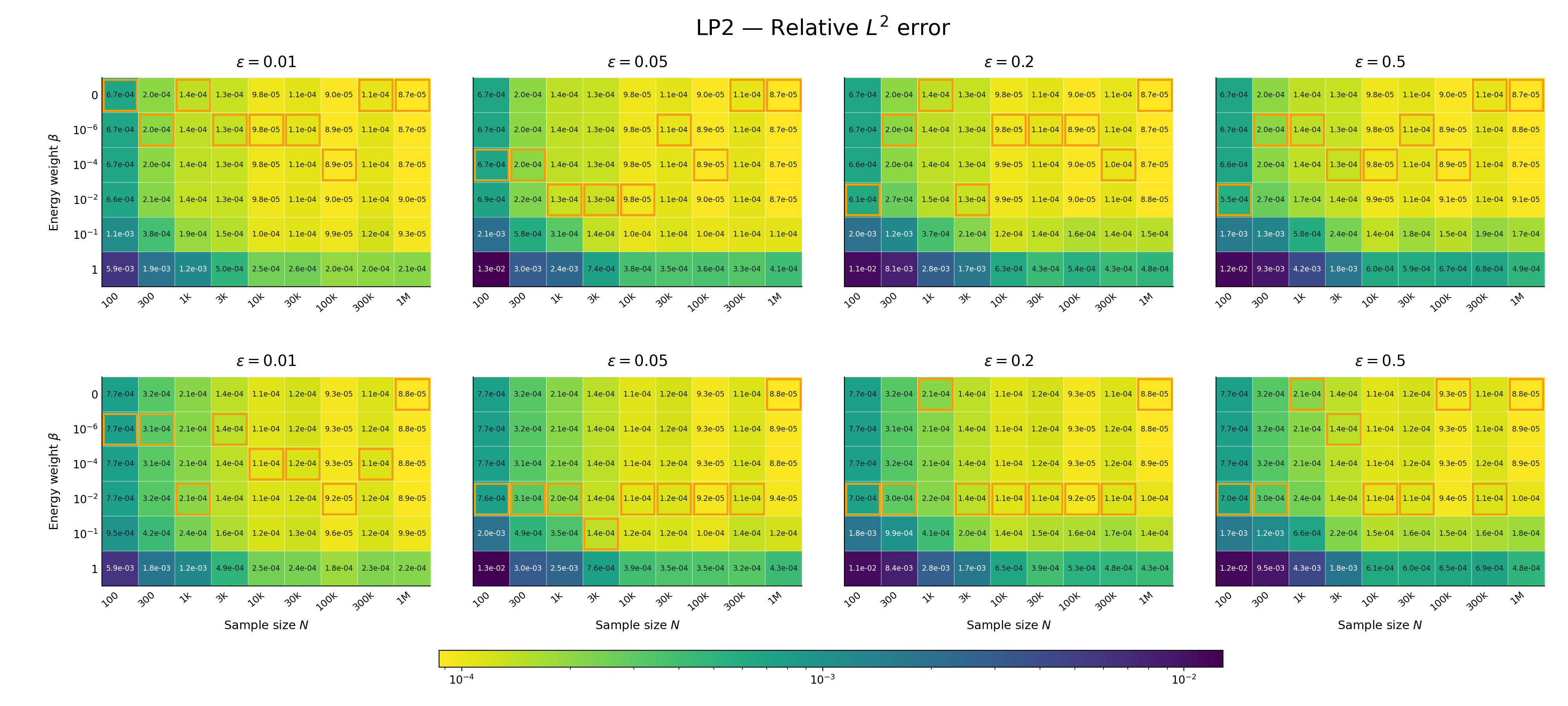}
\par\vspace{0.9em}
\includegraphics[width=\linewidth]{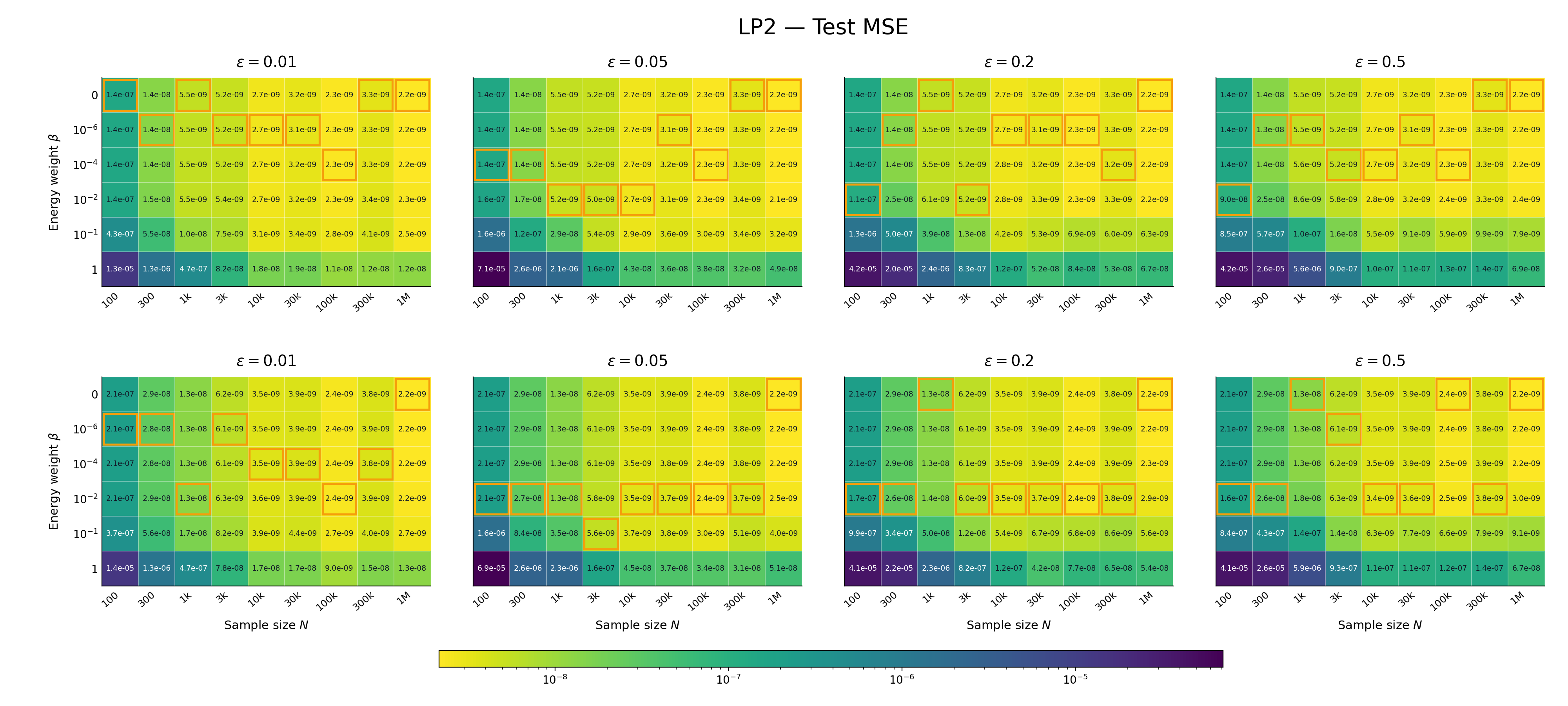}
\caption{LP2: relative $L^2$ error (first heatmap) and test MSE (second heatmap). Within each heatmap, the upper row has noisy $f$ and clean $u$; the lower row has clean $f$ and noisy $u$. Orange outlines mark the validation-selected $\beta$ for each fixed $(\epsilon,N)$.}
\label{fig:psa-lp2-accuracy}
\end{figure}

\begin{figure}[p]
\centering
\includegraphics[width=\linewidth]{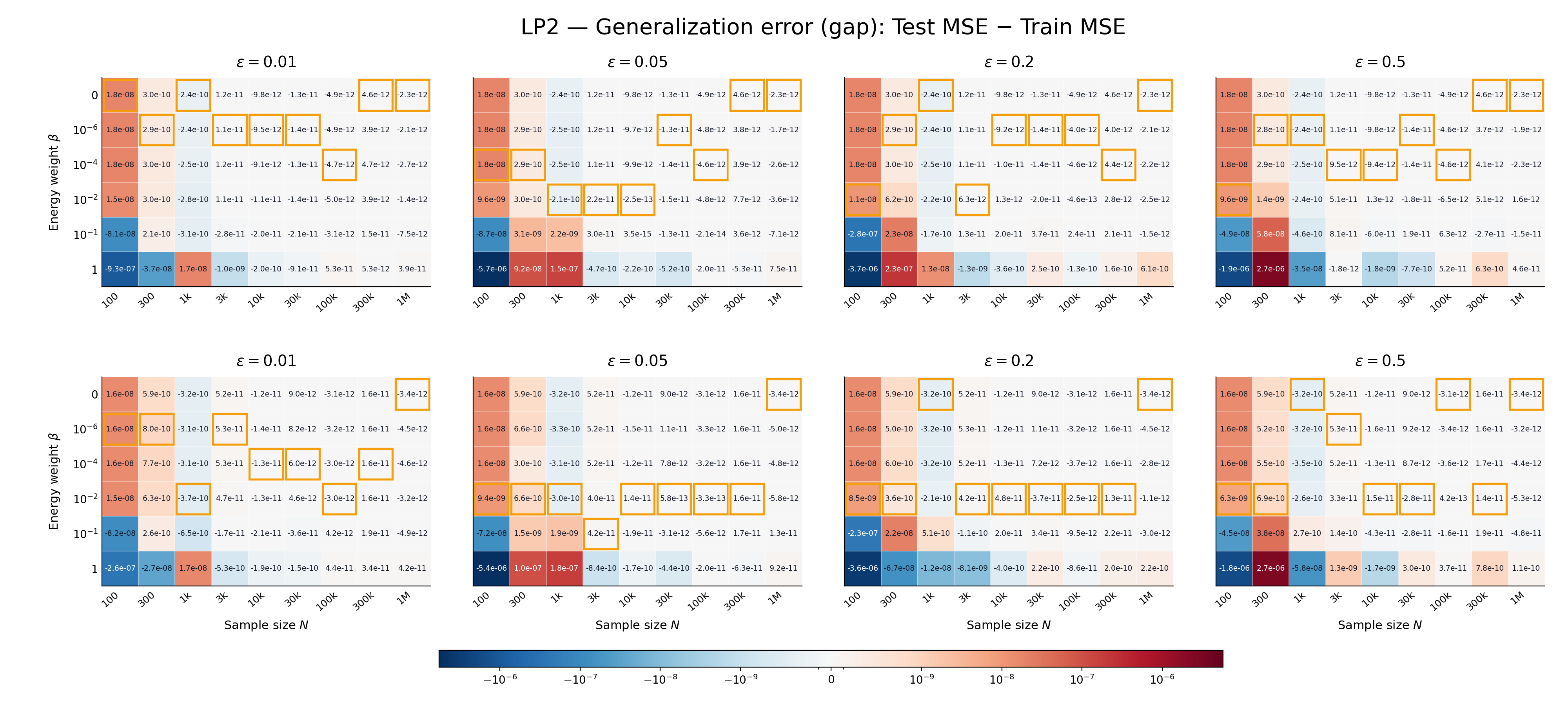}
\par\vspace{0.9em}
\includegraphics[width=\linewidth]{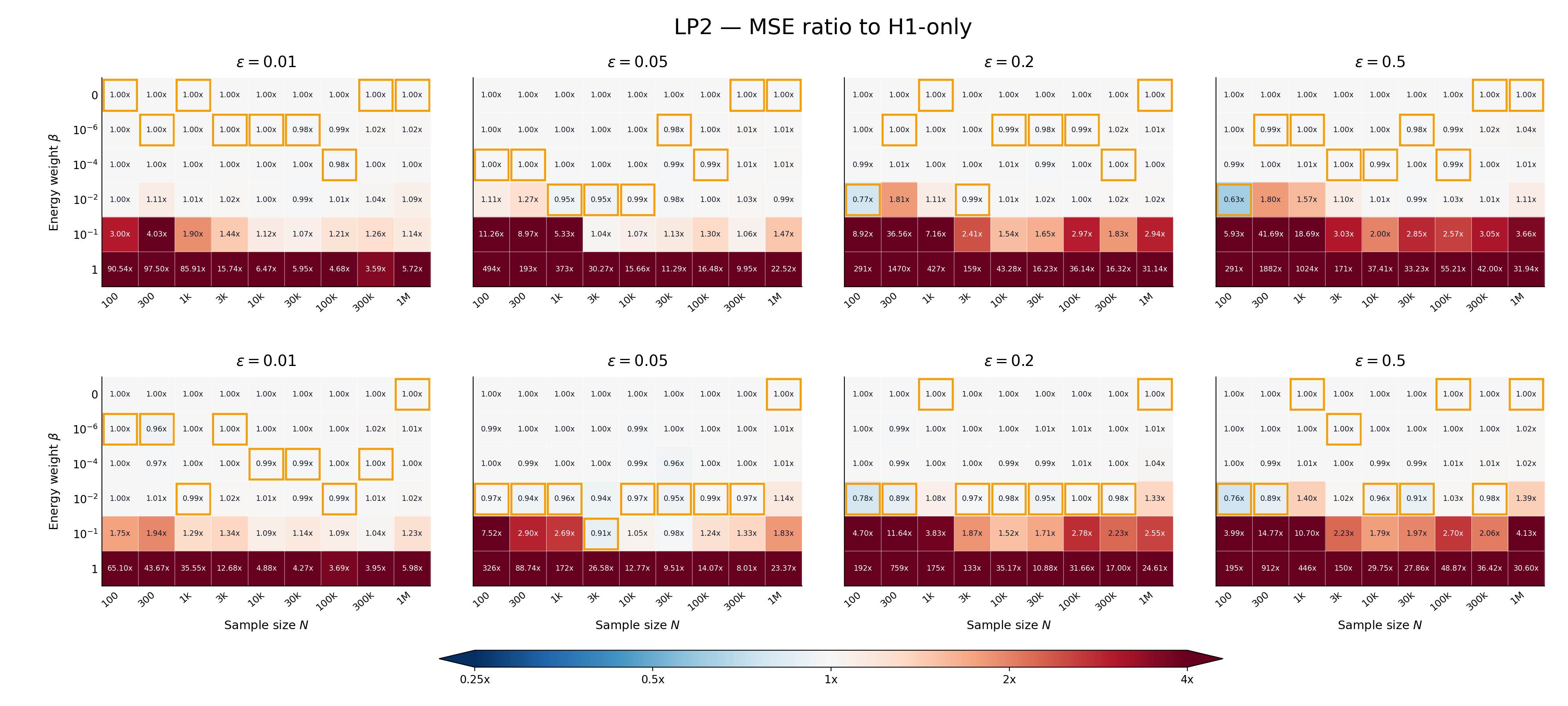}
\caption{LP2: signed value-MSE generalization gap, $G=R_{\mathrm{test}}-R_{\mathrm{train}}$ (first heatmap), and test-MSE ratio to the paired mixed-$H^1$ fitting baseline (second heatmap). Gap cells are means of signed gaps across seeds; ratios compare mean test MSEs. Ratios below one indicate improvement. Noise rows and orange outlines follow Figure~\ref{fig:psa-lp2-accuracy}.}
\label{fig:psa-lp2-generalization}
\end{figure}

\begin{figure}[p]
\centering
\includegraphics[width=\linewidth]{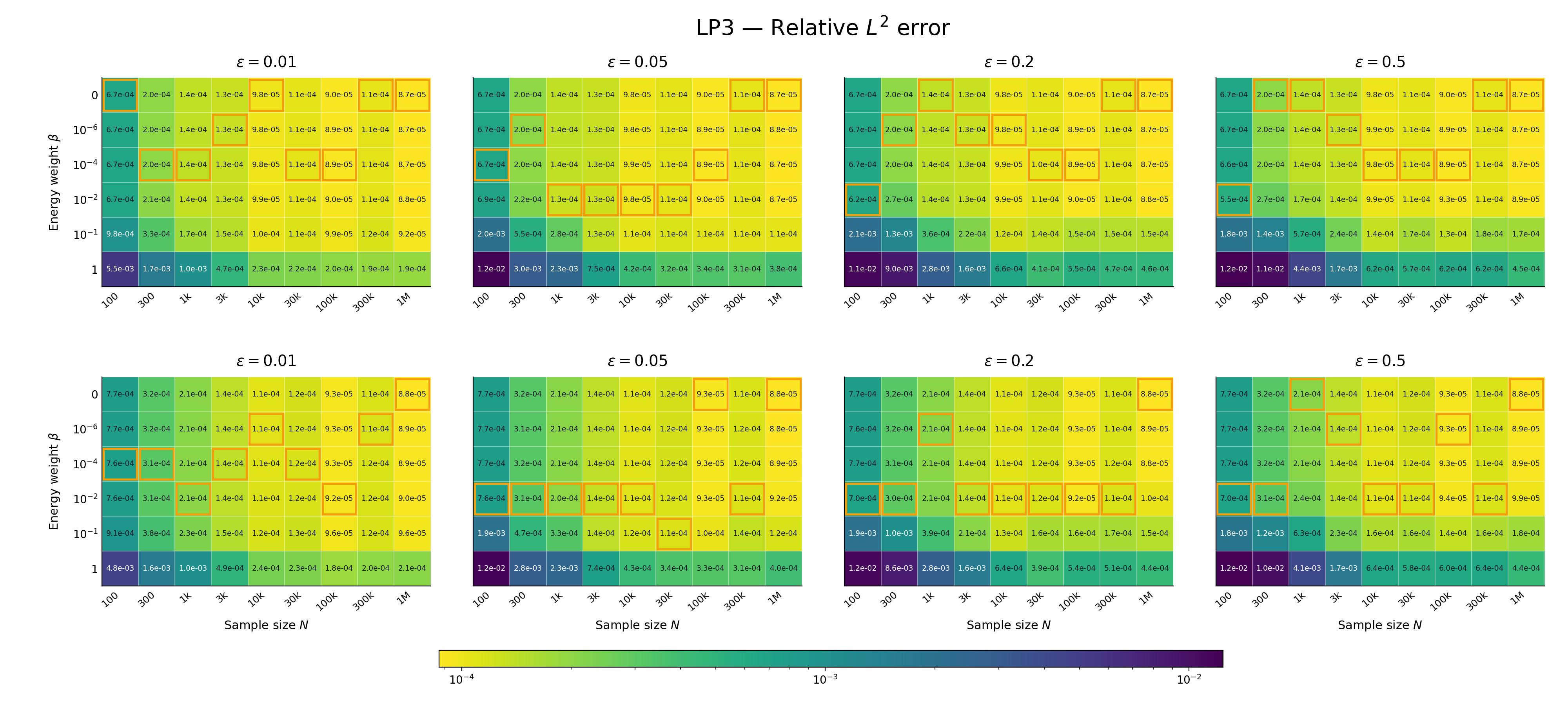}
\par\vspace{0.9em}
\includegraphics[width=\linewidth]{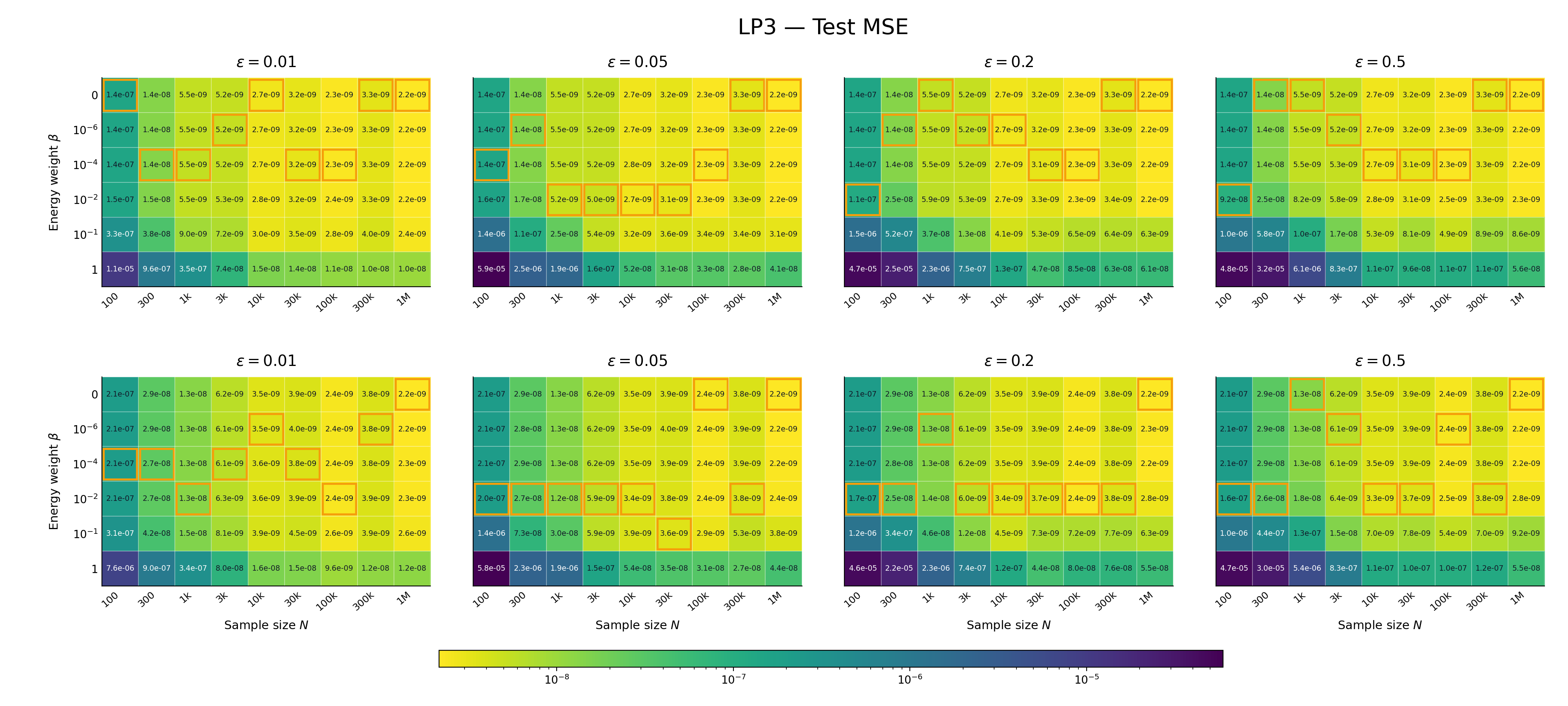}
\caption{LP3: relative $L^2$ error (first heatmap) and test MSE (second heatmap). Within each heatmap, the upper row has noisy $f$ and clean $u$; the lower row has clean $f$ and noisy $u$. Orange outlines mark the validation-selected $\beta$ for each fixed $(\epsilon,N)$.}
\label{fig:psa-lp3-accuracy}
\end{figure}

\begin{figure}[p]
\centering
\includegraphics[width=\linewidth]{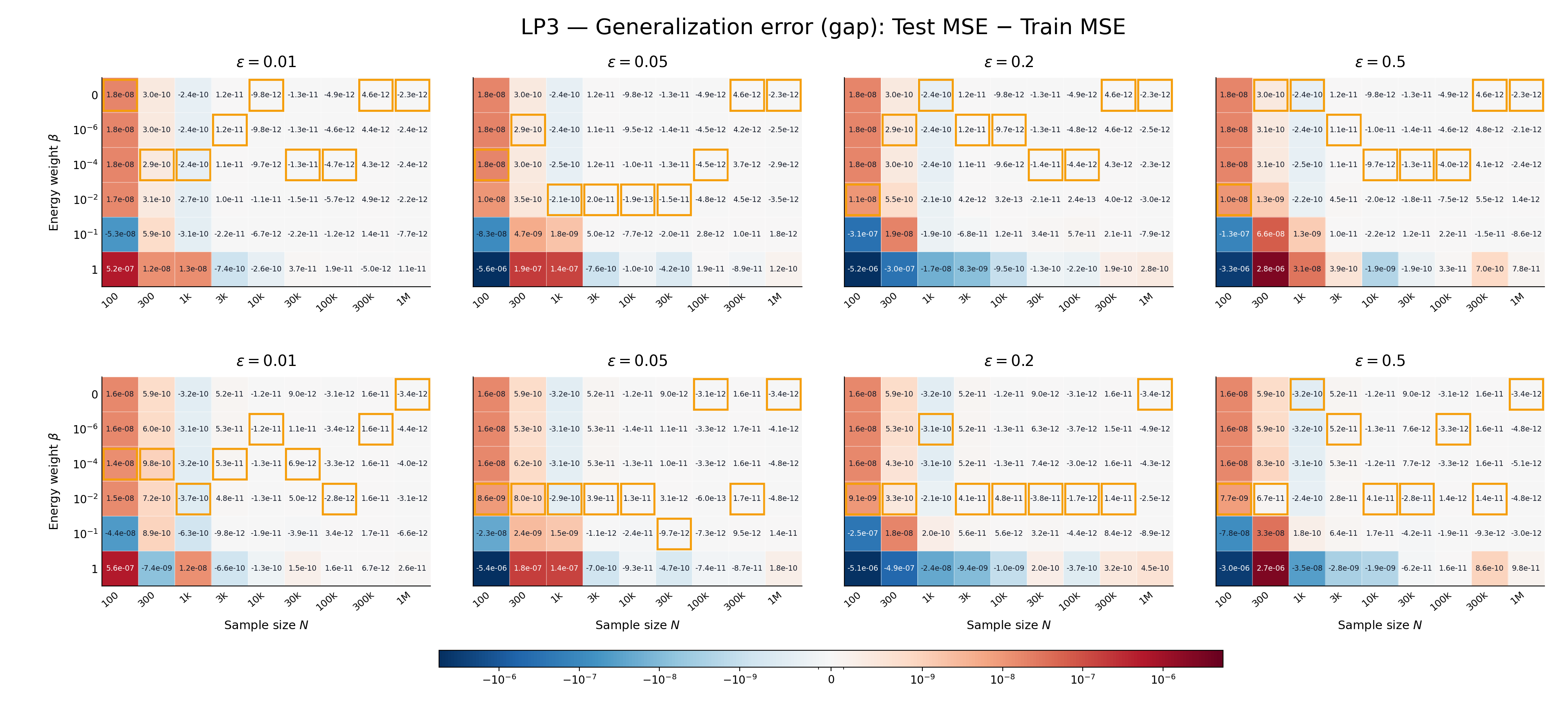}
\par\vspace{0.9em}
\includegraphics[width=\linewidth]{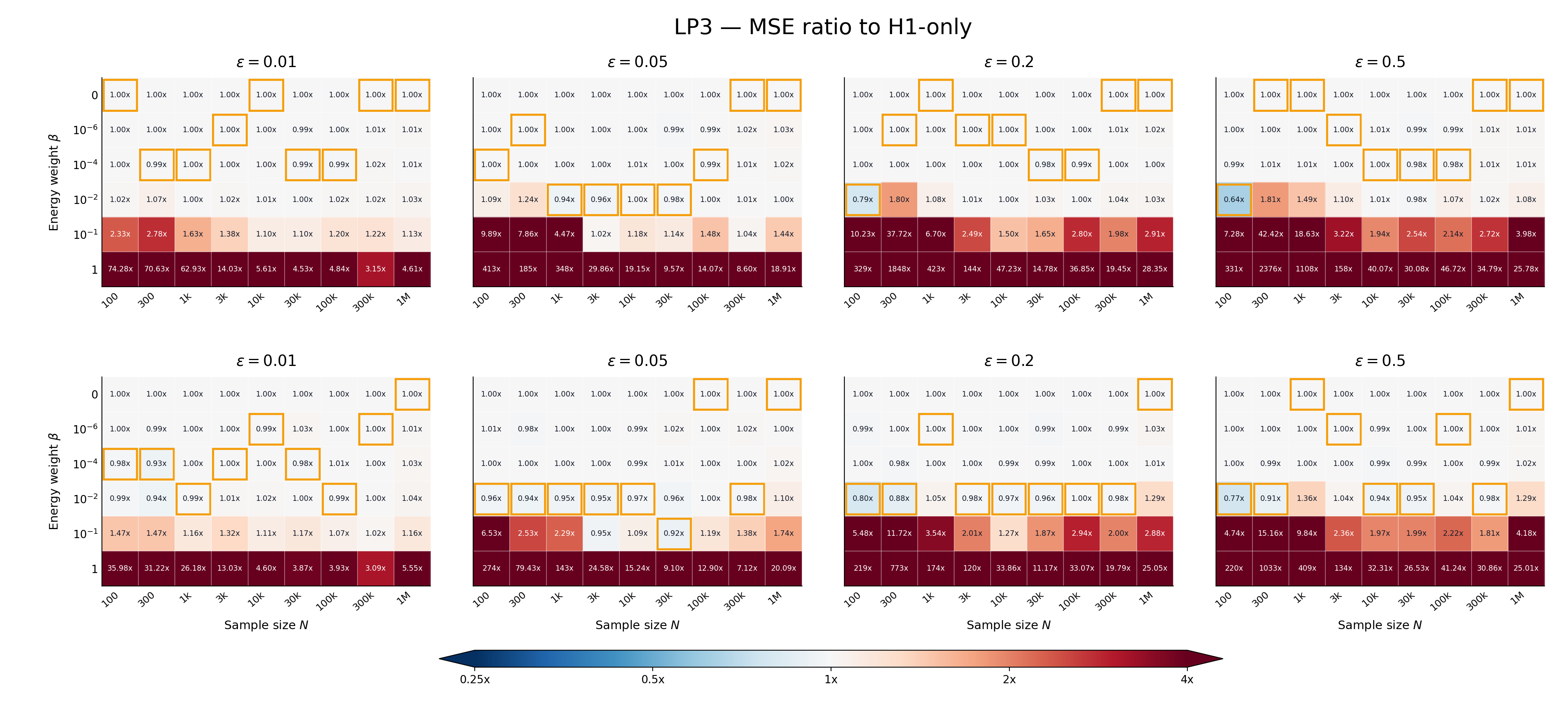}
\caption{LP3: signed value-MSE generalization gap, $G=R_{\mathrm{test}}-R_{\mathrm{train}}$ (first heatmap), and test-MSE ratio to the paired mixed-$H^1$ fitting baseline (second heatmap). Gap cells are means of signed gaps across seeds; ratios compare mean test MSEs. Ratios below one indicate improvement. Noise rows and orange outlines follow Figure~\ref{fig:psa-lp3-accuracy}.}
\label{fig:psa-lp3-generalization}
\end{figure}

\begin{figure}[p]
\centering
\includegraphics[width=\linewidth]{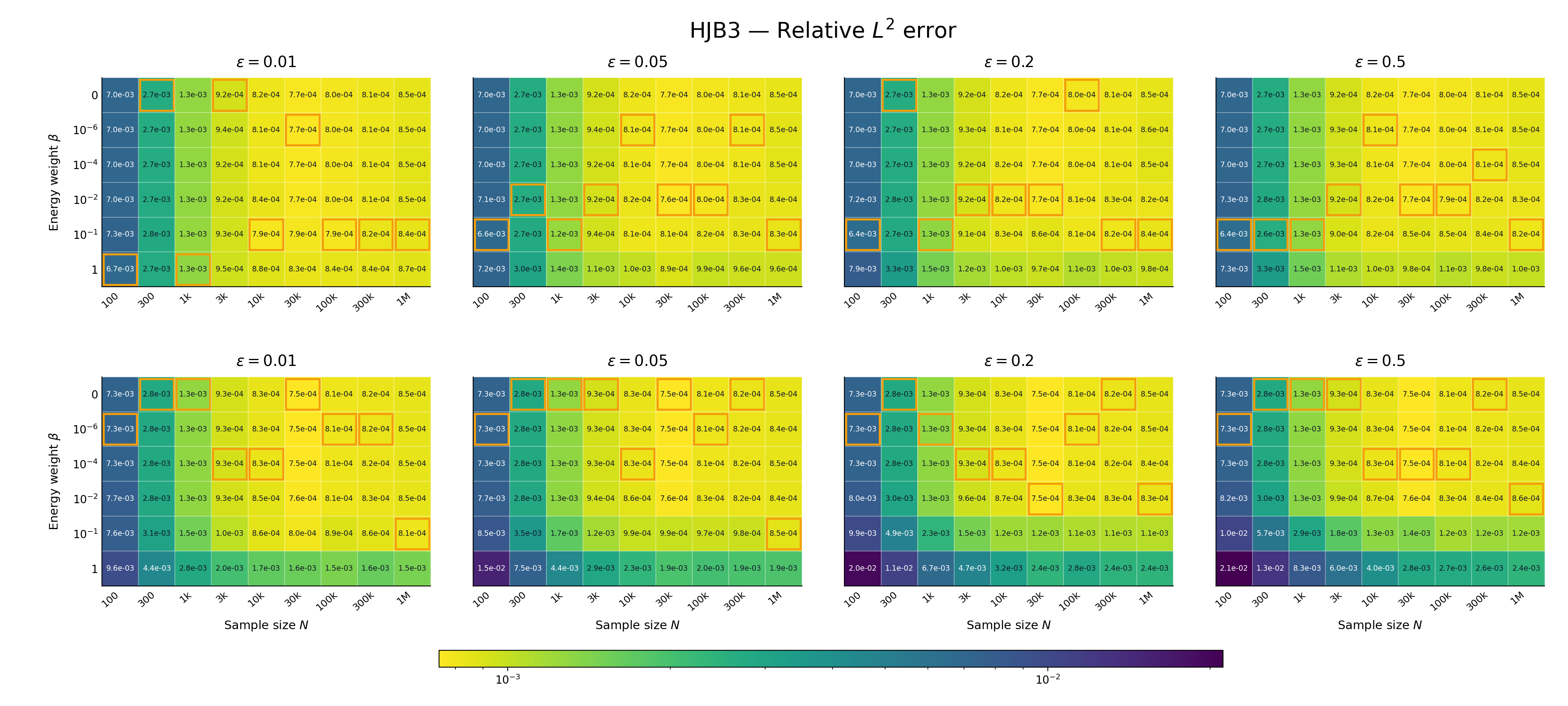}
\par\vspace{0.9em}
\includegraphics[width=\linewidth]{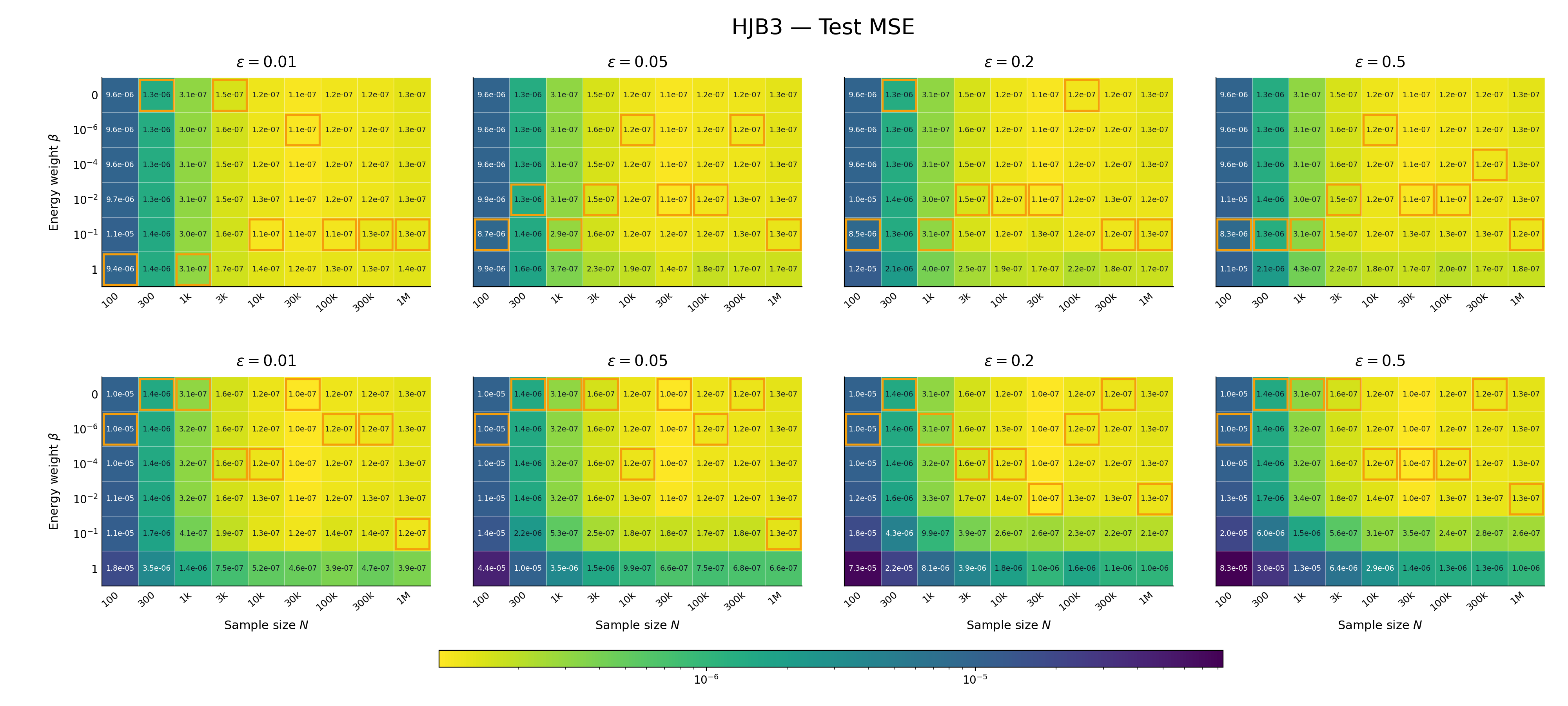}
\caption{HJB-3D: relative $L^2$ error (first heatmap) and test MSE (second heatmap). Within each heatmap, the upper row has noisy $f$ and clean $u$; the lower row has clean $f$ and noisy $u$. Orange outlines mark the validation-selected $\beta$ for each fixed $(\epsilon,N)$.}
\label{fig:psa-hjb3-accuracy}
\end{figure}

\begin{figure}[p]
\centering
\includegraphics[width=\linewidth]{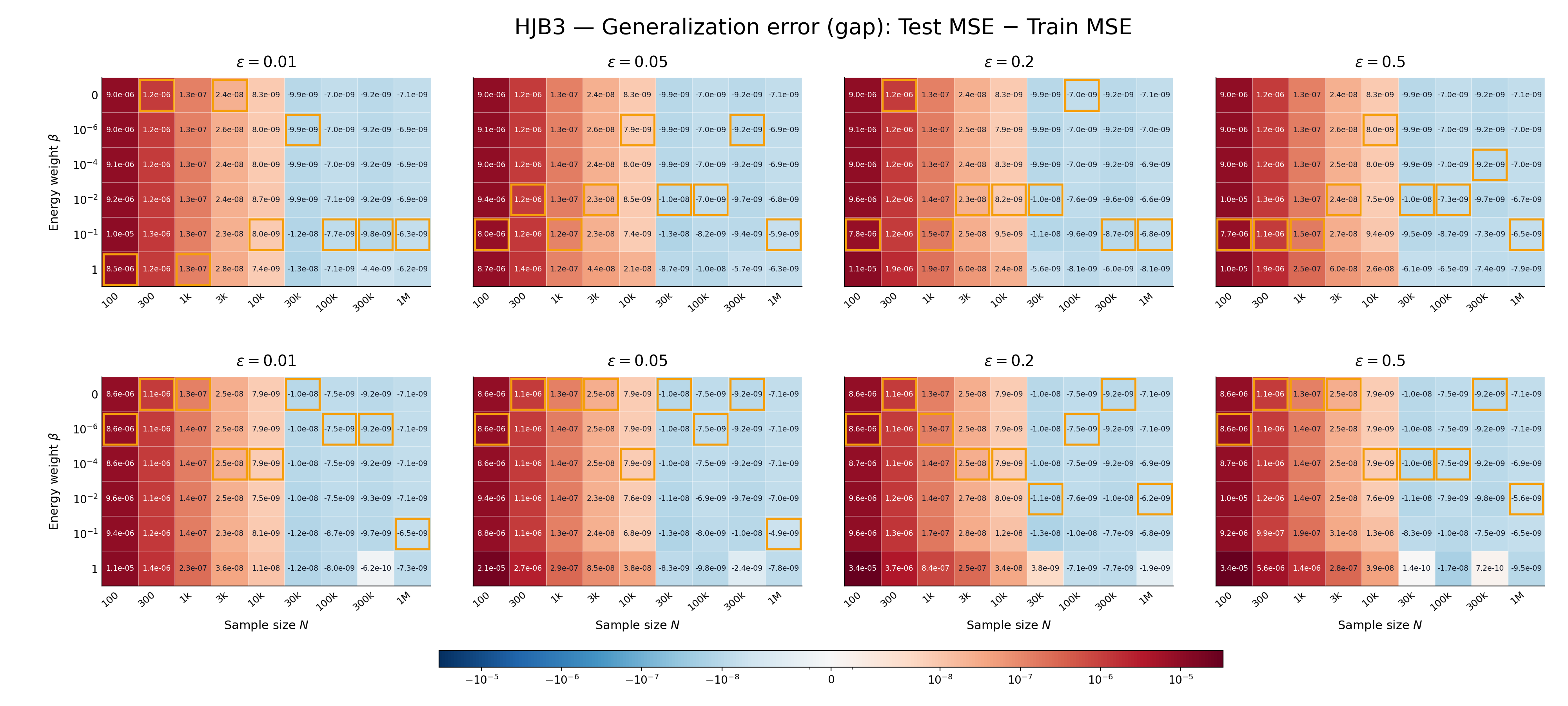}
\par\vspace{0.9em}
\includegraphics[width=\linewidth]{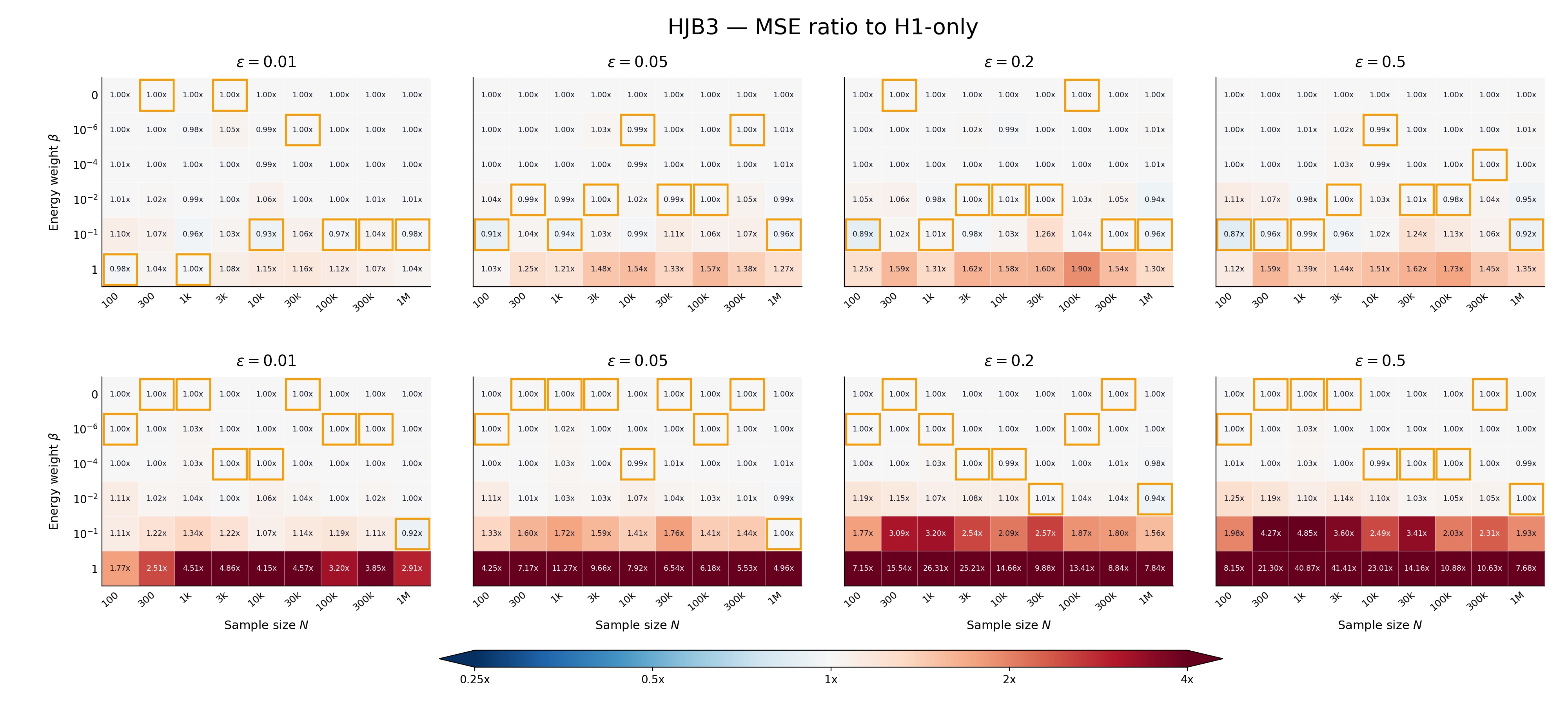}
\caption{HJB-3D: signed value-MSE generalization gap, $G=R_{\mathrm{test}}-R_{\mathrm{train}}$ (first heatmap), and test-MSE ratio to the paired mixed-$H^1$ fitting baseline (second heatmap). Gap cells are means of signed gaps across seeds; ratios compare mean test MSEs. Ratios below one indicate improvement. Noise rows and orange outlines follow Figure~\ref{fig:psa-hjb3-accuracy}.}
\label{fig:psa-hjb3-generalization}
\end{figure}

\clearpage
\section{Extension to Hyperbolic Equations}
\label{extension_to_hyperbolic}
We next use one family of hyperbolic equations to examine the extendability of our framework to see whether the weighted-energy construction can be adapted computationally to a different class of evolutionary PDEs. Further investigation of our method's extendability is worth to investigate in the future.
\paragraph{Nonlinear hyperbolic equations: Sine--Gordon.}
 As a nonlinear hyperbolic example, we consider the Sine--Gordon equation. Variational
approaches based on weighted inertia--energy functionals have been developed
for nonlinear wave equations; following this principle
\cite{akagi2025weighted}, we use
\begin{equation}
I_{\epsilon}^{\mathrm{hyp}}(u)
:=
\int_{0}^{T}\int_{\Omega}
e^{-t/\epsilon}
\left(
\frac{\epsilon^{2}\rho}{2}|u_{tt}|^{2}
+
\frac12|\nabla u|^{2}
+
F(u)
\right)
\,dx\,dt.
\label{hyperbolic_enregy_functional}
\end{equation}
The corresponding limiting wave equation has the form
\begin{equation}
\begin{cases}
\rho u_{tt}-\Delta u+F'(u)=0,
& (x,t)\in\Omega\times(0,T),\\
u(x,0)=u_0(x),
& x\in\Omega,\\
u_t(x,0)=u_1(x),
& x\in\Omega.
\end{cases}
\label{eq:nonlinear-wave}
\end{equation}

For the Sine--Gordon equation, we set
\[
\rho=1,
\qquad
F(u)=1-\cos(u),
\qquad
F'(u)=\sin(u),
\]
so that
\[
u_{tt}-\Delta u+\sin(u)=0.
\]
Let
\[
s_d(x,t)
=
\frac{
d^{-1/2}\sum_{j=1}^{d}x_j-vt
}{
\sqrt{1-v^2}
},
\qquad
v=0.3.
\]
A planar traveling-wave solution is
\begin{equation}
u_d^{*}(x,t)
=
4\arctan\!\left(\exp\!\left(s_d(x,t)\right)\right).
\label{eq:sine-gordon-ground-truth}
\end{equation}
The initial displacement and velocity are prescribed from this exact
solution:
\[
u_0(x)
=
4\arctan\!\left(\exp\!\left(s_d(x,0)\right)\right),
\]
and
\[
u_1(x)
=
-\frac{2v}{\sqrt{1-v^2}}
\operatorname{sech}\!\left(s_d(x,0)\right).
\]

We use this experiment only as a computational extension of the
energy-guided construction. The statistical theory in the main text remains
specific to the parabolic setting.

\paragraph{One-dimensional Sine--Gordon equation.}

Figure~\ref{fig:sine-gordon-solution} compares the learned solution produced
by our method with the manufactured traveling-wave solution.

\begin{figure}[htbp]
    \centering
    \includegraphics[width=0.58\textwidth]
    {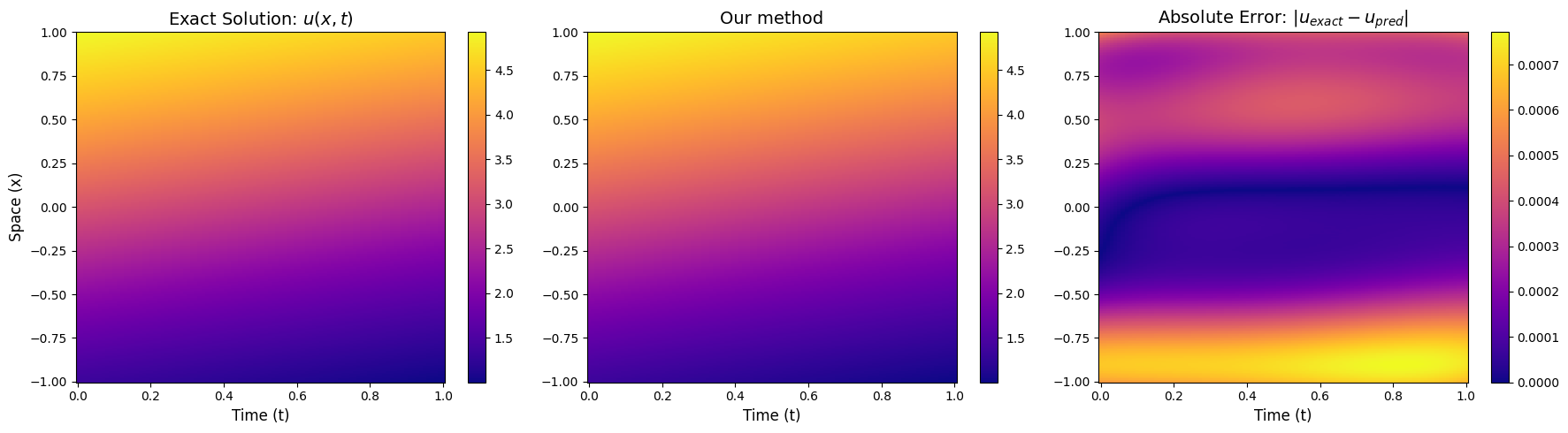}
    \caption{
    Learned solution from our method and the exact traveling-wave solution
    for the one-dimensional Sine--Gordon equation.
    }
    \label{fig:sine-gordon-solution}
\end{figure}

We use $\epsilon=10^{-5}$ in
\eqref{hyperbolic_enregy_functional}. We compare three basic objectives:
the vanilla PINN loss, the weighted-energy functional without labeled
solution data, and our complete objective combining data fitting with
energy guidance. Figure~\ref{fig_16} shows that the weighted-energy
functional alone is not sufficient to identify an accurate approximation
under this training configuration. Its combination with labeled data
produces a substantially more accurate solution.

\begin{figure}[htbp]
    \centering
    \includegraphics[width=0.62\textwidth]
    {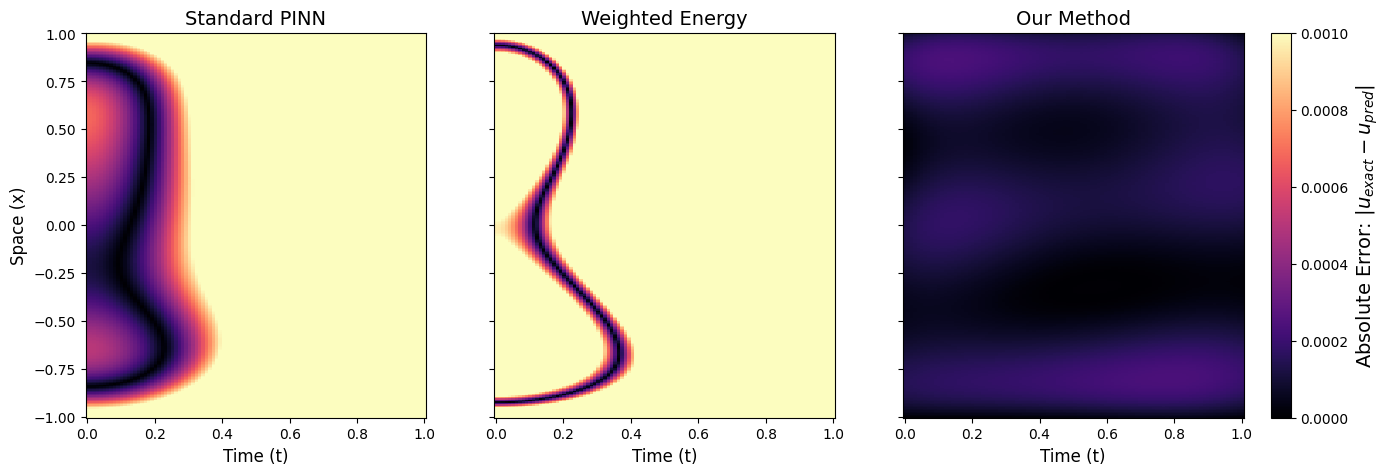}
    \caption{
    Error comparison for the one-dimensional Sine--Gordon equation:
    vanilla PINN, the weighted-energy objective without solution labels,
    and our complete energy-guided estimator.
    }
    \label{fig:sine-gordon-three-methods}
\end{figure}

The same conclusion is visible in the relative $L^2$ error curves in
Figure~\ref{fig_13}. With 5,000 training samples, combining the
data-fitting and weighted-energy terms gives the smallest final relative
error among the three objectives.

\begin{figure}[htbp]
    \centering
    \includegraphics[width=0.62\textwidth]
    {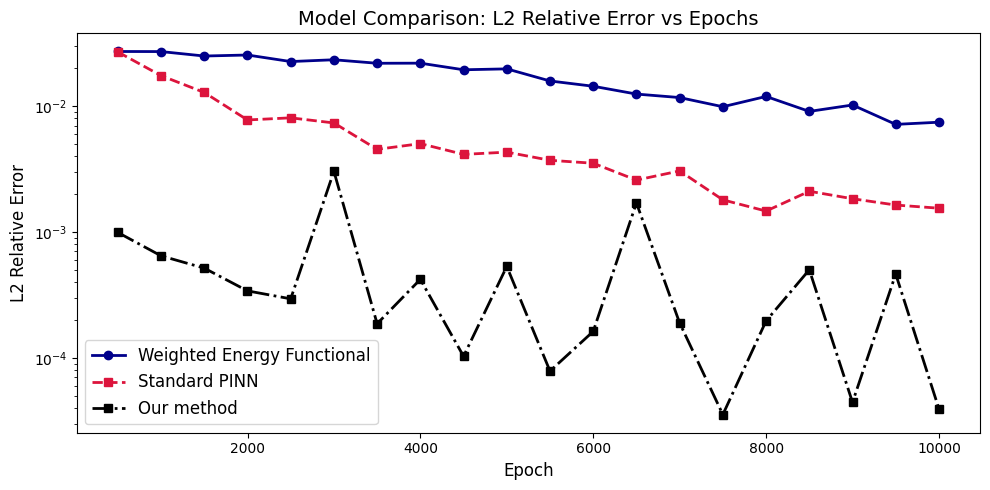}
    \caption{
    Relative $L^2$ error over training for the three objectives on the
    one-dimensional Sine--Gordon equation with 5,000 training samples.
    }
    \label{fig_13}
    \label{fig:sine-gordon-training-curves}
\end{figure}

The training curves occasionally contain sharp error spikes. These spikes
are observed in both the one- and three-dimensional experiments and are
primarily associated with optimization instability in the multi-term
objective. In particular, the data-fitting, derivative-label, and
energy-guidance terms can generate gradients with substantially different
scales. Consequently, a parameter update that decreases the training
objective can temporarily increase the test error.

Figure~\ref{fig_12} examines the effects of batch size and learning rate.
Smaller batches can initially reduce the test error more rapidly than
full-batch training, but they also introduce larger stochastic
fluctuations. Reducing the learning rate from $10^{-3}$ to $10^{-4}$
substantially stabilizes the training trajectory. Appropriate weighting of
the derivative and energy terms provides another possible stabilization
mechanism, although it introduces the loss-balancing problem commonly
encountered in PINN training
\cite{WangTengPerdikaris2021_PINN-weak}.

\begin{figure}[htbp]
    \centering
    \includegraphics[width=0.49\textwidth]
    {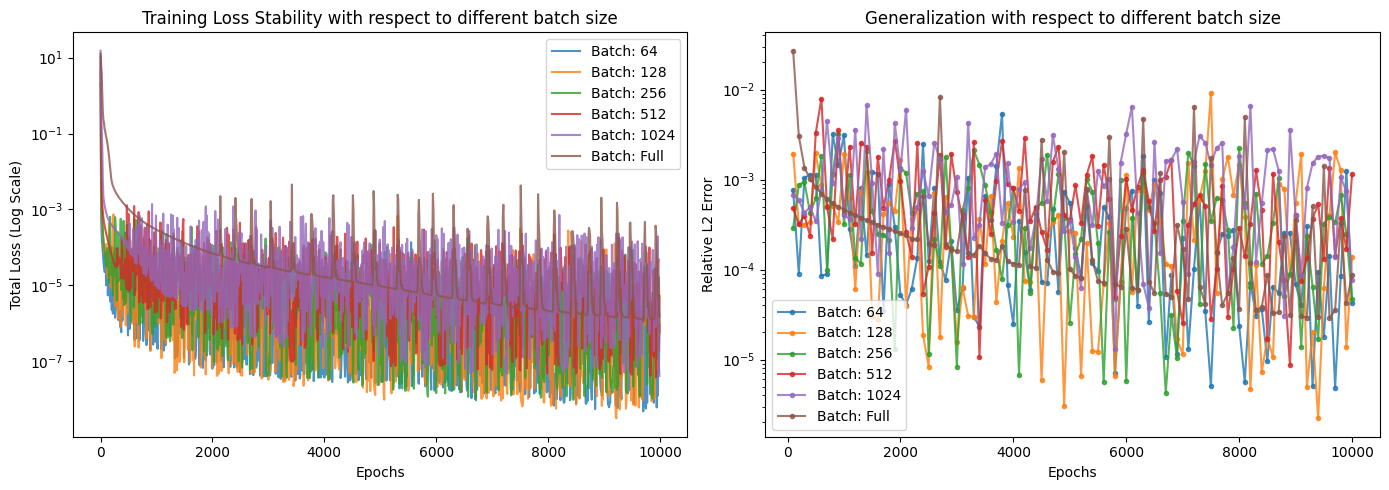}
    \includegraphics[width=0.49\textwidth]
    {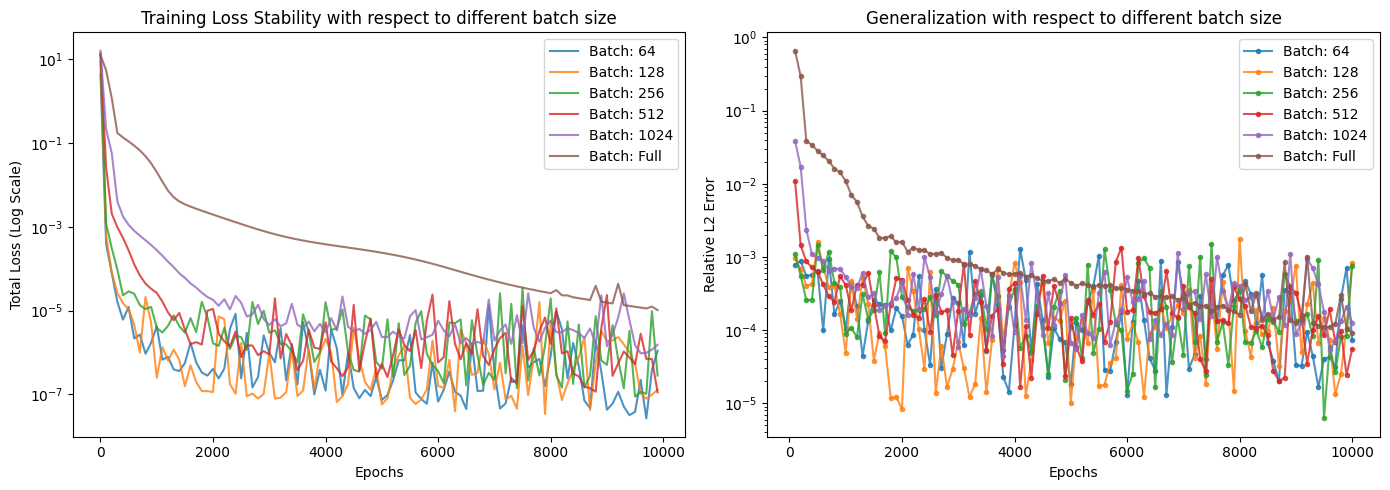}
    \caption{
    Batch-size and learning-rate ablations for the one-dimensional
    Sine--Gordon equation. Left: batch-size comparison with learning rate
    $10^{-3}$. Right: training with the reduced learning rate $10^{-4}$.
    }
    \label{fig_12}
    \label{fig:sine-gordon-optimization}
\end{figure}

After reducing the learning rate, the comparison in
Figure~\ref{fig_15} shows more stable convergence of the proposed estimator
relative to the baseline methods.

\begin{figure}[htbp]
    \centering
    \includegraphics[width=0.62\textwidth]
    {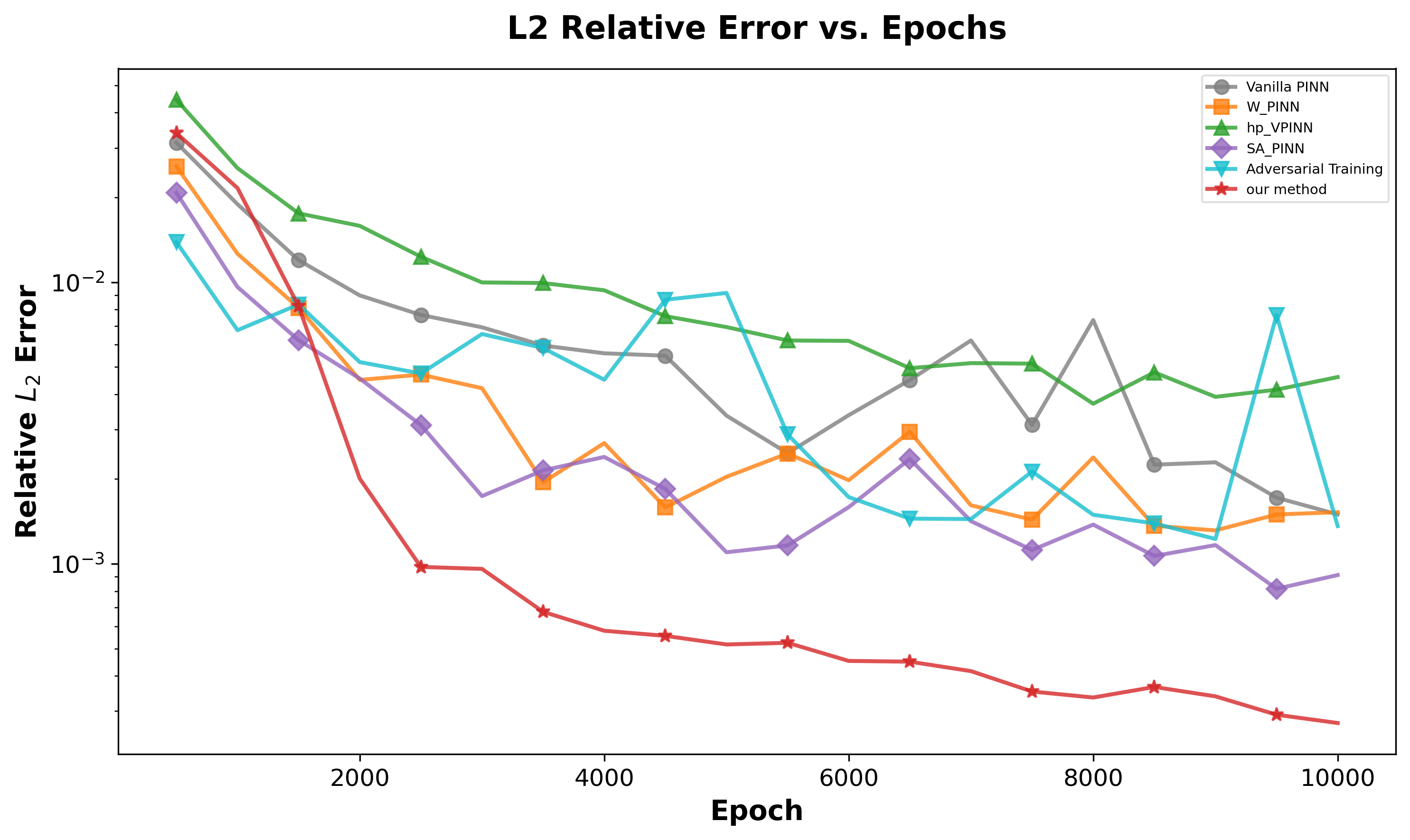}
    \caption{
    Method comparison for the one-dimensional Sine--Gordon equation with
    learning rate $10^{-4}$.
    }
    \label{fig_15}
    \label{fig:sine-gordon-method-comparison}
\end{figure}

\paragraph{Three-dimensional Sine--Gordon equation.}

We also evaluate the planar traveling-wave solution
\eqref{eq:sine-gordon-ground-truth} in three spatial dimensions.
Figure~\ref{fig_17} presents representative temporal and spatial slices of
the learned solution and its error relative to the manufactured solution.

\begin{figure}[htbp]
    \centering
    \includegraphics[width=0.49\textwidth]
    {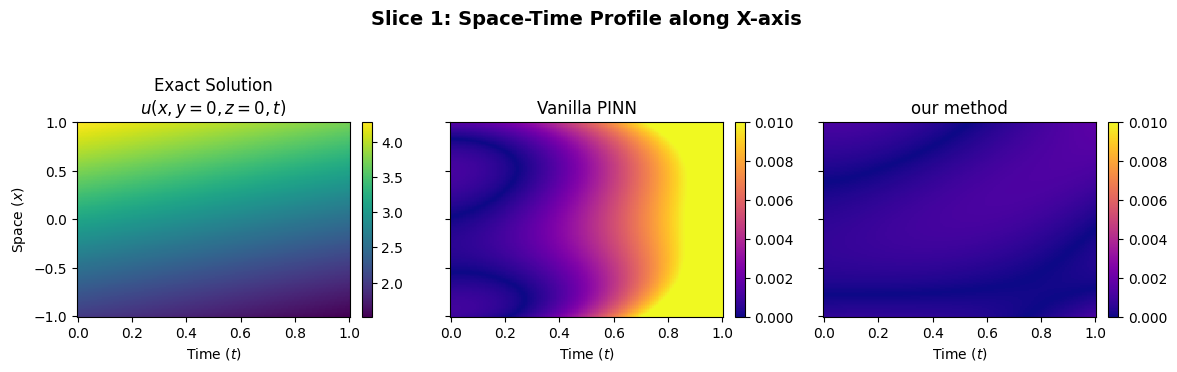}
    \includegraphics[width=0.49\textwidth]
    {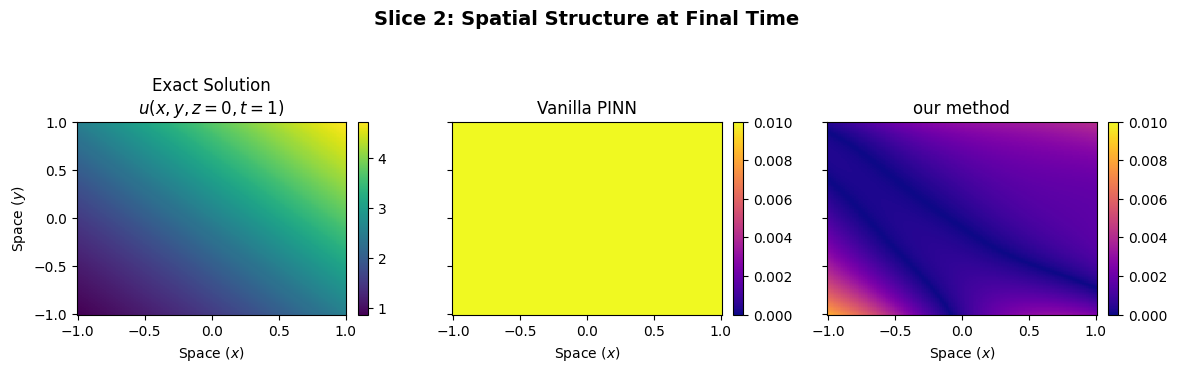}
    \caption{
    Representative slices for the three-dimensional Sine--Gordon equation.
    The ground-truth slices are shown together with the corresponding errors
    of the learned solutions.
    }
    \label{fig_17}
    \label{fig:sine-gordon-3d-slices}
\end{figure}

The one- and three-dimensional experiments indicate that the
energy-guided objective can be adapted computationally to this nonlinear
hyperbolic example. They also show that the physical energy term alone is
not sufficient under the present data and optimization configuration; the
improvement is obtained by combining the energy structure with labeled
solution information. It is clear from Table~\ref{table_3} that our method achieves the lowest mean squared error (MSE) and L2 relative error against various PINNs methods that struggles to handle singularity. 
Moreover, comparisons between our learned solution and ground-truth for temporal evolution snapshots when $z=0$ are shown in Figure~\ref{3D_Sine_Gordon_EXAMPLE}.
\begin{figure}[htbp]
\centering

\begin{minipage}[t]{0.6\linewidth}
\centering
\vspace{0pt}
\footnotesize
\setlength{\tabcolsep}{3pt}
\renewcommand{\arraystretch}{1.2}

\begin{tabular*}{\linewidth}
{@{\extracolsep{\fill}}p{0.52\linewidth}cc@{}}
\toprule
\textbf{Method} & \textbf{Rel. $L^2$ error} & \textbf{MSE} \\
\midrule
Vanilla PINNs \cite{PINN_Karniadakis2021}
& $4.01\times10^{-3}$ & $1.51\times10^{-4}$ \\

W-PINNs \cite{WangTengPerdikaris2021_PINN-weak}
& $5.32\times10^{-3}$ & $2.65\times10^{-4}$ \\

SA-PINNs \cite{McClenny2023SelfAdaptivePINN}
& $3.85\times10^{-3}$ & $1.39\times10^{-4}$ \\

Adversarial Training \cite{wang2022is}
& $8.25\times10^{-2}$ & $6.37\times10^{-2}$ \\

\textbf{Our method}
& $\mathbf{8.89\times10^{-4}}$
& $\mathbf{7.39\times10^{-6}}$ \\
\bottomrule
\end{tabular*}

\captionof{table}{Test error comparison for the 3D Sine--Gordon}
\label{table_3}
\end{minipage}
\hfill
\begin{minipage}[t]{0.38\linewidth}
\centering
\vspace{0pt}

\includegraphics[width=\linewidth]
{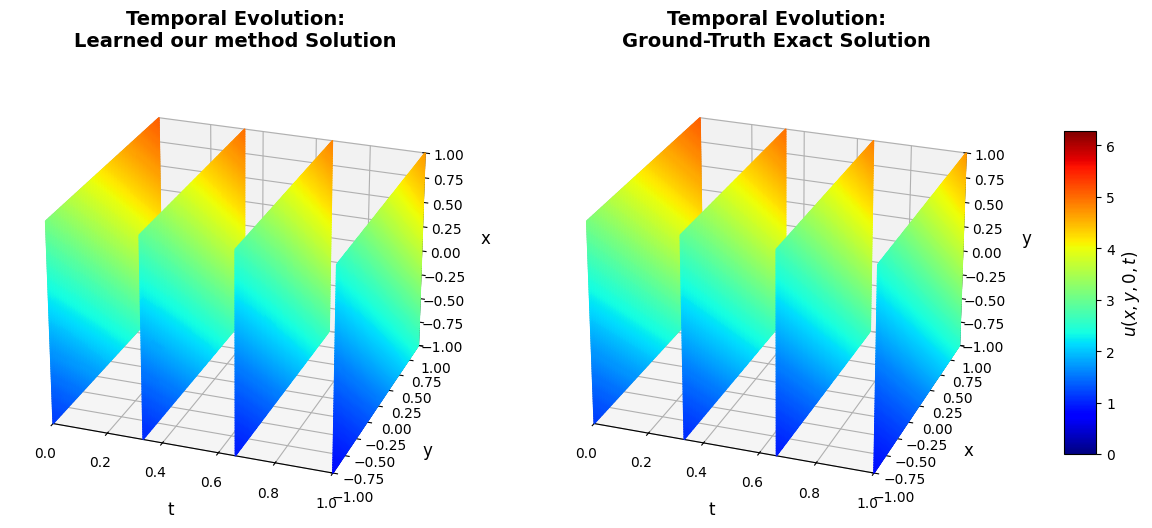}

\caption{Solution snapshots for the 3D Sine--Gordon equation at $z=0$.}
\label{3D_Sine_Gordon_EXAMPLE}
\end{minipage}

\end{figure}

\end{document}